\let\ORIlabel\label
\let\ORIrefstepcounter\refstepcounter
   \let\label\ORIlabel 
   \let\refstepcounter\ORIrefstepcounter}
\def\nmax{n_{\rm max}}
\def\T{{\mathcal T}}
\newcommand{\bsub}{\begin{subequations}}
\newcommand{\esub}{\end{subequations}$\!$}
\newcommand{\eps}{{\varepsilon}}
\newcommand{\X}{{\bf X}}
\newcommand{\K}{{\mathcal K}}
\newcommand{\D}{{\mathcal D}}
\newcommand{\pa}{{\partial\Omega}}
\newcommand{\x}{{\bf{x}}}
\newcommand{\vv}{{\bf{v}}}
\newcommand{\vc}{{\bf{C}}}
\newcommand{\vac}{{\bf{A}}}
\newcommand{\evec}{{\bf e}}
\newcommand{\vq}{{\bf q}}
\newcommand{\vb}{{\bf{b}}}
\newcommand{\y}{{\bf{y}}}
\newcommand{\n}{{\bf{n}}}
\newcommand{\bxi}{{\bf{\xi}}}
\newcommand{\G}{{\mathcal G}_{s}}
\newcommand{\area}{|\Omega|}
\newcommand{\R}{{\mathbb{R}}}
\newcommand{\Cmu}{{\mathcal C}}
\newcommand{\p}{\prime}
\newcommand{\PT}{{\Gamma}}
\newcommand{\DT}{{\bf p}}
\newcommand{\W}{{\mathcal W}}
\newcommand{\OO}{{\mathcal O}}
\newcommand{\keff}{k_{\rm eff}}
\newcommand{\ueff}{u_{\rm eff}}
\newcommand{\Keff}{{\mathcal K}_{\rm eff}}
\def\hmax{h_{\rm max}}
\newcommand{\0}{{\mathcal O}}
\newtheorem{prop}{Proposition}
\newtheorem{remark}{Remark}
\renewcommand\theequation{\thesection.\arabic{equation}}
\title{The Asymptotic Analysis of Some PDE and Steklov Eigenvalue Problems
  with Partially Reactive Patches in 3-D}
\author{Denis S. Grebenkov \thanks{Laboratoire de Physique de la
    Mati\`{e}re Condens\'{e}e, CNRS -- Ecole Polytechnique,
    Institut Polytechnique de Paris, 91120 Palaiseau, France.  Email:
    denis.grebenkov@polytechnique.edu (corresponding author)}, \and Michael
  J. Ward \thanks{Department of Mathematics, University of British
    Columbia, Vancouver, B.C., Canada, V6T 1Z2. Email:
    ward@math.ubc.ca}}
\date{\today}
\begin{document}

\label{firstpage}
\maketitle

\baselineskip=12pt

\begin{abstract}
We consider steady-state diffusion in a three-dimensional bounded
domain with a smooth reflecting boundary that is partially covered by
small partially reactive patches.  By using the method of matched
asymptotic expansions, we investigate the competition of these patches
for a diffusing particle and the crucial role of surface reactions on
these targets.  After a brief overview of former contributions to this
field, we first illustrate our approach by considering the classical
problems of the mean first-reaction time (MFRT) and the splitting
probability for partially reactive patches characterized by a Robin
boundary condition.  For a spherical domain, we derive a three-term
asymptotic expansion for the MFRT and splitting probabilities in the
small-patch limit.  This expansion is valid for arbitrary
reactivities, and also accounts for the effect of the spatial
configuration of patches on the boundary.  Secondly, we consider more
intricate surface reactions modeled by mixed Steklov-Neumann or
Steklov-Neumann-Dirichlet problems.  We provide the first derivation
of the asymptotic behavior of the eigenvalues and eigenfunctions for
these spectral problems in the small-patch limit for a spherical
domain.  Extensions of these asymptotic results to arbitrary domains
and their physical applications are discussed.
\end{abstract}

\begin{keywords}
matched asymptotic expansions, narrow escape problem, Robin boundary condition,
Steklov eigenvalue problem, diffusion, capacitance, homogenization, diffusive
capture, mean first-reaction time, splitting probability
\end{keywords}

\begin{AMS}
35B25, 35J25, 35P15,60J65, 35J08
\end{AMS}



\section{Introduction}\label{all:intro}

Many vital processes in microbiology rely on diffusive search
for small targets, such as proteins searching for their partners or
specific sites on a DNA chain, ions searching for channels on the
plasma membrane of the cell, viruses searching for nuclear pores, etc.
\cite{Alberts,Lauffenburger,Bressloff13,Holcman13,Grebenkovbook}.  In
heterogeneous catalysis, reactants move toward active sites on a
solid catalytic surface to be chemically transformed
\cite{House,Lindenberg,Grebenkov23b}.  In nuclear magnetic resonance
experiments, spin-bearing molecules diffuse in tissues or mineral
samples and may relax their magnetization on magnetic impurities that
are located on confining walls
\cite{Callaghan,Grebenkov07,Kiselev17}.  Various search problems on a
macroscopic scale such as animal or human behavior are inspired by
ecology \cite{Kurella15,Grebenkovbook}.  These and many other natural
phenomena are often modeled by reflected Brownian motion that is
confined inside a bounded domain by an impenetrable boundary.  This
stochastic process is stopped (or killed) under certain conditions
that represent interactions of the diffusing particle with prescribed
regions (called ``targets'') that are located either inside the domain
or on its boundary.  Depending on the context and application, such
interactions may represent a chemical reaction, a binding, a
conformational change to a different state, a relaxation of
magnetization or fluorescence, a passage through an ion channel or a
pore, an escape from the domain, etc.

One of the simplest and most studied stopping condition is the first
arrival onto the targets.  In this scenario, the efficiency of the
diffusive search is usually characterized by the distribution of the
first-passage time (FPT) to single or multiple targets
\cite{Redner,Schuss,Metzler,Masoliver,Dagdug,Grebenkovbook}.
Many former works have been dedicated to the analysis of the {\em
mean} first-passage time (MFPT) and its dependence on the shape, size,
and spatial arrangement of the targets in the confining domain.  If
$\X$ denotes the starting point of reflected Brownian motion in a
bounded domain $\Omega \subset \R^d$ with a smooth boundary $\pa$, the
MFPT $T(\X)$ to a target region (also called a reactive patch) $\pa_a
\subset \pa$ satisfies the Poisson equation with mixed
Dirichlet-Neumann boundary condition:
\begin{subequations}  \label{eq:MFPT}
\begin{align}   \label{eq:MFPT_eq}
-D \Delta T(\X) & = 1 \,, \quad \X\in\Omega\,, \\
 T(\X) & = 0 \,, \quad \X\in \pa_a\,, \\  
\partial_n T(\X) & = 0 \,, \quad \X\in\pa_r = \pa \backslash \pa_a\,,
\end{align}
\end{subequations}
where $D > 0$ is a constant diffusion coefficient, $\Delta$ is the
Laplace operator, and $\partial_n$ is the outward normal derivative.
The Dirichlet condition on $\pa_a$ characterizes the first-arrival
stopping condition (if the particle starts from $\pa_a$, the process
is immediately stopped, yielding $T = 0$), whereas the Neumann
condition on $\pa_r$ ensures zero flux across the reflecting part of
the boundary.  From a mathematical viewpoint, the mixed boundary
condition presents the major challenge in solving this boundary value
problem (BVP) in general domains, in which a trivial separation of
variables, such as in rectangles or concentric spheres, is not
applicable.  Despite the existence of advanced techniques such as dual
integral or series equations \cite{Sneddon}, or the generalized method
of separation of variables \cite{Grebenkov19f,Grebenkov20f}, the disk
with an arc-shaped target is presumably the only nontrivial example,
for which an exact explicit solution of (\ref{eq:MFPT}) has been found
\cite{Singer06b,Rupprecht15,Grebenkov16,Marshall16}.

For this reason, various approximate, numerical and asymptotic
techniques have been developed over the past two decades.  When the
patch $\pa_a$ has small area, determining the asymptotic behavior of
the MFPT is usually referred to as {\em the narrow escape problem}
(see an overview in \cite{Holcman14}).  The presence of a small
Dirichlet patch $\pa_a$ on an otherwise reflecting boundary is a {\em
singular} perturbation \cite{Ozawa81,Mazya85,Ward93}.  In fact, if
$\pa_a$ was absent, the MFPT would be infinite. As a consequence, the
solution of (\ref{eq:MFPT}) diverges in the small-patch limit $\pa_a
\to \emptyset$.  The very first asymptotic result for the MFPT to a
small circular patch of radius $\epsilon$ on a spherical boundary of
radius $R$ dates back to Lord Rayleigh, who found (in the context of
acoustics) that, to leading-order in $\epsilon$, $T \sim
|\Omega|/(4D\epsilon)$, where $|\Omega| = 4\pi R^3/3$ is the volume of
the spherical domain \cite{Rayleigh}.  Even though the particle
escapes through the two-dimensional boundary patch, the MFPT scales
inversely with the radius of the patch ($\sim \epsilon$) and not with
its area ($\sim \epsilon^2$).  This is a characteristic feature of a
perfectly reactive circular patch with Dirichlet condition, also known
as the diffusion-limited regime in diffusion-controlled reactions
\cite{North66,Wilemski73,Calef83,Berg85,Rice85,Grebenkov23b};
this behavior is typical for Dirichlet patches of arbitrary
shape. However, as we will discuss below, an additional scaling regime
emerges for more realistic reaction mechanisms.  Rayleigh's
leading-order result has been generalized to other domains and patch
shapes, both in two and three dimensions, and the asymptotic results
have been applied to many specific problems in microbiology (see
\cite{Holcman04,Singer06a,Singer06b,Singer06c,Schuss07,Holcman08,Singer09}
and the overviews in \cite{Holcman13,Holcman14}).

The leading $\OO(1/\epsilon)$ behavior clearly indicates that an
accurate characterization of the MFPT requires the knowledge of {\it
subleading} terms, at least up to $\OO(1)$.
Such refined asymptotic results can be obtained by the method of {\em
matched asymptotic expansions} \cite{Lagerstrom88}, which was tailored
in \cite{Ward93} and \cite{Ward93b} to analyze PDE problems with
strong localized perturbations.  This method has then been implemented
to systematically calculate higher-order asymptotic approximations of
the MFPT, the splitting probability, and related first-passage
quantities in both two- and three-dimensional settings with either
boundary patches or interior targets
\cite{Coombs09,Cheviakov10,Pillay2010,Cheviakov11,Chen11,Cheviakov13,Delgado15,Bressloff21a,Bressloff21b,Iyaniwura21}.
Some asymptotic results for related problems on unbounded domains are
given in \cite{Lindsay17,Lindsay18a,Lawley20} (see also the references
therein).  An alternative approach to the method of matched asymptotics
was described in \cite{Chevalier11}.  Even though our discussion is
focused on ordinary diffusion, extensions to more sophisticated
diffusion processes, and to problems with stochastic resetting, have
been explored (see
\cite{Condamin07,Benichou08,Bressloff15,Guerin16,Bressloff20} and the
references therein).  We also emphasize that the knowledge of the {\em
mean} FPT does not fully characterize the distribution of this random
variable \cite{Godec16,Grebenkov18a}.  Analytical and numerical
studies dealing with the full probability distribution include
\cite{Rupprecht15,Grebenkov19a,Reva21,Lindsay18b,Cherry22} (see the
references therein).

The above stopping condition corresponds to the simplest reaction
kinetics when the reaction event occurs certainly and instantly upon
the first arrival onto the target.  This perfect reaction scenario
assumes that the diffusive search is the only limiting factor and thus
greatly oversimplifies reaction kinetics that is relevant in many
applications \cite{Piazza22,Grebenkov23b}.  Indeed, a particle that
arrives onto the target may not react instantly due to various
reasons: (i) a reaction event often requires overcoming an activation
energy barrier \cite{Weiss86,Hanggi90}; (ii) an escape event may
involve overcoming an entropic barrier
\cite{Zhou91,Reguera06,Chapman16}; (iii) a macromolecule may need to
be in the proper conformational state to bind its partner
\cite{Cortes10,Galanti16b,Luking22}; (iv) the target may switch
between active and passive states (e.g., an ion channel can be open or
closed) \cite{Benichou00,Reingruber09,Lawley15}; (v) the target may be
microscopically inhomogeneous so that the arrival point may be inert
\cite{BergPurcell1977,Berezhkovskii04,Berezhkovskii06,Muratov08,Lindsay17,Bernoff18,Punia21}.
Whatever the microscopic origin is, such {\em imperfect} targets,
generally referred to as {\em partially reactive}, are often modeled
by a Robin boundary condition \cite{Collins49}.  For instance, the BVP
(\ref{eq:MFPT}) is replaced by
\begin{subequations}  \label{eq:MFRT}
\begin{align}   
-D \Delta \T(\X) & = 1 \,, \quad \X\in\Omega\,, \\  \label{eq:MFRT_Robin}
D \partial_n \T + \K \T & = 0\,, \quad \X\in\pa_a\,, \\
\partial_n \T(\X) & = 0 \,, \quad \X\in\pa_r = \pa \backslash \pa_a\,,
\end{align}
\end{subequations}
with the mixed Robin-Neumann boundary condition (\ref{eq:MFRT_Robin}).
The reactivity $\K$, which has units of length per time, characterizes
the facility of the reaction event, by ranging from $0$ (inert passive
target, no reaction) to $+\infty$ (perfect reaction upon the first
arrival).  If the particle that arrives onto the target fails to
react, it is reflected from the target and resumes its diffusion in
the domain until the next arrival, and so on.  As a consequence, the
successful reaction event is generally preceded by a sequence of
diffusive excursions in the bulk after each failed reaction attempt,
and the mean {\em first-reaction time} (MFRT), $\T(\X)$, satisfying
(\ref{eq:MFRT}), can significantly exceed the MFPT $T(\X)$ satisfying
(\ref{eq:MFPT}).

More generally, the Robin boundary condition is usually
understood as a mass conservation law \cite{Collins49}: if $c(\X)$ is
a steady-state concentration of particles, their diffusive flux
density $-D\partial_n c$ from the bulk is equal to the ``reactive
flux density'' on the patch, $\K c$, which is proportional to their
concentration $c$.  This macroscopic relation admits various microscopic
interpretations.  \\
(i) A perfectly absorbing patch can be separated from the bulk by a
thin permeable membrane of thickness $a$ and of diffusivity $D_m$
(e.g., a thin membrane separating the interior of $\Omega$ from an
absorbing exterior reservoir).  The continuity conditions for the
diffusion equation in such a biphasic medium allow one to represent
the effect of the membrane by imposing the Robin boundary condition
with permeability $\K = D_m/a$ on the frontier between the interior
and the membrane \cite{Sapoval94,Felici03}.  When the membrane
thickness $a$ vanishes, one has $\K\to \infty$, so that the Dirichlet
boundary condition is recovered, as expected.  A similar
interpretation appears in an electrochemical setting, in which $\K$ is
proportional to the Faraday resistance (or surface impedance) of a
metal electrode
\cite{Halsey92,Sapoval94}.  \\
(ii) In the framework of chemical reactions or surface relaxation, the
patch mimics a thin reactive layer of thickness $a$ that represents
the effect of short-range interactions between the diffusing particle
and reactive sites on the boundary.  If this layer is characterized by
a spatially constant bulk reaction rate $\nu$, the overall effect of
this layer onto the concentration of particles (and related
quantities) can be modeled via the Robin boundary condition with the
reactivity $\K = a \nu$ \cite{Grebenkov20}. \\
(iii) The reactivity $\K$ can also be related to the probability of
absorption upon encounter with the boundary, $p_a$, which is highly
relevant in computational chemistry, molecular simulations, and
intracellular transport models.  For instance, if the reflected
Brownian motion is approximated by a random walk on a regular lattice
$(a\mathbb{Z})^d$ with lattice spacing $a$, the effect of a partially
reactive patch is modeled as follows: at each arrival onto any
reactive site of the patch, the particle can either react with the
absorption probability $p_a = 1/(1 + D/(\K a))$, or jump to the
neighboring bulk site with the probability $1-p_a$
\cite{Grebenkov03,Grebenkov23a}.  As the reactivity $\K$ varies from
$0$ to $+\infty$, the absorption probability ranges from $0$ to $1$,
as expected.  This expression for $p_a$ can alternatively be seen as a
finite-difference discretization of the Robin boundary condition.  The
same absorption probability appears in the construction and Monte
Carlo simulations of partially reflected Brownian motion
\cite{Grebenkov06,Grebenkov07b} and other partially reflected
diffusions (see \cite{Andrews04,Erban07,Singer08} and references
therein; note that an approximate form $p_a \approx a \K/D\ll 1$ is
sometimes used, given that $a$ is a small parameter). \\
(iv) A more general probabilistic interpretation of the Robin boundary
condition in terms of the boundary local time $\ell_t$ was provided in
\cite{Grebenkov20}.  If each encounter with a thin reactive layer of
thickness $a$ can result in a reaction event with the probability
$p_a$, the number of failed attempts until the successful reaction
obeys the geometric distribution.  Understanding an encounter as
downcrossing of the layer, one can approximate the number of such
downcrossings up to time $t$ as $\ell_t/a$.  In the limit $a\to 0$, an
appropriate rescaling allows one to define the first-reaction time
\begin{equation} \label{eq:tau_def}
\tau = \inf\{ t>0 ~:~ \ell_t > \hat{\ell}\} 
\end{equation}
as the first instance when the boundary local time $\ell_t$ exceeds a
random threshold $\hat{\ell}$ obeying the exponential distribution:
$\mathbb{P}\{\hat{\ell} > \ell \} = e^{-\ell \K/D}$ (see details in
\cite{Grebenkov20}).  In this interpretation, the patch shape and the
diffusive dynamics in $\Omega$ determine the boundary local time
$\ell_t$, whereas the reactivity $\K$ controls the independent
threshold $\hat{\ell}$.  As a consequence, the geometric and diffusive
properties of the system are disentangled from the surface reaction.

In summary, various physical mechanisms, microscopic models, and
probabilistic constructions lead to the Robin boundary condition,
which appears as a natural extension of and a more realistic
alternative to the Dirichlet patch.

In this paper, we aim at adjusting the method of matched asymptotic
expansions to deal with Robin patches.  We emphasize that the effect
of partial reactivity on the efficiency of the diffusive search, which
was ignored in most former studies, has recently attracted
considerable attention.  For instance, the small-patch asymptotic
behavior of the MFRT was deduced in \cite{Grebenkov17a} by using a
constant-flux approximation.  In particular, for a circular patch on a
spherical boundary, the MFRT was shown to behave as $|\Omega|/(\K
|\pa_a|)$ in the leading order in $\epsilon$, where $|\pa_a| = \pi
\epsilon^2$ is the surface area of the patch, and $|\Omega|$ is
the volume of the domain.  The faster divergence $\0(\epsilon^{-2})$
is reminiscent of the reaction-limited regime, which becomes dominant
in the small-patch limit for a finite reactivity $\K$.  The change of
scaling between diffusion-limited and reaction-limited regimes
suggests a nontrivial dependence on the reactivity, especially in the
limit $\K \to \infty$.  In fact, the limits $\K\to \infty$ and
$\epsilon \to 0$ are not interchangeable.  This point was further
investigated in \cite{Guerin23,Cengiz24,Grebenkov25}, where the
behavior of the MFRT on a small circular patch was studied for small,
intermediate, and large reactivities by different methods.  The effect
of spatially heterogeneous reactivity was analyzed via a spectral
approach \cite{Grebenkov19b}, whereas the role of target anisotropy
onto the MFRT was studied in \cite{Chaigneau22}.  The trapping rate of
a reflecting plane covered by partially reactive circular patches was
estimated by means of boundary homogenization technique
\cite{Plunkett24} (see also \cite{Taflia11,Freche11} for other
homogenization results with partially reactive patches and their
applications to neuron signaling).  An elaborate asymptotic analysis
for partially reactive targets in planar domains was developed in
\cite{Grebenkov-Ward25b}.

To consolidate and extend this recent progress, we aim to establish a
streamlined and systematic asymptotic framework for analyzing the MFRT
and the splitting probability in a 3-D domain with partially reactive
boundary patches. Previous asymptotic analyses of the MFRT (see
\cite{Cheviakov10,Lindsay17,Cheviakov15,Tzou21}) and related problems
with {\em perfectly} reactive patches in 3-D have shown that a
higher-order asymptotic expansion is required to study the effect of
inter-patch interactions, and that some gauge terms in the asymptotic
expansion can have an intricate dependence on $\epsilon$.  Our
framework is then extended to treat certain mixed Steklov eigenvalue
problems, which are also relevant to diffusive capture.

Some key common features in all of these problems, and open questions
regarding the effect of partially reactive patches in 3-D include the
following:

(i) Many aforementioned works have dealt only with a single patch.  In
turn, multiple patches compete with each other for capturing a diffusing
particle (the so-called diffusional screening \cite{Felici03} or
diffusive interactions \cite{Traytak96,Traytak97,Berezhkovskii12}), which
leads to various optimization problems on a suitable spatial
arrangement of patches.  The MFPT to multiple perfectly reactive
circular patches has been studied for a sphere in
\cite{Cheviakov10}.  Can one provide a more streamlined asymptotic
approach to analyze the competition of multiple partially reactive
arbitrarily-shaped patches and its effect on the MFRT and related
quantities?

(ii) Many former works on the MFRT focused on the leading-order term,
which scales as $\epsilon^{-2}$ when ${\mathcal K}$ is considered
fixed on a patch while $\epsilon\to 0$.  However, the expected
``correction'' terms $\0(\epsilon^{-1})$, $\0(\log \epsilon)$ and
$\0(1)$ may provide significant contributions, and their knowledge is
required for an accurate estimation of the MFRT in applications,
especially if $\epsilon$ is not too small.  Can one develop a
systematic approach to capture these higher-order terms in the MFRT
for fixed ${\mathcal K}$ as $\epsilon \to 0$?

(iii) To our knowledge, previous analyses on partially reactive
patches have assumed their {\em circular} shape.  How does the shape
of the partially reactive patch affect the asymptotic behavior of the MFRT?

(iv) What is the impact of the spatial arrangement of patches?  This
question was addressed for perfectly reactive circular patches for the
MFPT in \cite{Cheviakov10}, but remains open for partially reactive
ones.

(v) Even though the use of the MFRT is a common way to characterize
the efficiency of the diffusive search, other quantities may be needed
to reveal versatile facets of this phenomenon.  For instance, one
often employs the splitting probabilities to describe the relative
contributions of individual patches in their competition for capturing
the diffusing particles.  What is the asymptotic behavior of the
splitting probabilities of partially reactive patches?

(vi) Finally, the Robin boundary condition (\ref{eq:MFRT_Robin})
implements the simplest model of a constant reactivity on the patch.
The encounter-based approach \cite{Grebenkov20} allows one to
introduce a much more general class of surface reactions that
describe, e.g., progressive activation or de-activation of the patch
by its interaction with diffusing particles, non-Markovian binding,
surface adsorption,
etc. \cite{Grebenkov23a,Bressloff23d,Bressloff23e}.  In probabilistic
terms, one can still use the stopping condition
(\ref{eq:tau_def}) that involves the boundary local time $\ell_t$ (as
a proxy of the number of encounters of the particle with the patch);
however, the former exponential distribution of the random threshold
$\hat{\ell}$ can now be replaced any suitable distribution
\cite{Grebenkov20c,Grebenkov22}.  One can show that the PDE
formulation of this framework substitutes the Robin boundary condition
by an integral equation on the patch.  A natural framework for solving
and analyzing such PDEs relies on the Steklov-Neumann problem
\cite{Grebenkov25} or the Steklov-Dirichlet-Neumann problem
\cite{Grebenkov23} (see \S \ref{sec:results} for their formulation).
These are basic extensions of the conventional Steklov spectral
problem that has been thoroughly studied in spectral geometry
\cite{Levitin,Girouard17,Colbois24}.  To characterize the
efficiency of multiple small patches with more sophisticated reaction
mechanisms, can one analyze the asymptotic behavior of the eigenvalues
and eigenfunctions of the related Steklov problems in the small-patch
limit?  To our knowledge, this asymptotic problem was not addressed in
the past (except for \cite{Grebenkov25} in the case of a single
circular patch in 3-D, and for \cite{Grebenkov-Ward25b} in two
dimensions).

In this paper, we aim to present a common theoretical framework
that synthesises previous approaches and addresses the open issues
discussed above. In our approach, we combine the method of matched
asymptotic expansions, relying on strong localized perturbation theory
\cite{Ward93}, with spectral expansions based on the local exterior
Steklov problem on each patch.  The use of geodesic normal coordinates
is another key tool that allows for a more streamlined access to the
higher-order terms in the asymptotic expansions that represent
inter-patch interactions, which arise from the the overall spatial
configuration of the patches.  In the next section, we formulate four
asymptotic problems and summarize our main results.

\section{Summary of main results}  \label{sec:results}

We consider reflected Brownian motion in a three-dimensional bounded
domain $\Omega$, with a smooth boundary $\pa$, which consists of the
union $\partial\Omega_a = \cup_{i=1}^{N} \partial\Omega_i$ of $N$
reactive patches $\partial\Omega_i \subset \pa$ and the remaining
reflecting (inert) boundary $\partial\Omega_r = \pa \backslash \pa_a$.
Each reactive boundary patch $\partial\Omega_i$ is assumed to be
simply-connected with a smooth boundary, but with an otherwise
arbitrary shape.  We denote $2L_i$ to be the diameter of the
orthogonal projection of the patch $\partial\Omega_i$ when mapped onto
the tangent plane.  Our asymptotic analysis will exploit an assumed
length-scale separation ${L/R}\ll 1$, where $L\equiv
\max\limits_{i} \{L_{i}\}$, and $2R$ is the diameter of the confining
domain $\Omega$.  The patches are assumed to be well-separated in the
sense that $\mathrm{dist}\{\pa_i, \pa_j\} \gg L$ for all $i\neq j$.
We will study four different problems that can be analyzed with a
common theoretical framework. For this reason, we will employ the same
notations, e.g., $U(\X)$, for formulating and studying these problems.

(I) The mean first-reaction time, $\T(\X) = U(\X)$, on the union
$\pa_a$ of partially reactive patches 
$\partial\Omega_1,\ldots,\partial\Omega_N$ with finite reactivities
$\K_1,\ldots,\K_N$, satisfies a Poisson equation with mixed
Neumann-Robin boundary conditions, re-formulated from (\ref{eq:MFRT})
as
\bsub \label{mfpt:ssp0}
\begin{align}
  \Delta U & = - \frac{1}{D} \,, \quad \X \in \Omega \,,
  \\
   D\partial_{n} U + \K_i U & = 0\,, \quad \X \in
     \partial\Omega_i \,, \quad i=1,\ldots,N \,, \\
  \partial_{n} U & = 0 \,, \quad \X \in \partial\Omega_r\,.
\end{align}
\esub 
We also consider the volume-averaged MFRT,
\begin{equation}   \label{eq:global}
 \overline{U} \equiv \frac{1}{\area} \int_{\Omega} U(\X) \, d\X \,, 
\end{equation}
which corresponds to the average with respect to a uniform
distribution of initial points $\X\in\Omega$, where $|\Omega|$
denotes the volume of $\Omega$.

(II) The splitting probability, $U(\X)$, to react on the first patch
$\partial\Omega_1$, before reacting on the other patches, satisfies
\bsub \label{split:ssp_0}
\begin{align}
  \Delta U &= 0 \,, \quad \X \in \Omega \,, \\
  D\partial_{n} U + \K_i U &= \delta_{i1}\K_i\,, \quad
      \X \in \partial \Omega_i \,, \quad i=1,\ldots,N \,, \\
  \partial_{n} U &= 0 \,, \quad \X \in \partial \Omega_r \,,
\end{align}
\esub
where $\delta_{11}=1$ and $\delta_{i1}=0$ for $i=2,\ldots,N$. 
The volume-averaged splitting probability is also given by
(\ref{eq:global}).

(III) More sophisticated surface reactions can be formulated in terms
of various Steklov spectral problems
(cf.~\cite{Grebenkov20,Grebenkov22,Grebenkov23a,Grebenkov23,Grebenkov24,Grebenkov25}).
One such problem consists of finding the eigenpairs $\{ \Sigma, U\}$
of the mixed Steklov-Dirichlet-Neumann (SDN) problem formulated as
\bsub \label{sdn:eig_0}
\begin{align}
  \Delta U &= 0 \,, \quad \X \in \Omega \,, \\
  \partial_{n} U &=\Sigma U\,, \quad \X \in \partial   \Omega_1 \,, \\ 
  U &=0 \,, \quad \X\in\partial\Omega_i \,,
  \quad i=2,\ldots,N \,,\\
  \partial_{n} U &= 0 \,, \quad \X \in \partial \Omega_r.
\end{align}
\esub 
These eigenpairs allow one to solve the escape problem when the
diffusing particle has to react on the patch $\partial\Omega_{1}$
before escaping from the domain $\Omega$ through any Dirichlet patch
$\partial\Omega_{j}$, for $j=2,\ldots,N$, which may represent holes or
channels on the domain boundary \cite{Grebenkov23}.

(IV) Finally, we will consider the mixed Steklov-Neumann (SN) problem
that consists of finding the eigenpairs $\{\Sigma, U\}$ satisfying
\bsub \label{sn:eig_0}
\begin{align}
  \Delta U & = 0 \,, \quad \X \in \Omega \,, \\
  \partial_{n} U & =\Sigma U\,, \quad \X \in 
  \partial\Omega_i \,, \quad i=1,\ldots,N \,,\\
  \partial_{n} U &= 0 \,, \quad \X \in \partial \Omega_r \,.
\end{align}
\esub These eigenpairs can be used to investigate sophisticated
surface reactions on multiple patches and their competition.  Some
general spectral properties of mixed Steklov-Neumann problems, known
as sloshing or ice-fishing problems in hydrodynamics, were studied
previously in \cite{Henrici70,Fox83,Kozlov04,Levitin22} (see also the
references therein).

For each of these four problems we focus on the spherical domain
$\Omega = \{ \X\in \R^3 \, \vert \, |\X| \leq R\}$ in order to derive
the three-term asymptotic behavior in the limit $\eps={L/R}\ll 1$.  We
stress that the leading-order terms in our derived asymptotic results
do not depend on the geometry of the confining domain and are thus
valid for an arbitrary domain with a smooth boundary.  However, our
emphasis is on calculating the higher-order terms in the asymptotic
expansions, which are often relevant for applications, and are needed
for determining the effect of the location of the patches on the
surface and for deriving a homogenization result for the MFRT.
Although the methodology for deriving the three-term expansions can
potentially be extended to arbitrary 3-D domains (see discussion in \S
\ref{sec:discussion}), we will restrict our analysis to the sphere.
Some preliminary results that are the central building blocks for our
analysis are summarized in \S \ref{mfpt_sec:prelim}, including the
geodesic normal coordinates, the reactive capacitance, and the surface
Neumann Green's function, which is available analytically for a
sphere.

For our analysis, it is convenient to reformulate the four problems in
terms of dimensionless variables defined by
\begin{equation}\label{intro:scalings}
  \x = \frac{\X}{R} \,, \quad L =\max\limits_{i}\{L_i\} \,, \quad \eps = \frac{L}{R}
  \,, \quad a_i=\frac{L_i}{L} \,, \quad \kappa_i = \frac{L \K_i}{D} \,,
\quad   \sigma =  \Sigma L \,.
\end{equation}
Moreover, the dimensionless MFRT $u(\x)$ is expressed as
\begin{equation}\label{intro:mfpt_scale}
  u(\x)=\frac{D}{R^2}U(\x R) \,.
\end{equation}
Such a rescaling of $U$ is not needed for the splitting probability
(which is already dimensionless), nor for the Steklov eigenfunctions,
which are defined up to a suitable normalization.

In terms of the new variables (\ref{intro:scalings}) and
(\ref{intro:mfpt_scale}), the dimensionless MFRT $u(\x)$ in the unit
sphere $\Omega$ with partially reactive patches and dimensionless
reactivities $\kappa_i$ satisfies
\bsub \label{mfpt:ssp}
\begin{align}
  \Delta_{\x} u & = - 1 \,, \quad \x \in \Omega \,,
                  \label{mfpt:ssp_1}\\
  \eps\partial_{n} u + \kappa_i u & = 0\,, \quad \x \in \partial\Omega^{\eps}_i
                                    \,, \quad i=1,\ldots,N \,, \\
  \partial_{n} u & = 0 \,, \quad \x \in \partial \Omega_r
  =\partial\Omega\backslash\partial\Omega_{a}\,, \label{1:ssp_2}
\end{align}
\esub 
where $\Delta_{\x}$ is the Laplacian in $\x$, and $\partial_n$ is
again the outward normal derivative to $\partial\Omega$.  Each
reactive boundary patch $\partial\Omega^{\eps}_i$, of small diameter
${\mathcal O}(\eps)$, is assumed to be simply-connected with a smooth
boundary, but with an otherwise arbitrary shape, and satisfies
$\partial\Omega^{\eps}_i\to\x_i\in \partial\Omega$ as $\eps\to 0$.
The patches are assumed to be well-separated in the sense that
$\left|\x_i-\x_j\right|={\mathcal O}(1)$ for all $i\neq j$.  With
respect to a uniform distribution of initial points $\x\in\Omega$ for
the reflected Brownian motion, the dimensionless volume-averaged MFRT
is
\begin{equation} 
 \overline{u} \equiv \frac{1}{\area} \int_{\Omega} u(\x) \,
 d\x \,, \label{mfpt:ubar}
\end{equation}
where $|\Omega|={4\pi/3}$ is the volume of $\Omega$. The geometry of a
confining sphere with reactive patches on its boundary is depicted in
Fig.~\ref{fig:schem}.  

\begin{figure}[htbp]
  \centering
    \begin{subfigure}[b]{0.42\textwidth}  
    \includegraphics[width =\textwidth]{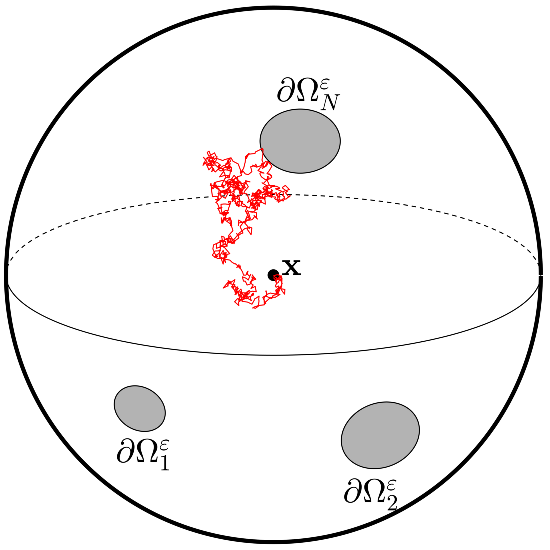} 
    \caption{Brownian trajectory inside unit sphere}
    \label{fig:schem}
  \end{subfigure}  \hskip 10mm
  \begin{subfigure}[b]{0.45\textwidth}  
    \includegraphics[width=\textwidth]{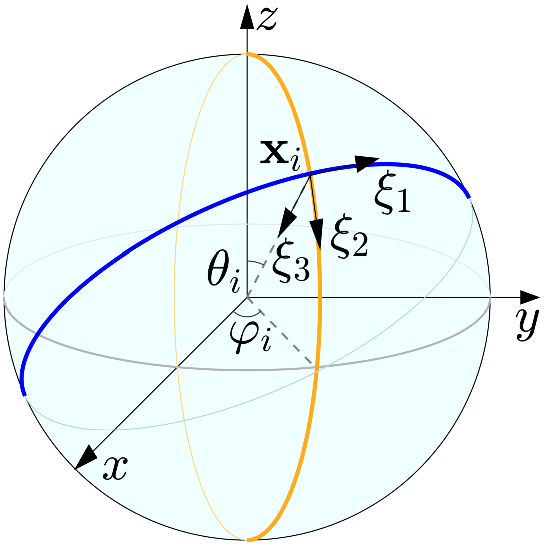} 
    \caption{Geodesic normal coordinates}
   \label{fig:geodesic}
  \end{subfigure}  
\caption{
(a): Sketch of a Brownian trajectory inside the unit sphere in $\R^{3}$
with partially reactive patches $\pa^{\eps}_1, \ldots, \pa^{\eps}_N$
on the boundary. (b): Geodesic normal coordinates
$(\xi_1,\xi_2,\xi_3)^T$ centered at $\x_i\in \partial\Omega$, with the
geodesics (orange and blue curves) indicated. }
\end{figure}

In \S \ref{mfpt_sec:expan} we will derive an asymptotic expansion for
$u(\x)$ and $\overline{u}$ for arbitrary $\kappa_i>0$ in the limit
$\eps\to 0$ of small patches.  The main result is summarized in
Proposition \ref{mfpt_b:main_res} of \S
\ref{mfpt_sec:expan}.  For the special case where $\kappa_i=\infty$
and when the patches are disks, such an analysis has been performed in
\cite{Cheviakov10} by expanding spherical coordinates near each
patch. In \S \ref{mfpt_sec:expan} we will use a different and simpler
approach than in \cite{Cheviakov10} that relies on geodesic normal
coordinates as introduced in \S
\ref{mfpt_sec:prelim}, which allows us to more readily consider the
case of finite $\kappa_i$ and arbitrary patch shapes. More explicit
results will be obtained when the patches are locally circular with
radii $\eps a_i$ for $i=1,\ldots,N$.  In \S \ref{sec:moderateK} we use
our main result to deduce a four-term asymptotic expansion for the
case where the dimensional reactivities ${\mathcal K}_i$ are
considered fixed but $\eps\to 0$; this corresponds to the case where
$\kappa_i={\mathcal O}(\eps)$.  In \S \ref{mfpt:homog}, we obtain the
effective reactivity of the spherical surface in the homogenized
limit.  As a byproduct of this analysis, we also derive in \S
\ref{sec:mfrt:eig} a three-term expansion for the principal (lowest)
eigenvalue of the Laplace operator with mixed Neumann-Robin boundary
conditions.

In a similar way, in terms of the dimensionless variables
(\ref{intro:scalings}), the splitting probability $u(\x)$ for a
Brownian particle in the unit sphere $\Omega$ to react on a specific
target patch on the domain boundary, labeled below by
$\partial\Omega_1^{\eps}$, before reacting on any of the remaining
$N-1$ boundary patches (with $N\geq 2$) satisfies
\bsub \label{split:ssp}
\begin{align}
  \Delta_{\x} u &= 0 \,, \quad \x \in \Omega \,, \label{split:ssp_1}\\
  \eps\partial_{n} u + \kappa_i u &= \delta_{i1}\kappa_i\,, \quad \x \in \partial
  \Omega_i^{\eps} \,, \quad i=1,\ldots,N \,, \\
  \partial_{n} u &= 0 \,, \quad \x \in \partial \Omega_r
  =\partial\Omega\backslash\partial\Omega_{a}\,. \label{split:ssp_2}
\end{align}
\esub Here $\delta_{11}=1$ and $\delta_{i1}=0$ for $i=2,\ldots,N$ and
we have used the same notation and assumptions on the patches as for
the MFRT problem (\ref{mfpt:ssp}).  The asymptotic analysis of
(\ref{split:ssp}) is done in \S \ref{split:intro},
with our main result being summarized in Proposition
\ref{splitt_b:main_res}.

In terms of (\ref{intro:scalings}) the dimensionless SDN problem in
the unit sphere $\Omega$ consists of finding the eigenpairs
$\{ \sigma, u\}$ satisfying
\bsub \label{sdn:eig}  
\begin{align}
  \Delta_{\x} u &= 0 \,, \quad \x \in \Omega \,,  \label{sdn:eig_1} \\
 \eps \partial_{n} u &=\sigma u\,, \quad \x \in \partial \Omega_1^{\eps} \,,   \label{sdn:eig_2} \\ 
  u &=0 \,, \quad \x\in\partial\Omega_i^{\eps} \,,
  \quad i=2,\ldots,N \,,    \label{sdn:eig_3}\\
  \partial_{n} u &= 0 \,, \quad \x \in \partial \Omega_r.    \label{sdn:eig_4}
\end{align}
\esub In the limit $\eps\to 0$, this problem is analyzed in \S
\ref{stekDN:intro}, with the main result summarized in Proposition
\ref{sdn:main_res}.

In turn, the dimensionless SN problem for (\ref{sn:eig_0})
consists of finding the eigenpairs $\{\sigma, u\}$ satisfying
\bsub \label{sn:eig}  
\begin{align}
  \Delta_{\x} u & = 0 \,, \quad \x \in \Omega \,, \\
  \eps \partial_{n} u & =\sigma u\,, \quad \x \in 
  \partial\Omega_i^{\eps} \,, \quad i=1,\ldots,N \,,\\
  \partial_{n} u &= 0 \,, \quad \x \in \partial \Omega_r \,.
\end{align}
\esub 
This problem is studied in \S \ref{stekN}, with the main results given
in Propositions \ref{sn:main_res} and \ref{sn:degen:main_res} that
distinguish between non-resonant and near-resonant cases.  For a
single circular patch, the leading-order asymptotic behavior of the SN
problem was thoroughly analyzed in \cite{Grebenkov25}.  We will extend
this previous analysis for the Steklov eigenvalues by determining a
three-term asymptotic result that pertains to multiple well-separated,
but arbitrary-shaped, patches.

Finally, in \S \ref{sec:discussion} we discuss applications of the
derived asymptotic results and summarize several open problems.

\section{Preliminaries}\label{mfpt_sec:prelim}

We first derive some preliminary results that are central for our
asymptotic analysis as $\eps\to 0$ of the MFRT, the splitting
probability, and the Steklov eigenvalue problems. Our framework will
use strong localized perturbation theory \cite{Ward93} based on the
method of matched asymptotic expansions. 

\subsection{Geodesic normal coordinates}

To construct the local expansion near each patch on $\partial\Omega$
it is convenient to introduce geodesic normal coordinates
$\bxi=(\xi_1,\xi_2,\xi_3)^T\in \left({-\pi/2},{\pi/2}\right) \times
\left(-\pi,\pi\right)\times [0,1]$ in $\Omega\cup \partial\Omega$ so
that $\bxi=0$ corresponds to $\x_i\in\partial\Omega$, with $\xi_3>0$
corresponding to the interior of $\Omega$.  Here $\xi_2$ can be viewed
as the polar angle of a spherical coordinate system centered at $\x_i$
on the sphere, but defined on the range
$\xi_2\in\left({-\pi/2},{\pi/2}\right)$ that avoids the usual
coordinate singularity of spherical coordinates at the north pole.
The curves obtained by setting $\xi_3=0$ and fixing either $\xi_1=0$
or $\xi_2=0$ are geodesics on $\partial\Omega$ (see
Fig.~\ref{fig:geodesic}).

In terms of the global transformation $\x=\x(\bxi)$ between Cartesian
and geodesic coordinates, in (\ref{app_g:global}) of Appendix
\ref{app_g:geod} we derive an exact expression for the Laplacian of a
generic function ${\mathcal V}(\bxi)\equiv u\left(\x(\bxi)\right)$.
Then, by introducing the inner, or local variables,
$\y=(y_1,y_2,y_3)^T$, defined by
\begin{equation}\label{mfpt:innvar}
  \xi_1=\eps y_1 \,, \qquad \xi_2=\eps y_2 \,, \qquad \xi_3 =\eps y_3\,,
\end{equation}
we derive in Appendix \ref{app_g:geod} that for $\eps\to 0$, and with
$V(\y)={\mathcal V} (\eps \y)$ and
$\Delta_{\y}V \equiv V_{y_1 y_2} + V_{y_2 y_2} + V_{y_3 y_3}$,  we have
\begin{equation}\label{mfpt:local}
  \Delta_{\x} u = \eps^{-2} \Delta_{\y} V + \eps^{-1} \left[2 y_3 \left(
      V_{y_1 y_1} + V_{y_2 y_2}\right) - 2 V_{y_3} \right] +
  {\mathcal O}(1)\,.
\end{equation}
This two-term inner expansion will be central in our asymptotic analysis.

\subsection{Reactive capacitance}

The leading-order term in our local or inner expansion near $\x=\x_i$
relies on the canonical solution $w_i = w_{i}(\y;\kappa_i)$ satisfying
\bsub \label{mfpt:wc}
\begin{align}
    \Delta_{\y} w_{i} &=0 \,, \quad \y \in \R_{+}^{3} \,, \label{mfpt:wc_1}\\
    -\partial_{y_3} w_{i} + \kappa_i w_{i} &=\kappa_i \,, \quad y_3=0 \,,\,
    (y_1,y_2)\in \PT_i\,,  \label{mfpt:wc_2}\\
    \partial_{y_3} w_{i} &=0 \,, \quad y_3=0 \,,\, (y_1,y_2)\notin \PT_i
    \,, \label{mfpt:wc_3}\\
  w_{i}&\sim \frac{C_{i}(\kappa_i)}{|\y|} +
                {  \frac{\DT_i(\kappa_i) {\bf \cdot} \y}{|\y|^3}}
         + \cdots\,,  \quad \mbox{as}\quad
    |\y|\to \infty \,, \label{mfpt:wc_4}
\end{align}
\esub where the neglected term in (\ref{mfpt:wc_4}) is a
quadrupole.  Here 
\begin{equation*}
\R_{+}^{3}\equiv\lbrace{\y=(y_1,y_2,y_3) \, \vert \, \, y_3>0 \,, \, -\infty<y_1,y_2<\infty \rbrace} 
\end{equation*}
is the upper half-space, and $\PT_i \asymp
\eps^{-1}\partial\Omega^{\eps}_i$ is the compact flat Robin
patch on the horizontal plane $y_3=0$, obtained by rescaling and
flattening the small patch $\pa_i^{\eps}$ on the spherical boundary
(e.g., if $\partial\Omega^{\eps}_i$ is a spherical cap of diameter
$2\eps$, then $\Gamma_i$ is the unit disk).  In (\ref{mfpt:wc_4}), the
dipole vector $\DT_i=\DT_i(\kappa_i)$ has the form
$\DT_i=(p_{1i},p_{2i},0)^T$ to ensure that the far-field behavior
(\ref{mfpt:wc_4}) satisfies (\ref{mfpt:wc_3}). When $\PT_i$ is
symmetric in $y_1$ and $y_2$, such as when $\PT_i$ is a disk, we must
have $p_{i1}=p_{i2}=0$ by symmetry, so that the dipole term in the
far-field (\ref{mfpt:wc_4}) vanishes identically.

By using the divergence theorem over a large hemisphere, we readily
obtain 
\begin{equation}\label{mfpt:wc_charge}
  C_i(\kappa_i) = \frac{1}{\pi} \int_{\PT_i} q_{i}(y_1,y_2; \kappa_i) \,
  dy_1 dy_2  \,, \quad \mbox{where} \quad
  q_i(y_1,y_2;\kappa_i)\equiv -\frac{1}{2} \partial_{y_3} w_{i}\vert_{y_3=0}\,.
\end{equation}
In analogy to electrostatics, $C_i(\kappa_i)$ can be interpreted as a
{\em capacitance} of the partially reactive patch $\PT_i$ with
reactivity $\kappa_i$, which we will refer to as the {\em reactive
capacitance}; in turn, we will refer to $q_i$ as the associated {\em
charge density}.  Although there is no explicit analytical solution to
(\ref{mfpt:wc}) for arbitrary $\kappa_i$, in Appendix
\ref{sec:Cmu} we establish a spectral representation
(\ref{eq:wi_spectral}) of $w_i$ in terms of eigenpairs of a suitable
exterior {\em local} Steklov problem (\ref{eq:Psi_def}), from which we
deduce
\begin{equation}  \label{eq:Cmu_def0}  
C_i(\kappa_i) = \frac{\kappa_i}{2\pi} \sum\limits_{k=0}^\infty
\frac{\mu_{ki} d_{ki}^2}{\mu_{ki} + \kappa_i} \,.
\end{equation}
In (\ref{eq:Cmu_def0}), $\mu_{ki} > 0$ are the Steklov eigenvalues
that correspond to nontrivial spectral weights $d_{ki} \ne 0$ defined
in (\ref{eq:dj}).  Both $\mu_{ki}$ and $d_{ki}$ depend on the shape of
the patch $\PT_i$.  Although their numerical computation is required
for a given patch shape (see details in \cite{Grebenkov26}), the
functional form of $C_i(\kappa_i)$ and its dependence on reactivity
$\kappa_i$ is universal.  Moreover, in the important special case
where all the patches are of the same shape (but of variable size),
such as a collection of disks, the rescaling relations
(\ref{eq:rescaling}) imply that
\begin{equation}  \label{eq:Cmu_scaling}
C_i(\kappa_i) = a_i \Cmu(\kappa_i a_i) , \qquad i = 1,\ldots,N,
\end{equation}
where $\Cmu(\mu)$ is the reactive capacitance of the rescaled patches
$\PT_i/a_i$, which needs to be computed only once for a given patch
shape.

For an arbitrary patch shape, we readily calculate from (\ref{eq:Cmu_def0})
that the derivative,
\begin{equation} \label{eq:dCmu} 
  C_{i}^{\prime}(\kappa_i) \equiv \frac{dC_i(\kappa_i)}{d\kappa_i} =
  \frac{1}{2\pi} \sum\limits_{k=0}^\infty
  \frac{\mu_{ki}^2 d_{ki}^2}{(\mu_{ki} + \kappa_i)^2} > 0\,,
\end{equation}
is strictly positive for all $\kappa_i$ (except for the simple
poles $\{-\mu_{ki}\}$), so that $C_{i}(\kappa_i)$ increases
monotonically between consecutive poles,
and on the positive semi-axis $\kappa_i > 0$.  Moreover, in the
small-reactivity limit $\kappa_i\to 0$, one can employ the
Taylor expansion
\begin{subequations}
\begin{equation} \label{eq:Cmu_Taylor}  
C_{i}(\kappa_i) = - a_i \sum\limits_{n=1}^{\infty} c_{ni} \,(-\kappa_i a_i)^n  \,, 
\end{equation}
where the coefficients
\begin{equation}  \label{eq:Cmu_Taylor_cn}
c_{ni} = \frac{1}{2\pi a_i^{n+1}} \sum\limits_{k=0}^\infty
  \frac{d_{ki}^2}{\mu_{ki}^{n-1}}
\end{equation}
\end{subequations}
are defined to be invariant under dilations of the patch.  In
Appendix \ref{sec:Cmu}, we show that
\begin{equation}\label{eq:c1_def} 
  c_{1i} = \frac{|\PT_i|}{2\pi a_i^2}  \,, \qquad  
  c_{2i} = \frac{1}{2\pi a_i^3} \int\limits_{\PT_i} \omega_i(\y) \, d\y \,,
  \qquad
  c_{3i} = \frac{1}{2\pi a_i^4} \int\limits_{\PT_i} \omega_i^2(\y) \,d\y \,, 
\end{equation}
where $\omega_{i}(\y)$ is defined by
\begin{equation}\label{eq:omega_def}
  \omega_i(\y) = \int\limits_{\PT_i} \frac{d\y^{\prime}}{2\pi|\y-\y^{\prime}|} \,,
  \qquad \mbox{for} \quad \y\in\PT_i \,.
\end{equation}
To leading order the expansion (\ref{eq:Cmu_Taylor}) yields
\begin{equation}  \label{eq:Ci_limit0}
  C_i(\kappa_i) \sim \kappa_i \frac{|\PT_i|}{2\pi}  \,, \qquad
  \mbox{as} \quad \kappa_i\to 0 \,.
\end{equation}

In the opposite high-reactivity limit, $C_i(\kappa_i)$ approaches the
(electrostatic) capacitance $C_i(\infty)$ of the patch $\PT_i$.  After
inspecting the spectral representation (\ref{eq:Cmu_def0}), we propose
the following heuristic approximation over the entire range of
reactivities:
\begin{equation}  \label{eq:Cmu_approx}
  C_i(\kappa_i) \approx C_i^{\rm app}(\kappa_i) =
  \frac{\kappa_i C_i(\infty)}{\kappa_i + 2\pi C_i(\infty)/|\PT_i|} \,,
  \qquad \mbox{for} \quad \kappa_i > 0\,.
\end{equation}
This sigmoidal approximation gives the correct limit as $\kappa_i\to
\infty$ and agrees with the leading-order term in the small-reactivity
limit $\kappa_i\to 0$ (it is also close to the lower bound
(\ref{eq:Cmu_bound}) derived in Appendix \ref{sec:Cmu0}).  However,
this approximation fails to recover the higher-order terms for
$\kappa_i\ll 1$, and does not correctly reproduce the asymptotic
approach to $C_i(\infty)$ (see (\ref{mfpt:cj_large_a})). We remark
that a similar sigmoidal formula was developed for approximating the
principal eigenvalue of the Laplace operator \cite{Chaigneau22} and
for studying the boundary local time distribution on small targets
\cite{Grebenkov22}.

\subsection*{Circular patch}

When $\PT_i$ is a disk of radius $a_i$, the limiting problem with
$\kappa_i=\infty$ in (\ref{mfpt:wc}), for which $w_i=1$ on $\PT_i$,
is the classical problem for the capacitance of a flat disk
(cf.~\cite{Jackson}), whose solution, labeled by $w_i(\y;\infty)$, is
(see page 38 of \cite{Fabrikant89}) \bsub \label{mfpt:wcinf}
\begin{equation}
  w_i(\y;\infty) = \frac{2}{\pi}
  \sin^{-1}\left(\frac{a_i}{B(y_3,\rho_0)} \right) \,,
    \label{mfpt:wcinf_1}
\end{equation}
where $\rho_0 \equiv (y_1^2 + y_2^2)^{1/2}$ and 
\begin{equation}
  B(y_3,\rho_0) \equiv  \frac{1}{2} \left(  \left[ (\rho_0 + a_i)^2 + y_3^2
  \right]^{1/2}  +   \left[ (\rho_0 - a_i)^2 + y_3^2 \right]^{1/2} \right) \,.
  \label{mfpt:wcinf_2}
\end{equation}
From this solution, we obtain the far-field behavior
\begin{equation}
    w_i(\y;\infty) \sim C_i(\infty) \left( \frac{1}{|\y|} +
      \frac{a_i^2}{6|\y|^5} 
  \left(y_1^2 + y_2^2 - 2 y_3^2 \right) + \cdots 
\right)\,,  \quad \mbox{as} \quad |\y| \to \infty \,,
\label{mfpt:wcinf_ff}
\end{equation}
\esub 
where $C_i(\infty)={2a_i/\pi}$ is the electrostatic capacitance of the
circular disk of radius $a_i$ (cf.~\cite{Jackson}). Owing to the
symmetry of the disk, (\ref{mfpt:wcinf_ff}) confirms that there is no
dipole term in the far-field.  In addition, from (\ref{mfpt:wcinf_1})
and the radial symmetry, we calculate
\begin{equation}\label{mfpt:wc_q}
 q_i(y_1,y_2;\infty)=q_i(\rho_0;\infty)\equiv
  -\frac{1}{2} \partial_{y_3} w_i(\y;\infty)\vert_{y_3=0}=
  \frac{1}{\pi \sqrt{a_i^2-\rho_0^2}} \,, \quad 0\leq \rho_0\leq a_i \,,
\end{equation}
which is needed in our analysis below. In the large-reactivity limit, one has
\begin{equation}
  C_i(\kappa_i) \to C_i(\infty) = \frac{2a_i}{\pi} \,, \quad \textrm{as}\quad
  \kappa_i\to\infty \,.
\end{equation}
However, owing to the edge singularity in (\ref{mfpt:wc_q}) at
$\rho_0=a_i$, the difference $C_i(\kappa_i)-C_{i}(\infty)$ is not
analytic for $\kappa_i\gg 1$.  This difference has been estimated
analytically from an integral equation formulation in \cite{Guerin23},
and in our notation is given explicitly in (\ref{eq:Cmu_asympt}) of
Appendix \ref{sec:large_mu} (see also Fig.~\ref{fig:Cmu_asympt} and
the order estimate in (\ref{mfpt:cj_large_a})).

\begin{figure}
\begin{center}
\includegraphics[width=88mm]{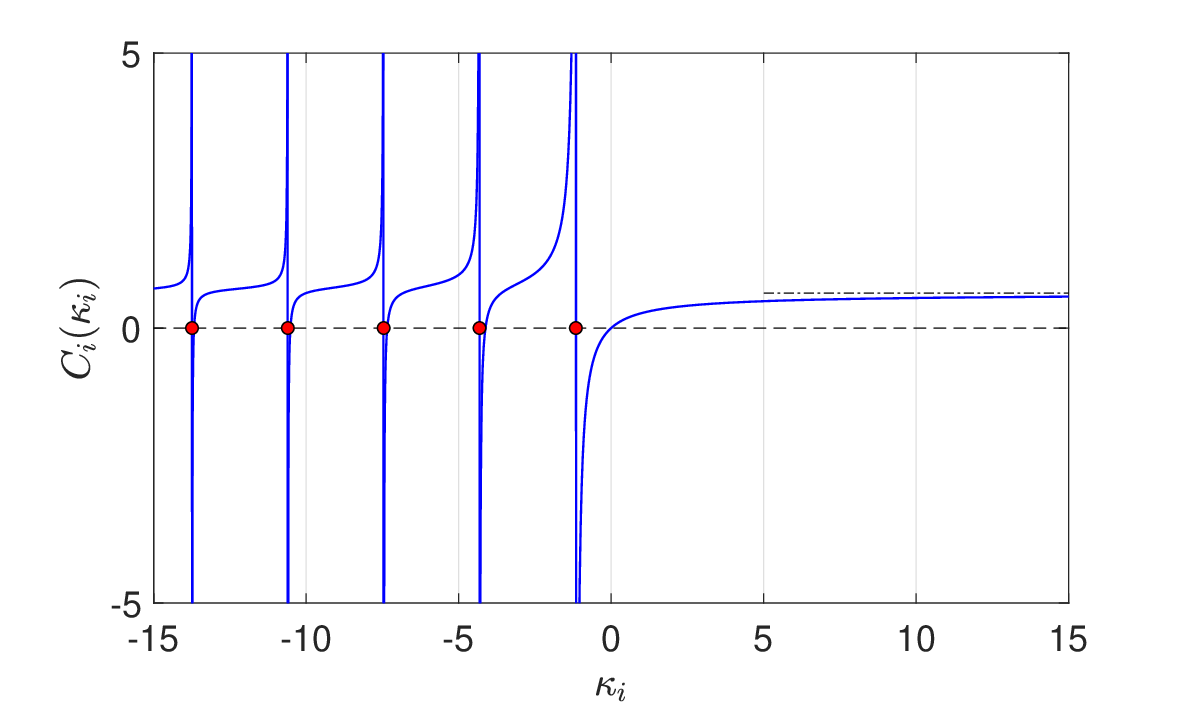}  
\end{center}
\caption{ 
The reactive capacitance $C_i(\kappa_i)$ for a circular patch $\PT_i$
of unit radius ($a_i = 1$), as computed from (\ref{eq:Cmu_def0}) by
truncating the series after 100 terms.  Filled circles presents the
poles $\{-\mu_{ki}\}$, all located on the negative axis, at which
$C_i(\kappa_i)$ diverges.  The dash-dotted horizontal line indicates
the asymptotic limit $C_i(\infty)=2/\pi$.}
\label{fig:Cmu}
\end{figure}

For a circular patch, one can use the oblate spheroidal
coordinates to efficiently solve the exterior local Steklov problem as
in \cite{Grebenkov24} to compute $\mu_{ki}$ and $d_{ki}$. Some of
these values are reported in Table \ref{table:muk_disk} of Appendix
\ref{sec:Cmu}, whereas the function $C_i(\kappa_i)$ is shown in
Fig.~\ref{fig:Cmu}.  As expected, this function increases
monotonically from $0$ at $\kappa_i = 0$ to its limit
$C_i(\infty) = 2a_i/\pi$ as $\kappa_i \to \infty$.
The asymptotic behavior of $C_i(\kappa_i)$ for $\kappa_i\ll 1$
is given by (\ref{eq:Cmu_Taylor}), in which the exact values of the
first three coefficients $c_{ni}$ are determined in Appendix
\ref{app_a:Ckappa} (see also Appendix \ref{sec:Cmu}) as
\begin{equation}  \label{eq:cn_exact}
  c_{1i} = \frac12 \,, \quad c_{2i} = \frac{4}{3\pi} \approx 0.4244\,,
  \quad c_{3i} = \frac{4}{\pi^2} \int_{0}^{1} r \left[E(r)\right]^2
\, dr \approx 0.3651\,, 
\end{equation}
where $E(r)$ is the complete elliptic integral of the second kind. In
(\ref{eq:cn_approx}) of Appendix \ref{sec:Cmu} we give a fully
explicit accurate approximation for all coefficients $c_{ni}$ with $n
\geq 2$.

\begin{figure}[htbp]
  \centering
     \begin{subfigure}[b]{0.45\textwidth}  
      \includegraphics[width =\textwidth]{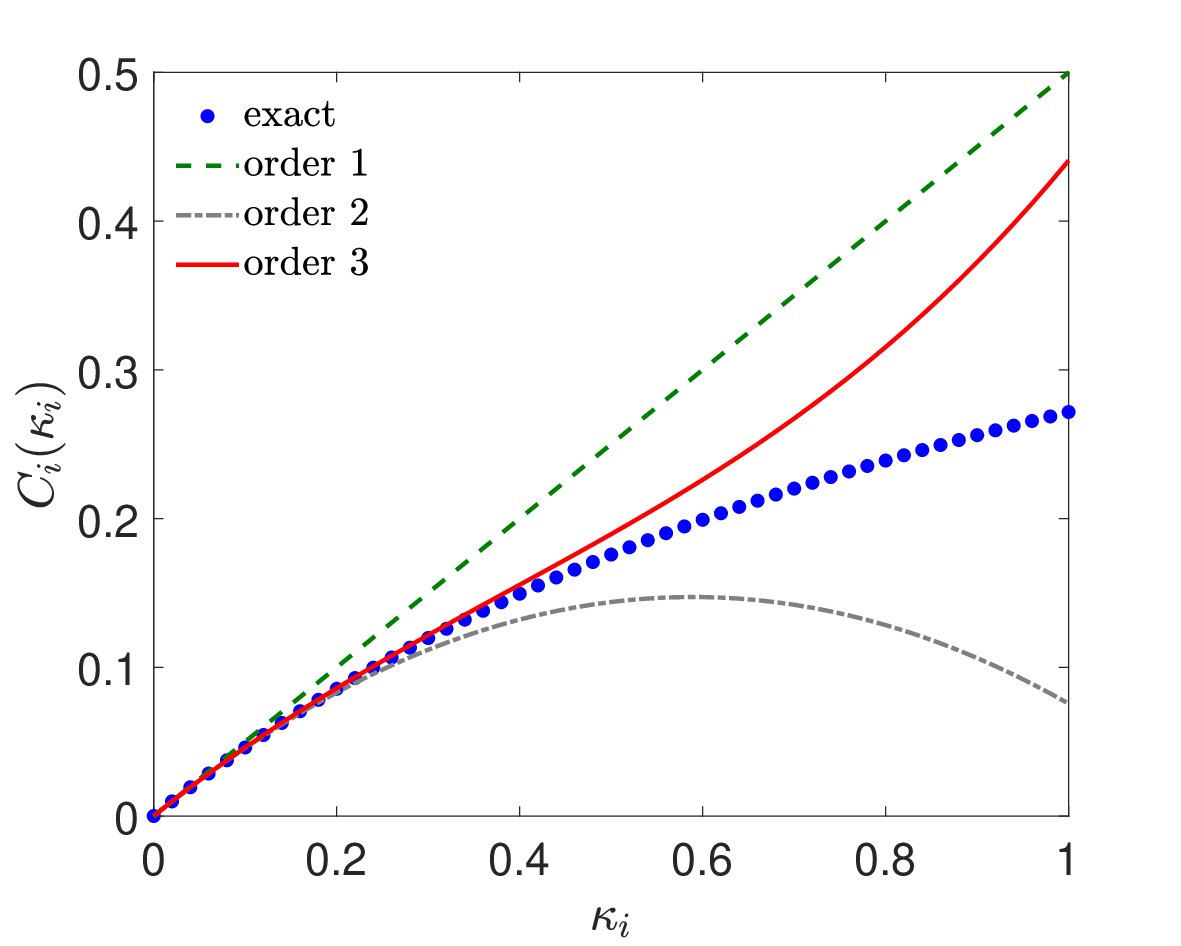} 
        \caption{Small $\kappa_i$ comparison}
        \label{fig:Cmu_small}
    \end{subfigure}  \hskip 10mm
    \begin{subfigure}[b]{0.45\textwidth}
      \includegraphics[width=\textwidth]{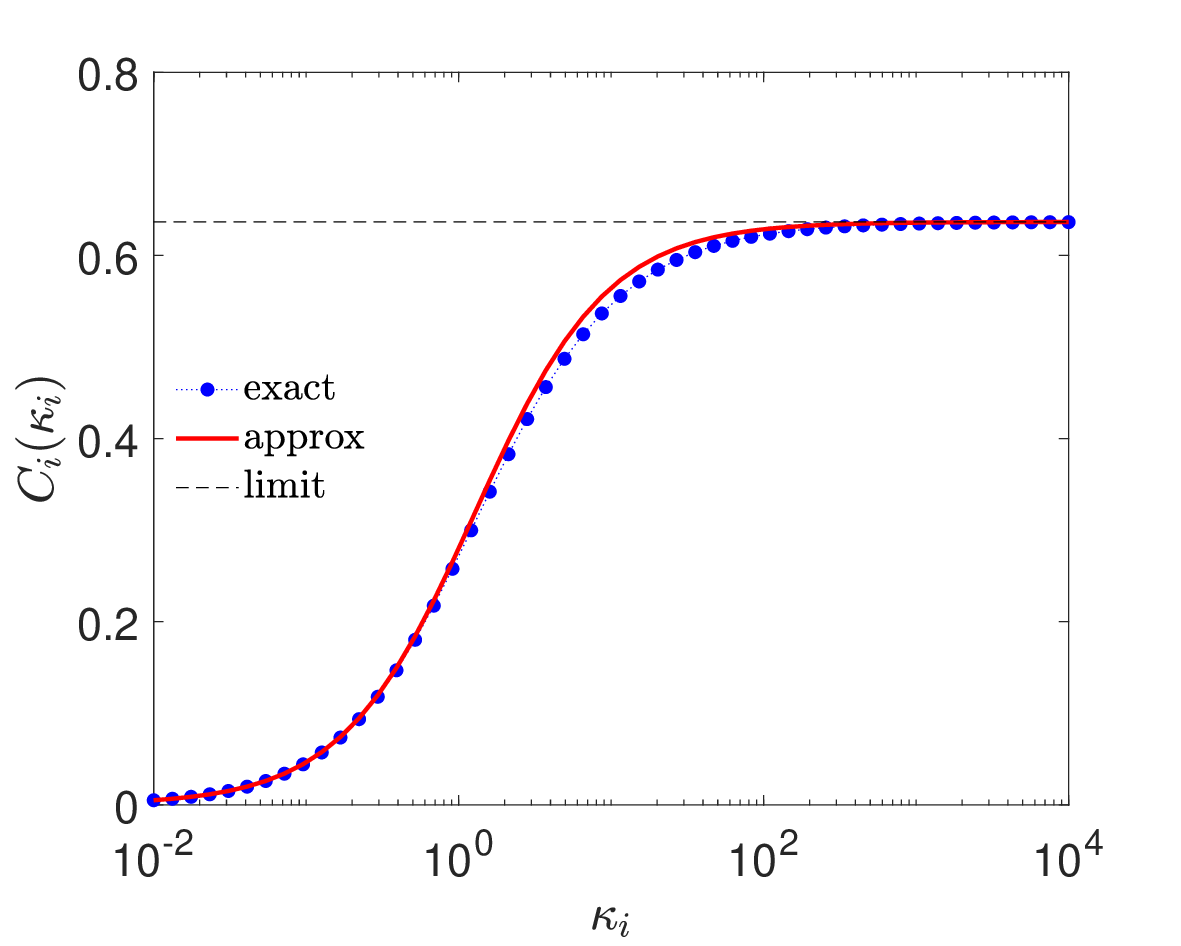}  
        \caption{Sigmoidal approximation} 
        \label{fig:Cmu_approx_new}
    \end{subfigure}
\caption{ 
The reactive capacitance $C_i(\kappa_i)$ for the circular patch
$\PT_i$ of unit radius ($a_i = 1$).  Filled circles present the exact
spectral expansion (\ref{eq:Cmu_def0}), which was numerically computed
by truncating the series after 100 terms.  {\bf (a):} Comparison with
the one-, two-, and three-term approximations obtained from
(\ref{eq:Cmu_Taylor}) and (\ref{eq:cn_exact}), valid for $\kappa_i \ll
1$.  {\bf (b):} The sigmoidal approximation (\ref{mfpt:sigmoidal_2})
provides a decent approximation of the numerical result for
$C_i(\kappa_i)$ on the full range $\kappa_i >0$. }
\label{fig:capprox}
\end{figure}

Figure~\ref{fig:Cmu_small} shows that a three-term 
small-reactivity series expansion of $C_{i}(\kappa_i)$ in
(\ref{eq:Cmu_Taylor}), with the coefficients from (\ref{eq:cn_exact}),
provides a very close approximation for $C_{i}(\kappa_i)$ on
the range $0<\kappa_i a_i <0.45$.  Finally, the heuristic
formula (\ref{eq:Cmu_approx}), when applied to a disk-shaped patch of
radius $a_i$, reads
\begin{equation}\label{mfpt:sigmoidal_2}
  C_{i}(\kappa_i)\approx C_i^{\rm app}(\kappa_i) =
  a_i {\mathcal C}^{\rm app}(\kappa_i a_i) \,, \quad
  \mbox{where} \quad {\mathcal C}^{\rm app}(\mu)= \frac{2\mu}
  {\pi \mu + 4}   \,.
\end{equation}
We verified numerically that (\ref{mfpt:sigmoidal_2}) provides a good
approximation (see Fig.~\ref{fig:Cmu_approx_new}), with a
maximal relative error of $4\%$, over the entire range of 
$\kappa_i>0$.  We summarize the asymptotic results above as
follows:
\begin{lemma}\label{lemma:Cj_kappa} 
When $\PT_i$ is the disk $y_1^2+y_2^{2}\leq a_i^2$,  its
reactive capacitance is determined by (\ref{eq:Cmu_def0}), as well as
by
\begin{equation}\label{mfpt:Ej_part}
  C_i(\kappa_i) = 2 \int_{0}^{a_i} q_i(\rho_0;\kappa_i) \rho_0 \, d\rho_0 \,, \qquad
  q_i(\rho_0;\kappa_i)=-\frac{1}{2} w_{i, y_3}\vert_{y_3=0} \,,
\end{equation}
where $w_i$ is the solution to (\ref{mfpt:wc}).  It has the
asymptotics
\bsub  \label{mfpt:cj_small}
\begin{align}
      C_i(\kappa_i) &\sim C_i(\infty) + {\mathcal O}\left(
  \frac{\log\kappa_i}{\kappa_i}\right) \,,\quad \mbox{as}\quad
                    \kappa_i\to\infty\,, \quad \mbox{with} \quad
  C_i(\infty)=\frac{2a_i}{\pi} \,, \label{mfpt:cj_large_a}  \\  
  C_i(\kappa_i) &  \sim a_i \biggl[c_{1i} \kappa_i a_i - c_{2i} (\kappa_i a_i)^2 
     + c_{3i} (\kappa_i a_i)^3 + {\mathcal O}((\kappa_ia_i)^4)\biggr] \,,
     \quad \mbox{as}  \quad \kappa_i\to 0 \,,  \label{mfpt:cj_small_b}
\end{align}
\esub 
with the coefficients $c_{ni}$ for $n=1,2,3$ given by
(\ref{eq:cn_exact}).  The error estimate in (\ref{mfpt:cj_large_a})
follows from (\ref{eq:Cmu_asympt}) of Appendix \ref{sec:large_mu}.
\end{lemma}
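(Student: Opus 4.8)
The plan is to recognize that this lemma merely specializes to the disk the general results already established for an arbitrary patch shape, supplemented by the explicitly solvable $\kappa_i=\infty$ problem, and I would treat the three claims in the order stated. The spectral formula (\ref{eq:Cmu_def0}) is inherited verbatim from the general derivation and needs no further argument for the disk. For the radial integral representation (\ref{mfpt:Ej_part}) I would start from the general capacitance identity (\ref{mfpt:wc_charge}) and exploit the rotational symmetry of the canonical problem (\ref{mfpt:wc}) about the $y_3$-axis: when $\PT_i$ is the disk $y_1^2+y_2^2\le a_i^2$, this symmetry forces $w_i$, and hence $q_i=-\tfrac12\partial_{y_3}w_i|_{y_3=0}$, to depend only on $\rho_0=(y_1^2+y_2^2)^{1/2}$. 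Passing to polar coordinates in (\ref{mfpt:wc_charge}) then collapses the angular integration to a factor $2\pi$, and the prefactor $1/\pi$ leaves precisely $2\int_0^{a_i} q_i(\rho_0;\kappa_i)\,\rho_0\,d\rho_0$, which is (\ref{mfpt:Ej_part}). This step is routine.

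For the small-reactivity expansion (\ref{mfpt:cj_small_b}) I would specialize the convergent Taylor series (\ref{eq:Cmu_Taylor}) to the disk. Expanding $-a_i\sum_{n\ge1}c_{ni}(-\kappa_i a_i)^n$ and collecting the first three powers of $\kappa_i a_i$ reproduces the bracketed alternating series in (\ref{mfpt:cj_small_b}), so it only remains to evaluate the dilation-invariant shape coefficients (\ref{eq:c1_def}). The leading one is immediate, $c_{1i}=|\PT_i|/(2\pi a_i^2)=1/2$, while $c_{2i}$ and $c_{3i}$ reduce to the first two moments over the disk of the Newtonian single-layer potential $\omega_i$ in (\ref{eq:omega_def}); these are the classical integrals evaluated in Appendix \ref{app_a:Ckappa}, giving $c_{2i}=4/(3\pi)$ and $c_{3i}=(4/\pi^2)\int_0^1 r[E(r)]^2\,dr$ as recorded in (\ref{eq:cn_exact}). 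Since (\ref{eq:Cmu_Taylor}) converges for $|\kappa_i|$ below the smallest Steklov eigenvalue $\mu_{0i}$, the truncation error is genuinely $\0((\kappa_i a_i)^4)$.

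For the high-reactivity limit (\ref{mfpt:cj_large_a}) I would let $\kappa_i\to\infty$ in the Robin condition (\ref{mfpt:wc_2}), which formally reduces to the Dirichlet condition $w_i=1$ on $\PT_i$; the resulting harmonic problem is exactly the electrostatic flat-disk capacitor problem solved explicitly in (\ref{mfpt:wcinf}), yielding the leading value $C_i(\infty)=2a_i/\pi$. The delicate part, and the main obstacle, is the correction term: because the limiting charge density (\ref{mfpt:wc_q}) has an inverse-square-root edge singularity at $\rho_0=a_i$, the boundary layer at the rim of the disk is not captured by a naive regular expansion in $1/\kappa_i$, and the difference $C_i(\kappa_i)-C_i(\infty)$ fails to be analytic, acquiring the logarithm. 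Rather than re-derive this rim layer, I would quote the sharp estimate (\ref{eq:Cmu_asympt}) obtained from the integral-equation analysis of Appendix \ref{sec:large_mu} (following \cite{Guerin23}), from which the order bound $\0(\kappa_i^{-1}\log\kappa_i)$ in (\ref{mfpt:cj_large_a}) follows immediately. This rim-layer non-analyticity is the only genuinely subtle point; the remaining two claims are direct specializations of results already in hand.
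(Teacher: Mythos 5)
Your proposal is correct and, piece by piece, follows essentially the paper's own route: the radial representation (\ref{mfpt:Ej_part}) is the divergence-theorem identity (\ref{mfpt:wc_charge}) collapsed by axial symmetry; the value $C_i(\infty)={2a_i/\pi}$ comes from the explicit flat-disk solution (\ref{mfpt:wcinf}); and the $\0\left({\log\kappa_i/\kappa_i}\right)$ error is quoted, exactly as the paper does, from (\ref{eq:Cmu_asympt}) of Appendix \ref{sec:large_mu} (i.e., from the integral-equation analysis of \cite{Guerin23}) rather than re-derived. The one point of divergence is in how the exact coefficients $c_{2i},c_{3i}$ are obtained. You invoke the spectral Taylor series (\ref{eq:Cmu_Taylor}) together with the geometric moment formulas (\ref{eq:c1_def}) and the disk evaluation $\omega_i=\tfrac{2}{\pi}E(|\y|)$; that is precisely the paper's Appendix \ref{sec:Cmu} derivation, not the one in Appendix \ref{app_a:Ckappa} to which you attribute those integrals. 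The derivation the lemma primarily cites (Appendix \ref{app_a:Ckappa}) instead writes $w_i=1-C_i\W$, expands $\W$ in powers of $\kappa_i$ with solvability conditions fixing the constants, obtains ${1/C_i}\sim {b_0/\kappa_i}+b_1+b_2\kappa_i$, and inverts the series — a more laborious computation (method of images, Landen transformation) that does not presuppose the Steklov spectral machinery. Both derivations are the paper's own and yield (\ref{eq:cn_exact}), so this is a citation slip rather than a gap; your route has the advantage of brevity and of making the $\0\left((\kappa_i a_i)^4\right)$ truncation error transparent from the convergence of (\ref{eq:Cmu_Taylor}) for $\kappa_i a_i<\mu_{0i}$, while the Appendix \ref{app_a:Ckappa} route is self-contained and independent of the spectral representation.
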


\subsection{Monopole from a Higher-Order Inner Solution}
\label{prel:high}

Our higher-order asymptotic analysis of each of our four problems
(\ref{mfpt:ssp})--(\ref{sn:eig}) also involves the monopole
coefficient $E_i=E_{i}(\kappa_i)$, which is defined by the solution to
the following inhomogeneous inner problem (see Appendix
\ref{app_h:inn2}):
\bsub\label{mfpt:inn2_probh}
\begin{align}
\Delta_{\y} \Phi_{2hi} &= 0 \,, \quad \y \in \R_{+}^{3} \,,\label{mfpt:inn2_h1}\\
  -\partial_{y_3} \Phi_{2hi} + \kappa_i \Phi_{2hi} &= -\kappa_i {\mathcal F}_{i}
                                                 \,, \quad y_3=0 \,,\,
    (y_1,y_2)\in \PT_i\,,  \label{mfpt:inn2_h2}\\
    \partial_{y_3} \Phi_{2hi} &=0 \,, \quad y_3=0 \,,\, (y_1,y_2)\notin \PT_i
    \,, \label{mfpt:inn2_h3}\\
     \Phi_{2hi} &  \sim \frac{E_i}{|\y|}\,,  \quad
    \mbox{as} \quad |\y|\to \infty \,, \label{mfpt:inn2_h4}
  \end{align}
  \esub where ${\mathcal F}_i={\mathcal F}_{i}(y_1,y_2;\kappa_i)$ is the unique
  solution to
  \bsub\label{mfpt:fprob}
\begin{gather}  
  {\mathcal F}_{i,y_1 y_1} + {\mathcal F}_{i,y_2 y_2}=q_i(y_1,y_2;\kappa_i)
  I_{\PT_i} \,,
    \quad I_{\PT_i} \equiv \left\{\begin{array}{ll}
        1 \,, & (y_1,y_2) \in \PT_i \\
    0 \,, & (y_1,y_2) \notin \PT_i  \end{array}\right. \,
  \label{mfpt:fprob_1}\\
  {\mathcal F}_{i} \sim \frac{C_i}{2}\log\rho_0 + o(1)\,,  \quad
  \mbox{as} \quad \rho_0\equiv (y_1^2+y_2^2)^{1/2}\to \infty
                            \,. \label{mfpt:fprob_2}
\end{gather}
\esub Here $C_i=C_i(\kappa_i)$ while the charge density
$q_i(y_1,y_2;\kappa_i)$ is given in (\ref{mfpt:wc_charge}).

For an arbitrary patch shape, in Appendix \ref{app_h:inn2} we show
that $E_i(\kappa_i)$ is determined by
\begin{equation}  \label{eq:Ei_general0}
  E_i(\kappa_i) = - \frac{1}{2\pi^2} \int\limits_{\PT_i}\int\limits_{\PT_i}
  q_i(\y;\kappa_i)\, q_i(\y^{\prime};\kappa_i)
\, \log|\y-\y^{\prime}|\, d\y \,d\y^{\prime} \,.
\end{equation}
In addition, in the limit $\kappa_i \to 0$, we derive in Appendix
\ref{app_h:inn2} that to leading order
\begin{equation}  \label{eq:Ei_asympt0}
  E_i(\kappa_i) \sim e_i C_i^2(\kappa_i)\,,  \quad \mbox{with} \quad
  e_i \equiv - \frac{1}{2|\PT_i|^2}
  \int\limits_{\PT_i}\int\limits_{\PT_i} \log|\y-\y^{\prime}|\, d\y \,
  d\y^{\prime}\,,
\end{equation}
where $C_i(\kappa_i)\sim{\kappa_i |\PT_i|/(2\pi)}$ for $\kappa_i\ll
1$.

The next result, also proved in Appendix \ref{app_h:inn2},
characterizes $E_i$ when $\PT_i$ is a disk:

\begin{lemma}\label{lemma:Ej_kappa} 
When $\PT_i$ is the disk $y_1^2+y_2^{2}\leq a_i^2$, we have
\begin{equation}\label{mfpt:Ej_all}  
  E_i =  E_i(\kappa_i) = -\frac{\log{a_i}}{2} [C_i(\kappa_i)]^2 + a_i^2 \,
  {\mathcal E}_i(\kappa_i a_i) \,,
\end{equation}
where $C_i(\kappa_i)$ is given by (\ref{eq:Cmu_def0}), and
\begin{equation} 
{\mathcal E}_i(\mu) = 2 \int_{0}^1 \frac{1}{\rho_0}
\left(\int_{0}^{\rho_0}  a_i q_i(\eta a_i;\mu/a_i) \, \eta \,
  d\eta\right)^2 \, d\rho_0 
\end{equation}
corresponds to the unit disk, with $q_i(\rho_0;\kappa_i)$
as given in (\ref{mfpt:Ej_part}). The asymptotic behavior of $E_i(\kappa_i)$ is
\bsub \label{mfpt:Ej_asy}
  \begin{align}
  E_{i} &\sim E_{i}(\infty)\equiv -\frac{2 a_i^2}{\pi^2}\left(
    \log{a_i} + \log{4} - \frac{3}{2} \right)\,,   \quad \mbox{as}\quad
  \kappa_i\to \infty\,, \label{mfpt:Ej_asy_large} \\
  E_i & \sim \frac{\kappa_i^2 a_i^4}{8} \left( \frac{1}{4}-\log{a_i}\right)\,,
  \quad \mbox{as} \quad \kappa_i\to 0 \,.  \label{mfpt:Ej_asy_small}
\end{align}
\esub
\end{lemma}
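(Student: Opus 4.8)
The plan is to start from the general double-integral representation (\ref{eq:Ei_general0}) for $E_i(\kappa_i)$ and specialize it to the disk $\PT_i=\{y_1^2+y_2^2\le a_i^2\}$, where $q_i=q_i(\rho_0;\kappa_i)$ is radially symmetric. The cleanest route is to recognize the logarithmic potential that is already hidden in (\ref{eq:Ei_general0}): setting ${\mathcal F}_i(\y)=\tfrac{1}{2\pi}\int_{\PT_i}q_i(\y^\prime;\kappa_i)\log|\y-\y^\prime|\,d\y^\prime$, one verifies that ${\mathcal F}_i$ is exactly the solution of (\ref{mfpt:fprob}), since its two-dimensional Laplacian is $q_i I_{\PT_i}$ and its far-field is $\tfrac{C_i}{2}\log\rho_0+o(1)$ because $\int_{\PT_i}q_i\,d\y=\pi C_i$ by (\ref{mfpt:wc_charge}). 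This identifies $E_i=-\tfrac{1}{\pi}\int_{\PT_i}q_i\,{\mathcal F}_i\,d\y$, reducing the double integral to a single quadrature against the potential.

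Next I would exploit radial symmetry to extract the claimed $\log a_i$ term. Introducing the enclosed charge $Q(\rho)\equiv\int_0^{\rho}\eta\,q_i(\eta;\kappa_i)\,d\eta$, so that $Q^\prime=\rho q_i$, the radial Poisson equation $\tfrac{1}{\rho}(\rho {\mathcal F}_i^\prime)^\prime=q_i$ gives $\rho{\mathcal F}_i^\prime(\rho)=Q(\rho)$ inside the disk. Passing to polar coordinates yields $E_i=-2\int_0^{a_i}{\mathcal F}_i Q^\prime\,d\rho$, and integrating by parts produces a boundary term $-2{\mathcal F}_i(a_i)Q(a_i)$ together with the bulk term $2\int_0^{a_i}Q^2/\rho\,d\rho$. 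The decisive observation is that for $\rho\ge a_i$ all the charge is enclosed, so ${\mathcal F}_i(\rho)=\tfrac{C_i}{2}\log\rho$ exactly, the additive constant being killed by the $o(1)$ normalization in (\ref{mfpt:fprob_2}); combined with $Q(a_i)=C_i/2$ this turns the boundary term into $-\tfrac{\log a_i}{2}C_i^2$. Rescaling $\rho=a_i\rho_0$ in the bulk term and using that $a_i\,q_i(\eta a_i;\kappa_i)$ is the unit-disk charge density at reactivity $\kappa_i a_i$ converts $2\int_0^{a_i}Q^2/\rho\,d\rho$ into $a_i^2{\mathcal E}_i(\kappa_i a_i)$, which establishes (\ref{mfpt:Ej_all}).

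For the two asymptotic regimes I would work directly with this exact formula. As $\kappa_i\to\infty$, I substitute the explicit charge density (\ref{mfpt:wc_q}); on the unit disk one gets $Q(\rho)=\tfrac{1}{\pi}(1-\sqrt{1-\rho^2})$, and the elementary integral gives ${\mathcal E}_i(\infty)=\tfrac{2}{\pi^2}\int_0^1\rho^{-1}(1-\sqrt{1-\rho^2})^2\,d\rho=\tfrac{1}{\pi^2}(3-4\log 2)$. Inserting this together with $C_i(\infty)=2a_i/\pi$ into (\ref{mfpt:Ej_all}) collapses to (\ref{mfpt:Ej_asy_large}). As $\kappa_i\to 0$, the Robin condition forces the charge profile to be asymptotically uniform, $q_i\sim\kappa_i/2$, so $Q(\rho)\sim\kappa_i\rho^2/4$ and $C_i\sim\kappa_i a_i^2/2$ in agreement with (\ref{eq:Ci_limit0}); then the bulk term behaves as $\kappa_i^2 a_i^4/32$ and the boundary term as $-\tfrac{\kappa_i^2 a_i^4}{8}\log a_i$, whose sum is precisely (\ref{mfpt:Ej_asy_small}).

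I expect the main obstacle to be twofold. First, the conceptual crux is the integration by parts: the entire $-\tfrac{\log a_i}{2}C_i^2$ contribution is the boundary term, and obtaining it cleanly hinges on pinning down ${\mathcal F}_i(a_i)=\tfrac{C_i}{2}\log a_i$ from the far-field normalization in (\ref{mfpt:fprob_2}). Second, the technical work lies in justifying the passage to the limit inside ${\mathcal E}_i$. In the $\kappa_i\to\infty$ case this must be done despite the edge singularity of $q_i(\cdot;\infty)$ at $\rho_0=a_i$; this is controlled because only the enclosed charge $Q$, which remains bounded and uniformly convergent, enters ${\mathcal E}_i$, so dominated convergence applies. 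In the $\kappa_i\to 0$ case one must check that deviations from the uniform profile $q_i\sim\kappa_i/2$ contribute only at higher order, which follows from the small-reactivity expansion underlying (\ref{eq:Cmu_Taylor}).
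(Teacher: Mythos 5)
Your proposal is correct and follows essentially the same route as the paper's own proof in Appendix C: starting from the logarithmic-potential representation $E_i=-\tfrac{1}{\pi}\int_{\PT_i}q_i{\mathcal F}_i\,d\y$, exploiting radial symmetry via the first integral $\rho_0{\mathcal F}_{i,\rho_0}=\int_0^{\rho_0}\eta q_i\,d\eta$, integrating by parts so that the boundary term ${\mathcal F}_i(a_i)=\tfrac{C_i}{2}\log a_i$ together with $Q(a_i)=C_i/2$ produces $-\tfrac{\log a_i}{2}C_i^2$, and then evaluating the two limits with the explicit density (\ref{mfpt:wc_q}) for $\kappa_i=\infty$ and the uniform profile $q_i\sim\kappa_i/2$ for $\kappa_i\to 0$. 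The computations and limiting values all check out against (\ref{mfpt:Ej_all}) and (\ref{mfpt:Ej_asy}).
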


Although there is no explicit formula for $E_i(\kappa_i)$ for
arbitrary $\kappa_i>0$ even when $\PT_i$ is a disk, in Appendix
\ref{sec:Ei} we show how it can be numerically computed to high
precision by expanding the charge density $q_i$ in terms of Steklov
eigenfunctions.  Moreover, when $\PT_i$ is a disk, we provide the
heuristic approximation (see Appendix \ref{sec:Ei})
\bsub \label{mfpt:E_heur}
\begin{equation}\label{mfpt:E_heur_1}
   E_{i}(\kappa_i)\approx E_{i}^{\rm app}(\kappa_i) =
  -\frac{a_i^2\log{a_i}}{2}
  \left[{\mathcal C}^{\rm app}(\kappa_i a_i)\right]^2 + a_i^2
  {\mathcal E}^{\rm app}(\kappa_i a_i)\,, \qquad \kappa_i > 0\,,
\end{equation}
where
${\mathcal C}^{\rm app}(\mu)$ is given by the sigmoidal approximation
(\ref{mfpt:sigmoidal_2}), and we define (see (\ref{eq:Eapp}) of
Appendix \ref{sec:Ei})
\begin{equation}  \label{mfpt:Eapp}
 {\mathcal E}^{\rm app}(\mu) = \left[{\mathcal C}^{\rm app}(\mu)\right]^2 
\biggl(\frac34 - \log{2} + \frac{1}{\frac{1}{\log{2} - 5/8} + 5.17 \,
  \mu^{0.81}} \biggr)\,.
\end{equation}
\esub In Fig.~\ref{fig:Ekappa} we show that, over the full range
$\kappa_i >0$, (\ref{mfpt:E_heur_1}) agrees remarkably well,
with only a maximal relative error of $0.7\%$, with corresponding
numerical results as computed using the methodology outlined in
Appendix \ref{sec:Ei}. This heuristic approximation also ensures that
the required limits from (\ref{mfpt:Ej_asy}) as $\kappa_i\to 0$
and $\kappa_i \to \infty$ hold.

\begin{figure}
\begin{center}
\includegraphics[width=88mm]{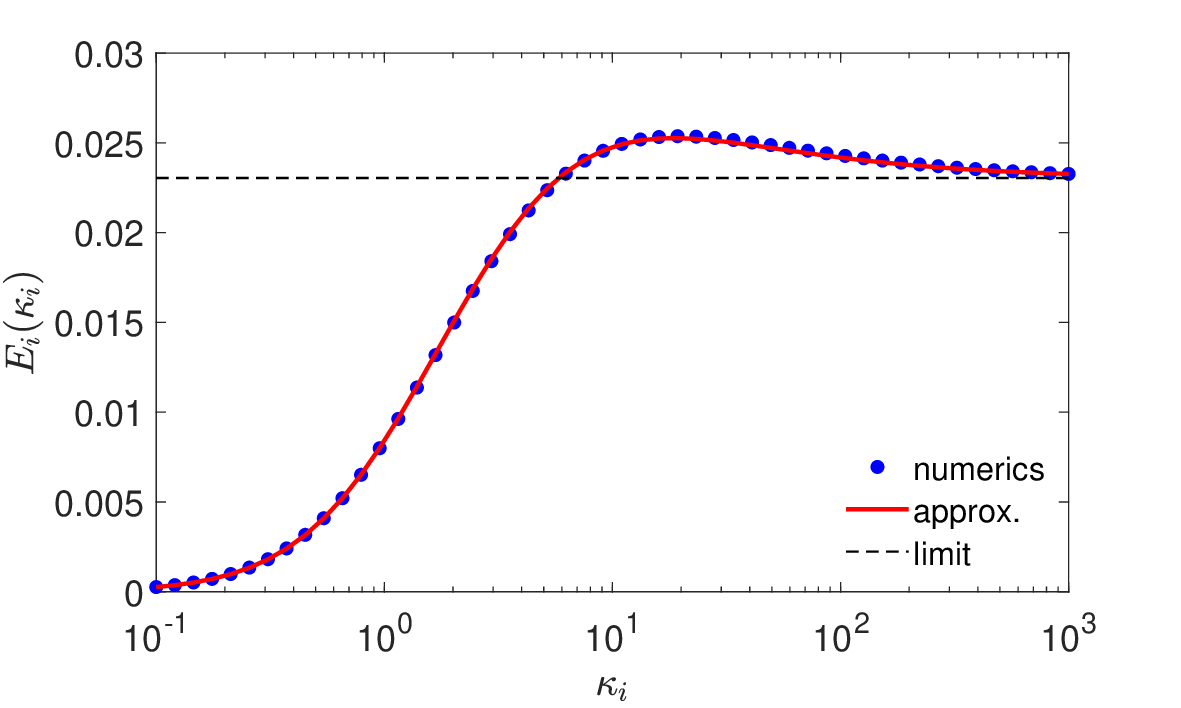} 
\end{center}
\caption{
For a circular patch of radius $a_i=1$, the heuristic approximation
${\mathcal E}^{\rm app}(\kappa_i)$ (solid line) from (\ref{mfpt:Eapp})
is compared with $E_i(\kappa_i) = {\mathcal E}_i(\kappa_i)$ (filled
circles) given by (\ref{mfpt:Ej_all}) and computed via the numerical
approach described in Appendix \ref{sec:Ei}, with the series
truncation after 1000 terms.  The dashed horizontal line is the
asymptotic limiting value $(3 - 4\log 2)/\pi^2$ consistent with
(\ref{mfpt:Ej_asy_large}). }
\label{fig:Ekappa}
\end{figure}

\subsection{The Surface Neumann Green's Function}\label{prel:gs}

The asymptotic solutions at each order in the outer region, defined at
${\mathcal O}(1)$ distances from the surface patches, is represented
in terms of the surface Neumann Green's function $G_s(\x;\x_i)$, which
is the unique solution to
\begin{equation}
\Delta_{\x} G_s = \frac{1}{|\Omega|} \,, \quad  \x\in \Omega \,; \qquad
\partial_n G_s =  \delta(\x-\x_i)\,, \quad \x \in \partial \Omega \,; \qquad
 \int_{\Omega} G_s \, d\x  = 0 \,, \label{mfpt:sph}
\end{equation}
with $\x_i\in \pa$.  For the unit sphere $\Omega$, the exact solution
to (\ref{mfpt:sph}), as derived in Appendix A of \cite{Cheviakov10},
is
\begin{equation}\label{mfpt:gs_exact}
G_s(\x;\x_i) = \frac{1}{2 \pi \left|\x-\x_i\right|} +
\frac{|\x|^2 + 1}{8\pi} + \frac{1}{4
\pi } \log\left(\frac{2}{1 - \x {\bf \cdot} \x_i +
\left|\x-\x_i\right|}\right) - \frac{7}{10 \pi} \,,
\end{equation}
with $|\x_i|=1$.  The following result characterizes the local
behavior of $G_{s}$ as $\x\to\x_i$ in terms of the local geodesic
coordinates $\y$ defined in (\ref{mfpt:innvar}).

\begin{lemma}\label{lemma:green} 
As $\x\to\x_i$ with $|\x_i|=1$, we have
\begin{equation}\label{mfpt:gs_loc1}
    G_s(\x;\x_i)\sim \frac{1}{2\pi|\x-\x_i|} -\frac{1}{4\pi}
    \log\bigl(|\x-\x_i|+1 -|\x|\bigr) + \frac{\log{2}}{4\pi} -\frac{9}{20\pi}     + o(1) \,.
\end{equation}
In terms of the local geodesic coordinates
$\y=\eps^{-1}{\mathcal Q}_i^{T}(\x-\x_i)$, where ${\mathcal Q}_i$ is the
orthogonal matrix from (\ref{app_g:change}) of Appendix \ref{app_g:geod}, we
have that
\begin{equation}\label{mfpt:gs_locm}
    G_s\sim \frac{1}{2\pi \eps |\y|} - \frac{1}{4\pi}\log\left(\frac{\eps}{2}
    \right) + \frac{y_3(y_1^2+y_2^2)}{4\pi |\y|^3} - \frac{1}{4\pi}
    \log(|\y| + y_3) + R_{s} + o(1)\,,
\end{equation}
where $R_{s}\equiv -9/(20\pi)$ is the regular part.   
\end{lemma}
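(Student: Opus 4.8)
The plan is to work directly from the exact expression (\ref{mfpt:gs_exact}) for $G_s(\x;\x_i)$, first extracting the coordinate-free local form (\ref{mfpt:gs_loc1}) as $\x\to\x_i$, and then passing to the geodesic inner variables of (\ref{mfpt:innvar}) to obtain (\ref{mfpt:gs_locm}). Throughout I set $r=|\x-\x_i|$ and $s=1-|\x|$, both of which tend to $0$ as $\x\to\x_i$ since $|\x_i|=1$.

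For (\ref{mfpt:gs_loc1}), the only term in (\ref{mfpt:gs_exact}) that needs care is the logarithm. Using $r^2=|\x|^2-2\,\x\cdot\x_i+1$ to eliminate the dot product, I would rewrite
\begin{equation*}
1-\x\cdot\x_i=\tfrac12\bigl[(1-|\x|^2)+r^2\bigr]=\tfrac12\bigl[s(2-s)+r^2\bigr],
\end{equation*}
from which a short computation gives the factorization $1-\x\cdot\x_i+r=(s+r)\bigl(1+\tfrac12(r-s)\bigr)$. Since $r-s\to0$, this yields $\log(1-\x\cdot\x_i+r)=\log(s+r)+o(1)$, so the logarithmic term of (\ref{mfpt:gs_exact}) contributes $\tfrac{\log2}{4\pi}-\tfrac{1}{4\pi}\log\bigl(|\x-\x_i|+1-|\x|\bigr)+o(1)$. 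The remaining terms are regular: $\tfrac{|\x|^2+1}{8\pi}\to\tfrac{1}{4\pi}$, and collecting the constants via $\tfrac{1}{4\pi}-\tfrac{7}{10\pi}=-\tfrac{9}{20\pi}$ gives (\ref{mfpt:gs_loc1}).

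To obtain (\ref{mfpt:gs_locm}) I would substitute the geodesic-to-Cartesian map of Appendix \ref{app_g:geod} into (\ref{mfpt:gs_loc1}). For the unit ball the normal coordinate is exactly the radial depth, so $|\x|=1-\xi_3$, i.e. $s=1-|\x|=\xi_3=\eps y_3$, while the surface point lies at geodesic (arc-length) distance $\rho=(\xi_1^2+\xi_2^2)^{1/2}=\eps(y_1^2+y_2^2)^{1/2}$ from $\x_i$, so $\x\cdot\x_i=(1-\xi_3)\cos\rho$. Expanding
\begin{equation*}
r^2=(1-\xi_3)^2-2(1-\xi_3)\cos\rho+1=\eps^2|\y|^2-\eps^3 y_3(y_1^2+y_2^2)+{\mathcal O}(\eps^4),
\end{equation*}
and inverting gives $\tfrac{1}{2\pi r}=\tfrac{1}{2\pi\eps|\y|}+\tfrac{y_3(y_1^2+y_2^2)}{4\pi|\y|^3}+{\mathcal O}(\eps)$, which supplies both the leading singularity and the $O(1)$ curvature term. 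In the logarithm, $r+s=\eps(|\y|+y_3)(1+{\mathcal O}(\eps))$, so $-\tfrac{1}{4\pi}\log(r+s)+\tfrac{\log2}{4\pi}=-\tfrac{1}{4\pi}\log(\eps/2)-\tfrac{1}{4\pi}\log(|\y|+y_3)+o(1)$, and assembling these pieces reproduces (\ref{mfpt:gs_locm}).

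The main obstacle is the $O(1)$ term $\tfrac{y_3(y_1^2+y_2^2)}{4\pi|\y|^3}$: it is invisible to a naive linear identification $\x-\x_i\approx\eps{\mathcal Q}_i\y$ (which would give $r=\eps|\y|$ exactly), and arises only from the ${\mathcal O}(\eps^3)$ curvature correction to $r^2$. Capturing it correctly therefore requires the exact nonlinear relations $|\x|=1-\xi_3$ and $\x\cdot\x_i=(1-\xi_3)\cos\rho$ from the geodesic map, and one must carefully retain the cubic term in the small-$\eps$ expansion of $r^2$ while discarding the genuinely $o(1)$ pieces generated inside the logarithm.
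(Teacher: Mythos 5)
Your proof is correct and follows essentially the same route as the paper's: your factorization $1-\x\cdot\x_i+r=(s+r)\bigl(1+\tfrac{1}{2}(r-s)\bigr)$ is algebraically identical to the paper's difference-of-squares step $(1+r)^2-|\x|^2=(1+r-|\x|)(1+r+|\x|)$ (with $1+r+|\x|\to 2$), and your expansion $r^2=\eps^2|\y|^2-\eps^3y_3(y_1^2+y_2^2)+{\mathcal O}(\eps^4)$ is precisely the appendix result (\ref{app_g:loc}) that the paper's proof quotes, merely re-derived from the coordinate map. One small caveat: the relation $\x\cdot\x_i=(1-\xi_3)\cos\rho$ is not exact as you claim — from (\ref{app_g:global}) the exact relation is $\x\cdot\x_i=(1-\xi_3)\cos\xi_1\cos\xi_2$, and $\cos\xi_1\cos\xi_2-\cos\rho={\mathcal O}(\eps^4)$ in the inner variables — but since this discrepancy enters $r^2$ only at ${\mathcal O}(\eps^4)$, it does not affect the ${\mathcal O}(\eps^3)$ curvature term you need, and your conclusion stands.
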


\begin{proof}
We use the law of cosines with $|\x_i|=1$ to get $2\x {\bf \cdot}\x_i=|\x|^2+1
  -|\x-\x_i|^2$, so that (\ref{mfpt:gs_exact}) becomes
\begin{equation}\label{mfpt:gs_exact_1}
G_s(\x;\x_i) = \frac{1}{2 \pi \left|\x-\x_i\right|} +
\frac{|\x|^2 + 1}{8\pi} + \frac{1}{4
  \pi } \log\left(\frac{4}{\left(|\x-\x_i|+1\right)^2 - |\x|^2}\right)
- \frac{7}{10 \pi} \,. 
\end{equation}
Upon using $a^2-b^2=(a-b)(a+b)$, we let $\x\to\x_i$ with $|\x_i|=1$ to
obtain that (\ref{mfpt:gs_exact_1}) reduces to (\ref{mfpt:gs_loc1}).
Finally, as $\x\to\x_i$ we use $1-|\x|\sim\eps y_3$ in (\ref{mfpt:gs_loc1}),
together with (\ref{app_g:loc}) of Appendix \ref{app_g:geod}, to reduce
(\ref{mfpt:gs_loc1}) to (\ref{mfpt:gs_locm}).
\end{proof}

\section{The Mean First-Reaction Time}\label{mfpt_sec:expan}

In this section, we investigate the MFRT in the small-patch
limit and derive its three-term expansion, which is valid for
arbitrary reactivities.  We also discuss its asymptotic behavior for
fixed reactivities, as well as the homogenization of the spherical
boundary.  The asymptotic tools developed in this section will be
applied for solving the three other problems in the sections below.

\subsection{Asymptotic Analysis}

We use the method of matched asymptotic expansions to construct
solutions to (\ref{mfpt:ssp}) in the limit $\eps \to 0$. In the outer
region away from the Robin patches we expand the outer solution as
\begin{equation}
    u \sim \eps^{-1} U_0 + U_1 + \eps \log\left( \frac{\eps}{2} \right)
   U_2 + \eps U_3 + \cdots \,, \label{mfpt_b:outex}
\end{equation}
where $U_0$ is a constant to be determined and where $U_k$ for $k\geq 1$
satisfies
\begin{equation}
  \Delta_{\x} U_k = - \delta_{k1} \,, \quad \x \in \Omega \,; 
  \qquad \partial_n U_k = 0 \,, \quad \x\in \partial\Omega\backslash
\lbrace{\x_1,\ldots,\x_N\rbrace} \,.  \label{mfpt_b:Uk}
\end{equation}
Here $\delta_{k1}=1$ if $k=1$ and $\delta_{k1}=0$ for $k>1$. Our
asymptotic analysis below provides singularity behaviors for each
$U_{k}$ as $\x\to \x_i$, for $i=1,\ldots,N$.  The non-analytic term in
$\eps$ in (\ref{mfpt_b:outex}) arises from the subdominant logarithmic
term in the local behavior (\ref{mfpt:gs_locm}) of the surface Neumann
Green's function.

In the inner region near the $i$-th Robin patch we introduce the
local geodesic coordinates (\ref{mfpt:innvar}) and expand each
inner solution as
\begin{equation}
  u  \sim \eps^{-1} V_{0i} + \log\left( \frac{\eps}{2} \right) V_{1i} +
  V_{2i}  + \ldots \,. \label{mfpt_b:innex}
\end{equation}
Upon substituting (\ref{mfpt_b:innex}) into (\ref{mfpt:local}), we obtain
that $V_{ki}$ for $k=0,1,2$ satisfies
\bsub \label{mfpt_b:Vk}
\begin{align}
  \Delta_{\y} V_{ki} &= \delta_{k2} \left( 2y_{3} V_{0i,y_3 y_3} + 2 V_{0i,y_3}
                       \right) \,, \quad
   \y \in \R_{+}^{3} \,, \label{mfpt_b:Vk_1}\\
   -\partial_{y_3} V_{ki} + \kappa_i V_{ki} &=0 \,, \quad y_3=0 \,,\,
    (y_1,y_2)\in \PT_i\,,  \label{mfpt_b:Vk_2}\\
    \partial_{y_3} V_{ki} &=0 \,, \quad y_3=0 \,,\, (y_1,y_2)\notin \PT_i
    \,, \label{mfpt_b:Vk_3}
\end{align}
\esub  
where $\delta_{22}=1$ and $\delta_{k2}=0$ if $k=0,1$. 

Since the leading-order matching condition is $V_{0i}\sim U_0$ as
$|\y|\to\infty$, we have 
\begin{equation}\label{mfpt_b:v0sol}
    V_{0i} = U_0 \left( 1 - w_{i} \right) \,,
\end{equation}
where $w_{i}$ is the solution to (\ref{mfpt:wc}), defined on the
tangent plane to the sphere at $\x=\x_i$, which has the far-field
behavior (\ref{mfpt:wc_4}) in terms of $C_i$ and $\DT_i$. The matching
condition is that the local behavior of the outer expansion
(\ref{mfpt_b:outex}) as $\x\to\x_i$ must agree with the far-field
behavior of the inner expansion (\ref{mfpt_b:innex}), so that
\begin{equation} \label{mfpt_b:mat_1}
  \begin{split}
  \frac{U_0}{\eps} + U_1 + \eps \log\left( \frac{\eps}{2} \right)
  U_2 + &\eps U_3 + \ldots \\
   \sim \frac{U_0}{\eps} & \left( 1 - \frac{C_i}{|\y|} - \frac{\DT_i {\bf \cdot}
       \y}{|\y|^3} \right)  + 
  \log\left( \frac{\eps}{2} \right) V_{1i} + V_{2i} + \ldots \,.
  \end{split}
\end{equation}
Since $|\y|\sim\eps^{-1}|\x-\x_i|$ from (\ref{app_g:change}) of
Appendix \ref{app_g:geod}, it follows that the outer correction $U_1$
must satisfy (\ref{mfpt_b:Uk}) with the singular behavior $U_{1} \sim
- {U_0 C_{i}/|\x-\x_i|}$ as $\x\to \x_i$ for $i=1,\ldots,N$.  In this
way, $U_1$ satisfies
\bsub \label{mfpt_b:U1prob}
\begin{gather}
  \Delta_{\x} U_{1}= -1 \,, \quad \x\in \Omega \,; \qquad
  \partial_n U_1=0 \,, \quad \x\in \partial\Omega\backslash
  \lbrace{\x_1,\ldots,\x_N\rbrace} \,, \\
  U_1\sim -\frac{U_0 C_{i}}{|\x-\x_i|}\,,  \quad \mbox{as} \quad
  \x\to\x_i \in \partial\Omega \,, \quad i=1,\ldots,N \,.
\end{gather}
\esub

From the divergence theorem, the solvability condition for
(\ref{mfpt_b:U1prob}) is that $|\Omega|=2\pi U_0\sum_{i=1}^{N} C_{i}$,
which determines $U_0$ as
\begin{equation} 
  U_0 = \frac{|\Omega|}{2\pi\overline{C}}\,,  \quad \mbox{where} \quad
  \overline{C}\equiv   \sum_{j=1}^{N} C_{j} \,. \label{mfpt_b:U0sol}
\end{equation}
The solution to (\ref{mfpt_b:U1prob}) is represented in terms of the
surface Neumann Green's function of (\ref{mfpt:gs_exact}) as
\begin{equation}
  U_1 = \overline{U}_1 - 2\pi U_0 \sum_{j=1}^{N} C_j G_{s}(\x;\x_j) \,, \quad
  \mbox{where} \quad  \overline{U}_1\equiv |\Omega|^{-1}\int_{\Omega} U_1 \, d\x \,.
  \label{mfpt_b:U1sol}
\end{equation}
We recall that the coefficients $C_i=C_i(\kappa_i)$, defined by
(\ref{mfpt:wc_4}) and (\ref{mfpt:wc_charge}), have the asymptotic
behavior for both small and large $\kappa_i$ given in Lemma
\ref{lemma:Cj_kappa} for circular Robin patches.  As in
\cite{Cheviakov10}, we need to expand the unknown constant
$\overline{U}_1$ in (\ref{mfpt_b:U1sol}) as
\begin{equation}\label{mfpt_b:swit}
  \overline{U}_1 = \overline{U}_{10} \log\left(\frac{\eps}{2}\right) +
  \overline{U}_{11}\,,
\end{equation}
where $\overline{U}_{10}$ and $\overline{U}_{11}$ are constants
independent of $\eps$, which we determine below. We note that the
$\overline{U}_{10}\log\left({\eps/2}\right)$ term in
(\ref{mfpt_b:swit}) is a ``switchback term'' \cite{Lagerstrom88}
and effectively corresponds to inserting a constant term between
${U_0/\eps}$ and $U_1$ in the outer expansion (\ref{mfpt_b:outex}).

To proceed to higher order, we expand $U_1$ as $\x\to \x_i$ by using
the local behavior (\ref{mfpt:gs_locm}) of $G_s$ near the $i$-th
patch.  The matching condition (\ref{mfpt_b:mat_1}) becomes
\begin{multline}
  \frac{U_0}{\eps}\left(1 - \frac{C_i}{|\y|}\right) +
  \left(\frac{U_0 C_i}{2} + \overline{U}_{10}\right)
  \log\left( \frac{\eps}{2} \right) + \frac{U_0 C_i}{2}
 \left( \log(y_3+|\y|) - \frac{y_3 (y_1^2+y_2^2)}{|\y|^3} \right)\\
 + U_0 \beta_{i} + \overline{U}_{11} + \eps \log\left( \frac{\eps}{2} \right) U_2
 + \eps U_3 + \ldots \\
 \sim \frac{U_0}{\eps}
 \left( 1 - \frac{C_i}{|\y|} - \frac{\DT_i {\bf \cdot}
       \y}{|\y|^3}\right) +\log\left( \frac{\eps}{2} \right)
 V_{1i} + V_{2i} + \ldots \,.
 \label{mfpt_b:mat_2}
\end{multline}
Here the constant $\beta_i$ is defined by the $i$-th component of the
matrix-vector product
\begin{equation}\label{mfpt_b:Bi}
  \beta_i =  -2\pi \left({\mathcal G}_s \vc\right)_{i} \,,
\end{equation}
where $\vc\equiv(C_1,\ldots,C_N)^T$ and ${\mathcal G}_{s}$ is the
symmetric Green's matrix defined by
\begin{equation}\label{mfpt_b:green_mat}
    {\mathcal G}_s \equiv \left ( 
\begin{array}{cccc}
 R_s & G_{12} & \cdots & G_{1N} \\
 G_{21} & R_s & \cdots   &G_{2N} \\
 \vdots & \vdots  &\ddots  &\vdots\\ 
 G_{N1} &\cdots & G_{N,N-1} & R_s
\end{array}
\right ) \,, \quad R_s \equiv - \frac{9}{20\pi} \,, \quad G_{ij} \equiv
  G_{s}(\x_i;\x_j) \,,
\end{equation}
where $R_{s}$ is the regular part given in the local expansion
(\ref{mfpt:gs_locm}). The matrix entries $G_{ij}$ can be calculated from
(\ref{mfpt:gs_exact}).

Upon comparing the
${\mathcal O}\left(\log\left(\eps/2\right)\right)$ terms on both
sides of (\ref{mfpt_b:mat_2}) we conclude that we must have
$V_{1i}\sim \overline{U}_{10} + {U_0 C_i/2}$ as $|\y| \to \infty$,
where $V_{1i}$ satisfies the inner problem (\ref{mfpt_b:Vk}) with
$k=1$. This solution is determined in terms of $w_{i}$ of
(\ref{mfpt:wc}) by
\begin{equation}
  V_{1i} = \left( \frac{U_0 C_i}{2} + \overline{U}_{10} \right)
  \left(1 - w_{i}\right) \,. \label{mfpt_b:V1sol}
\end{equation}
The far-field behavior (\ref{mfpt:wc_4}) for $w_{i}$ yields for
$\rho=|\y|\to\infty$  that
\begin{equation}
    V_{1i} \sim \left( \frac{U_0 C_i}{2} + \overline{U}_{10} \right) \left(1 - 
      \frac{C_i}{|\y|} +\cdots  \right)\,,
    \quad \mbox{as} \quad |\y| \to \infty\,.
        \label{mfpt_b:V1ff}
\end{equation}

Next, we substitute (\ref{mfpt_b:V1ff}) into the matching condition
(\ref{mfpt_b:mat_2}). We conclude that the solution $U_2$ to
(\ref{mfpt_b:Uk}) has the singular behavior
$U_2 \sim - \left(\tfrac12 U_0 C_i +\overline{U}_{10} \right)
{C_i/|\x-\x_i|}$ as $\x\to \x_i$. Therefore, $U_2$ satisfies
\bsub \label{mfpt_b:U2prob}
\begin{gather}
  \Delta_{\x} U_{2} = 0 \,, \quad \x\in \Omega \,; \qquad
  \partial_n U_2=0 \,, \quad \x\in \partial\Omega\backslash
  \lbrace{\x_1,\ldots,\x_N\rbrace} \,, \\
  U_2\sim -\left( \frac{U_0 C_i}{2} +\overline{U}_{10} \right) \frac{C_i}
  {|\x-\x_i|}\,, \quad \mbox{as} \quad
  \x\to\x_i \in \partial\Omega \,, \quad i=1,\ldots,N \,.
\end{gather}
\esub By using the divergence theorem, (\ref{mfpt_b:U2prob}) is
solvable only when $\overline{U}_{10}$ satisfies
\begin{equation}
  \frac{\overline{U}_{10}}{U_0} = -\frac{\vc^T\vc}{2 \overline{C}}\,,  \quad
 \mbox{where}\quad  \vc^T\vc = \sum_{j=1}^{N} C_{j}^2 \,.
  \label{mfpt_b:u10}
\end{equation}
Then, the solution to (\ref{mfpt_b:U2prob}) is represented in terms of
an unknown constant $\overline{U}_{2}$ as
\begin{equation}
  U_2 = \overline{U}_2 -2\pi \sum_{j=1}^{N} C_j \left( \frac{U_0 C_j}{2} +
    \overline{U}_{10} \right)  G_{s}(\x;\x_j)  \,.
  \label{mfpt_b:U2}
\end{equation}

Next, we match the ${\mathcal O}(1)$ terms in (\ref{mfpt_b:mat_2}). We
obtain that $V_{2i}$ satisfies (\ref{mfpt_b:Vk}) with $k=2$ together
with the far-field behavior
\begin{equation}
  V_{2i} \sim \beta_i U_0 + \overline{U}_{11} + \frac{U_0 C_i}{2} \left(
    \log(y_3 + |\y|) - \frac{y_3 (y_1^2 + y_2^2)}{|\y|^3} \right)\,, \quad
   \mbox{as} \quad |\y| \to \infty \,. 
\label{mfpt_b:V2inf}
\end{equation}
Since $V_{0i}=U_0(1-w_{i})$ from (\ref{mfpt_b:v0sol}), we decompose
$V_{2i}$ as 
\begin{equation}\label{mfpt_b:V2decom_1}
  V_{2i} = U_0 \left( \Phi_{2i} +
  \left(\beta_{i} + \frac{\overline{U}_{11}}{U_0}\right)(1-w_{i})\right) \,,
\end{equation}
and obtain from (\ref{mfpt_b:Vk}) and (\ref{mfpt_b:V2inf}) that $\Phi_{2i}$
satisfies
\bsub \label{mfpt_b:Phi2}
\begin{align}
  \Delta_{\y} \Phi_{2i} &= - \left( 2y_{3} w_{i,y_3 y_3} + 2 w_{i,y_3}
                       \right) , \quad
   \y \in \R_{+}^{3} \,, \label{mfpt_b:Phi2_1}\\
   -\partial_{y_3} \Phi_{2i} + \kappa_i \Phi_{2i} &=0 \,, \quad y_3=0 \,,\,
    (y_1,y_2)\in \PT_i\,,  \label{mfpt_b:Phi2_2}\\
    \partial_{y_3} \Phi_{2i} &=0 \,, \quad y_3=0 \,,\, (y_1,y_2)\notin \PT_i
                               \,, \label{mfpt_b:Phi2_3}\\
  \Phi_{2i} &\sim  \frac{C_i}{2} \left(
    \log(y_3 + |\y|) - \frac{y_3 (y_1^2 + y_2^2)}{|\y|^3} \right)\,, \quad
   \mbox{as} \quad |\y| \to \infty \,. \label{mfpt_b:Phi2_4} 
\end{align}
\esub

In Appendix \ref{app_h:inn2} we analyze the solution to
(\ref{mfpt_b:Phi2}) and we determine the monopole term
$E_i=E_{i}(\kappa_i)$ in the refined far-field behavior, defined by the
limiting behavior
\begin{equation}\label{mfpt_b:Phi2_ff}
  \Phi_{2i} - \frac{C_i}{2} \left(
    \log(y_3 + |\y|) - \frac{y_3 (y_1^2 + y_2^2)}{|\y|^3} \right) \sim
  \frac{E_i}{|\y|}\,, \quad \mbox{as} \quad |\y| \to \infty \,.
\end{equation}
For an arbitrary patch shape $\PT_i$, $E_i(\kappa_i)$ is given by
(\ref{eq:Ei_general0}).  Some properties of $E_i(\kappa_i)$ were
summarized in \S \ref{prel:high}. In particular, when $\PT_i$ is a
disk, the limiting asymptotics of $E_i(\kappa_i)$ for $\kappa_i\ll 1$
and $\kappa_i\gg 1$ are given in (\ref{mfpt:Ej_asy}) of Lemma
\ref{lemma:Ej_kappa}.  In addition, an accurate heuristic global
approximation for $E_i$ was given in (\ref{mfpt:E_heur}).

Finally, we determine $\overline{U}_{11}$ from a solvability condition
for the problem for the outer correction $U_3$ in
(\ref{mfpt_b:outex}).  To do so, we substitute (\ref{mfpt_b:Phi2_ff})
into (\ref{mfpt_b:V2decom_1}) and use $w_{i}\sim {C_i/|\y|}$ as
$|\y|\to \infty$. We conclude that $V_{2i}$ satisfies the refined
far-field behavior
\begin{equation}\label{mfpt_b:V2ff_refine}
  \begin{split}
    V_{2i} &\sim \beta_i U_0 + \overline{U}_{11} + \frac{U_0 C_i}{2} \left(
      \log(y_3 + |\y|) - \frac{y_3 (y_1^2 + y_2^2)}{|\y|^3} \right) \\
    &\qquad +\frac{U_0 E_i}{|\y|} - \left(\beta_i U_0
        +\overline{U}_{11}\right) \frac{C_i}{|\y|} \,, \quad   \mbox{as}
    \quad |\y| \to \infty \,. 
  \end{split}
\end{equation}
The second line in (\ref{mfpt_b:V2ff_refine}) is the first of two terms
that needs to be accounted for by $U_3$ in the matching condition
(\ref{mfpt_b:mat_2}).  The second term is the dipole term in
(\ref{mfpt_b:mat_2}), which arises from (\ref{mfpt:wc_4}). This term
is written in terms of outer variables using (\ref{app_g:change}) of
Appendix \ref{app_g:geod}.

In this way, we conclude from (\ref{mfpt_b:Uk}), (\ref{mfpt_b:mat_2})
and (\ref{mfpt_b:V2ff_refine}) that $U_3$ must satisfy 
\bsub
\label{mfpt_b:U3prob}
\begin{align}
  \Delta_{\x} U_{3} &= 0 \,, \quad \x\in \Omega \,; \qquad
             \partial_n U_3=0 \,, \quad \x\in \partial\Omega\backslash
                      \lbrace{\x_1,\ldots,\x_N\rbrace} \,, \\
  U_3 &\sim \frac{\left[U_0 E_i -\left(\beta_i U_0 +\overline{U}_{11}\right)C_i
        \right]}{|\x-\x_i|} \nonumber \\
&\qquad -U_0 \frac{\DT_i {\bf \cdot} {\mathcal Q}_i^T (\x-\x_i)}{|\x-\x_i|^3}\,,
    \quad \mbox{as} \quad \x\to\x_i \in \partial\Omega \,, \quad
                      i=1,\ldots,N \,,
\end{align}
\esub 
where the orthogonal matrix ${\mathcal Q}_i$ is defined in
(\ref{app_g:change}) in terms of the basis vectors of the geodesic
coordinate system and $\DT_i$ is the dipole vector in
(\ref{mfpt:wc_4}).  By using the divergence theorem,
(\ref{mfpt_b:U3prob}) has a solution if and only if
$\overline{U}_{11}$ satisfies
\begin{equation}
  \frac{\overline{U}_{11}}{U_0} = \frac{1}{\overline{C}}
  \left(\sum_{j=1}^{N} E_j -
    \sum_{j=1}^{N} \beta_j C_j\right) \,, \label{mfpt_b:u11_1}
\end{equation}
where we find that the contribution from the dipole term vanishes
identically by symmetry since $\DT_i$ has the form
$\DT_i=(p_{1i},p_{2i},0)^T$.  Finally, by using (\ref{mfpt_b:Bi}) for
$\beta_i$, we get
\begin{equation}\label{mfpt_b:u11}
  \frac{\overline{U}_{11}}{U_0} = \frac{2\pi}{\overline{C}} \vc^T
  {\mathcal G}_s \vc + \frac{\overline{E}}{\overline{C}} \,, \quad
  \mbox{where} \quad  \overline{E}=\sum_{j=1}^{N} E_j \,.
\end{equation}

We summarize our main result for the dimensionless MFRT $u(\x)$ and
the volume-averaged MFRT $\overline{u}$ in the small-patch limit in the
following proposition. We also provide the corresponding dimensional
result for (\ref{mfpt:ssp0}), based on the scalings
(\ref{intro:scalings}) and (\ref{intro:mfpt_scale}).

\begin{prop}\label{mfpt_b:main_res} 
As $\eps \to 0$, the asymptotic solution to (\ref{mfpt:ssp}) in the
unit sphere $\Omega$ is given in the outer region $|\x-\x_i|\gg
{\mathcal O}(\eps)$ for $i=1,\ldots,N$ by
\begin{equation}\label{mfpt_b:main_res_1}
  \begin{split}
    u(\x) &\sim \frac{U_0}{\eps} \left[ 1 +
      \eps\log\left(\frac{\eps}{2}\right)
      \frac{\overline{U}_{10}}{U_0} + \eps\left(\frac{\overline{U}_{11}}{U_0} -2\pi \sum_{j=1}^{N}
        C_j G_{s}(\x;\x_j)\right)
    \right.\\
    & \qquad \left. + \eps^2\log\left(\frac{\eps}{2}\right)\left(\frac{\overline{U}_2}{U_{0}} -2\pi
        \sum_{j=1}^{N} C_j \left(\frac{C_j}{2}+\frac{\overline{U}_{10}}{U_0}\right)
        G_{s}(\x;\x_j) \right) +
      {\mathcal O}(\eps^2) \right] \,.
  \end{split}
\end{equation}
The volume-averaged MFRT $\overline{u}$, defined by (\ref{mfpt:ubar}),
satisfies
\begin{equation}\label{mfpt_b:main_res_2}
  \overline{u}\sim \frac{U_0}{\eps} \left[ 1 +
    \eps\log\left(\frac{\eps}{2}\right)
    \frac{\overline{U}_{10}}{U_0} + \eps \frac{\overline{U}_{11}}{U_0} +
    {\mathcal O}\left(\eps^2\log \eps\right)\right] \,.
\end{equation}
In (\ref{mfpt_b:main_res_1}) and (\ref{mfpt_b:main_res_2}), $U_0$,
$\overline{U}_{10}$ and $\overline{U}_{11}$ are determined in terms of
$\vc=(C_1,\ldots,C_N)^T$, $\overline{C}= \sum_{j=1}^{N} C_j$,
$\overline{E}= \sum_{j=1}^{N} E_j$, and the Green's matrix ${\mathcal
G}_s$ from (\ref{mfpt_b:green_mat}) and (\ref{mfpt:gs_exact}) by
\begin{equation}\label{mfpt_b:main_U}
 U_0=\frac{2}{3\overline{C}} \,,\qquad
  \frac{\overline{U}_{10}}{U_0}= -\frac{\vc^T\vc}{2\overline{C}}\,, \qquad
  \frac{\overline{U}_{11}}{U_0} = \frac{2\pi}{\overline{C}} \vc^T {\mathcal G}_s
  \vc + \frac{\overline{E}}{\overline{C}} \,.
\end{equation}
In (\ref{mfpt_b:main_res_1}), $\overline{U}_2$ remains unknown and can
only be found at higher order.
For a single patch ($N=1$), we have
\begin{equation}\label{mfpt_b:main_U_single}
  U_0=\frac{2}{3C_1} \,,\qquad
  \frac{\overline{U}_{10}}{U_0}= -\frac{C_1}{2}\,, \qquad
  \frac{\overline{U}_{11}}{U_0} = - \frac{9}{10}C_1 + \frac{E_1}{C_1} \,.
\end{equation}
In terms of the dimensional variables, we use (\ref{intro:scalings})
and (\ref{intro:mfpt_scale}) to conclude for a sphere of radius $R$ and
for a collection of Robin patches with maximum diameter $L$ that
\begin{equation}\label{mfpt_b:dimen}
  \overline{U} \sim \frac{R^2}{D} \overline{u}\,.
\end{equation}
Here in calculating $\overline{u}$ in (\ref{mfpt_b:main_res_2}) we set
$C_i=C_i\left({L\K_i/D}\right)$ and $E_i=E_i\left({L\K_i/D}\right)$ in
(\ref{mfpt_b:main_U}) and evaluate the Green's matrix ${\mathcal G}_s$
at $\x_i={\X_i/R}$.
\end{prop}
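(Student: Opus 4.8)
The plan is to recognize that the systematic matched-asymptotic construction carried out above in (\ref{mfpt_b:outex})--(\ref{mfpt_b:u11}) has already produced every ingredient of the statement, so the proof reduces to assembling these pieces in the correct order and then reading off the volume average, the single-patch reduction, and the dimensional form. I will therefore treat the proposition as a collection result and organize the argument around four assembly steps, flagging at the end where the single genuine subtlety lies.

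First I would assemble the outer expansion (\ref{mfpt_b:main_res_1}). Into the outer ansatz (\ref{mfpt_b:outex}) I substitute the explicit representation (\ref{mfpt_b:U1sol}) for $U_1$, with its constant part expanded through the switchback relation (\ref{mfpt_b:swit}) as $\overline{U}_1 = \overline{U}_{10}\log(\eps/2) + \overline{U}_{11}$, together with the representation (\ref{mfpt_b:U2}) for $U_2$ contributing at order $\eps\log(\eps/2)$. Factoring out the prefactor $U_0/\eps$ and collecting terms by their order in $\eps$ and $\log\eps$ then yields (\ref{mfpt_b:main_res_1}); the only bookkeeping point here is that the switchback constant $\overline{U}_{10}\log(\eps/2)$, which formally sits between $\eps^{-1}U_0$ and $U_1$, contributes at the same order as $\overline{U}_{11}$ once the factor $U_0/\eps$ is pulled out, and that the undetermined constant $\overline{U}_2$ from (\ref{mfpt_b:U2}) is simply carried along inside the $\eps^2\log(\eps/2)$ bracket.

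Next I would obtain the volume average (\ref{mfpt_b:main_res_2}). Applying the averaging operator (\ref{mfpt:ubar}) term by term to (\ref{mfpt_b:main_res_1}) and using the normalization $\int_{\Omega} G_s\, d\x = 0$ from (\ref{mfpt:sph}), every Green's-function contribution integrates to zero, leaving only the spatially constant terms. This removes the $-2\pi\sum_j C_j G_s$ contribution and reduces the $\eps^2\log(\eps/2)$ bracket to $\eps^2\log(\eps/2)\,\overline{U}_2/U_0$, which is absorbed into the stated $\mathcal{O}(\eps^2\log\eps)$ remainder, giving (\ref{mfpt_b:main_res_2}). The constants in (\ref{mfpt_b:main_U}) are then recorded directly from the three solvability conditions (\ref{mfpt_b:U0sol}), (\ref{mfpt_b:u10}) and (\ref{mfpt_b:u11}), substituting $|\Omega| = 4\pi/3$ into $U_0 = |\Omega|/(2\pi\overline{C})$ to obtain $U_0 = 2/(3\overline{C})$.

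The single-patch reduction (\ref{mfpt_b:main_U_single}) follows by setting $N=1$, so that $\overline{C} = C_1$, $\vc^T\vc = C_1^2$, and $\vc^T\mathcal{G}_s\vc = R_s C_1^2 = -\tfrac{9}{20\pi}C_1^2$ by (\ref{mfpt_b:green_mat}); substitution into (\ref{mfpt_b:main_U}) gives $\overline{U}_{10}/U_0 = -C_1/2$ and $\overline{U}_{11}/U_0 = -\tfrac{9}{10}C_1 + E_1/C_1$. Finally, the dimensional statement (\ref{mfpt_b:dimen}) follows by inverting the scaling (\ref{intro:mfpt_scale}) as $U = (R^2/D)u$, which commutes with volume averaging, and mapping the reactivities via $\kappa_i = L\K_i/D$ so that $C_i$ and $E_i$ are evaluated at $L\K_i/D$. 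The only genuinely delicate point in the whole argument lies not in this assembly but upstream, in verifying that the dipole far-field in (\ref{mfpt:wc_4}) drops out of the $U_3$ solvability condition by the symmetry $\DT_i = (p_{1i},p_{2i},0)^T$; granting that step, as established in deriving (\ref{mfpt_b:u11_1}), the proposition is a direct consolidation of the preceding results.
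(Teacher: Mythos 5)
Your proposal is correct and follows essentially the same route as the paper: Proposition \ref{mfpt_b:main_res} is indeed stated there as a consolidation of the matched-asymptotics derivation (\ref{mfpt_b:outex})--(\ref{mfpt_b:u11}), and your four assembly steps (substituting (\ref{mfpt_b:U1sol}), (\ref{mfpt_b:swit}), (\ref{mfpt_b:U2}) into the outer ansatz, averaging with $\int_{\Omega}G_s\,d\x=0$, recording the solvability constants, and reducing to $N=1$ and dimensional form) reproduce the paper's logic, including the correct identification that the vanishing of the dipole contribution in the $U_3$ solvability condition is the one nontrivial step.
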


Although in (\ref{mfpt_b:main_res_1}) the coefficient $\overline{U}_2$
can only be determined at higher order, the spatial dependence of the
$\eps^2\log\left({\eps/2}\right)$ correction term is completely
specified up to this unknown constant. In (\ref{mfpt_b:main_res_1})
and (\ref{mfpt_b:main_res_2}), $C_i=C_i(\kappa_i)$ is determined by
the solution to (\ref{mfpt:wc}), while $E_i=E_i(\kappa_i)$ is given by
(\ref{eq:Ei_general0}) for an arbitrary shaped patch and by
(\ref{mfpt:Ej_all}) when the Robin patches are disks. Their asymptotic
behaviors when $\PT_i$ is a disk are given for small and large
reactivities in Lemmas \ref{lemma:Cj_kappa} and \ref{lemma:Ej_kappa}.

We emphasize several features of our main result for the MFRT:

(i) The coefficient $\overline{U}_{11}$ in
(\ref{mfpt_b:main_res_2}) depends on the spatial configuration
$\lbrace{\x_1,\ldots,\x_N\rbrace}$ of the centers of the Robin patches
on the surface of the unit sphere via the Green's matrix ${\mathcal
G}_s$, defined in (\ref{mfpt_b:green_mat}) and (\ref{mfpt:gs_exact}).
Therefore, the effect of the location of the patches is only
revealed at the third order in the asymptotic expansion.

(ii) To numerically calculate the asymptotic result
(\ref{mfpt_b:main_res_2}) for the volume-averaged MFRT, we need only
to numerically compute $C_i(\kappa_i)$ from a PDE solution of
(\ref{mfpt:wc}) and $E_i(\kappa_i)$ from the quadrature in
(\ref{eq:Ei_general0}).  However, when $\PT_i$ is a disk, the use of
the heuristic but accurate approximations (\ref{eq:Cmu_approx}) and
(\ref{mfpt:E_heur}) that closely predict respectively $C_i$ and $E_i$
for all $\kappa_i>0$, provide explicit approximations for the
coefficients in the asymptotic expansion (\ref{mfpt_b:main_U}) of the
volume-averaged MFRT (see \cite{Grebenkov26} for the applicability of
the approximation (\ref{eq:Cmu_approx}) to noncircular patches).

(iii) We observe from Lemmas \ref{lemma:Cj_kappa} and
\ref{lemma:Ej_kappa} that both ${\overline{U}_{10}/U_0}$ and
${\overline{U}_{11}/U_0}$ are ${\mathcal O}(\kappa_i)$ as $\kappa_i\to
0$ for $i=1,\ldots,N$. As a result, the expansion
(\ref{mfpt_b:main_res_2}) in $\eps$ remains uniformly valid in the
limit $\kappa_i\to 0$ for each Robin patch.

(iv) Proposition \ref{mfpt_b:main_res} extends the previous result
from \cite{Cheviakov10} that dealt with the special case of $N$
perfectly reactive ($\kappa_i=\infty$) disk-shape patches.

(v) As discussed in \S \ref{sec:results}, the leading-order term
is applicable to any bounded domain $\Omega$ with a smooth boundary;
in this more general setting, the dimensional MFRT reads
\begin{equation}  \label{eq:MFRT_leading}
\T(\x) \sim  \overline{U} \sim \frac{|\Omega|}{2\pi D C_{\rm tot}} \,,
\qquad C_{\rm tot} = L \bigl[C_1(L\K_i/D) + \cdots + C_N(L\K_i/D)\bigr] ,
\end{equation}
where $L = \max_i \{L_i\}$ is the size of the largest patch, whereas
each reactive capacitance $C_i$ depends implicitly on the patch size
$L_i$ via $a_i = L_i/L$.  This is an extension of the classical
relation that expresses the MFPT in terms of the capacitance $C_{\rm
tot}$ of a perfectly absorbing target.  Here, our structured target is
composed of multiple partially reactive patches, and the effective
capacitance $C_{\rm tot}$ is naturally expressed as the sum of the
reactive capacitances (up to a factor $L$, due to our definition of
$C_i$ to be dimensionless).  One sees that, to leading order, the
patches are fully represented by their reactive capacitances, which
are simply additive.  Moreover, one can substitute a partially
reactive patch of arbitrary shape by a perfectly reactive circular
patch of radius $L_i^{\rm eff} = 2L/(\pi C_i(L\K_i/D))$ such that
(\ref{eq:MFRT_leading}) remains unchanged.  However, this
``equivalence'' breaks down at higher-order terms in
(\ref{mfpt_b:main_res_1}) that account for the spatial arrangement
and the diffusional screening (or diffusive interactions) between
patches. 

(vi) When the reactivities $\K_i$ are fixed, one has $\kappa_i
\to 0$ in the limit $\eps\to 0$; using (\ref{eq:Ci_limit0}), one gets
$L C_i(\kappa_i) \approx \K_i L_i^2/(2\pi D)$ so that
(\ref{eq:MFRT_leading}) is reduced to $\overline{U} \sim
|\Omega|/(\K_1 L_1^2 + \cdots + \K_N L_N^2)$ in the leading order (see
\S \ref{sec:moderateK} for further details).  This relation
indicates that a Robin patch with fixed reactivity becomes
reaction-limited as $\eps\to 0$, i.e., the MFRT scales as $1/L_i^2$,
in sharp contrast to the $1/L_i$ scaling for Dirichlet patches.  This
is another manifestation of the singular nature of the limit $\K_i \to
\infty$ toward the Dirichlet condition.

For the special case of $N$ {\em identical} patches, we have
$\overline{C} = N C_1$, $\overline{E} = N E_1$, $U_0 = {2/(3N C_1)}$,
$\overline{U}_{10}/U_0 = - C_1/2$, $\overline{U}_{11}/U_0 = 2\pi C_1
({\bf e}^T \G {\bf e})/N + E_1/C_1$, with $\evec = (1,\ldots,1)^T$, so
that the asymptotic expansion (\ref{mfpt_b:main_res_2}) applied to the
dimensional volume-averaged MFRT in (\ref{mfpt_b:dimen}) yields
\begin{equation}  \label{eq:MFPT_Nidentical}
  \overline{U} \sim \frac{|\Omega|}{2\pi D N R} \left( \frac{1}{\eps C_1} +
    \frac{1}{2} \log\left({2/\eps}\right) 
    + \frac{2\pi}{N} ({\bf e}^T \G {\bf e}) + \frac{E_1}{C_1^2}
    + {\mathcal O}(\eps \log\eps)\right) \,,
\end{equation}
where $|\Omega|={4\pi R^3/3}$. For instance, for $N$ circular
perfectly reactive patches of radius $\eps R$, we have $C_1={2/\pi}$
and $E_1=(3-4\log 2)/\pi^2$ from (\ref{mfpt:cj_large_a}) and
(\ref{mfpt:Ej_asy_large}), respectively, so that
(\ref{eq:MFPT_Nidentical}) reduces to
\begin{equation}  \label{eq:MFPT_Nidentical_Kinf}
  \overline{U} \sim \frac{|\Omega|}{4D N R} \left(\frac{1}{\eps} +
    \frac{1}{\pi} \log\left({1/\eps}\right) 
    + \frac{4}{N} ({\bf e}^T \G {\bf e}) + \frac{3}{2\pi} - \frac{\log{2}}{\pi}
     + {\mathcal O}(\eps \log\eps)\right) \,.
\end{equation}
This expression corrects and extends, with its inclusion of the
${\mathcal O}(1)$ term, the seminal result by Singer {\it et al.}
\cite{Singer06a} that was obtained for a single circular patch
(note that the factor $1/\pi$ in front of the logarithmic term was
missing in \cite{Singer06a}).

\subsection{Numerical Comparison}\label{mfpt_sec:numerics}

We aim at validating the asymptotic results by comparison with a
numerical solution of the original PDE (\ref{mfpt:ssp}).  Different
numerical methods are available for this purpose, including a
hierarchical, fast multipole method \cite{Kaye20}, spectral expansions
\cite{Grebenkov19b}, finite-element methods \cite{Cheviakov13}, Monte
Carlo methods \cite{Plunkett24,Ye25}. However, most of these methods
are either specifically designed for perfectly reactive patches (i.e.,
Dirichlet boundary condition), or limited to not-too-small partially
reactive patches.  For our validation purposes, we resort to a basic
finite-element method (FEM) implemented in Matlab and focus on the
unit sphere with two identical circular patches of angle $\epsilon$,
centered at the north and south poles.  The symmetry of this setting
allows one to reduce the original 3-D problem to a planar one that can
be solved more readily numerically.   Indeed, in the spherical
coordinates $(r,\theta,\phi)$, the PDE (\ref{mfpt:ssp}) reads
\bsub \label{eq:planar_dec}
\begin{align}
  \frac{1}{r^2} \partial_r (r^2\, \partial_r u) +
  \frac{1}{r^2 \sin\theta} \partial_\theta (\sin \theta\, \partial_\theta u)
  & = - 1 \,, 
\quad (r,\theta) \in (0,1)\times (0,\pi) \,, \\
  \eps\partial_{r} u + \kappa_1 u & = 0\,, \quad r = 1, \,
                                    \theta \in(0,\epsilon) \,, \\ 
  \partial_{r} u & = 0\,, \quad r = 1, \,
                   \theta \in(\epsilon,\pi-\epsilon) \,, \\ 
  \eps\partial_{r} u + \kappa_2 u & = 0\,, \quad r = 1, \,
                                    \theta \in(\pi-\epsilon,\pi) \,, 
\end{align}
\esub 
which is independent of the angle $\phi$ (note that $\eps =
\sin(\epsilon)$, and thus $\eps \approx \epsilon$ for small patches).
Alternatively, one could use cylindrical coordinates $(r,z,\phi)$ to
deal with a simpler representation of the Laplace operator.

Figure~\ref{fig:MFPT} illustrates the volume-averaged MFRT
$\overline{u}$ from (\ref{mfpt_b:main_res_2}).  While the asymptotic
results were derived in the limit $\eps\to 0$, the volume-averaged
MFRT was computed numerically for a broad range of $\epsilon$, up to
${\pi/2}$, when one patch covers a hemisphere, whereas two patches
together cover the whole sphere (in this limiting case, one has
$\overline{u} = 1/15 + 1/(3\kappa)$, that served us for an independent
validation of the FEM solution).  Remarkably, Fig.~\ref{fig:MFPT}
reveals that the asymptotic formula (\ref{mfpt_b:main_res_2}) remains
accurate up to $\eps \simeq 0.8$, i.e., far beyond its expected range
of applicability.  In turn, Fig.~\ref{fig:MFPT_error} shows that the
relative error of the asymptotic formula (\ref{mfpt_b:main_res_2}),
which was predicted to be ${\mathcal O}(\eps^2 \log \eps)$ in a
general setting, is actually ${\mathcal O}(\eps^2)$ for this example.
Minor deviations at very small $\eps$ can be attributed to the
inaccuracy of the FEM solution, despite a rather small meshsize used.

\begin{figure}
  \centering
     \begin{subfigure}[b]{0.49\textwidth}  
      \includegraphics[width =\textwidth]{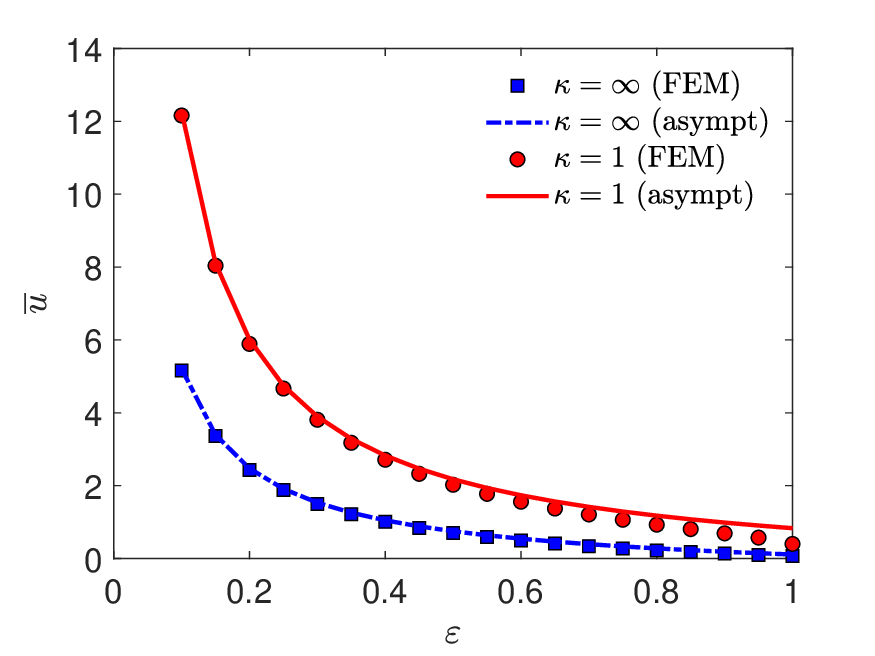} 
        \caption{Volume-averaged MFRT versus $\eps$}
        \label{fig:MFPT}
    \end{subfigure}  
    \begin{subfigure}[b]{0.49\textwidth}
      \includegraphics[width=\textwidth]{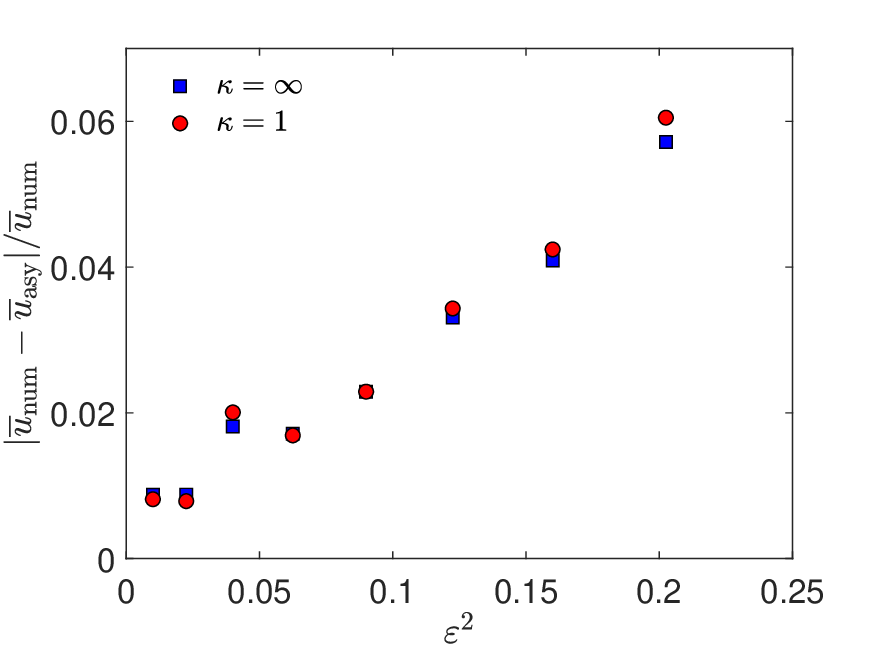}  
        \caption{Relative error versus $\eps^2$} 
        \label{fig:MFPT_error}
    \end{subfigure}
\caption{
{\bf (a)} Dimensionless volume-averaged MFRT $\overline{u}$ to two
identical circular patches of size $\eps$ (with $a_1 = a_2 = 1$),
located at the north and south poles of the unit sphere, with
$\kappa_1 = \kappa_2 = \infty$ (squares) or $\kappa_1 = \kappa_2 = 1$
(circles).  The curves present the asymptotic formula
(\ref{mfpt_b:main_res_2}), whereas symbols indicate the FEM solution
with the maximal meshsize $\hmax = 0.001$.
{\bf (b)} The relative error of the asymptotic formula
(\ref{mfpt_b:main_res_2}), as compared to the FEM solution.  }
\end{figure}

\subsection{Expansion for Moderate Reactivities}
\label{sec:moderateK}

The crucial advantage of our asymptotic analysis is that the
expansions (\ref{mfpt_b:main_res_1}) and (\ref{mfpt_b:main_res_2})
remain valid for any reactivities $\kappa_i$, even in the limit
$\kappa_i \to \infty$ of perfectly reactive patches.  In contrast,
former approaches usually fixed {\it finite} reactivities $\K_i$ and
then analyzed the limit $\eps \to 0$ such that $\kappa_i = \eps R
\K_i/D \to 0$ according to (\ref{intro:scalings}).  In this limit, one
can use the asymptotic behaviors of $C_i(\kappa_i)$ and
$E_i(\kappa_i)$ as $\kappa_i\to 0$ to derive more explicit expansions.
Throughout this subsection, we aim to express the volume-averaged MFRT
in terms of finite reactivities $\K_i$ and the small parameter $\eps$.

We first rewrite the Taylor expansion (\ref{eq:Cmu_Taylor}) for the
reactive capacitance in terms of dimensional parameters. Upon using
$a_i=L_i/(\eps R)$ from (\ref{intro:scalings}) together with
(\ref{eq:c1_def}) for $c_{1i}$, we obtain
\begin{equation}
  \begin{split}
C_i & = \frac{1}{2\pi \eps R D} \biggl(|\pa_i| \K_i - 2\pi
c_{2i} \frac{L_i^3 \K_i^2}{D} + 2\pi c_{3i} \frac{L_i^4 \K_i^3}{D^2} +
\ldots\biggr) \,.
\end{split}
\end{equation}
Here the dimensionless coefficients $c_{2i}$ and $c_{3i}$ are given in
(\ref{eq:c1_def}) for arbitrary patch shapes, while their exact
values for circular patches are given in (\ref{eq:cn_exact}).  We
emphasize that they depend only on the shape of the $i$-th patch, not
on its size. As a consequence, we get for $\overline{C}=\sum_{j=1}^{N} C_j$
that
\begin{equation}  \label{eq:averageC_eps}
\overline{C} = \frac{|\pa|}{2\pi \eps R D} 
\left(\overline{\K}^{(1)} - \overline{\K}^{(2)} + \overline{\K}^{(3)}
+ \0(\eps^5)\right) \,,
\end{equation}
where $|\pa|=4\pi R^2$ is the surface area of the sphere and where we
introduced the weighted reactivities defined by
\begin{subequations}
\begin{align}
  \overline{\K}^{(1)} & \equiv \frac{1}{|\pa|} \sum\limits_{i=1}^N |\pa_i| \K_i
                        = \sum\limits_{i=1}^N K_i \equiv \overline{K}\,,
\quad  \mbox{with} \quad K_i \equiv \K_i \frac{|\pa_i|}{|\pa|} \,, \\
  \overline{\K}^{(n)} & \equiv \frac{2\pi}{D^{n-1} |\pa|} \sum\limits_{i=1}^N c_{ni}
                        L_i^{n+1} \K_i^n \,, \quad \mbox{for} \quad n=2,3,
                        \ldots \,.
\end{align}
\end{subequations}
Since $L_i \sim \0(\eps)$ and $|\pa_i| \sim \0(\eps^2)$, we have
$\overline{\K}^{(n)} \sim \0(\eps^{n+1})$ for $n\geq 1$.

Substitution of the expansion (\ref{eq:averageC_eps}) into
(\ref{mfpt_b:main_U}), while using the binomial approximation, yields
\begin{equation}  \label{mfpt:U0_K}
U_0 = \frac{4\pi \eps R D}{3 |\pa| \overline{K}}
\left(1 + \frac{\overline{\K}^{(2)}}{\overline{K}} +
\frac{[\overline{\K}^{(2)}]^2 - \overline{\K}^{(3)} \overline{K}}{\overline{K}^2}
 + \0(\eps^5)\right) \,.
\end{equation}
We also derive from (\ref{mfpt_b:main_U}) that
\begin{equation}    \label{mfpt:U10_U0_K} 
  \frac{\overline{U}_{10}}{U_0}  = - \frac{1}{2\overline{C}}
  \sum\limits_{i=1}^N C_i^2 = - \frac{|\pa|}{4\pi \eps DR \overline{K}}
  \sum\limits_{i=1}^N K_i^2 + \0(\eps^2)  \,.
\end{equation}
To estimate ${\overline{U}_{11}/U_0}$ in (\ref{mfpt_b:main_U}), we first use
$E_i\sim C_i^{2}e_i$ from (\ref{eq:Ei_asympt0}) to obtain to leading order
that
\begin{equation}
  \frac{\overline{E}}{\overline{C}} = \frac{|\pa|}{2\pi \eps RD \overline{K}}
  \sum\limits_{i=1}^N e_i K_i^2 + \0(\eps^2)\,.
\end{equation}
In this way, we obtain in terms of ${\bf K}=(K_1,\ldots,K_N)^{T}$ that
\begin{equation} \label{mfpt:U11_U0_K}
\frac{\overline{U}_{11}}{U_0} = \frac{|\pa|}{2\pi \eps RD \overline{K}} 
\left(2\pi ({\bf K}^T \G {\bf K}) + \sum\limits_{i=1}^N e_i K_i^2 \right)
+ \0(\eps^2) \,.
\end{equation} 

Substituting the expansions (\ref{mfpt:U0_K}), (\ref{mfpt:U10_U0_K}),
and (\ref{mfpt:U11_U0_K}) into (\ref{mfpt_b:main_res_2}) and
(\ref{mfpt_b:dimen}), we finally obtain the following {\em four-term}
expansion for the dimensional volume-averaged MFRT:
\begin{equation}\label{mfpt:U_K}
 \begin{split}
\overline{U} & \sim \frac{|\Omega|}{|\pa|}
   \biggl(\underbrace{\frac{1}{\overline{K}}}_{\propto \eps^{-2}} +
 \underbrace{\frac{\overline{\K}^{(2)}}{\overline{K}^2}}_{\propto \eps^{-1}} 
               + \underbrace{ \log(2/\eps) \frac{|\pa| ({\bf K}^T {\bf K})}
               {4\pi DR \overline{K}^2}}_{\propto \log(\eps)}
               + \underbrace{\frac{[\overline{\K}^{(2)}]^2 - \overline{\K}^{(3)}
               \overline{K}}{\overline{K}^3}}_{\propto 1}  \\  
             & \qquad \qquad + \underbrace{\frac{|\pa|}{2\pi RD \overline{K}^2}
               \biggl( 2\pi ({\bf K}^T \G {\bf K})
        + \sum\limits_{i=1}^N e_i K_i^2 \biggr)}_{\propto 1} + \, o(1)\biggr)\,.
\end{split}
\end{equation}
This is the main result of this subsection.  Several comments are in
order: 

(i) The first (leading-order) term in (\ref{mfpt:U_K}) depends only on
the reactivities and surface areas of the patches. The second and the
third terms, providing the contributions $\0(\eps^{-1})$ and
$\0(\log \eps)$, also depend on the shape of the patches (via the
coefficients $c_{2i}$ and $c_{3i}$). Finally, the last term in
(\ref{mfpt:U_K}) of order $\0(1)$ incorporates the details on the
spatial arrangement of patches via the Green's matrix $\G$.

(ii) For $N$ identical patches of a common reactivity $\K$ and surface
area $|\pa_1|$, (\ref{mfpt:U_K}) reduces, with $\evec=(1,\ldots,1)^T$, to
\begin{equation} \label{mfpt:U_K_identical}
  \begin{split}
\overline{U} & \sim \frac{|\Omega|}{N}
\biggl(\frac{1}{\K |\pa_1|} + 2\pi c_{21}
\frac{L_1^3}{D |\pa_1|^2} + \frac{\log(2/\eps)}{4\pi DR}   
    + \frac{1}{2\pi RD} \biggl(e_1 + \frac{2\pi}{N} ({\bf e}^T \G {\bf e})
           \biggr)   \\
           &  \qquad \qquad + \frac{2\pi \K L_1^6}{D^2 |\pa_1|^3}
           \left(2\pi c_{21}^2 - c_{31} \frac{|\pa_1|}{L_1^2}\right)
   + o(1) \biggr)\,.
  \end{split}
\end{equation}
Interestingly, the second and the third terms do not depend on the
reactivity $\K$.  One might thus naively expect that these terms
remain valid even in the limit $\K \to \infty$.  However, this is not
true, as revealed by comparison of these terms with the first two
terms of the expansion (\ref{eq:MFPT_Nidentical}), which is valid for
any $\K$.  One sees that the logarithmic terms are indeed the same.
However, the coefficients in front of the contribution $\0(\eps^{-1})$
are in general different (e.g., for circular patches, the second term
in (\ref{mfpt:U_K_identical}) is $8/(3\pi^2 DR\eps)$, whereas the
first term in (\ref{eq:MFPT_Nidentical_Kinf}) is $1/(4DR\eps)$).  This
distinction originates from the singular character of the limit $\K
\to \infty$.  More specifically, the expansions (\ref{mfpt:U_K}) and
(\ref{mfpt:U_K_identical}) are based on the asymptotic behavior of
$C_i(\kappa_i)$ as $\kappa_i\to 0$, which is not applicable when
$\kappa_i = \infty$.

(iii) For $N$ identical circular patches of radius $\eps R = L_1$, we
have $c_{2}={4/(3\pi)}$ and $c_3\approx 0.3651$ as given in
(\ref{eq:cn_exact}). Moreover, since $E_1\sim {C_1^{2}/8}$ as
$\kappa_i\to 0$ from (\ref{mfpt:Ej_asy_small}), we identify from
(\ref{eq:Ei_asympt0}) that $e_1={1/8}$. In this way,
(\ref{mfpt:U_K_identical}) yields the explicit result
\begin{equation} \label{eq:MFRT_Ncircular}
  \begin{split}
\overline{U} & \sim \frac{|\Omega|}{N \pi}
\biggl(\frac{1}{\K R^2 \eps^2} + \frac{8}{3\pi R D \eps} 
               + \frac{1}{4DR} \log(2/\eps)  + \biggl(\frac{64}{9\pi^2} -
               2c_3\biggr) \frac{\K}{D^2}  \\ 
               & \qquad \qquad + \frac{1}{2RD} \biggl(\frac18 + \frac{2\pi}{N}
               ({\bf e}^T \G {\bf e}) \biggr) + o(1)\biggr) \,.
\end{split}
\end{equation}
First, we compare this expansion with an approximate expansion that
was derived in \cite{Grebenkov17a} for a single circular patch by
using a constant-flux approximation.  Despite the approximate
character of the expansion from \cite{Grebenkov17a}, its first three
terms turn out to be identical with those in our exact expansion
(\ref{eq:MFRT_Ncircular}).
Secondly, the expansion (\ref{eq:MFRT_Ncircular}) also agrees with the
expansion (5.22) from \cite{Grebenkov25}, which was obtained for a
single circular patch by a different method.

We conclude this subsection with two comments: 

(i) To leading order, the volume-averaged MFRT behaves as
$\overline{U} \sim |\Omega|/(|\pa| \overline{K})$, so that the
trapping effect of small patches is analogous to a parallel connection
of wires, whose resistances are inversely proportional to wire
cross-sectional areas $|\partial\Omega_i|$ and charge career's
mobilities (here $\K_i$) in electrostatics. Diffusion screening
between patches (i.e., their competition for diffusing particles)
appears only at higher orders.

(ii) The volume-averaged MFRT scales as $\eps^{-2}$, i.e., as the
surface area of the patches, which is drastically different from the
case of perfectly reactive patches, for which $\overline{U}$ scales as
$\eps^{-1}$. This crucial distinction between perfectly and partially
reactive patches was emphasized in \cite{Grebenkov17a} (see also
\cite{Grebenkov18a}).

\subsection{Effective Reactivity in the Homogenized Limit}\label{mfpt:homog}

We now derive a scaling law that characterizes the (dimensionless)
effective reactivity $\keff$ corresponding to a large number of
identical circular patches, with a common radius $\eps$ and reactivity
$\kappa$, that are uniformly distributed on the surface on the unit
sphere. A similar analysis has been done for the case of Dirichlet
patches in \cite{Cheviakov13}.

For this homogenized limit we define $\ueff=\ueff(|\x|)$ as the
radially symmetric solution to
\begin{equation}\label{homog:bvp}
  \Delta_{\x} \ueff = - 1 \,, \quad 0\leq |\x| < 1\,;
  \qquad \partial_{n} \ueff + \keff  \ueff=0 \,, \quad |\x|=1 \,.
\end{equation}
The solution to (\ref{homog:bvp}) and the homogenized volume-averaged
MFRT, defined as $\overline{u}_{\rm eff}\equiv |\Omega|^{-1} \int_{\Omega} \ueff
\, d\x$, are
\begin{equation}\label{homog:bvp_sol}
  \ueff(\x) =  \frac{1}{6}\left(1-|\x|^2\right) + \frac{1}{3 \keff}  \qquad
  \mbox{and} \qquad \overline{u}_{\rm eff} =
  \frac{1}{15} + \frac{1}{3\keff} \,.
\end{equation}

Since the reactive patches are all disks with radius $\eps$ and
reactivity $\kappa$, we set $C_i=C(\kappa)$ and $E_i=E(\kappa)$ in
(\ref{mfpt_b:main_res_2}) and (\ref{mfpt_b:main_U}) to obtain that
\begin{equation}\label{homog:uasy_ave}
  \overline{u}\sim \frac{2}{3N \eps C} \left[
    1 +\frac{\eps C}{2}\log\left({2/\eps}\right) + \eps C
    \left( \frac{P}{N} + \frac{E}{C^2} \right) \right] \,,
\end{equation}
where
$P=P(\x_1,\ldots,\x_N)\equiv 2\pi {\bf e}^T {\mathcal G}_s {\bf e}$
with ${\bf e}=(1,\ldots,1)^T$. Upon using the entries of the Green's
matrix ${\mathcal G}_s$ in (\ref{mfpt_b:green_mat}), as can be
calculated from (\ref{mfpt:gs_exact}), we obtain that
\begin{equation}\label{homog:peval}  
    P =-\frac{9N^2}{10} + N(N-1)\log{2} +
    2 {\mathcal H}(\x_1,\ldots,\x_N) \,,
\end{equation}
where the discrete energy ${\mathcal H}$ on the unit sphere is
defined by
\begin{equation}\label{homog:H}
    {\mathcal H}(\x_1,\ldots,\x_N)=\sum_{i=1}^{N}\sum_{j=i+1}^{N}
    \left( \frac{1}{|\x_i-\x_j|} - \frac{1}{2}\log|\x_i-\x_j| -\frac{1}{2}
      \log\left(2+|\x_i-\x_j|\right) \right) \,.
\end{equation}

For the homogenized limit, we require $\overline{u}=\overline{u}_{{\rm
eff}}$.  By using (\ref{homog:bvp_sol}), and in terms of ${\mathcal
H}$, we obtain after a little algebra that (\ref{homog:uasy_ave})
determines $\keff$ as
\begin{equation}\label{homog:uasy_nave}
  \frac{1}{\keff} \sim \frac{2}{N \eps C} \left[
    1 + \frac{\eps C}{2}\log\left({2/\eps}\right) + \eps C
    \left(\frac{E}{C^2} -  N +  (N-1)\log{2} + \frac{2{\mathcal H}}{N}
        \right) \right] \,.
\end{equation}

As derived formally in Appendix of \cite{Cheviakov13} (see also \S
4 of \cite{Cheviakov10}), for a large collection of uniformly
distributed patches with centers at $\x_i$ for $i=1,\ldots,N$, we have
for $N\to \infty$ that
\bsub \label{homog:hasy}
\begin{equation} \label{homog:hasy_1}
   {\mathcal H}\sim \frac{N^2}{2}\left(1-\log{2}\right) + b_1 N^{3/2}
   + b_2 N\log{N} + b_3 N + {\mathcal O}\left(N^{1/2}\right) \,,
 \end{equation}
 with coefficients
 \begin{equation}\label{homog:hasy_2}
   b_1=-\frac{1}{2}\,, \quad b_2=-\frac{1}{8} \,, \quad
   b_3=\frac{1}{2}\left(\log{2}-\frac{1}{4}\right) \,.
 \end{equation}
\esub

To derive a scaling law for $\keff$ we substitute (\ref{homog:hasy})
into (\ref{homog:uasy_nave}) and write the result in terms of the
surface fraction of patches, defined by $f\equiv {N\pi\eps^2/(4\pi)}$.
For the dilute fraction limit $f\ll 1$, we obtain after some algebra
that (\ref{homog:uasy_nave}) reduces to
\begin{equation}\label{homog:keff_final}
   \keff \sim \frac{2f C}{\eps} \left[
     1 + 4 b_1 C \sqrt{f} + \eps C \left(\frac{E}{C^2} - \frac{1}{4}
       -\frac{1}{4} \log f  \right) \right]^{-1} \,, 
\end{equation}
with $b_1 = -1/2$.  We remark that, as shown in \cite{Lindsay17} for a
related problem, the estimate $b_1\approx -0.5523$ should be slightly
better than using $b_1=-{1/2}$ as it accounts for the small defects
from the uniformly distributed patch assumption. Such defects will
always occur when tiling on a sphere. Related homogenization results,
but valid only for $\kappa=\infty$, have been derived in
\cite{Cheviakov13} and in \cite{Lindsay17} for the interior and
exterior problems, respectively.

This main result characterizes $\keff$ in terms of both $C$ and $E$,
which depend on the common local reactivity $\kappa$ of the patches.
In particular, for $\kappa\to \infty$, we set $a_i=1$ in
(\ref{mfpt:cj_large_a}) and (\ref{mfpt:Ej_asy_large}) to obtain $C
\sim {2/\pi}$ and ${E/C^2} \sim (3 - 4\log 2)/(2\pi)$, so that
(\ref{homog:keff_final}) becomes
\begin{equation}\label{homog:keff_big}  
\keff \sim \frac{4f}{\pi \eps} \left[
     1 + \frac{8 b_1}{\pi} \sqrt{f} + \frac{\eps}{\pi}
     \left(1 - \log{4}  -\frac{1}{2} \log{f} \right) \right]^{-1} \,, \quad
   \mbox{for}\quad
   \kappa\gg 1\,.
\end{equation}
Alternatively, for $\kappa\to 0$, (\ref{mfpt:cj_small_b}) and
(\ref{mfpt:Ej_asy_small}) with $a_i=1$ yield that $C\sim {\kappa/2}$ and $E\sim
{\kappa^2/32}$ so that (\ref{homog:keff_final}) becomes
\begin{equation}\label{homog:keff_small}
   \keff \sim \frac{f\kappa}{\eps} \left[
     1 + 2 b_1 \kappa \sqrt{f} - \frac{\eps \kappa}{16} \left(1 + 2\log{f}
     \right) \right]^{-1} \,,
   \quad \mbox{for} \quad \kappa\ll 1\,.
\end{equation}

In order to obtain an explicit approximation for $\keff$ valid for all
$\kappa>0$, one can use in (\ref{homog:keff_final}) the heuristic
approximations for $C$ and $E$ from \S \ref{mfpt_sec:prelim}.  Upon
setting $a_i=1$, we can use $C(\kappa)\approx \Cmu^{\rm app}(\kappa)$
and $E(\kappa) \approx {\mathcal E}^{\rm app}(\kappa)$, where the
explicit functions $\Cmu^{\rm app}(\kappa)$ and ${\mathcal E}^{\rm
app}(\kappa)$ are given by (\ref{mfpt:sigmoidal_2}) and
(\ref{mfpt:E_heur}), respectively.

Finally, we reformulate the homogenized result for the
dimensional problem (\ref{mfpt:ssp0}) in a sphere of radius $R$, 
covered by circular patches of a common radius $L\ll R$ and common
dimensional reactivity $\K_i=\K$ for $i=1,\ldots,N$. Upon recalling
(\ref{intro:scalings}) and $\eps={L/R}$, we use
(\ref{homog:keff_final}) to identify that
\begin{equation}\label{homog:dimen}
    \Keff = \frac{D}{L} \keff\sim 
  \frac{2 D R}{L^2} f C \left[
     1 + 4 b_1 C \sqrt{f} + \eps C \left(\frac{E}{C^2}-\frac{1}{4}
       - \frac14 \log f \right) \right]^{-1} \,, 
\end{equation}
where $C=C\left({L\K/D}\right)$ and $E=E\left({L\K/D}\right)$, and
$\Keff$ has units of ${\mbox{length}/\mbox{time}}$.  This result for
$\Keff$ pertains to the low patch area fraction
$f={N\left({L/R}\right)^2/4}\ll 1$.

\subsection{Laplacian Eigenvalue Problem with Reactive Patches}
\label{sec:mfrt:eig}

In this subsection, we briefly consider the following eigenvalue
problem for the Robin Laplacian in the unit sphere $\Omega$:
\bsub \label{mfpt_eig:ssp}
\begin{align}
  \Delta_{\x} \phi +\lambda \phi & = 0 \,, \quad \x \in \Omega \,; \qquad
   \int_{\Omega} \phi^2 \, d\x =1 \,,  \label{mfpt_eig:ssp_1}\\
  \eps\partial_{n} \phi + \kappa_i \phi & = 0\,, \quad \x \in
     \partial\Omega^{\eps}_i  \,, \quad i=1,\ldots,N \,, \\
  \partial_{n} \phi & = 0 \,, \quad \x \in \partial \Omega_r
  =\partial\Omega\backslash\partial\Omega_{a}\,. \label{mfpt_eig:ssp_2}
\end{align}
\esub 
In \S 3 of \cite{Cheviakov10}, a three-term expansion for the
principal (lowest) eigenvalue $\lambda_0$ of (\ref{mfpt_eig:ssp}) was
derived for a collection of well-separated perfectly reactive
($\kappa_i=\infty$) locally circular patches $\Omega_{i}^{\eps}$. We
now derive the corresponding result for partially reactive and
arbitrary-shaped patches. Instead of repeating an inner-outer
expansion analysis similar to that done for the MFRT and in \S 3 of
\cite{Cheviakov10}, we derive our result primarily from an
eigenfunction expansion solution of (\ref{mfpt:ssp}).

Labeling $\lambda_j=\lambda_j(\eps)$ and $\phi_j=\phi_j(\x;\eps)$ for
$j\geq 0$ to be the eigenpairs of (\ref{mfpt_eig:ssp}), for which
$\lambda_0\to 0$ as $\eps\to 0$ and $\lambda_j={\mathcal O}(1)$ as
$\eps \to 0$, we represent the solution to (\ref{mfpt:ssp}) as
$u=\sum_{j=0}^{\infty}\lambda_j^{-1}\phi_j\left(\int_{\Omega}\phi_{j}\,
d\x\right)$.  By calculating the
average $\overline{u}\equiv |\Omega|^{-1}\int_{\Omega}u\,d\x$, we
conclude that
\begin{equation}\label{sec:eig:ubar}
  \overline{u}=\frac{\left(\int_{\Omega} \phi_0\, d\x\right)^2}
  {|\Omega|\lambda_0} +  \sum_{j=1}^{\infty} \frac{\left(\int_{\Omega}\phi_j\,
      d\x\right)^2}{|\Omega|\lambda_j} \,,
\end{equation}
where a three-term expansion for $\overline{u}$ was given in
(\ref{mfpt_b:main_res_2}) of Proposition \ref{mfpt_b:main_res}.

As in \cite{Cheviakov10}, the principal eigenvalue $\lambda_0$ and
the corresponding eigenfunction $\phi_0$ in the outer region have the expansion
\begin{equation}\label{sec_eig:eigpair}
  \begin{split}
    \lambda_0 &=\eps\lambda_{00}+\eps^2\log\left(\frac{\eps}{2}\right)
    \lambda_{01} + \eps^2\lambda_{02} + {\mathcal O}(\eps^3\log\eps) \,,\\
   \phi_0 &= \phi_{00} + \eps \phi_{01} + \eps^2\log\left(\frac{\eps}{2}\right)
    \phi_{02} + \eps^2\phi_{03} + \ldots \,.
  \end{split}
\end{equation}

By substituting the expansion for $\phi_0$ into the normalization
condition in (\ref{mfpt_eig:ssp_1}), collecting powers of $\eps$, and
ignoring negligible ${\mathcal O}(\eps^3)$ contributions from the
inner regions near the patches, we obtain that
$\phi_{00}={1/|\Omega|^{1/2}}$, $\int_{\Omega}\phi_{01}\, d\x=0$,
$\int_{\Omega}\phi_{02}\, d\x=0$, and $\int_{\Omega}\phi_{03}\,
d\x\neq 0$. As a result, we estimate that
\begin{equation}\label{sec_eig:int_0}
  \left( \int_{\Omega} \phi_0 \, d\x\right)^2 \sim
  \left( |\Omega|^{1/2} + {\mathcal O}(\eps^2)\right)^2 \sim |\Omega| +
  {\mathcal O}(\eps^2) \,.
\end{equation}
Next, by the orthogonality property
$\int_{\Omega}\phi_j\phi_0\, d\x=0$ for $j\geq 1$, we use
(\ref{sec_eig:eigpair}) to estimate
\begin{equation}\label{sec_eig:int_j}
  0 = \int_{\Omega} \phi_j \phi_0\, d\x \sim \frac{1}{|\Omega|^{1/2}} \int_{\Omega}
  \phi_j \, d\x + \eps \int_{\Omega}\phi_{j}\phi_{01}\, d\x \,,
\end{equation}
which yields that
$\left(\int_{\Omega} \phi_j\, d\x\right)^2= {\mathcal O}(\eps^2)$ for
$j\geq 1$. By using this estimate and (\ref{sec_eig:int_0})
in (\ref{sec:eig:ubar}), we conclude that
$\overline{u} \sim \lambda_0^{-1}\left[ 1 + {\mathcal O}(\eps^2)
\right] + {\mathcal O}(\eps^2)$, which yields
\begin{equation}\label{sec_eig:uave_1}
  \lambda_0 \sim \frac{1+{\mathcal O}(\eps^2)}
  {\overline{u}-{\mathcal O}(\eps^2)}\,.
\end{equation}
By using the expansion (\ref{mfpt_b:main_res_2}) for $\overline{u}$
in (\ref{sec_eig:uave_1}) it follows that we can neglect the
${\mathcal O}(\eps^2)$ terms in (\ref{sec_eig:uave_1}), so that
\begin{equation}\label{sec_eig:lam0_1}
\lambda_0 \sim \frac{\eps}{U_0} \left( 1 + \eps \log\left(\frac{\eps}{2}\right)
    \frac{\overline{U}_{10}}{U_0} + \eps \frac{\overline{U}_{11}}{U_0}
     + {\mathcal O}(\eps^2 \log\eps) \right)^{-1} \,.
\end{equation}
Then, by using $(1+y)^{-1}\sim 1-y+{\mathcal O}(y^2)$ for $|y|\ll 1$,
we obtain the expansion
(\ref{sec_eig:eigpair}) for $\lambda_0$ in which
$\lambda_{01}={1/U_0}$, $\lambda_{02}=-{\overline{U}_{10}/U_0^2}$ and
$\lambda_{03}=-{\overline{U}_{11}/U_0^2}$.  Finally, by using
(\ref{mfpt_b:main_U}), we obtain our main result that
\begin{equation}\label{sec_eig:end}
  \lambda_0 \sim \frac{2\pi \overline{C}}{|\Omega|}\eps  +
  \eps^2\log\left(\frac{\eps}{2}\right)\frac{\pi \vc^{T}\vc}{|\Omega|}\\
   -\frac{2\pi \eps^2}{|\Omega|} \left(2\pi\vc^{T}{\mathcal G}_s\vc +
    \overline{E} \right) + {\mathcal O}(\eps^3\log^2\eps)\,,
\end{equation}
where $|\Omega|={4\pi/3}$, $\vc=(C_1,\ldots,C_N)^T$,
$\overline{C}=\sum_{i=1}^{N}C_i$, $\overline{E}=\sum_{i=1}^{N}E_i$,
and ${\mathcal G}_s$ is the Green's matrix, which depends on the patch
locations.  As for the MFRT, the leading-order term of this expansion
is valid for any bounded domain with a smooth boundary.

For the special case of perfectly reactive locally circular patches
for which $C_i={2a_i/\pi}$ and $E_i=E_i(\infty)$ is given in
(\ref{mfpt:Ej_asy_large}), we readily observe that (\ref{sec_eig:end})
agrees with the result in Proposition 3.1 of \cite{Cheviakov10}. For
circular patches, by using our heuristic approximations in
(\ref{mfpt:sigmoidal_2}) and (\ref{mfpt:E_heur}) for $C_i$ and $E_i$,
(\ref{sec_eig:end}) gives an explicit three-term approximation for the
principal eigenvalue $\lambda_0$ over the full range $\kappa_i>0$ of
reactivities.

\section{The Splitting Probability}\label{split:intro}

We use the method of matched asymptotic expansions to approximate
solutions to (\ref{split:ssp}) as $\eps\to 0$. Since the MFRT and
splitting probability problems have a similar structure, our asymptotic
analysis for (\ref{split:ssp}) relies heavily on that done in
\S \ref{mfpt_sec:expan}.

\subsection{Asymptotic Analysis}\label{split:expan}

In the outer region, we expand
\begin{equation}
    u \sim U_0 + \eps U_1 + \eps^2 \log\left( \frac{\eps}{2} \right)
   U_2 + \eps^2 U_3 + \cdots \,, \label{split_b:outex}
\end{equation}
where $U_0$ is a constant to be determined and where $U_k$ for $k\geq 1$
satisfies
\begin{equation}
  \Delta_{\x} U_k = 0 \,, \quad \x \in \Omega \,; 
  \qquad \partial_n U_k = 0 \,, \quad \x\in \partial\Omega\backslash
  \lbrace{\x_1,\ldots,\x_N\rbrace} \,.  \label{split_b:Uk}
\end{equation}
The singularity behavior for $U_k$ as $\x\to\x_i$ will be found by
matching.

To construct the inner solution we introduce local geodesic
coordinates near each $\x_i\in\partial\Omega$ to obtain
(\ref{mfpt:local}) with
$-\partial_{y_3}V+\kappa_iV=\delta_{i1}\kappa_i$ on $y_3=0$ and
$(y_1,y_2)\in \PT_i$.  In the inner region near the $i$-th Robin patch
we expand the inner solution as
\begin{equation}
  u  \sim  V_{0i} + \eps \log\left( \frac{\eps}{2} \right) V_{1i} +
  \eps V_{i2}  + \ldots \,. \label{split_b:innex}
\end{equation}
Owing to the different boundary condition on the target
$\partial\Omega_1^{\eps}$, we find that $V_{0i}$ satisfies
\bsub \label{split_b:V0}
\begin{align}
  \Delta_{\y} V_{0i} &= 0 \,, \quad
   \y \in \R_{+}^{3} \,, \label{split_b:V0_1}\\
  -\partial_{y_3} V_{0i} + \kappa_i V_{0i} &= \delta_{i1}\kappa_i \,,
                                             \quad y_3=0 \,,\,
    (y_1,y_2)\in \PT_i\,,  \label{split_b:V0_2}\\
    \partial_{y_3} V_{0i} &=0 \,, \quad y_3=0 \,,\, (y_1,y_2)\notin \PT_i
    \,. \label{split_b:V0_3}
\end{align}
\esub
Moreover, for $k=1,2$, we have that $V_{ki}$ satisfies
\bsub \label{split_b:Vk}
\begin{align}
  \Delta_{\y} V_{ki} &= \delta_{k2} \left( 2y_{3} V_{0i,y_3 y_3} + 2 V_{0i,y_3}
                       \right) \,, \quad
   \y \in \R_{+}^{3} \,, \label{split_b:Vk_1}\\
   -\partial_{y_3} V_{ki} + \kappa_i V_{ki} &=0 \,, \quad y_3=0 \,,\,
    (y_1,y_2)\in \PT_i\,,  \label{split_b:Vk_2}\\
    \partial_{y_3} V_{ki} &=0 \,, \quad y_3=0 \,,\, (y_1,y_2)\notin \PT_i
    \,. \label{mfpt_b:split_3}
\end{align}
\esub  

In terms of $w_{i}=w_{i}(\y;\kappa_i)$, which satisfies
(\ref{mfpt:wc}), the solution to (\ref{split_b:V0}) is
\begin{equation}\label{split_b:V0_sol}
  V_{0i}= U_0 + \left(\delta_{i1} - U_0\right) w_{i} \,, \quad \mbox{for}
  \quad i=1,\ldots,N\,.
\end{equation}
By using the far-field behavior (\ref{mfpt:wc_4}) for $w_i$, we have
\begin{equation*}
  V_{0i}\sim U_0 + \left(\delta_{i1} - U_0\right)\left(
    \frac{C_i}{|\y|} + \frac{\DT_i {\bf \cdot} \y}{|\y|^3} + \cdots
  \right) \,, \quad  \mbox{as} \quad |\y|\to \infty\,.
\end{equation*}
The matching condition as $\x\to \x_i$ and $|\y|\to \infty$ is that
\begin{equation}  \label{split_b:mat_1}
  \begin{split}
  U_0 + \eps U_1 + & \eps^2 \log\left( \frac{\eps}{2} \right)
  U_2 + \eps^2 U_3 + \ldots \\
  & \sim U_0 + \left(\delta_{i1} - U_0\right)\left(
    \frac{C_i}{|\y|} + \frac{\DT_i {\bf \cdot} \y}{|\y|^3} \right) +
   \eps \log\left( \frac{\eps}{2} \right) V_{1i} + \eps V_{2i} + \ldots \,.
\end{split}
\end{equation}
Upon using $|\y|\sim\eps^{-1}|\x-\x_i|$, (\ref{split_b:mat_1}) gives
the singularity behavior for $U_1(\x)$ as $\x\to \x_i$.

At order ${\mathcal O}(\eps)$ in the outer expansion
(\ref{split_b:outex}) we obtain that $U_1(\x)$ satisfies
\bsub\label{split_b:U1prob}
\begin{gather}
  \Delta_{\x} U_{1}= 0 \,, \quad \x\in \Omega \,; \qquad
  \partial_n U_1=0 \,, \quad \x\in \partial\Omega\backslash
  \lbrace{\x_1,\ldots,\x_N\rbrace} \,, \\
  U_1\sim -\frac{\left(U_{0}-\delta_{i1}\right)C_i}{|\x-\x_i|}\,,
  \quad \mbox{as} \quad
  \x\to\x_i \in \partial\Omega \,, \quad i=1,\ldots,N \,.
\end{gather}
\esub
The solvability condition for (\ref{split_b:U1prob}) determines $U_0$ as
\begin{equation}\label{split_b_b:U0sol}
  U_0 = \frac{C_1}{\overline{C}}\,,  \quad \mbox{where} \quad
  \overline{C}\equiv \sum_{j=1}^{N} C_{j} \,. 
\end{equation}
In terms of the surface Neumann Green's function $G_s$, given in
(\ref{mfpt:gs_exact}), the solution to (\ref{split_b:U1prob}) is
\begin{equation}
  U_1 = \overline{U}_1 -2\pi \sum_{j=1}^{N} \left(U_{0}-\delta_{j1}\right)
  C_j G_{s}(\x;\x_j) \,, \quad
  \mbox{where} \quad  \overline{U}_1\equiv |\Omega|^{-1}\int_{\Omega} U_1 
  \, d\x \,.  \label{split_b:U1sol}
\end{equation}
As similar to that for the MFRT problem in \S \ref{mfpt_sec:expan}, we
expand $\overline{U}_1$ as
\begin{equation}\label{split_b:U1cons}
  \overline{U}_1 = \overline{U}_{10} \log\left(\frac{\eps}{2}\right) +
  \overline{U}_{11}\,,
\end{equation}
where $\overline{U}_{10}$ and $\overline{U}_{11}$ are constants
independent of $\eps$, which are to be determined.

Next, we expand $U_1$ in (\ref{split_b:U1sol}) as $\x\to\x_i$ to obtain,
after some algebra, that
\begin{equation}\label{split_b:U1_expan}
  \begin{split}
    U_{1} &\sim -\left(U_{0}-\delta_{i1}\right) \frac{C_i}{\eps |\y|} +
    \left(\overline{U}_{10} + \frac{\left(U_0-\delta_{i1}\right)C_i}{2} \right)
    \log\left(\frac{\eps}{2}\right)  + \gamma_i + \overline{U}_{11}\\
    & \qquad + \frac{\left(U_0-\delta_{i1}\right)C_i}{2}
    \left( \log(y_3+|\y|) - \frac{y_3(y_1^2+y_2^2)}{|\y|^3}\right)  \,,
  \end{split}
\end{equation}
where $\gamma_i$ is the $i$-th component of the vector ${\bm \gamma}$
defined by
\begin{equation}\label{split_b:gamma}  
  {\bm \gamma} \equiv 2\pi C_1 \left(R_s,G_{12},\ldots,G_{1N}\right)^{T} - 2\pi U_0 {\mathcal G}_s \vc \,. 
\end{equation}
Here $R_s=-{9/(20 \pi)}$, $G_{1j}=G_{s}(\x_1;\x_j)$, and ${\mathcal G}_s$
is the Green's matrix of (\ref{mfpt_b:green_mat}).

Upon substituting (\ref{split_b:U1_expan}) into the matching condition
(\ref{split_b:mat_1}), we conclude that the dominant
${\mathcal O}\left(\eps\log\left({\eps/2}\right)\right)$ terms
determine the far-field behavior for the inner correction $V_{1i}$ in
(\ref{split_b:innex}). In particular, we find that $V_{1i}$ satisfies
(\ref{split_b:Vk}) with $k=1$ subject to
$ V_{1i} \sim \overline{U}_{10} + {\left( U_0 -
    \delta_{i1}\right)C_i/2}$ as $|\y|\to \infty$. In terms of $w_{i}$,
satisfying (\ref{mfpt:wc}), we obtain that $V_{1i}$ is given by
\begin{equation}\label{split_b:V1sol}
  V_{1i}= \left(\overline{U}_{10} + \frac{\left(U_0 -\delta_{i1}\right)C_i}{2}
  \right)\left(1-w_{i}\right)\,.
\end{equation}

We substitute (\ref{split_b:V1sol}) into the matching condition
(\ref{split_b:mat_1}), where we use $w_{i}\sim{ C_i/|\y|}$ as
$|\y|\to \infty$ from (\ref{mfpt:wc_4}). This provides the required
singularity behavior of the outer correction $U_2$ in
(\ref{split_b:outex}). In this way, we find that $U_2$ satisfies
\bsub \label{split_b:U2prob}
\begin{gather}
  \Delta_{\x} U_{2} = 0 \,, \quad \x\in \Omega \,; \qquad \partial_n
  U_2=0 \,, \quad \x\in \partial\Omega\backslash
  \lbrace{\x_1,\ldots,\x_N\rbrace} \,, \\
  U_2\sim -\left( \overline{U}_{10} + \frac{\left(
      U_0-\delta_{i1}\right)C_i}{2} \right) \frac{C_i} {|\x-\x_i|}\,, \quad
  \mbox{as} \quad \x\to\x_i \in \partial\Omega \,, \quad i=1,\ldots,N
  \,.
\end{gather}
\esub 
From the divergence theorem we find that (\ref{split_b:U2prob}) is
solvable only when $\overline{U}_{10}$ satisfies
$2\overline{U}_{10}\sum_{j=1}^{N} C_j=-\sum_{j=1}^{N}
C_j^2\left(U_0-\delta_{j1} \right)$. Upon using
(\ref{split_b_b:U0sol}) for $U_0$, we obtain in terms of
$\vc=(C_1,\ldots,C_N)^T$ and $\overline{C}= \sum_{j=1}^{N}C_j$
that
\begin{equation}\label{split_b:U10}
  \frac{\overline{U}_{10}}{U_0} = -\frac{1}{2\overline{C}} \left(
    \vc^T\vc - \overline{C} C_1 \right)\,.
\end{equation}
Then, the solution to (\ref{split_b:U2prob}) is written in terms of an
unknown constant $\overline{U}_{2}$ as
\begin{equation}\label{split_b:U2}
  U_2 = \overline{U}_2 -2\pi \sum_{j=1}^{N} \left(\overline{U}_{10} +
    \frac{\left( U_0 -\delta_{j1}\right)C_j}{2} \right) C_j G_{s}(\x;\x_j)
    \,.
\end{equation}

Next, we continue our expansion to higher order by substituting
(\ref{split_b:U1_expan}) into (\ref{split_b:mat_1}). We conclude
that $V_{2i}$ satisfies (\ref{split_b:Vk}) with $k=2$ subject to the
far-field behavior
\begin{equation}
  V_{2i} \sim \frac{\left(U_0-\delta_{i1}\right) C_i}{2} \left(
    \log(y_3 + |\y|) - \frac{y_3 (y_1^2 + y_2^2)}{|\y|^3} \right) +
    \gamma_i + \overline{U}_{11} \,, \quad \mbox{as} \quad |\y| \to \infty \,. 
\label{split_b:V2inf}
\end{equation}
Since $V_{0i}= U_0 + \left(\delta_{i1} - U_0\right) w_{i}$ from
(\ref{split_b:V0_sol}), we decompose $V_{2i}$ as 
\begin{equation}\label{split_b:V2decom_1}
  V_{2i} = \left(U_0-\delta_{i1}\right) \left[ \Phi_{2i} +
  \frac{\left(\gamma_i+\overline{U}_{11}\right)}{U_{0}-\delta_{i1}}
   (1-w_{i})\right] \,.
\end{equation}
We conclude from (\ref{split_b:Vk}) with $k=2$ and
(\ref{split_b:V2inf}) that $\Phi_{2i}$ satisfies (\ref{mfpt_b:Phi2})
of \S \ref{mfpt_sec:expan}.  As a result, the refined far-field
behavior of $\Phi_{2i}$ is given by (\ref{mfpt_b:Phi2_ff}) in terms of
$E_i=E_i(\kappa_i)$, which is determined by the far-field behavior of
(\ref{mfpt:inn2_probh}) (see also Appendix \ref{app_h:inn2}).

To determine $\overline{U}_{11}$ we will impose a solvability
condition on the problem for the outer correction $U_3$ in
(\ref{split_b:outex}). To obtain the singularity behavior for $U_3$
as $\x\to \x_i$, we substitute (\ref{mfpt_b:Phi2_ff}) and $w_{i}\sim
{C_i/|\y|}$ as $|\y|\to \infty$ into (\ref{split_b:V2decom_1}) to
conclude that $V_{2i}$ satisfies the refined far-field behavior
\begin{equation}\label{split_b:V2ff_refine}
  \begin{split}
    V_{2i} &\sim \gamma_i + \overline{U}_{11} +
    \frac{\left(U_0-\delta_{i1}\right) C_i}{2} \left(
      \log(y_3 + |\y|) - \frac{y_3 (y_1^2 + y_2^2)}{|\y|^3} \right) \\
    &\qquad + \frac{E_i\left(U_0-\delta_{i1}\right)}{|\y|} -
    \frac{\left(\gamma_i+\overline{U}_{11}\right)C_{i}}{|\y|} \,,
    \quad \mbox{as} \quad |\y| \to \infty \,.
  \end{split}
\end{equation}
The second line in (\ref{split_b:V2ff_refine}) is one of the two terms
that provides the singularity behavior for $U_3$ in the matching
condition (\ref{split_b:mat_1}). The other term is the dipole term
given in (\ref{split_b:mat_1}), which is written in terms of outer
variables using (\ref{app_g:change}) of Appendix \ref{app_g:geod}.

In this way, upon substituting (\ref{split_b:V2ff_refine}) into the
matching condition (\ref{split_b:mat_1}), we conclude that $U_3$ in the
outer expansion (\ref{split_b:outex}) must satisfy
\bsub \label{split_b:U3prob}
\begin{align}
  \Delta_{\x} U_{3} &= 0 \,, \quad \x\in \Omega \,; \qquad
                      \partial_n U_3=0 \,, \quad \x\in \partial\Omega\backslash
                      \lbrace{\x_1,\ldots,\x_N\rbrace} \,, \\
  U_3 &\sim \frac{E_i \left(U_0-\delta_{i1}\right)}{|\x-\x_i|} 
        - \frac{\left(\gamma_i + \overline{U}_{11}\right)C_i}{|\x-\x_i|}
        \nonumber\\
        & \quad + \left(\delta_{i1}-U_0\right)
    \frac{\DT_i {\bf \cdot} {\mathcal Q}_i^T (\x-\x_i)}{|\x-\x_i|^3}\,,
         \quad \mbox{as} \quad \x\to\x_i \in \partial\Omega \,, \quad
                i=1,\ldots,N \,.
\end{align}
\esub 
From the divergence theorem, (\ref{split_b:U3prob}) has a solution if
and only if $\sum_{j=1}^{N} E_j\left(U_0-\delta_{j1}\right) =
\sum_{j=1}^{N} \left(\gamma_j+\overline{U}_{11}\right)C_j$, where we
observe that the contribution from the dipole term again vanishes
identically.  By using (\ref{split_b:gamma}) for $\gamma_i$, we solve
for $\overline{U}_{11}$ as
\begin{equation*}
  \overline{U}_{11} \overline{C} = U_0\sum_{j=1}^{N} E_j - E_1 +
  2\pi U_0 \vc^{T} {\mathcal G}_s \vc - 2\pi C_1 \left({\mathcal G}_s \vc
  \right)_{1}\,,
\end{equation*}
which yields
\begin{equation}\label{split_b:u11_1}
  \overline{U}_{11} = \frac{U_0\sum_{j=1}^{N} E_j}{\overline{C}} -
   \frac{E_1}{\overline{C}}
  + \frac{2\pi C_1}{\overline{C}} \left(
    \frac{\vc^T {\mathcal G}_s \vc}{\overline{C}} - \left(
      {\mathcal G}_s \vc \right)_{1} \right) \,.
\end{equation}

We summarize our main result for the splitting probability $u(\x)$ and
the volume-averaged splitting probability $\overline{u} =
|\Omega|^{-1}\int_{\Omega} u(\x) \, d\x$ in the small-patch
limit as follows:

\begin{prop}\label{splitt_b:main_res} 
As $\eps \to 0$, the asymptotic solution to (\ref{split:ssp}) is given
in the outer region $|\x-\x_i|\gg {\mathcal O}(\eps)$ for
$i=1,\ldots,N$ by
\begin{equation}\label{split_b:main_res_1}
  \begin{split}
  u(\x) &\sim U_{0}\left[ 1 + \eps\log\left(\frac{\eps}{2}\right)
    \frac{\overline{U}_{10}}{U_0} + \eps\left(\frac{\overline{U}_{11}}{U_0} + 
      2\pi \sum_{j=1}^{N} \frac{\left(\delta_{j1}-U_0\right)}{U_0}
      C_jG_{s}(\x;\x_j) \right)
    \right.\\
    & \left. + \eps^2\log\left(\frac{\eps}{2}\right)\left(\frac{\overline{U}_2}{U_{0}} + 
	2\pi \sum_{j=1}^{N} C_j \left(\frac{C_j\left(\delta_{j1}-U_0\right)}{2U_0}
          -\frac{\overline{U}_{10}}{U_0}\right)
      G_{s}(\x;\x_j) \right) + {\mathcal O}(\eps^2)
  \right].
  \end{split}
\end{equation}
The volume-averaged splitting probability $\overline{u}$ satisfies
\begin{equation}\label{split_b:main_res_2}
  \overline{u}\sim U_0 \left[ 1 +
    \eps\log\left(\frac{\eps}{2}\right)
    \frac{\overline{U}_{10}}{U_0} + \eps \frac{\overline{U}_{11}}{U_0} +
    {\mathcal O}\left(\eps^2\log \eps\right)\right] \,.
\end{equation}
In (\ref{split_b:main_res_1}) and (\ref{split_b:main_res_2}), $U_0$,
$\overline{U}_{10}$ and $\overline{U}_{11}$ are determined in terms of
$\vc=(C_1,\ldots,C_N)^T$, $\overline{C}=\sum_{j=1}^{N} C_j$,
$\overline{E} = \sum_{j=1}^{N} E_j$, and the Green's matrix
${\mathcal G}_s$ from (\ref{mfpt_b:green_mat}) by
\begin{equation}\label{split_b:main_U}
  \begin{split}
  U_0 &=\frac{C_1}{\overline{C}} \,,\qquad
  \frac{\overline{U}_{10}}{U_0}= -\frac{1}{2\overline{C}} \left(
    \vc^T\vc - C_1 \overline{C}\right) \,, \\
  \frac{\overline{U}_{11}}{U_0} &= \left( \frac{\overline{E}}{\overline{C}}-
    \frac{E_1}{C_1}\right) + 2\pi \left(\frac{\vc^T {\mathcal G}_s \vc}
    {\overline{C}} - \left({\mathcal G}_s \vc \right)_{1} \right) \,.
  \end{split}
\end{equation}
In (\ref{split_b:main_res_1}), $\overline{U}_2$ remains unknown and
can only be found at higher order, $C_i=C_i(\kappa_i)$ is obtained
from (\ref{mfpt:wc}), while $E_i=E_i(\kappa_i)$ is given by
(\ref{eq:Ei_general0}) for arbitrary-shaped patches and by
(\ref{mfpt:Ej_all}) when the patches are disks. Their asymptotic
behaviors for small and large reactivities are given in Lemmas
\ref{lemma:Cj_kappa} and \ref{lemma:Ej_kappa}.  When the patches are
disks, heuristic approximations for $C_i$ and $E_i$ valid for all
$\kappa_i>0$ are given in (\ref{mfpt:sigmoidal_2}) and
(\ref{mfpt:E_heur}).

To obtain the corresponding result in terms of dimensional variables,
we use (\ref{intro:scalings}) to conclude that we need only replace
$C_i=C_i\left({L\K_i/D}\right)$ and $E_i=E_i\left({L\K_i/D}\right)$ in
(\ref{split_b:main_res_2}) and (\ref{split_b:main_U}), while
evaluating the Green's matrix ${\mathcal G}_s$ at $\x_i={\X_i/R}$.
\end{prop}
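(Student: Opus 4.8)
The plan is to assemble Proposition~\ref{splitt_b:main_res} directly from the matched asymptotic construction already set up in \S\ref{split:expan}, collecting the solvability conditions obtained at successive orders into the three stated formulas. The conceptual skeleton is identical to the MFRT analysis of \S\ref{mfpt_sec:expan}; the only structural novelty is that the governing equation (\ref{split:ssp_1}) is homogeneous, so the outer expansion (\ref{split_b:outex}) opens at $\mathcal{O}(1)$ with an undetermined constant $U_0$, and the forcing enters solely through the inhomogeneous patch data $\delta_{i1}\kappa_i$. The essential input is the leading inner solution (\ref{split_b:V0_sol}), $V_{0i}=U_0+(\delta_{i1}-U_0)w_i$, built from the canonical patch solution $w_i$ of (\ref{mfpt:wc}); its far-field monopole strength $(\delta_{i1}-U_0)C_i$ is what drives all subsequent matching, so the remaining argument runs in lockstep with \S\ref{mfpt_sec:expan} upon replacing the factor $U_0$ there by $(U_0-\delta_{i1})$ here.

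First I would record the three solvability conditions, each from applying the divergence theorem to the harmonic problem (\ref{split_b:Uk}) carrying the prescribed Coulomb singularities. Matching at $\mathcal{O}(\eps)$ fixes the singularity of $U_1$ in (\ref{split_b:U1prob}) and gives $U_0=C_1/\overline{C}$ as in (\ref{split_b_b:U0sol}). Representing $U_1$ through the surface Neumann Green's function of (\ref{mfpt:gs_exact}), splitting its regular constant as the switchback sum (\ref{split_b:U1cons}), and expanding locally via Lemma~\ref{lemma:green} yields the singularity of $U_2$, whose solvability produces $\overline{U}_{10}$ in (\ref{split_b:U10}). One order further, the decomposition (\ref{split_b:V2decom_1}) isolates the inhomogeneous inner solution $\Phi_{2i}$ of (\ref{mfpt_b:Phi2}), whose refined far-field (\ref{mfpt_b:Phi2_ff}) contributes the monopole $E_i$; the solvability condition for $U_3$ in (\ref{split_b:U3prob}) then delivers $\overline{U}_{11}$ in (\ref{split_b:u11_1}), which rearranges (using $U_0=C_1/\overline{C}$) into the third line of (\ref{split_b:main_U}). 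The outer expansion (\ref{split_b:main_res_1}) is then obtained by substituting (\ref{split_b:U1sol}) and (\ref{split_b:U2}) back into (\ref{split_b:outex}).

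For the volume-averaged result (\ref{split_b:main_res_2}) I would integrate (\ref{split_b:main_res_1}) over $\Omega$ and invoke the normalization $\int_{\Omega} G_s\,d\x=0$ from (\ref{mfpt:sph}); this annihilates every Green's-function term and leaves only the constants $U_0$, $\overline{U}_{10}$, and $\overline{U}_{11}$, reproducing (\ref{split_b:main_res_2}) to the stated order. The dimensional statement then follows by reinstating $C_i=C_i\left({L\K_i/D}\right)$ and $E_i=E_i\left({L\K_i/D}\right)$ via (\ref{intro:scalings}) and evaluating $\mathcal{G}_s$ at $\x_i=\X_i/R$.

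The main obstacle, as in the MFRT case, is confirming that the dipole terms do not contaminate the solvability conditions. The far-field of $w_i$ in (\ref{mfpt:wc_4}) generally carries a dipole $\DT_i=(p_{1i},p_{2i},0)^T$, and one must check that its contribution to the $U_3$ problem (\ref{split_b:U3prob}) drops out of the divergence-theorem balance; this holds precisely because $\DT_i$ is tangential to the sphere, exactly as argued for (\ref{mfpt_b:u11_1}). The secondary delicacy is the consistent propagation of the factor $(\delta_{i1}-U_0)$ through the switchback constant and the monopole $E_i$, which is what produces the asymmetric combination $\overline{E}/\overline{C}-E_1/C_1$ together with the distinguished term $(\mathcal{G}_s\vc)_1$ singling out the target patch in (\ref{split_b:main_U}).
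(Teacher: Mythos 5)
Your proposal is correct and follows essentially the same route as the paper's own derivation in \S\ref{split:expan}: the same outer and inner expansions, the same three divergence-theorem solvability conditions yielding $U_0$, $\overline{U}_{10}$ and $\overline{U}_{11}$ via (\ref{split_b:U1prob}), (\ref{split_b:U2prob}) and (\ref{split_b:U3prob}), the same decomposition (\ref{split_b:V2decom_1}) isolating $\Phi_{2i}$ and its monopole $E_i$, and the same symmetry argument that the tangential dipole $\DT_i=(p_{1i},p_{2i},0)^T$ drops out of the final solvability balance. The passage to the volume-averaged result via $\int_{\Omega}G_s\,d\x=0$ and the dimensional restatement also match the paper, so there is nothing to add.
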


We remark that in the limit $\kappa_i\to 0$ for $i=1,\ldots,N$, it
follows from Lemma \ref{lemma:Cj_kappa} and Lemma \ref{lemma:Ej_kappa}
that $C_i \sim {\kappa_i a_i^2/2}$ and $E_i={\mathcal
  O}(\kappa_i^2)$. In this limit, we obtain from
(\ref{split_b:main_U}) that $U_0={\mathcal O}(1)$, while both
${\overline{U}_{10}/U_0}$ and ${\overline{U}_{11}/U_0}$ are
${\mathcal O}(\kappa_i)$ as $\kappa_i\to 0$. As a result, we conclude
that the asymptotic expansions in (\ref{split_b:main_res_1}) and
(\ref{split_b:main_res_2}) remain well-ordered in $\eps$ in the limit
$\kappa_i\to 0$ for $i=1,\ldots,N$.  Moreover, since 
\begin{equation}
  U_0 \approx \frac{\kappa_1 a_1^2}{\kappa_1 a_1^2 + \cdots + \kappa_N a_N^2} \,,
  \quad \mbox{for} \quad \kappa_i\ll 1 \,, \quad i=1,\ldots,N \,,
\end{equation}
we observe that the leading-order splitting probability is determined
by the relative reactive surface $\kappa_1 a_1^2$ of the first patch
as compared to other patches.  When all reactivities $\K_i$ are finite
and fixed, one has $\kappa_i = \eps \K_i R/D \to 0$ as $\eps\to 0$, so
that
\begin{equation}
  \overline{U} \sim U_0 \approx \frac{|\pa_1| \K_1}{|\pa_1| \K_1  +
    \cdots + |\pa_N| \K_N}  \,,
\end{equation}
to leading order in $\eps$.  This shows that to leading-order the
trapping capacity of the $i$-th patch is simply the product of its
reactivity and surface area.  We emphasize that this approximate
relation is not valid if at least one reactivity is infinite (see a
numerical example in the next subsection).

To complete this subsection, we briefly mention another physical
interpretation of the considered setting.  Upon multiplying the
splitting probability by a prescribed concentration $c_{\rm ext}$,
$c(\X) = c_{\rm ext} U(\X)$, we transform the original BVP
(\ref{split:ssp_0}) into
\bsub 
\begin{align}
  \Delta c &= 0 \,, \quad \X \in \Omega \,, \\
  D\partial_{n} c + \K_i c &= \delta_{i1}\K_i c_{\rm ext} \,, \quad
      \X \in \partial \Omega_i \,, \quad i=1,\ldots,N \,, \\
  \partial_{n} c &= 0 \,, \quad \X \in \partial \Omega_r \,.
\end{align}
\esub
Here $c(\X)$ is a steady-state concentration of particles in the
presence of a {\it source} on the Robin patch $\partial \Omega_1$ and
partially reactive sinks on the other patches $\partial \Omega_i$ with
$i = 2,\ldots,N$.  Rewriting the boundary condition on
$\partial\Omega_1$ as $-D\partial_n c = \K_1(c - c_{\rm ext})$, one
sees that the diffusive flux density from the bulk in the left-hand
side is equal to an exchange term on the right.  For instance, this
term can represent an exchange of particles across a semi-permeable
membrane $\partial \Omega_1$ between the domain $\Omega$ and an
exterior reservoir with a prescribed concentration $c_{\rm ext}$.  A
similar problem in heat transfer is known as Newton cooling.

\subsection{Numerical Example}\label{split_sec:numerics}

As an example, we consider two circular patches located at the north
and south poles of the unit sphere.  Figure~\ref{fig:split}
illustrates how the volume-averaged splitting probability
$\overline{u}$ depends on the reactivity $\kappa_1 = \kappa$ of the
first patch, where the second patch is assumed to be perfectly
reactive ($\kappa_2 = \infty$).  We consider two scenarios: two
patches of the same radius (i.e., $a_1 = a_2 = 1$), and two patches of
different radii ($0.5 = a_2 < a_1 = 1$).  In both cases, the
asymptotic result (\ref{split_b:main_res_2}) is in very close
agreement with the FEM solution of the BVP (\ref{split:ssp}) over a
broad interval of reactivities, ranging from $10^{-2}$ to $10^2$.
When two patches are of the same radius, the splitting probability
approaches $1/2$ in the limit $\kappa\to\infty$.  Curiously, even for
a weak reactivity (e.g., $\kappa = 0.1$), the first patch has a
non-negligible chance of capturing the particle.

\begin{figure}
\begin{center}
\includegraphics[width=88mm]{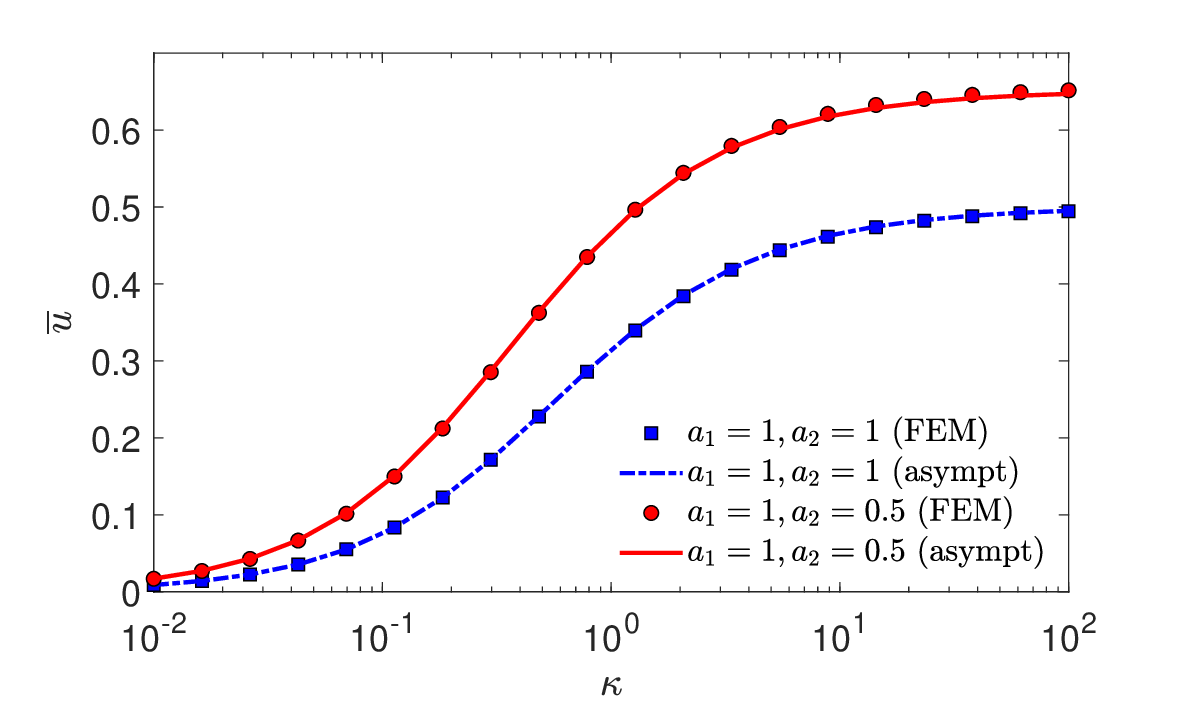} 
\end{center}
\caption{
The volume-averaged splitting probability $\overline{u}$ to the first
circular patch of reactivity $\kappa_1 = \kappa$ in the presence of
the second circular patch of infinite reactivity ($\kappa_2 =
\infty$).  Two patches are located at the north and south poles of the
unit sphere.  Two configurations are considered: (i) patches of
equal radii ($a_1 = a_2 = 1$, $\eps = 0.2$) and (ii) patches of
different radii ($a_1 = 1$, $a_2 = 0.5$, $\eps = 0.4$).  The curves
present the asymptotic formula (\ref{split_b:main_res_2}), while the
symbols indicate a FEM solution with the maximal meshsize $\hmax =
0.0025$.}
\label{fig:split}
\end{figure}

\section{The Steklov-Dirichlet-Neumann (SDN)
 Eigenvalue Problem}\label{stekDN:intro}

Next, we analyze the SDN eigenvalue problem (\ref{sdn:eig}) in the
unit sphere $\Omega$, where we refer to $\partial \Omega_1^{\eps}$ as
the Steklov patch and to $\partial \Omega_i^{\eps}$ for $i=2,\ldots,N$
as Dirichlet patches.  As previously, each boundary patch
$\partial\Omega_i^{\eps}$ having diameter ${\mathcal O}(\eps)\ll 1$ is
assumed to be simply-connected with a smooth boundary, but with an
otherwise arbitrary shape and satisfies
$\partial\Omega_i^{\eps}\to\x_i\in \partial\Omega$.  For convenience,
in our analysis below we will normalize the Steklov eigenfunctions of
(\ref{sdn:eig}) by
\begin{equation}\label{sdn:eig_norm}
  \int_{\partial\Omega_{1}^{\eps}} u^2 \, d{\bf s} =1\,,
\end{equation}
where the surface area element on the unit sphere in geodesic
coordinates is $d{\bf s}=\cos(\xi_1)d\xi_1 d\xi_2$ (see Appendix
\ref{app_g:geod}).  

As in \S \ref{mfpt_sec:expan} and \S \ref{split:intro}, the dependence
of the inner solution near the Steklov patch $\PT_1$ on the spectral
parameter $\sigma$ at each order in $\eps$ will rely on properties of
the parameterized solution $w_{1}\equiv w_{1}(\y;-\sigma)$ to the
canonical problem (\ref{mfpt:wc}).  Here the dependences of the
monopole term $C_1$ and the dipole term $\DT_1$ on $\sigma$ are
obtained by setting $\kappa_1=-\sigma$ in (\ref{mfpt:wc}).  However,
the ``negative reactivity'' $\kappa_1 < 0$ presents the crucial
difference with respect to the previously considered problems in \S
\ref{mfpt_sec:expan} and \S \ref{split:intro}.  In fact, from the
Steklov spectral representation (\ref{eq:Cmu_def0}), it follows that
$C_1(-\sigma)$ has simple poles at the eigenvalues $\mu_{k1}$, with
$k\geq 0$, of the local Steklov eigenvalue problem (\ref{eq:Psi_def}),
which is defined near the patch $\PT_1$, with nontrivial spectral
weights $d_{k1} \ne 0$ (see Appendix \ref{sec:Cmu} for details; we
also recall Fig.~\ref{fig:Cmu} where the poles of $C_1$ were shown for
a circular patch of unit radius). We label this resonant set as
\begin{equation}\label{sdnp:poles}
  {\mathcal P}_1 \equiv \bigcup\limits_{k=0}^\infty \left\{ \mu_{k1} ~\vert~
    \textrm{if} ~ d_{k1} \ne 0 \right\} 
\end{equation}
(i.e., this set includes only the indices $k$ for which $d_{k1} \ne
0$).  As an eigenvalue $\sigma = \sigma(\eps)$ of the SDN problem
(\ref{sdn:eig}) depends on $\eps$, a small-$\eps$ expansion of
$w_1(\y;-\sigma(\eps))$ would naturally lead to $w_1(\y;-\sigma_0)$,
where $\sigma_{0} = \lim\limits_{\eps\to 0}\sigma(\eps)$.  However,
the solution $w_{1}(y;-\sigma_0)$ exists only when $\sigma_0\notin
{\mathcal P}_1$.  This preliminary consideration clarifies the need to
distinguish between two cases in our analysis below:

(I) $\sigma_0 \notin {\mathcal P}_1$, in which case an eigenpair
$\{\sigma, u\}$ is called {\em non-resonant};

(II) $\sigma_0 \in {\mathcal P}_1$, in which case an eigenpair
$\{\sigma, u\}$ is called {\em near-resonant}.

In the next subsection, we will determine the asymptotic behavior of
the non-resonant SDN eigenvalues.  In turn, in \S \ref{sec:sdn_degen},
we will show that near-resonant eigenvalues do not exist for the SDN
problem (\ref{sdn:eig}) with a single Steklov patch.  In other words,
the near-resonant case is not possible for this SDN problem.  In
contrast, this case will re-appear in our analysis in \S \ref{stekN}
for the SN problem (\ref{sn:eig}) with multiple Steklov patches.

Before undertaking our asymptotic analysis, we outline an important
auxiliary result related to the Steklov patch.
When $\sigma\notin {\mathcal P}_1$, we observe upon substituting
$\kappa_1 = -\sigma$ into (\ref{mfpt:wc}) and differentiating it with
respect to $\sigma$ that
\begin{equation}  \label{eq:wc_def}
w_{c1}=w_{c1}(\y;-\sigma)\equiv \partial_{\sigma} w_{1}(\y;-\sigma)
\end{equation}
satisfies
\bsub \label{sdnp:wc}
\begin{align}
    \Delta_{\y} w_{c1} &=0 \,, \quad \y \in \R_{+}^{3} \,, \label{sdnp:wc_1}\\
    \partial_{y_3} w_{c1} + \sigma w_{c1} &=1-w_1 \,, \quad y_3=0 \,,\,
    (y_1,y_2)\in \PT_1\,,  \label{sdnp:wc_2}\\
    \partial_{y_3} w_{c1} &=0 \,, \quad y_3=0 \,,\, (y_1,y_2)\notin \PT_1
    \,, \label{sdnp:wc_3}\\
  w_{c1}&\sim - \frac{C_{1}^{\prime}(-\sigma)}{|\y|} -
                {  \frac{\DT_1^{\prime}(-\sigma) {\bf \cdot} \y}{|\y|^3}}
         + \cdots\,,  \quad \mbox{as}\quad
    |\y|\to \infty \,, \label{sdnp:wc_4}
\end{align}
where we have defined
\begin{equation}\label{sdnp:wc_deriv}
  C_{1}^{\prime}(-\sigma) \equiv \left. \left(\frac{dC_1(\kappa)}{d\kappa}
  \right)\right|_{\kappa
    = -\sigma}
  \qquad \mbox{and} \qquad \DT_1^{\prime}(-\sigma) \equiv \left. \left(\frac{d
    \DT_1(\kappa)}{d\kappa}\right)\right|_{\kappa = -\sigma}\,.
\end{equation}
\esub
The identification of this problem satisfied by $w_{c1}$ is key for
solving the different inner problems at each order of the inner
expansion near the Steklov patch $\PT_1$.  A general spectral
representation (\ref{eq:wc_spectral}) for $w_{c1}$ is established in
(\ref{eq:wc_spectral}) of Appendix \ref{sec:Cmu}.

\subsection{Non-Resonant Case}\label{sdn_sec:asymptotics}

In the outer region, we expand the solution to (\ref{sdn:eig}) away
from all the patches as
\begin{equation}
    u \sim U_0 + \eps U_1 + \eps^2 \log\left( \frac{\eps}{2} \right)
   U_2 + \eps^2 U_3 + \cdots \,. \label{sdn:outex}
\end{equation}
We will initially seek a solution where $U_0\neq 0$ is a
constant. This non-zero leading-order outer solution will be used
below to satisfy the normalization condition (\ref{sdn:eig_norm}).  In
Remark \ref{sdn:remark2} below we will briefly discuss whether one can
find SDN eigenpairs for the case where $U_0=0$.

Upon substituting (\ref{sdn:outex}) into (\ref{sdn:eig}) 
we obtain that $U_j$ for $j=1,2,3$ satisfies the
outer problems
\begin{equation} \label{sdn:Uk}
  \Delta_{\x} U_j = 0 \,, \quad \x \in \Omega \,; 
  \qquad \partial_n U_j = 0 \,, \quad \x\in \partial\Omega\backslash
  \lbrace{\x_1,\ldots,\x_N\rbrace} \,.
\end{equation}
Our analysis below provides singularity behaviors for each $U_{j}$
as $\x\to \x_i$.

The novel feature of our analysis of (\ref{sdn:eig}) is that
each Steklov eigenvalue $\sigma=\sigma(\eps)$ must be expanded as
\begin{equation}\label{sdn:stek_eig}
  \sigma=\sigma(\eps)=\sigma_{0}+
  \eps\log\left(\frac{\eps}{2} \right) \sigma_{1} + \eps
  \sigma_{2} + \ldots\,.
\end{equation}
The coefficients $\sigma_{j}$ for $j=0,1,2$, which are
independent of $\eps$, will be determined below by ensuring that the
outer problems are solvable at each order. We emphasize that by using
(\ref{sdn:stek_eig}) in (\ref{sdn:eig_2}) we will obtain distinct
boundary conditions on the Steklov patch at each order of the inner
expansion.

In the inner region near the $i$-th patch we introduce the local
geodesic coordinates (\ref{mfpt:innvar}) and expand each inner
solution as
\begin{equation}
  u  \sim V_{0i} + \eps \log\left( \frac{\eps}{2} \right) V_{1i} +
   \eps V_{i2}  + \ldots \,. \label{sdn:innex}
\end{equation}
For each Dirichlet patch with $i=2,\ldots, N$, we readily
obtain that $V_{ji}$ for $j=0,1,2$ satisfies
\bsub \label{sdn_d:Vi}
\begin{align}
  \Delta_{\y} V_{ji} &= \delta_{j2} \left( 2y_{3} V_{0i,y_3 y_3} + 2 V_{0i,y_3}
                       \right) \,, \quad
   \y \in \R_{+}^{3} \,, \label{sdn_d:Vi_1}\\
   V_{ji} &=0 \,, \quad y_3=0 \,,\,
    (y_1,y_2)\in \PT_i\,,  \label{sdn_d:Vi_2}\\
    \partial_{y_3} V_{ji} &=0 \,, \quad y_3=0 \,,\, (y_1,y_2)\notin \PT_i
    \,, \label{sdn_d:Vi_3}
\end{align}
\esub where $\delta_{22}=1$, $\delta_{j2}=0$ if $j=0,1$, and
$\PT_i \asymp \eps^{-1}\partial\Omega_i^{\eps}$.  In contrast, on the
rescaled Steklov patch $\PT_1 \asymp
\eps^{-1}\partial\Omega_1^{\eps}$, we obtain that $V_{j1}$ for
$j=0,1,2$ satisfies
\bsub \label{sdn_s:V1}
\begin{align}
  \Delta_{\y} V_{j1} &= \delta_{j2} \left( 2y_{3} V_{01,y_3 y_3} + 2 V_{01,y_3}
                       \right) \,, \quad
   \y \in \R_{+}^{3} \,, \label{sdn_s:V1_1}\\
  \partial_{y_3} V_{j1} + \sigma_{0} V_{j1} &=-(1-\delta_{j0})
        \sigma_{j} V_{01} \,, \quad y_3=0 \,,\,
    (y_1,y_2)\in \PT_1\,,  \label{sdn_s:V1_2}\\
    \partial_{y_3} V_{j1} &=0 \,, \quad y_3=0 \,,\, (y_1,y_2)\notin \PT_1
    \,. \label{sdn_s:V1_3}
\end{align}
\esub

Since the leading-order matching condition between inner and outer
solutions is that $V_{0i}\sim U_0$ as $|\y|\to\infty$, the leading-order
inner solution, obtained from  (\ref{sdn_d:Vi}) and (\ref{sdn_s:V1}), is
\begin{equation}\label{sdn:v0i}
  V_{0i}=U_0\left(1-w_{0i}\right) \,.
\end{equation}
In (\ref{sdn:v0i}), $w_{0i}(\y)$ is defined in terms of the solution
$w_i(\y;\kappa_i)$ to (\ref{mfpt:wc}) by
\begin{equation}\label{sdn:V0isol}
  w_{0i}(\y) \equiv \left\{\begin{array}{ll} w_{1}(\y;-\sigma_0)
                             \,, & i=1, \\
   w_i(\y;\infty) \,, & i=2,\ldots,N.  \end{array}\right. 
\end{equation}
Our assumption $\sigma_0\notin {\mathcal P}_1$ implies that
$C_1(-\sigma_0)$ and $w_{1}(\y;-\sigma_0)$ are well-defined.

Upon using (\ref{mfpt:wc_4}) to obtain the far-field behavior for
$w_{0i}$, we match the inner and outer expansions to conclude that the
outer correction $U_1$ in (\ref{sdn:outex}) satisfies
\bsub\label{sdn:U1prob}
\begin{align}
  \Delta_{\x} U_{1} &= 0 \,, \quad \x\in \Omega \,; \qquad
  \partial_n U_1=0 \,, \quad \x\in \partial\Omega\backslash
  \lbrace{\x_1,\ldots,\x_N\rbrace} \,, \label{sdn:U1prob_1}\\
  U_1 & \sim -\frac{C_1(-\sigma_{0})U_0}{|\x-\x_1|}\,,
  \quad \mbox{as} \quad
  \x\to\x_1\in \partial\Omega \,, \\
   U_1 & \sim -\frac{C_i(\infty)U_0}{|\x-\x_i|}\,, \quad \mbox{as} \quad
  \x\to\x_i \in \partial\Omega\,, \quad i=2,\ldots,N \,.
\end{align}
\esub 
The solvability condition for (\ref{sdn:U1prob}), together with the
assumption that $U_0\neq 0$, provides the following nonlinear
algebraic equation for the leading-order term $\sigma_{0}$ in the
expansion (\ref{sdn:stek_eig}) of a Steklov eigenvalue:
\begin{equation}\label{sdn:sigma_0}
  C_1\left(-\sigma_{0}\right) = {\mathcal N} \equiv
  -\sum_{i=2}^{N} C_i(\infty) \,.
\end{equation}
Since $C_i(\infty)>0$, we conclude that ${\mathcal N} <0$.  The
spectral expansion (\ref{eq:Cmu_def0}) ensures that, for any
${\mathcal N} < 0$, (\ref{sdn:sigma_0}) has infinitely many solutions
that we denote as $\sigma_0^{(k)}$, with $k = 0,1,\ldots$.  Moreover,
since $C_1(-\sigma_0)$ decreases monotonically as $\sigma_0$ increases
between its poles in ${\mathcal P}_1$ (see (\ref{eq:dCmu}) and
Fig.~\ref{fig:Cmu}), each solution $\sigma_0^{(k)}$ is simple and lies
between two consecutive poles.  In our analysis below, we will
determine the next-order corrections $\sigma_1^{(k)}$ and
$\sigma_2^{(k)}$ to the dominant contribution $\sigma_0^{(k)}$.  As
our asymptotic analysis is applicable to any $k$, we omit the
superscript $^{(k)}$ for brevity.

With $\sigma_{0}$ determined by (\ref{sdn:sigma_0}), the solution to
(\ref{sdn:U1prob}) is represented in terms of an unknown constant
$\overline{U}_1$ as
\begin{equation}\label{sdn:U1sol}
  U_1(\x) = \overline{U}_1 -2\pi U_0 \sum_{j=1}^{N} C_j G_{s}(\x;\x_j)\,,
 \quad
 \mbox{where} \quad  C_j \equiv \left\{\begin{array}{ll}
        C_1(-\sigma_{0})  \,, & j=1 \\
  C_j(\infty) \,, & j=2,\ldots,N\,.  \end{array}\right.
\end{equation}
Here $G_s$ is the surface Neumann Green's function for the sphere given
in (\ref{mfpt:gs_exact}).

To proceed to higher order, we expand $U_1(\x)$ in (\ref{sdn:U1sol})
as $\x\to\x_i$ to obtain in terms of local geodesic coordinates that
\begin{equation}\label{sdn:U1_expan}
  \begin{split}
    U_{1} &\sim -\frac{C_i U_0}{|\y|} + \frac{U_0 C_i}{2}
    \log\left(\frac{\eps}{2}\right)
    + \frac{U_0 C_i}{2} \left( \log(y_3+|\y|) -
      \frac{y_3(y_1^2+y_2^2)}{|\y|^3}\right) \\
    & \qquad + U_0 \beta_i +\overline{U}_1\,,
  \end{split}
\end{equation}
where $\beta_i$ is the $i$-th component of the vector ${\bm \beta}$
defined by
\begin{equation}\label{sdn:beta}
  {\bm \beta} \equiv -2\pi {\mathcal G}_s \vc \,, \qquad
  \mbox{where} \qquad \vc=(C_1,\ldots,C_N)^T\,.
\end{equation}
Here $C_i$ for $i=1,\ldots,N$ is defined in (\ref{sdn:U1sol}) and
${\mathcal G}_s$ is the Green's matrix in (\ref{mfpt_b:green_mat}).

Upon matching the inner and outer expansions for the
$\eps\log\left({\eps/2} \right)$ terms, we conclude that the inner
correction $V_{1i}$ in (\ref{sdn:innex}) must satisfy
$V_{1i}\sim {U_0 C_i/2}$ as $|\y|\to\infty$. As a result, we seek a
solution to (\ref{sdn_d:Vi}) and (\ref{sdn_s:V1}) for $k=1$ in the
form
\begin{equation}\label{sdn:V1form}
  V_{1i} = \frac{U_0C_i}{2} \left(1- w_{1i}\right) \,, \quad \mbox{for}
  \quad i=1,\ldots,N \,.
\end{equation}
By using the problem (\ref{sdnp:wc}) satisfied by
$w_{c1}(\y;-\sigma_{0})$ to account for the inhomogeneous term in
(\ref{sdn_s:V1_2}) on the Steklov patch, we use superposition to get
that
\bsub \label{sdn:V1sol}
\begin{align}
  V_{1i} &=\frac{U_0 C_i(\infty)}{2} \left( 1 - w_{i}(\y;\infty)\right)  \,,
           \quad i=2,\ldots, N \,,\\   \label{sdn:V1sol_1}
  V_{11} &=  -U_{0} \sigma_{1} w_{c1}(\y;-\sigma_0) +
           \frac{U_0 C_1(-\sigma_0)}{2}
           \left(1 - w_{1}(\y;-\sigma_{0})\right) \,.
\end{align}
\esub
Then, by using the far-field behaviors in (\ref{mfpt:wc_4}) and
(\ref{sdnp:wc_4}), we find for $|\y|\to\infty$ that
\bsub\label{sdn:V1sol_ff}
\begin{align}
  V_{1i} & \sim \frac{U_0 C_i(\infty)}{2}\left( 1 - \frac{C_{i}(\infty)}{|\y|}
           \right) +\cdots \,, \quad i=2,\ldots,N \,,\\     
  V_{11} &\sim U_0 \sigma_{1} \frac{ C_{1}^{\prime}(-\sigma_0)}
           {|\y|} + \frac{U_0 C_1(-\sigma_0)}{2}
           \left(1 - \frac{C_1(-\sigma_{0})}{|\y|} \right) +\cdots \,,
\end{align}
\esub
where the neglected higher-order terms are dipole contributions.

Upon matching the monopole terms in the far-field behavior
(\ref{sdn:V1sol_ff}) to the outer correction $U_2$ in
(\ref{sdn:outex}), we obtain from (\ref{sdn:Uk}) that $U_2$ satisfies
\bsub\label{sdn:U2prob}
\begin{gather}
  \Delta_{\x} U_{2} = 0 \,, \quad \x\in \Omega \,; \qquad
  \partial_n U_2=0 \,, \quad \x\in \partial\Omega\backslash
  \lbrace{\x_1,\ldots,\x_N\rbrace} \,, \label{sdn:U2prob_1}\\
  U_2  \sim -\frac{U_0}{2}\frac{C_i^2}{|\x-\x_i|} + U_0 \sigma_{1}
        \delta_{i1} \frac{ C_{1}^{\prime}(-\sigma_0)}{|\x-\x_i|}\,,
          \quad \mbox{as} \quad
  \x\to\x_i\in \partial\Omega \,, \quad i=1,\ldots, N \,, \label{sdn:U2prob_2}
\end{gather}
\esub
where $C_i$ for $i=1,\ldots,N$ is defined in (\ref{sdn:U1sol}) and
$\delta_{i1}=1$ if $i=1$ and $\delta_{i1}=0$ for $i=2,\ldots,N$. The
solvability condition for (\ref{sdn:U2prob}) yields
$U_0\sum_{i=1}^{N} C_i^2 = 2 U_0 \sigma_{1} C_1^{\prime}(-\sigma_0)$. For
$U_0\neq 0$, this expression determines the coefficient
$\sigma_{1}$ in (\ref{sdn:stek_eig}) as
\begin{equation}\label{sdn:sigma_1}
  \sigma_{1} = \frac{1}{2 C_1^{\prime}(-\sigma_0)} \left(
    \left[ C_1(-\sigma_0)\right]^2 + \sum_{i=2}^{N}
     \left[ C_i(\infty) \right]^2 \right)\,,
\end{equation}
where $\sigma_0$ is a root of (\ref{sdn:sigma_0}).
Upon using (\ref{sdn:sigma_0}) directly in (\ref{sdn:sigma_1}),
we can write $\sigma_{1}$ equivalently as
\begin{equation}\label{sdn:sigma_1n}
  \sigma_{1} = \frac{1}{2 C_1^{\prime}(-\sigma_0)} \left(
    \left[ \sum_{i=2}^{N} C_i(\infty) \right]^2 + \sum_{i=2}^{N}
     \left[ C_i(\infty) \right]^2 \right) \,.
\end{equation}
From (\ref{eq:dCmu}) we recall that $C_1(\kappa)$ increases
monotonically in $\kappa$ between its poles so that
$C_1^{\prime}(\kappa)$, evaluated at $\kappa = -\sigma_0$, is strictly
positive.  As a result, the denominator in (\ref{sdn:sigma_1n}) never
vanishes, and $\sigma_1$ is well-defined and strictly positive.  Then,
the solution to (\ref{sdn:U2prob}) is represented in terms of an
unknown constant $\overline{U}_2$ as
\begin{equation}\label{sdn:U2sol}
  U_2(\x) = \overline{U}_2 -2\pi U_0 \sum_{j=1}^{N}
  \left( \frac{C_j^2}{2} -\sigma_{1}
    C_1^{\prime}(-\sigma_0) \delta_{j1}\right) G_{s}(\x;\x_j) \,.
\end{equation}

Finally, we calculate the eigenvalue correction $\sigma_{2}$ in
(\ref{sdn:stek_eig}). To do so, we observe from the matching condition
between inner and outer solutions that the ${\mathcal O}(1)$ terms in
(\ref{sdn:U1_expan}) provide the following far-field behavior for the
inner correction $V_{2i}$ in (\ref{sdn:innex}):
\begin{equation}\label{sdn:v2_ff}
   V_{2i} \sim \frac{U_0 C_i}{2} \left( \log(y_3+|\y|) -
     \frac{y_3(y_1^2+y_2^2)}{|\y|^3}\right) + U_0 \beta_i +\overline{U}_1\,,
   \quad   \mbox{as} \quad |\y|\to \infty\,.
 \end{equation}

For the Dirichlet patches, and as similar to the
analysis for the MFRT and splitting probability problems, the solution
to (\ref{sdn_d:Vi}) with $k=2$ subject to (\ref{sdn:v2_ff}) is
\begin{equation}\label{sdn:v2sol_i}
  V_{2i} = U_0 \Phi_{2i} + \left(U_0 \beta_i+\overline{U}_1\right)
  \left(1- w_{i}(\y;\infty)\right)\,, \quad \mbox{for} \quad
  i=2,\ldots, N \,,
\end{equation}
where $\Phi_{2i}$ satisfies (\ref{mfpt_b:Phi2}) with $\kappa_i=\infty$
so that $\Phi_{2i}=0$ on $\PT_i$.  As a result, by using
(\ref{mfpt_b:Phi2_ff}) for the far-field behavior of $\Phi_{2i}$, we
conclude for $i=2,\ldots,N$ that $V_{2i}$ has the refined far-field
behavior
\begin{equation}\label{sdn_d:V2iff_refine}
  \begin{split}
    V_{2i} &\sim U_0 \beta_i +\overline{U}_1 + \frac{U_0 C_i(\infty)}{2} \left(
      \log(y_3 + |\y|) - \frac{y_3 (y_1^2 + y_2^2)}{|\y|^3} \right) \\
    &\qquad + \left[E_i(\infty)-\left(\beta_i+ \frac{\overline{U}_1}{U_0}
        \right) C_i(\infty)\right] \frac{U_0}{|\y|}
     \,, \quad   \mbox{as}    \quad |\y| \to \infty \,. 
  \end{split}
\end{equation}
For an arbitrarily-shaped patch, $E_i(\infty)$ is obtained by setting
$\kappa_i=\infty$ in (\ref{eq:Ei_general0}), while for a locally
circular patch it is given in (\ref{mfpt:Ej_all}) of Lemma
\ref{lemma:Ej_kappa}.  The second line in (\ref{sdn_d:V2iff_refine})
is one of the two terms that needs to be accounted for by the outer
correction $U_3$ in the matching condition. The other term is the
dipole contribution from (\ref{mfpt:wc_4}) and (\ref{sdn:v0i}).

In contrast, for the Steklov patch, we use superposition to determine
that the solution to (\ref{sdn_s:V1}) with $k=2$ subject to
(\ref{sdn:v2_ff}) is
\begin{equation}\label{sdn:v2sol_1}
  V_{21} = U_0 \Phi_{21} + \left(U_0 \beta_1+\overline{U}_1\right)
  \left(1-w_{1}(\y;-\sigma_0)\right)
  - U_0 \sigma_2 w_{c1}(\y;-\sigma_{0}) \,,
\end{equation}
where $\Phi_{21}$ satisfies (\ref{mfpt_b:Phi2}) (with $i=1$) in which we set
$\kappa_1=-\sigma_{0}$ and $C_1=C_1(-\sigma_0)$. As a
result, the refined far-field behavior of $V_{21}$ is
\begin{equation}\label{sdn_d:V21ff_refine}
  \begin{split}
    V_{21} &\sim U_0 \beta_1 + \overline{U}_1 + \frac{U_0 C_1(-\sigma_{0})}
    {2}  \left(
      \log(y_3 + |\y|) - \frac{y_3 (y_1^2 + y_2^2)}{|\y|^3} \right) \\
    &\qquad + \left[E_1(-\sigma_{0})-
    \left(\beta_1 + \frac{\overline{U}_1}{U_0}\right) C_1(-\sigma_0)
    + \sigma_{2} C_1^{\prime}(-\sigma_0)\right] \frac{U_0}{|\y|}\,,
    \quad \mbox{as} \quad |\y| \to \infty \,.
  \end{split}
\end{equation}
Here $E_1(-\sigma_{0})$ is calculated by setting $\kappa_i=-
\sigma_0$ in (\ref{eq:Ei_general0}). 

The monopole terms in the far-field behaviors
(\ref{sdn_d:V21ff_refine}) and (\ref{sdn_d:V2iff_refine}) together
with the dipole terms from the leading order inner solutions $V_{0i}$
in (\ref{sdn:v0i}) provide the required singularity behavior for the
outer correction $U_3$ in (\ref{sdn:outex}). In this way, we obtain
from (\ref{sdn:Uk}) that $U_3$ satisfies
\begin{equation}\label{sdn:U3prob}
  \begin{split}
  \Delta_{\x} U_{3} &= 0 \,, \quad \x\in \Omega \,; \qquad
  \partial_n U_3=0 \,, \quad \x\in \partial\Omega\backslash
  \lbrace{\x_1,\ldots,\x_N\rbrace} \,, \\
  U_3  & \sim \left[ E_i - \left(\beta_i +
      \frac{\overline{U}_1}{U_0}\right)C_i + \sigma_2 C_{1}^{\prime}(-\sigma_0)
  \delta_{i1}\right] \frac{U_0}{|\x-\x_i|} \\
  & \qquad -U_0 \frac{\DT_i {\bf \cdot} {\mathcal Q}_i^T (\x-\x_i)}{|\x-\x_i|^3}
\,, \quad \mbox{as} \quad
  \x\to\x_i\in \partial\Omega \,, \quad i=1,\ldots, N \,,
\end{split}
\end{equation}
where the orthogonal matrix ${\mathcal Q}_i$ is defined in
(\ref{app_g:change}) in terms of the basis vectors of the geodesic
coordinate system. In (\ref{sdn:U3prob}) we have defined $E_i$ and
$\DT_i$ for $i=1,\ldots,N$ by
\begin{equation}\label{sdn:Ei}
 E_i \equiv \left\{\begin{array}{ll}
        E_1(-\sigma_{0})  \,, & i=1, \\
  E_i(\infty) \,, & i=2,\ldots,N \,,  \end{array}\right.  \qquad
 \DT_i \equiv \left\{\begin{array}{ll}
        \DT_1(-\sigma_{0})  \,, & i=1, \\
   \DT_i(\infty) \,, & i=2,\ldots,N .  \end{array}\right. 
\end{equation}
Here both $\DT_i(\infty)$ and $\DT_i(-\sigma_0)$ are defined
by (\ref{mfpt:wc_4}).

In deriving the solvability condition for (\ref{sdn:U3prob}), we
observe that the dipole terms again do not contribute and we obtain
\begin{equation}\label{sdn:sigma2_1}
  U_0 \sum_{i=1}^{N} E_i - U_0 \sum_{i=1}^{N} \beta_i C_i -\overline{U}_1
  \sum_{i=1}^{N} C_i + U_0
  \sigma_{2} C_1^{\prime}(-\sigma_{0}) =0 \,,
\end{equation}
where $\sum_{i=1}^{N} C_i=0$ from the leading-order result
(\ref{sdn:sigma_0}) for $\sigma_0$. As a result, (\ref{sdn:sigma2_1})
shows that $\sigma_2$ is independent of the constant
$\overline{U}_1$. This term is determined by a higher-order evaluation
of the normalization condition for the Steklov eigenfunction.

Then, by using (\ref{sdn:beta}) for $\beta_i$, we solve (\ref{sdn:sigma2_1})
to determine $\sigma_{2}$  as
\begin{equation}\label{sdn:sigma_2}
  \sigma_{2}=-\frac{1}{C_1^{\prime}(-\sigma_{0})} \left[
    2\pi \vc^T {\mathcal G}_s \vc + \sum_{i=1}^{N} E_i\right]\,, \quad
  \mbox{where} \quad  \vc \equiv (C_1,\ldots,C_N)^T\,,
\end{equation}
where ${\mathcal G}_s$ is the Green's matrix in
(\ref{mfpt_b:green_mat}). Through this Green's matrix, it follows that
$\sigma_{2}$ depends on the overall spatial configuration of the
Steklov and Dirichlet patches on the boundary.  We summarize our
result as follows:

\begin{prop}\label{sdn:main_res} 
As $\eps \to 0$, there are eigenvalues $\sigma=\sigma(\eps)$ of the
Steklov-Dirichlet-Neumann problem (\ref{sdn:eig}) that have the
three-term asymptotics
\bsub
\begin{align}\label{sdn:stek_eigex}
  \sigma &=\sigma_{0}+
  \eps\log\left(\frac{\eps}{2} \right) \sigma_{1} + \eps
  \sigma_{2} + {\mathcal O}(\eps^2\log \eps)\,, 
\end{align}
where $\sigma_{0}$, $\sigma_{1}$ and $\sigma_{2}$ are respectively
given by (\ref{sdn:sigma_0}), (\ref{sdn:sigma_1n}) and
(\ref{sdn:sigma_2}).  The corresponding eigenfunctions, restricted to
$\PT_1$, have the three-term asymptotics
\begin{equation}
  u\vert_{\PT_1} = V_{01} + \eps\log\left(\frac{\eps}{2} \right)
                               V_{11} + \eps
  V_{21} + {\mathcal O}(\eps^2\log \eps )\,,
\end{equation}
\esub
where $V_{01}$, $V_{11}$ and $V_{21}$ are respectively given by
(\ref{sdn:v0i}), (\ref{sdn:V1sol_1}) and (\ref{sdn:v2sol_1}).  Here
$u\vert_{\PT_1}$ is given up to constants $U_0\neq 0$ and
$\overline{U}_1$, while its spatial behavior is determined by the
functions $w_1(\y;-\sigma_0)$ and $w_{c1}(\y;-\sigma_0)$, which admit
the spectral expansions (\ref{eq:wi_spectral}) and
(\ref{eq:wc_spectral}), respectively.  For a circular patch, these
expansions allow for their efficient numerical computation (see
Appendix \ref{sec:Cmu}).
\end{prop}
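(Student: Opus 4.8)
The plan is to construct each eigenpair by the method of matched asymptotic expansions, treating every small patch as a strong localized perturbation. First I would posit the outer expansion (\ref{sdn:outex}) together with the crucial eigenvalue expansion (\ref{sdn:stek_eig}), in which the leading term $\sigma_0$ is an $\0(1)$ constant; the non-analytic $\eps\log(\eps/2)$ gauge is forced by the subdominant logarithm in the local form (\ref{mfpt:gs_locm}) of the surface Neumann Green's function. Near each patch I would introduce the geodesic inner variables (\ref{mfpt:innvar}) and use the two-term inner Laplacian (\ref{mfpt:local}), splitting the analysis into the Dirichlet patches $i=2,\ldots,N$, governed by (\ref{sdn_d:Vi}), and the single Steklov patch $i=1$, governed by (\ref{sdn_s:V1}). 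The leading inner solution $V_{0i}=U_0(1-w_{0i})$ is then read off from the canonical problem (\ref{mfpt:wc}), with $w_{01}=w_1(\y;-\sigma_0)$ obtained by setting $\kappa_1=-\sigma_0$, and $w_{0i}=w_i(\y;\infty)$ on the Dirichlet patches, as in (\ref{sdn:v0i}).

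The core of the argument is a hierarchy of solvability conditions, each obtained by integrating the outer Laplace problem (\ref{sdn:Uk}) over $\Omega$ and applying the divergence theorem, so that the net monopole strength must vanish. At leading order, matching $V_{0i}$ to $U_1$ produces the monopole singularities in (\ref{sdn:U1prob}); requiring $U_0\neq 0$ forces the nonlinear equation (\ref{sdn:sigma_0}), $C_1(-\sigma_0)=\mathcal{N}=-\sum_{i=2}^{N}C_i(\infty)$, whose roots I would enumerate using the monotonicity of $C_1$ between its poles guaranteed by (\ref{eq:dCmu}). To reach the next order I would introduce the auxiliary field $w_{c1}=\partial_\sigma w_1$ satisfying (\ref{sdnp:wc}); this is the device that absorbs the inhomogeneous boundary term $-\sigma_1 V_{01}$ coming from (\ref{sdn:stek_eig}) inside the Steklov condition (\ref{sdn_s:V1_2}). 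Matching the resulting monopole coefficients to $U_2$ and imposing solvability of (\ref{sdn:U2prob}) yields (\ref{sdn:sigma_1}), which simplifies to (\ref{sdn:sigma_1n}) via (\ref{sdn:sigma_0}). Finally, the third-order inner correction $V_{2i}$ is built from the higher-order inner solution $\Phi_{2i}$ of (\ref{mfpt_b:Phi2}), whose refined monopole $E_i$ is supplied by (\ref{eq:Ei_general0}); the solvability condition for $U_3$ in (\ref{sdn:U3prob}) then determines $\sigma_2$ as in (\ref{sdn:sigma_2}), after checking that the dipole contributions $\DT_i$ integrate to zero by the antisymmetry built into (\ref{mfpt:wc_4}). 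Assembling $V_{01}$, $V_{11}$, $V_{21}$ from (\ref{sdn:v0i}), (\ref{sdn:V1sol_1}), (\ref{sdn:v2sol_1}) gives the claimed eigenfunction on $\PT_1$.

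I expect the main obstacle to be the bookkeeping forced by the ``negative reactivity'' $\kappa_1=-\sigma$: unlike the MFRT and splitting problems, the monopole $C_1(-\sigma)$ is no longer monotone on the whole axis but has simple poles at the local Steklov eigenvalues $\mathcal{P}_1$, so the entire construction is valid only under the non-resonant hypothesis $\sigma_0\notin\mathcal{P}_1$, which keeps $w_1(\y;-\sigma_0)$ and $C_1^{\prime}(-\sigma_0)$ finite. Confirming $C_1^{\prime}(-\sigma_0)>0$, so that the denominators in $\sigma_1$ and $\sigma_2$ never vanish, is immediate from the spectral sum (\ref{eq:dCmu}), but the near-resonant complement $\sigma_0\in\mathcal{P}_1$ must be excluded by a separate argument, which I would defer to the degeneracy analysis showing such eigenvalues do not occur for a single Steklov patch. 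A secondary subtlety is verifying that $\sigma_2$ is independent of the undetermined normalization constant $\overline{U}_1$, which follows because $\sum_i C_i=0$ at the root $\sigma_0$, and that the switchback logarithmic gauge remains consistent with the normalization (\ref{sdn:eig_norm}) through the relevant order.
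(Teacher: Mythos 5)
Your proposal is correct and follows essentially the same route as the paper's own derivation in \S \ref{sdn_sec:asymptotics}: the same outer/inner expansions with the switchback gauge, the same use of $w_{c1}=\partial_\sigma w_1$ satisfying (\ref{sdnp:wc}) to absorb the $-\sigma_1 V_{01}$ boundary term, the same hierarchy of divergence-theorem solvability conditions yielding (\ref{sdn:sigma_0}), (\ref{sdn:sigma_1n}) and (\ref{sdn:sigma_2}), and the same handling of the non-resonance hypothesis, the vanishing dipole contributions, and the independence of $\sigma_2$ from $\overline{U}_1$ via $\sum_i C_i=0$. Nothing essential is missing; the deferred near-resonant exclusion is indeed treated separately in \S \ref{sec:sdn_degen}, exactly as you anticipate.
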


We emphasize that with our assumption $U_0\neq 0$, the Steklov
eigenfunction is not solely concentrated on the Steklov patch. Instead
it has a long-range extension into the outer region as a result of the
presence of the other small Dirichlet patches. To determine $U_0$ we
substitute $u\sim V_{01}=U_{0}\left(1-w_1(\y;-\sigma_0)\right)$ into
the normalization condition (\ref{sdn:eig_norm}). Upon using $d{\bf
s}=\cos(\xi_1)d\xi_1 d\xi_2=\eps^2 \cos(\eps y_1) dy_1 dy_2
\sim \eps^2 dy_1 dy_2$, we calculate that
\begin{equation}\label{sdn:norm_u0} 
  U_0 \sim \eps^{-1} \left(\int_{\PT_1} \left[1-w_1(\y;-\sigma_0)
      \right]^2 d\y\right)^{-1/2} \,.
\end{equation}
In turn, the constant $\overline{U}_1$ that appears in $V_{21}$,
remains unknown and can only be found at higher order.

\begin{remark} \label{sdn:remark1} 
Proposition \ref{sdn:main_res} offers a straightforward numerical
procedure to construct three-term asymptotic expansions of the SDN
eigenpairs in the small-patch limit when $U_0\neq 0$.  Moreover, since
(\ref{sdn:sigma_0}) has infinitely many solutions, one can 
construct SDN eigenvalues on any desired, but large enough,
interval $(\sigma_{\rm min}, \sigma_{\rm max})$.  To characterize the
associated eigenfunctions on the Steklov patch $\Gamma_1$, we
use $V_{01}=U_{0}\left[1-w_1(y;-\sigma_0)\right]$ to leading
order, together with the divergence theorem applied to the problem
(\ref{mfpt:wc}) for $w_{1}$, to obtain for $U_0\neq 0$ that
\begin{equation}
C_{1}(-\sigma_0)=-\frac{\sigma_0}{2\pi} \int_{\Gamma_1} \left(1-w_1
\right)\, dy_1 dy_2 = -\frac{\sigma_0}{2\pi U_0} \int_{\Gamma_1}
V_{01}\, dy_1 dy_2 \,.
\end{equation}
Since we must have $C_1(-\sigma_0)\neq 0$ from (\ref{sdn:sigma_0}), we
conclude that to leading order all non-resonant SDN
eigenfunctions in Proposition \ref{sdn:main_res} are such that
$\int_{\Gamma_1}V_{01}\, dy_1 dy_2\neq 0$.  For a circular patch
$\Gamma_1$, this implies that these SDN eigenfunctions must be axially
symmetric on the patch.
\end{remark}

\begin{remark} \label{sdn:remark2} 
If we remove our requirement $U_0\neq 0$ and set $U_0 = 0$, so that
the bulk solution is now asymptotically small, we can construct a SDN
eigenpair where the Dirichlet patches have only a very weak influence
on the SDN eigenvalue.  In this situation, the SDN eigenfunction is
concentrated on the Steklov patch, and to leading-order is unaffected
by the presence of the Dirichlet patches, as if they were absent.  The
asymptotic behavior for a single circular Steklov patch was studied in
\cite{Grebenkov25}.  For an arbitrary patch, to construct such a
solution, we let $\mu_{k1}^{N}>0$ and $\Psi_{k1}^{N}$ for $k\geq 1$ be
the eigenpairs of the local Steklov problem (\ref{eq:Psi_def_N}) near
$\Gamma_1$ that satisfies, up to a normalization condition, the local
boundary value problem
\begin{subequations}  \label{SDN:Psi_def_N}
\begin{align}  \label{SDN:VkN_eq} 
\Delta_{\y} \Psi_{k1}^N & = 0\,, \quad \y \in \R_+^3 \,,\\
  \partial_{y_3}\Psi_{k1}^N + \mu_{k1}^N \Psi_{k1}^N &=0 \,, \label{SDN:VkN_stek}
\quad y_3=0 \,,\, (y_1,y_2)\in \PT_1\,,\\  \label{SDN:VkN_Neumann}
  \partial_{y_3} \Psi_{k1}^N & = 0 \,,
\quad y_3=0 \,,\, (y_1,y_2)\notin \PT_1\,,\\  \label{SDN:VkN_inf}
   \Psi_{k1}^N(\y) & \sim {\mathcal O}(|\y|^{-2}) \,,
                                    \quad \textrm{as}\quad |\y|\to \infty\,,
\end{align}
\end{subequations}
where there is no monopole behavior in the far-field
(\ref{SDN:VkN_inf}).  Then, a leading-order SDN eigenpair for
(\ref{sdn:eig}) is obtained by taking $\sigma_0=\mu_{k1}^{N}$ for some
index $k\geq 1$, and by choosing $V_{01}=\Psi_{k1}^{N}$ as the
leading-order inner solution near the Steklov patch $\Gamma_1$, and
$V_{0i}=0$ as the leading-order inner solution near the Dirichlet
patches $\Gamma_i$ for $i=2,\ldots,N$.  Owing to the fast decay
(\ref{SDN:VkN_inf}), the outer (bulk) solution away from the patches
is ${\mathcal O}(\eps^2)$ smaller than that of the SDN eigenfunction
evaluated on the Steklov patch.  For this leading-order construction,
we find by applying the divergence theorem to (\ref{SDN:Psi_def_N})
that $\int_{\Gamma_1} V_{01}\, dy_1 dy_2=0$.  As a result, when
$\Gamma_1$ is a circular patch, all of the non-axially symmetric
eigenfunctions on the patch will satisfy this condition.  It is an
open problem to asymptotically construct a higher-order approximation
for these SDN eigenpairs.
\end{remark}

\subsection{Example of Identical Circular Dirichlet Patches}
\label{sdn:example}

Our main result can be simplified considerably for the special case of
$N-1$ identical locally circular Dirichlet patches $\pa_i^\eps$, each
of radius $\eps a$ (i.e., $a_i = a$ for $i = 2,\ldots,N$).  For this
situation, we have $C_i(\infty)= 2a/\pi$ for $i=2,\ldots,N$, so that
from (\ref{sdn:sigma_0}) $\sigma_{0}$ is a root of the nonlinear
algebraic equation
\begin{equation}\label{sdn:stek_eig0} 
  C_1(-\sigma_0) =-\frac{2a(N-1)}{\pi} \,.
\end{equation}
In addition, we reduce the expression (\ref{sdn:sigma_1n}) for
$\sigma_{1}$ to
\begin{equation}\label{sdn:stek_eig1}  
  \sigma_{1} = \frac{2a^2}{\pi^2 C_1^{\prime}(-\sigma_0)} N(N-1)  \,.
\end{equation}
Moreover, we can use (\ref{sdn:stek_eig0}) together with
(\ref{mfpt:Ej_asy_large}) for $E_i(\infty)$, so that to determine
$\sigma_2$ in (\ref{sdn:sigma_2}) we need only to set
\begin{equation}\label{sdn:sigma_2id}  
  \vc =\frac{2a}{\pi} \left( 1-N,1,\ldots,1\right)^T \,; \quad
   E_i=-\frac{2a^2}{\pi^2} \left(\log{a} + \log{4}-\frac{3}{2}
  \right) \,, \quad i=2,\ldots, N\,.
\end{equation}
In this way, the numerical evaluation of the three-term expansion for
$\sigma$ in (\ref{sdn:stek_eigex}) only involves solving the
root-finding problem (\ref{sdn:stek_eig0}) for $\sigma_0$ and then
calculating $E_1(-\sigma_0)$ and $C_1^{\prime}(-\sigma_0)$
numerically.  To calculate $E_1(-\sigma_0)$ for a circular Steklov
patch, we use the decomposition (\ref{appE:new}) of Appendix
\ref{sec:Ei} together with the numerical results shown in
Fig.~\ref{fig:Ei}.

To qualitatively illustrate our theory, we consider a Steklov patch of
arbitrary shape and examine the limit $a\to 0$ with a fixed $N$.  In
this limit, the right-hand side of (\ref{sdn:stek_eig0}) vanishes, and
$\sigma_0$ is determined by solving $C_{1}(-\sigma_0) = 0$.  These
solutions correspond to the eigenvalues $\mu_{k1}^N$ of another {\em
local} exterior Steklov problem defined by (\ref{eq:Psi_def_N}) of
Appendix \ref{sec:Cmu0} near the Steklov patch $\PT_1$, with
a Neumann-like boundary condition (\ref{eq:VkN_inf}) at infinity (see
\cite{Grebenkov25} for details; in particular, Table 1 from
\cite{Grebenkov25} reports $\mu_{k1}^N$ for a circular Steklov patch).
Indeed, as the Dirichlet patches vanish, one recovers the conventional
Steklov problem with a single Steklov patch (see Remark
\ref{sdn:remark2}).  In particular, the principal eigenvalue
$\sigma^{(0)}$ of the SDN problem should approach $0$ for a Steklov
patch of arbitrary shape. Substituting the asymptotic relation
(\ref{eq:Ci_limit0}) into the left-hand side of (\ref{sdn:stek_eig0}),
we obtain to leading-order in $a$ and $\eps$ that
\begin{equation} 
\sigma^{(0)} \approx  \sigma^{(0)}_0 \approx \frac{4a(N-1)}{|\PT_1|} 
\,, \quad \mbox{as} \quad a\to 0 \,.
\end{equation}

\subsection{Numerical Comparison}\label{stekDN:num}

A numerical solution of the SDN spectral problem was obtained via a
finite-element method (FEM) described in
\cite{Chaigneau24,Grebenkov25b}.  For an accurate computation, one
needs to ensure that a tetrahedral mesh of the domain is sufficiently
refined near small patches, which requires a numerical diagonalization
of very large matrices.  To avoid these technical issues, we restrict
our analysis to two circular patches ($N = 2$), located at the north
and south poles.  The axial symmetry of this setting reduces the
original three-dimensional setting to a planar one, and is similar to
the formulation of (\ref{eq:planar_dec}).  Since our analysis leading
to Proposition \ref{sdn:main_res} does not access the asymptotic
behavior of non-axially-symmetric eigenfunctions (see Remarks
\ref{sdn:remark1} and \ref{sdn:remark2}), we restrict the numerical
comparison exclusively to the eigenvalues that correspond to axially
symmetric eigenfunctions.

Table \ref{tab:SDN_eigenvalues} illustrates the accuracy of the
three-term asymptotic formula (\ref{sdn:stek_eigex}) by comparing its
predictions (last column) to FEM solutions with different maximal
meshsizes $h_{\rm max}$, with smaller meshsizes yielding more accurate
solutions.  Even though the obtained numerical values of
$\sigma^{(k)}$ did not fully converge to the true eigenvalues, further
refinement of the mesh yielded matrices that are too large to be
treated on a laptop.  Nevertheless, the numerical values reported in
the 6th column are very close to the predictions from our three-term
asymptotic formula given in the last column.  This example serves as a
numerical validation of the asymptotic formula (\ref{sdn:stek_eigex}).
Further analysis of the SDN problem and its applications will be
reported elsewhere.

\begin{table}
\begin{center}
\begin{tabular}{|c|c|c|c|c|c|c|}  \hline
$\eps$ & $h_{\rm max}$  &  0.01  & 0.005 & 0.0025 & 0.002 & asympt. \\  \hline
       & $\sigma^{(0)}$ & 0.536 & 0.548 & 0.555 & 0.556 & 0.556      \\ 
$0.1$  & $\sigma^{(1)}$ & 4.130 & 4.097 & 4.109 & 4.114 & 4.146      \\
       & $\sigma^{(2)}$ & 7.836 & 7.406 & 7.321 & 7.316 & 7.338      \\  \hline
       & $\sigma^{(0)}$ & 0.528 & 0.535 & 0.538 & 0.539 & 0.529  \\
$0.2$  & $\sigma^{(1)}$ & 4.035 & 4.051 & 4.067 & 4.071 & 4.088  \\
       & $\sigma^{(2)}$ & 7.318 & 7.250 & 7.253 & 7.256 & 7.282  \\  \hline
\end{tabular}
\end{center}
\caption{
The first three SDN eigenvalues (that correspond to axially symmetric
eigenfunctions) for the unit sphere with two circular patches of
radius $\eps$ (with $a_1 = a_2 = 1$), located at the north and south
poles.  Columns 3-6 present the numerical results by FEM with
different maximal meshsizes $h_{\rm max}$.  The last column indicates
the three-term asymptotic relation (\ref{sdn:stek_eigex}), which is
seen to compare very favorably with the numerical result on the most
refined mesh.}
\label{tab:SDN_eigenvalues}
\end{table}

Figure \ref{fig:eigen_SDN1} illustrates the behavior of the first
three axially-symmetric eigenfunctions for circular patches of radii
$\eps_1 = \eps_2 = 0.2$.  For a clearer visualization, we plot these
eigenfunctions along the boundary of the sphere that corresponds to
the plane cross-section at $\phi = 0$ in spherical coordinates
$(r,\theta,\phi)$ (i.e., $r = 1$).  The Steklov patch is located on
the south pole ($\pi-\epsilon_2 < \theta < \pi$), whereas the
Dirichlet patch is located on the north pole ($0 < \theta <
\epsilon_1$), where $\eps_i = \sin(\epsilon_i)$.  On the Steklov
patch $\PT_1$, the first eigenfunction $u^{(0)}$ is positive, whereas
the other eigenfunctions necessarily change the sign to ensure their
orthogonality on $\PT_1$.  In turn, far from both patches, each
eigenfunction is nearly constant, according to the far-field expansion
(\ref{sdn:outex}), with $U_0 \ne 0$, see Remark
\ref{sdn:remark1}.  Note that the constant $U_0$, given by
(\ref{sdn:norm_u0}), depends on the eigenvalue and thus is different
for each eigenfunction.  For comparison, we plot in
Fig. \ref{fig:eigen_SDN2} three other eigenfunctions that are not
axially symmetric.  The axial symmetry of the eigenvalue problem
(\ref{sdn:eig}) for this example allows one to construct
such eigenfunctions by imposing their angular dependence as
$e^{im\phi}$, with an integer $m$ (see the related discussion in
\cite{Grebenkov25}).  For this illustration, we chose $m = 1$.  In
sharp contrast to Fig. \ref{fig:eigen_SDN1}, these eigenfunctions are
essentially concentrated near the Steklov patch, being ${\mathcal
O}(\eps)$ away from the patch.  In fact, one has $U_0 = 0$ for any
non-axially symmetric eigenfunction on the circular patch, see Remark
\ref{sdn:remark2}.

\begin{figure}
  \centering
     \begin{subfigure}[b]{0.49\textwidth}  
      \includegraphics[width =\textwidth]{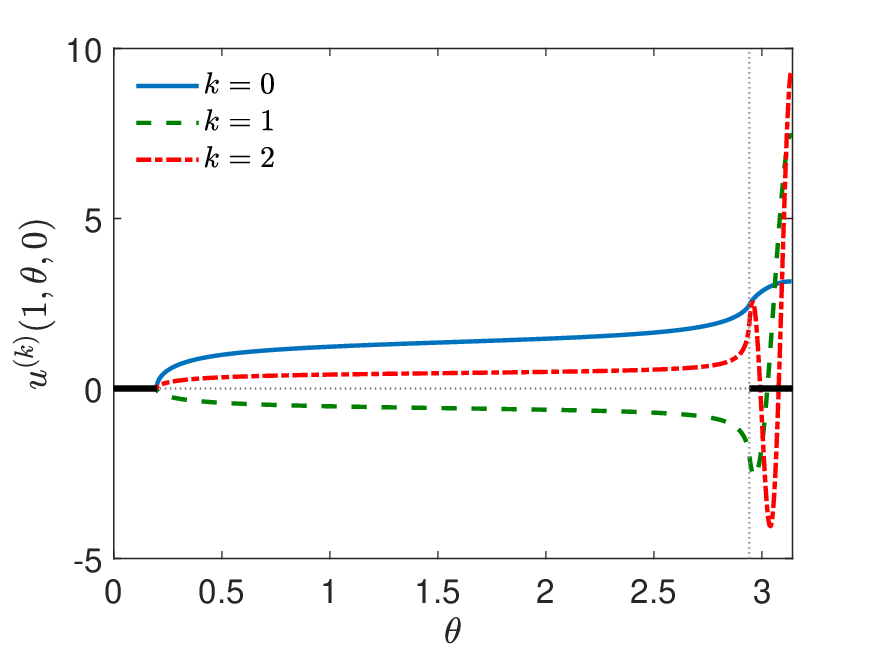} 
        \caption{Axially symmetric case}
        \label{fig:eigen_SDN1}
    \end{subfigure} 
    \begin{subfigure}[b]{0.49\textwidth}
      \includegraphics[width=\textwidth]{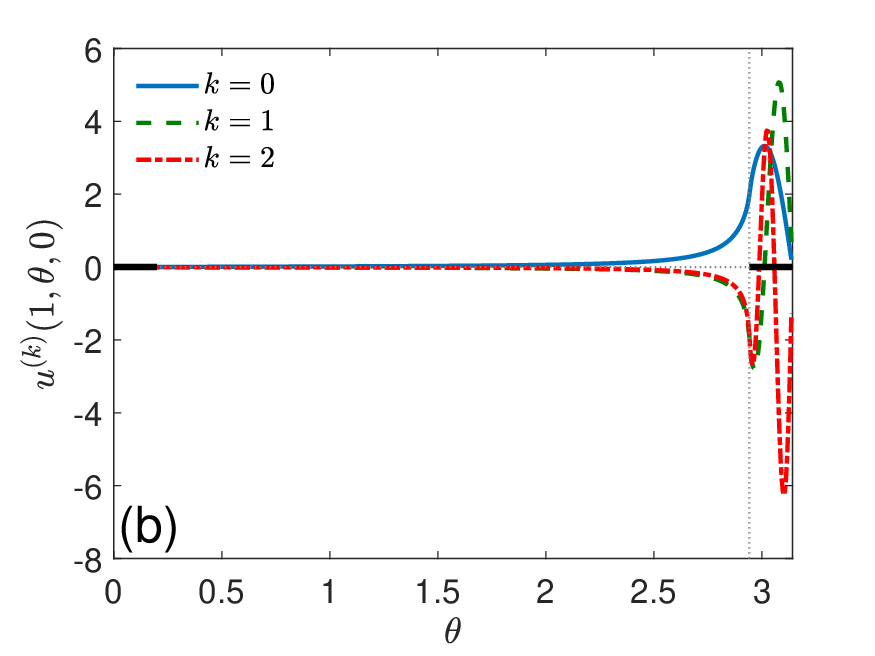} 
        \caption{Non-axially symmetric case} 
        \label{fig:eigen_SDN2}
    \end{subfigure}
\caption{
Eigenfunctions of the SDN problem for the unit sphere with two
circular patches of radii $\eps_1 = \eps_2 = 0.2$ (with $a_1 = a_2 =
1$), located at the north and south poles.  They are plotted in the
spherical coordinates $(r,\theta,\phi)$, by fixing $r = 1$ and $\phi =
0$.  Black thick intervals indicate the locations of the Dirichlet
patch (left, $0 < \theta < \epsilon_2$) and the Steklov patch (right,
$\pi - \epsilon_1 < \theta < \pi$), with $\eps_i = \sin(\epsilon_i)$.
{\bf (a)} The first three axially symmetric eigenfunctions that
correspond to the eigenvalues $\sigma^{(k)}$ reported in Table
\ref{tab:SDN_eigenvalues}. {\bf (b)} The first three non-axially
symmetric eigenfunctions that exhibit the dependence $\propto
e^{i\phi}$.  These FEM solutions \cite{Chaigneau24,Grebenkov25b} were
obtained with the maximal meshsize $\hmax = 0.005$. }
\end{figure}

\subsection{Near-Resonant Case}\label{sec:sdn_degen}

Next, we inspect whether there may exist a near-resonant SDN eigepair
for the spectral problem (\ref{sdn:eig}) for which the limiting value
$\sigma_{0}=\lim\limits_{\eps\to 0} \sigma(\eps)$ belongs to
${\mathcal P}_{1}$ (see (\ref{sdnp:poles})).  More specifically, let
us assume that there exists an index $k^{\prime} \geq 0$ such that
$\sigma_0 = \mu_{k^{\prime}1}$ for some {\em simple} eigenvalue
$\mu_{k^{\prime}1}$ of the local Steklov eigenvalue problem
(\ref{eq:Psi_def}) of Appendix \ref{sec:Cmu} for which
$d_{k^{\prime}1}\neq 0$ (see also Remark \ref{rem:simple} below).
We normalize the corresponding local Steklov eigenfunction
$\tilde{\Psi}_{k^{\prime}1}$ to be the unique solution of
\begin{subequations}  \label{snd:Psi_def}
\begin{align}  \label{snd:Vk_eq}
  \Delta_{\y} \tilde{\Psi}_{k^{\prime}1} & = 0 \,, \quad \y \in \R_+^3 \,,\\
  \label{snd:Vk_stek}
  \partial_{y_3} \tilde{\Psi}_{k^{\prime}1} + \sigma_0 \tilde{\Psi}_{k^{\prime}1} &=0\,,
                                 \quad y_3=0 \,,\, (y_1,y_2)\in \PT_1\,,
  \\  \label{snd:Vk_Neumann}
  \partial_{y_3} \tilde{\Psi}_{k^{\prime}1} & = 0 \,,\quad y_3=0 \,,\,
                                     (y_1,y_2)\notin \PT_1\,,
  \\  \label{snd:Vk_inf}
  \tilde{\Psi}_{k^{\prime}1}(\y) & \sim \frac{1}{|\y|} + {\mathcal O}\left(|\y|^{-2}\right)
                  \quad \textrm{as}  \quad |\y|\to \infty\,,
\end{align}
\end{subequations}
where $\sigma_0=\mu_{k^{\prime}1}$ and
$\PT_1 \asymp \eps^{-1}\partial\Omega_{1}^{\eps}$.  Here the tilde symbol
highlights that this normalization is different from the conventional
one used in Appendix \ref{sec:Cmu}.

With $\sigma_0=\mu_{k^{\prime}1}$ we again expand the outer solution,
the eigenvalue $\sigma(\eps)$, and the inner solutions as in
(\ref{sdn:outex}), (\ref{sdn:stek_eig}) and (\ref{sdn:innex}),
respectively. However, in place of (\ref{sdn:v0i}) and
(\ref{sdn:V0isol}), the leading-order inner solutions near the Steklov
patch and the Dirichlet patches are now
\begin{equation}
  V_{01}=A_1 \tilde{\Psi}_{k^{\prime}1}(\y) \,; \qquad
  V_{0i}=U_0\left(1-w_{i}(\y;\infty)\right) \,, \quad i=2,\ldots,N \,,
\end{equation}
where $A_1$ is a constant to be determined. Since $V_{01}\to 0$ as
$|\y|\to\infty$, we can only match the far-field behaviors of the
inner solutions to a leading-order constant outer solution $U_0$ when
$U_0=0$. As a result, since $U_0=0$, we must have $V_{0i}\equiv 0$ for
$i=2,\ldots,N$. In this way, in place of (\ref{sdn:U1prob}) the outer
correction $U_1$ must now satisfy
\bsub\label{sdn:degen:U1prob}
\begin{align}
  \Delta_{\x} U_{1} &= 0 \,, \quad \x\in \Omega \,; \qquad
  \partial_n U_1=0 \,, \quad \x\in \partial\Omega\backslash
  \lbrace{\x_1\rbrace} \,, \label{sdn:degen:U1prob_1}\\
  U_1 & \sim \frac{A_1}{|\x-\x_1|}\,, \quad \mbox{as} \quad
        \x\to\x_1\in \partial\Omega \,.
\end{align}
\esub
The solvability condition for (\ref{sdn:degen:U1prob}) yields that
$A_1=0$.  As a result, $V_{01}\equiv 0$, and we must have
$U_1=\overline{U}_1$ in $\Omega$, where $\overline{U}_1$ is an unknown
constant.

Proceeding to higher order it is readily established that
$\overline{U}_1=0$. We conclude that one cannot construct a
{\em nontrivial} solution to (\ref{sdn:eig}) with limiting behavior
$\sigma_0=\mu_{k^{\prime}1}$.  As a consequence, there is no
near-resonant eigenpair of the SDN problem (\ref{sn:eig}) with a
single Steklov patch.

\begin{remark}  \label{rem:simple}  
From the beginning of \S \ref{sec:sdn_degen}, we restricted our
analysis to the near-resonant cases, for which $\sigma_0 =
\mu_{k^{\prime}1} \in {\mathcal P}_1$ such that $\mu_{k^{\prime}1}$ is simple.
As a consequence, our statement that there is no near-resonant
eigenpair of the SDN problem (\ref{sn:eig}) is established only under
the assumption that all eigenvalues in the resonant set ${\mathcal
P}_1$ are simple.  For a circular patch, numerical evidence suggests
that this assumption does hold, but its rigorous validation presents
an interesting open problem.  For an arbitrary patch, however, the
assumption on the simplicity of eigenvalues from ${\mathcal P}_1$ may
not hold.  However, we expect that this assumption can be relaxed,
i.e., one can use any nontrivial element of the eigenspace associated
to $\mu_{k^{\prime}1}$ in the analysis above.  A proof of this
statement is beyond the scope of this paper.
\end{remark}

\section{The Steklov-Neumann (SN) Eigenvalue Problem}
\label{stekN}

Finally, we address the SN eigenvalue problem (\ref{sn:eig}) in the unit
sphere $\Omega$, with $N\geq 1$ Steklov patches $\pa_i^{\eps}$.
We impose the normalization condition
\begin{equation}\label{sn:eig_4}
  \int_{\partial\Omega_{a}} u^2 \, d{\bf s} =\sum_{i=1}^{N}
  \int_{\partial\Omega_{i}^{\eps}} u^2 d{\bf s} = 1\,.
\end{equation}
This spectral problem has a discrete spectrum, with a countable set of
nonnegative eigenvalues $\{\sigma^{(m)}(\eps)\}$ that are enumerated
by an integer index $m = 0,1,2,\ldots$ that accumulates to infinity
\cite{Levitin}.  Since the principal eigenvalue,
which corresponds to a constant eigenfunction, is
$\sigma^{(0)}(\eps) = 0$, independently of $\eps$, we exclude it from
our analysis below.  Owing to the orthogonality of the Steklov
eigenfunctions, or more simply by applying the divergence theorem to
(\ref{sn:eig}), we must have for any other Steklov eigenvalue
$\sigma^{(m)}(\eps) > 0$ with $m = 1,2,\ldots$ that the
corresponding eigenfunction $u^{(m)}$ satisfies
\begin{equation}\label{sn:eig_5}
  \int_{\partial\Omega_a} u^{(m)} \, d{\bf s}=\frac{\eps}{\sigma^{(m)}(\eps)}
 \sum_{i=1}^{N} \int_{\partial\Omega_i^{\eps}} \partial_{n} u^{(m)} \, d{\bf s}=0 \,.
\end{equation}
For convenience, in our analysis below we omit the superscript
$^{(m)}$ to highlight that our asymptotic analysis is not specific to
a particular value of $m$.

In analogy with the SDN problem studied in \S \ref{stekDN:intro}, we
need to consider both non-resonant and near-resonant cases.  While the
near-resonant case was not possible for the SDN problem with a single
Steklov patch, it will present one of the challenging features of the
SN analysis here.

We introduce the resonant set
\begin{equation}\label{sn:poles0} 
  {\mathcal P} \equiv  \bigcup_{i=1}^{N} {\mathcal P}_i\,, \quad
  \mbox{where} \quad
  {\mathcal P}_i \equiv \bigcup_{k=0}^{\infty} \left\{ \mu_{ki} ~\vert~ d_{ki}
    \ne 0 \right\}\,,
\end{equation}
where $\mu_{ki}$ with $k = 0,1,\ldots$ are the eigenvalues of the {\em
local} Steklov problem (\ref{eq:Psi_def}) near the $i$-th Steklov
patch $\PT_i$ for $i=1,\ldots,N$, with nontrivial spectral weights
$d_{ki} \ne 0$ (see of Appendix \ref{sec:Cmu} for details).  We
distinguish two situations according to the limiting value
$\sigma_{0}=\lim\limits_{\eps\to 0}\sigma(\eps)$ of a SN eigenvalue
$\sigma = \sigma(\eps)$:

(I) If $\sigma_0 \notin {\mathcal P}$, the eigenpair $\{\sigma, u\}$
is called non-resonant. In this case, both $C_i(-\sigma_0)$ and
$w_{i}(\y;-\sigma_0)$ are well-defined for each $i=1,\ldots,N$. The
analysis for this non-resonant case, which is similar to that done in
\S \ref{sdn_sec:asymptotics} for the SDN problem, will be performed in
\S \ref{sec:sn:non}.

(II) If $\sigma_0 \in {\mathcal P}$, the eigenpair $\{\sigma, u\}$ is
called near-resonant. In this case, some $w_{i}(\y;-\sigma_0)$ may be
undefined and thus must be replaced by suitable eigenfunctions of the
local Steklov problem (\ref{eq:Psi_def}) associated to $\sigma_0$.
Even though the asymptotic analysis of the near-resonant case is
possible to undertake in more generality, we will restrict our
attention below to one relevant setting.  More specifically, we assume
that $N\geq 2$ and that there are exactly $M$ {\em identical} patches,
with $2\leq M\leq N$, which we relabel by
$\partial\Omega_{1}^{\eps}=\ldots=\partial\Omega_{M}^{\eps}\equiv
\partial\Omega_{c}^{\eps}$.  For these identical patches, with a common
patch shape $\PT_c\asymp \eps^{-1}\partial\Omega_{c}^{\eps}$, there is
a common spectrum $\{\mu_{kc}\}_{k\geq 0}$ of the local Steklov
problem (\ref{eq:Psi_def}) of Appendix \ref{sec:Cmu}.  In \S
\ref{sec:sn_degen} we will show that the global SN problem
(\ref{sn:eig}) admits $M-1$ {\em near-resonant} eigenvalues (counting
multiplicity) such that $\sigma_0 \in {\mathcal P}$.  The
corresponding  eigenfunctions will be shown to concentrate on
the $M$ near-resonant patches.  We will also derive a three-term
asymptotic behavior for these global SN eigenvalues.

\begin{remark}
We remark that for this setting, it is essential that $M>1$.  In fact,
if the near-resonant condition occurs on only one patch, i.e. if
$\sigma_0=\mu_{ki}$ for some simple eigenvalue $\mu_{ki}$ of
(\ref{eq:Psi_def}) of a unique patch $i\in \lbrace{1,\ldots,N\rbrace}$
with $d_{ki}\neq 0$, it is readily shown, as similar to that done in
\S \ref{sec:sdn_degen} for the SDN problem, that the SN problem
(\ref{sn:eig}) only admits the trivial solution (see also Remark
\ref{rem:simple}).  Hence, for $M=1$, there is no SN eigenpair for
(\ref{sn:eig}) with such limiting asymptotics $\sigma_0$. We emphasize
that when there are two or more identical patches, our analysis for
the non-resonant case will only capture a subset of the SN eigenvalues
for the global problem (\ref{sn:eig}). The remaining global SN
eigenvalues will be in near-resonance with eigenvalues of the local
Steklov problem (\ref{eq:Psi_def}) on the identical patches. The
corresponding eigenfunctions will concentrate on these identical
patches. We remark that more intricate near-resonant cases are
possible for specific non-generic situations such as when there are
two patch indices $i_1$ and $i_2$ and two eigenvalue indices $k_1$ and
$k_2$ for which $\mu_{k_1 i_1} = \mu_{k_2 i_2}$.  Although the
construction of a SN eigenpair for which $\sigma_0 = \mu_{k_1 i_1} =
\mu_{k_2 i_2}$ can be done in a similar way as for the identical patch
case undertaken in \S \ref{sec:sn_degen}, we will not consider this
special case below.
\end{remark}

\subsection{Non-Resonant Case}\label{sec:sn:non}

Since the analysis for the SN problem (\ref{sn:eig}) in the
non-resonant case is very similar to that done in \S
\ref{sdn_sec:asymptotics} for the SDN problem (\ref{sdn:eig}), we will
only briefly outline the main steps to determine $\sigma(\eps)$.  In
analogy to the solution of the SDN problem, we seek to construct
eigenpairs of (\ref{sn:eig}) for which the leading-order outer
solution $U_0$ is non-vanishing (i.e.  $U_0\neq 0$).

In the outer region, we expand $u$ as in (\ref{sdn:outex}) to obtain
(\ref{sdn:Uk}) at each order of the expansion. In addition, each
nontrivial Steklov eigenvalue is expanded as in
(\ref{sdn:stek_eig}). In the inner region near each Steklov patch, we
expand the inner solution in terms of geodesic coordinates as in
(\ref{sdn:innex}), to derive that $V_{ji}$ for $j=0,1,2$, and for each
$i=1,\ldots,N$, satisfies
\bsub \label{sn_s:Vi}
\begin{align}
  \Delta_{\y} V_{ji} &= \delta_{j2} \left( 2y_{3} V_{0i,y_3 y_3} + 2 V_{0i,y_3}
                       \right) \,, \quad
   \y \in \R_{+}^{3} \,, \label{sn_s:Vi_1}\\
  \partial_{y_3} V_{ji} + \sigma_{0} V_{ji} &=-(1-\delta_{j0})
        \sigma_{j} V_{0i} \,, \quad y_3=0 \,,\,
    (y_1,y_2)\in \PT_i\,,  \label{sn_s:Vi_2}\\
    \partial_{y_3} V_{ji} &=0 \,, \quad y_3=0 \,,\, (y_1,y_2)\notin \PT_i
    \,. \label{sn_s:Vi_3}
\end{align}
\esub 

For each Steklov patch, we set  $w_{i}=w_{i}(\y;-\sigma)$ as the
solution of (\ref{mfpt:wc}), where the dependence of $w_i$, $C_i$ and
$\DT_i$ on $\sigma$ is obtained by setting $\kappa_i=-\sigma$ in
(\ref{mfpt:wc}).  As similar to the analysis of the SDN problem in
 \S \ref{stekDN:intro}, we define
\begin{equation}  \label{sneq:wc_def}
w_{ci}=w_{ci}(\y;-\sigma)\equiv \partial_{\sigma} w_{i}(\y;-\sigma),
\end{equation}
which satisfies, for each $i=1,\ldots,N$, the following inner problem:
\bsub \label{snp:wc}
\begin{align}
    \Delta_{\y} w_{ci} &=0 \,, \quad \y \in \R_{+}^{3} \,, \label{snp:wc_1}\\
    \partial_{y_3} w_{ci} + \sigma w_{ci} &=1-w_i \,, \quad y_3=0 \,,\,
    (y_1,y_2)\in \PT_i\,,  \label{snp:wc_2}\\
    \partial_{y_3} w_{ci} &=0 \,, \quad y_3=0 \,,\, (y_1,y_2)\notin \PT_i
    \,, \label{snp:wc_3}\\
  w_{ci}&\sim - \frac{C_{i}^{\prime}(-\sigma)}{|\y|} -
                {  \frac{\DT_i^{\prime}(-\sigma) {\bf \cdot} \y}{|\y|^3}}
         + \cdots\,,  \quad \mbox{as}\quad
    |\y|\to \infty \,. \label{snp:wc_4}
\end{align}
\esub 

Since $\sigma_0 \notin {\mathcal P}$, it follows that $C_i(-\sigma_0)$
and $w_{i}(\y;-\sigma_0)$ are well-defined.  Then, in terms of the
constant leading-order outer solution $U_0$, which will be found below
by the normalization condition (\ref{sn:eig_4}), the leading order
inner solution for each $i=1,\ldots,N$, as obtained by setting $j=0$
in (\ref{sn_s:Vi}), is
\begin{equation}\label{sn:v0i}
  V_{0i}=U_0 \left(1 - w_{i}(\y;-\sigma_{0})\right) \,.
\end{equation}

Upon matching the far-field of $V_{0i}$ to the outer solution, we find
that $U_1$ satisfies
\bsub\label{sn:U1prob}
\begin{align}
  \Delta_{\x} U_{1} &= 0 \,, \quad \x\in \Omega \,; \qquad
  \partial_n U_1=0 \,, \quad \x\in \partial\Omega\backslash
  \lbrace{\x_1,\ldots,\x_N\rbrace} \,, \label{snn:U1prob_1}\\
  U_1 & \sim -\frac{C_i(-\sigma_{0})U_0}{|\x-\x_i|}\,,
  \quad \mbox{as} \quad
  \x\to\x_i\in \partial\Omega \,, \quad i=1,\ldots,N \,.
\end{align}
\esub The solvability condition for (\ref{sn:U1prob}) is that
\begin{equation}\label{sn:solv1}
  U_0 \sum_{i=1}^{N} C_i(-\sigma_0)=0 \,.
\end{equation}
For $U_0\neq 0$, we conclude that the leading-order Steklov
eigenvalue $\sigma_0$ is a root of the following nonlinear algebraic
equation:
\begin{equation}\label{sn:sigma_0}
  {\mathcal N}(\sigma_0)=0 \,, \quad \mbox{where} \quad
  {\mathcal N}(\sigma_{0}) \equiv \sum_{i=1}^{N} C_i(-\sigma_0)\,.
\end{equation}
The spectral expansion (\ref{eq:Cmu_def0}) ensures that
${\mathcal N}(\sigma_0)$ increases monotonically between its
consecutive poles so that (\ref{sn:sigma_0}) has infinitely many
solutions that we denote as $\sigma_0^{(k)}$.  These solutions lie
between consecutive poles but finding their explicit locations is in
general more difficult than for the SDN problem with a single Steklov
patch.  As earlier, we omit the superscript $^{(k)}$ for brevity.

With $\sigma_0$ determined in this way, the solution to
(\ref{sn:U1prob}) for $U_{1}$ is written in terms of the
surface Neumann Green's function $G_s$ and an unknown constant
$\overline{U}_1$ as
\begin{equation}\label{sn:U1sol}
  U_1(\x) = \overline{U}_1 -2\pi U_0 \sum_{j=1}^{N} C_j(-\sigma_0) G_{s}(\x;\x_j)
  \,.
\end{equation}

To proceed to higher order, we expand $U_1$ as $\x\to \x_i$ to derive
(\ref{sdn:U1_expan}), where we now label $C_i=C_i(-\sigma_0)$ for
$i=1,\ldots,N$.  Upon matching to the inner solution we conclude that
$V_{1i}\sim {U_0C_i/2}$ as $|\y|\to \infty$ for $i=1,\ldots,N$. Upon
solving the problem (\ref{sn_s:Vi}) for $V_{1i}$ with this limiting
behavior, we obtain that
\begin{equation}\label{sn:V1isol}
    V_{1i} = -U_{0} \sigma_{1} w_{ci}(\y;-\sigma_0) +
           \frac{U_0 C_i(-\sigma_0)}{2}
           \left(1 - w_{i}(\y;-\sigma_{0})\right) \,, \quad
           i=1,\ldots,N \,.
\end{equation}
The far-field behavior for $V_{1i}$ as $|\y|\to \infty$ is
\begin{equation}\label{sn:Visol_ff}
  V_{1i} \sim U_0 \sigma_{1}\frac{ C_{i}^{\prime}(-\sigma_0)}
           {|\y|} + \frac{U_0 C_i(-\sigma_0)}{2}
           \left(1 - \frac{C_i(-\sigma_{0})}{|\y|} \right) +\cdots \,,
\end{equation}
where the neglected higher-order far-field terms are dipole contributions.

The monopole terms in (\ref{sn:Visol_ff}) provide the singularity
behavior for the outer correction $U_2$ in (\ref{sdn:outex}). In this
way, we find that $U_2$ satisfies (\ref{sdn:U2prob_1}) subject to
\begin{equation}\label{sn:U2prob_2}
  U_2  \sim -\frac{U_0}{2}\frac{\left[C_i(-\sigma_0)\right]^2}
  {|\x-\x_i|} + U_0 \sigma_{1}
        \frac{ C_{i}^{\prime}(-\sigma_0)}{|\x-\x_i|}\,,
          \quad \mbox{as} \quad
  \x\to\x_i\in \partial\Omega \,, \quad i=1,\ldots, N \,.
\end{equation}
The solvability condition for this problem for $U_2$ is that
\begin{equation}\label{sn:sigma_10}
  -\frac{U_0}{2} \sum_{i=1}^{N} \left[ C_i(-\sigma_0)\right]^2 +
  U_0 \sigma_1  \sum_{i=1}^{N} C_i^{\prime}(-\sigma_0)=0 \,.
\end{equation}
Under the condition that $U_0\ne 0$, (\ref{sn:sigma_10}) determines
$\sigma_1$ as
\begin{equation}\label{sn:sigma_1}
  \sigma_{1} = \frac{1}{2} \frac{\sum_{i=1}^{N} \left[
      C_i(-\sigma_0)\right]^2}{\sum_{i=1}^{N} C_i^{\prime}(-\sigma_0)
    } \,,
\end{equation}
where $\sigma_0$ is a root of (\ref{sn:sigma_0}). From
(\ref{eq:dCmu}), it follows that $C_i$ increases monotonically between
its poles so that $C_i^{\prime}(-\sigma_0)\neq 0$.  As a result, the
denominator in (\ref{sdn:sigma_1}) never vanishes, and $\sigma_1$ is
well-defined and strictly positive.  With $\sigma_1$ determined in
this way, the solution to (\ref{sdn:U2prob_1}) with
(\ref{sn:U2prob_2}) is given in terms of an unknown constant
$\overline{U}_2$ by
\begin{equation}\label{sn:U2sol}
  U_2(\x) = \overline{U}_2
  -2\pi U_0 \sum_{j=1}^{N} \left( \frac{\left[C_j(-\sigma_0)
      \right]^2}{2} -\sigma_{1} C_j^{\prime}(-\sigma_0)
  \right) G_{s}(\x;\x_j) \,.
\end{equation}

Finally, we determine $\sigma_2$. We readily obtain (\ref{sdn:v2_ff})
for the far-field behavior for the inner correction $V_{2i}$, which
satisfies (\ref{sn_s:Vi}) with $k=2$. In analogy with (\ref{sdn:v2sol_1}),
we determine $V_{2i}$ for $i=1,\ldots,N$ as
\begin{equation}\label{sn:v2sol_i}
V_{2i} = U_0 \Phi_{2i} + \left(U_0 \beta_i +\overline{U}_1\right)
\left(1-w_{i}(\y;-\sigma_0)\right)
  - U_0 \sigma_2 w_{ci}(\y;-\sigma_{0}) \,,
\end{equation}
where $\Phi_{2i}$ satisfies (\ref{mfpt_b:Phi2}) in which we set
$\kappa_i=-\sigma_{0}$ and $C_i=C_i(-\sigma_0)$.  As a result, the
refined far-field behavior of $V_{2i}$ for each $i=1,\ldots,N$ is
\begin{equation}\label{sn:V2iff_refine}
  \begin{split}
    V_{2i} &\sim U_0 \beta_i +\overline{U}_1 +
    \frac{U_0 C_i(-\sigma_{0})}{2}
    \left(
      \log(y_3 + |\y|) - \frac{y_3 (y_1^2 + y_2^2)}{|\y|^3} \right) \\
    &\qquad + \left[ E_i(-\sigma_{0})-
      \left(\beta_i+\frac{\overline{U}_1}{U_0}\right)
      C_i(-\sigma_0)+ \sigma_{2}
        C_i^{\prime}(-\sigma_0)\right] \frac{U_0}{|\y|}\,,
    \quad \mbox{as} \quad |\y| \to \infty \,.
  \end{split}
\end{equation}
Here $E_i(-\sigma_{0})$ is obtained by setting $\kappa_i=- \sigma_0$
in (\ref{eq:Ei_general0}).

As similar to the analysis of the SDN problem in \S \ref{sdn_sec:asymptotics},
the monopole terms in the far-field behavior (\ref{sn:V2iff_refine}),
together with dipole term in the far-field of $V_{0i}$, provide the
singularity behavior for the outer correction $U_3$. In this way, we get
that $U_3$ satisfies
\begin{equation}\label{sn:U3prob}
  \begin{split}
  \Delta_{\x} U_{3} &= 0 \,, \quad \x\in \Omega \,; \qquad
  \partial_n U_3=0 \,, \quad \x\in \partial\Omega\backslash
  \lbrace{\x_1,\ldots,\x_N\rbrace} \,, \\
  U_3  & \sim \left[ E_i(-\sigma_0) - \left(\beta_i +
      \frac{\overline{U}_1}{U_0}\right)C_i(-\sigma_0) + \sigma_2
    C_{i}^{\prime}(-\sigma_0)\right] \frac{U_0}{|\x-\x_i|} \\
  & \qquad -U_0 \frac{\DT_i(-\sigma_0) {\bf \cdot} {\mathcal Q}_i^T
    (\x-\x_i)}{|\x-\x_i|^3}\,,
  \quad \mbox{as} \quad
  \x\to\x_i\in \partial\Omega \,, \quad i=1,\ldots, N \,,
\end{split}
\end{equation}
where the orthogonal matrix ${\mathcal Q}_i$ is defined in
(\ref{app_g:change}) in terms of the basis vectors of the geodesic
coordinate system. The solvability condition for (\ref{sn:U3prob}) is
that 
\begin{equation}\label{sn:sigma_20}
  U_0 \left( \sum_{i=1}^{N} E_{i}(-\sigma_0) - \sum_{i=1}^{N}
    \beta_i C_{i}(-\sigma_0) +\sigma_2 \sum_{i=1}^{N} C_{i}^{\prime}(-\sigma_0)
    \right) - \overline{U}_1 \sum_{i=1}^{N} C_{i}(-\sigma_0)=0\,,
\end{equation}
which determines $\sigma_2$.  Since $\sigma_0$ satisfies
$\sum_{i=1}^{N} C_i(-\sigma_0)=0$ when $U_0\neq 0$, as seen from
(\ref{sn:sigma_0}) and (\ref{sn:solv1}), we observe that $\sigma_2$ is
independent of the unknown normalization constant $\overline{U}_1$. As
a result, upon recalling that
$\beta_i=-2\pi \left({\mathcal G}_s\vc\right)_i$ from
(\ref{sdn:beta}), we conclude from (\ref{sn:sigma_20}) that, when
$U_0\neq 0$, $\sigma_2$ is given by
\begin{equation}\label{sn:sigma_2}
  \sigma_{2} = -\frac{1}{\sum_{i=1}^{N} C_i^{\prime}(-\sigma_0)}
  \left( 2\pi \vc^{T} {\mathcal G}_s \vc + \sum_{i=1}^{N}
    E_i(-\sigma_0) \right) \,,
\end{equation}
where ${\mathcal G}_s$ is the Green's matrix and
$\vc=\left(C_1(-\sigma_0),\ldots,C_N(-\sigma_0)\right)^T$.
We summarize our result as follows:

\begin{prop}\label{sn:main_res} 
As $\eps \to 0$, consider the eigenvalues $\sigma=\sigma(\eps)$ of the
Steklov-Neumann problem (\ref{sn:eig}), for which
$\sigma(\eps)\to\sigma_0 \notin {\mathcal P}$, where the resonant set
${\mathcal P}$ is defined by (\ref{sn:poles0}). These Steklov
eigenvalues and the associated eigenfunctions, restricted to $\PT_i$,
have the three-term asymptotics
\bsub
\begin{align}\label{sn:stek_eigex}
  \sigma &=\sigma_{0}+
  \eps\log\left(\frac{\eps}{2} \right) \sigma_{1} + \eps
  \sigma_{2} + {\mathcal O}(\eps^2\log \eps )\,, \\
  u\vert_{\PT_i} &= V_{0i} + \eps\log\left(\frac{\eps}{2} \right)
                               V_{1i} + \eps
  V_{2i} + {\mathcal O}(\eps^2\log \eps)\,, \quad i=1,\ldots,N \,,
\end{align}
\esub where $\sigma_{0}$, $\sigma_{1}$ and $\sigma_{2}$ are
respectively determined by (\ref{sn:sigma_0}), (\ref{sn:sigma_1}) and
(\ref{sn:sigma_2}).  Moreover, $V_{0i}$, $V_{1i}$ and $V_{2i}$ for
each $i=1,\ldots,N$ are respectively given by (\ref{sn:v0i}),
(\ref{sn:V1isol}), and (\ref{sn:v2sol_i}).  Here $u|_{\PT_i}$ is given
up to constants $U_0\neq 0$ and $\overline{U}_1$, whereas its spatial
behavior is determined by the functions $w_i(\y;-\sigma_0)$ and
$w_{ci}(\y;-\sigma_0)$, which admit the spectral expansions
(\ref{eq:wi_spectral}) and (\ref{eq:wc_spectral}), respectively.  For
a circular patch, these expansions can be readily calculated
numerically, as shown in Appendix
\ref{sec:Cmu}.
\end{prop}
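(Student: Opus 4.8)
The plan is to construct the non-resonant SN eigenpairs by the method of matched asymptotic expansions, following the template already established for the SDN problem in \S \ref{sdn_sec:asymptotics}, but now with several Steklov patches and no Dirichlet patches. I would posit the outer expansion (\ref{sdn:outex}) with a nonzero constant leading term $U_0$, the eigenvalue expansion (\ref{sdn:stek_eig}), and the inner expansions (\ref{sdn:innex}) in geodesic normal coordinates near each patch $\PT_i$, so that the $V_{ji}$ satisfy (\ref{sn_s:Vi}). The crucial device is to set $\kappa_i = -\sigma$ in the canonical inner problem (\ref{mfpt:wc}), so that $w_i(\y;-\sigma)$ and its reactive capacitance $C_i(-\sigma)$ encode the Steklov boundary condition. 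The non-resonance hypothesis $\sigma_0\notin{\mathcal P}$ (with ${\mathcal P}$ from (\ref{sn:poles0})) is precisely what guarantees that $w_i(\y;-\sigma_0)$ and $C_i(-\sigma_0)$ are finite, since the spectral representation (\ref{eq:Cmu_def0}) shows that $C_i(-\sigma)$ has poles exactly on ${\mathcal P}_i$.

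First I would record the leading-order inner solution $V_{0i}=U_0\left(1-w_i(\y;-\sigma_0)\right)$ in (\ref{sn:v0i}), read off its monopole far field, and impose the $\mathcal{O}(\eps)$ matching to obtain the singularity structure of $U_1$ in (\ref{sn:U1prob}). Applying the divergence theorem to the harmonic Neumann problem for $U_1$ then yields the solvability condition $\sum_i C_i(-\sigma_0)=0$, which is the nonlinear equation (\ref{sn:sigma_0}) fixing $\sigma_0$; the monotonicity of each $C_i$ between its poles, guaranteed by $C_i'>0$ from (\ref{eq:dCmu}), shows that ${\mathcal N}(\sigma_0)$ is increasing between consecutive poles and hence that there are infinitely many simple roots $\sigma_0^{(k)}$.

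Next I would carry the expansion to the $\eps\log(\eps/2)$ and $\eps$ orders. At the first correction the inner solution $V_{1i}$ in (\ref{sn:V1isol}) is built by superposition from $w_i$ and from $w_{ci}=\partial_\sigma w_i$, the auxiliary function satisfying (\ref{snp:wc}); this is where the coefficient $\sigma_1$ enters through the inner boundary condition (\ref{sn_s:Vi_2}). Matching the monopole far field of $V_{1i}$ to $U_2$ and imposing solvability yields $\sigma_1$ in (\ref{sn:sigma_1}), which is well-defined and strictly positive because $\sum_i C_i'(-\sigma_0)\neq 0$. At the $\mathcal{O}(\eps)$ order I would solve for $V_{2i}$ in (\ref{sn:v2sol_i}) via the inhomogeneous problem (\ref{mfpt_b:Phi2}) for $\Phi_{2i}$, whose refined far field (\ref{mfpt_b:Phi2_ff}) supplies the monopole coefficient $E_i(-\sigma_0)$ from (\ref{eq:Ei_general0}). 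The solvability condition for the outer correction $U_3$ in (\ref{sn:U3prob})—in which the dipole contributions cancel because $\DT_i=(p_{1i},p_{2i},0)^T$—then determines $\sigma_2$ in (\ref{sn:sigma_2}), with the Green's matrix entering through $\beta_i=-2\pi({\mathcal G}_s\vc)_i$, so that the spatial configuration of the patches appears at this order.

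The main obstacle I anticipate is bookkeeping the logarithmic ``switchback'' structure consistently: the subdominant logarithm in the local behavior (\ref{mfpt:gs_locm}) of the surface Neumann Green's function forces the $\eps\log(\eps/2)$ terms in both the outer and eigenvalue expansions, and at each stage one must verify that the monopole, logarithmic, and dipole pieces of each inner far field are matched to the correct outer term so that the expansion remains well-ordered. The genuinely new structural point relative to SDN is that every patch now carries a $\sigma$-dependent capacitance $C_i(-\sigma)$, so the single scalar root-finding of SDN is replaced by the sum condition (\ref{sn:sigma_0}); confirming that this, together with the non-resonance assumption, produces simple roots and non-vanishing solvability denominators $\sum_i C_i'(-\sigma_0)$ is the key check. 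Since each step mirrors the SDN construction under the standing assumption $U_0\neq 0$ (with $U_0$ and $\overline{U}_1$ left to the normalization (\ref{sn:eig_4})), the proof reduces to assembling the already-derived formulas (\ref{sn:v0i}), (\ref{sn:V1isol}), (\ref{sn:v2sol_i}), (\ref{sn:sigma_0}), (\ref{sn:sigma_1}), (\ref{sn:sigma_2}) and verifying their mutual consistency.
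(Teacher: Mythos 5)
Your proposal is correct and follows essentially the same route as the paper's own derivation in \S \ref{sec:sn:non}: the same outer/inner/eigenvalue expansions, the same substitution $\kappa_i=-\sigma$ with $w_{ci}=\partial_\sigma w_i$, and the same three solvability conditions on $U_1$, $U_2$, $U_3$ yielding (\ref{sn:sigma_0}), (\ref{sn:sigma_1}), (\ref{sn:sigma_2}), with the dipole cancellation and normalization handled identically. No gaps to flag.
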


By using the normalization condition (\ref{sn:eig_4}), together with
the leading-order inner solutions (\ref{sn:v0i}), we can determine
$U_0$ as
\begin{equation}\label{sn:norm_u0} 
  U_0 \sim \eps^{-1} \left[\sum_{i=1}^{N} \int_{\PT_i}
      \left[1-w_i(\y;-\sigma_0)\right]^2 d\y\right]^{-1/2} \,.
\end{equation}
In turn, the constant $\overline{U}_1$ that appears in $V_{2i}$,
remains unknown and can only be found at higher order.  Since $U_0\neq
0$, the class of SN eigenpairs given in Proposition
\ref{sn:main_res} results from a global interaction of the Steklov
patches through the outer (bulk) solution.

\begin{remark} \label{sn:remark2} 
In analogy with the SDN eigenvalue problem analysis (see Remarks
\ref{sdn:remark1} and \ref{sdn:remark2}), the analysis leading to
Proposition \ref{sn:main_res} does not give access to {\em all} SN
eigenpairs.  In particular, as for the SDN problem, there are
eigenpairs for which the leading-order bulk solution $U_0$ vanishes.
For circular Steklov patches, this situation will always occur for
eigenfunctions that are not axially symmetric on the patches.  To
leading order, these SN eigenvalues have limiting behavior
$\sigma_0=\mu_{ki}^{N}$, for some index $k\geq 1$ and patch index
$i\in \lbrace{1,\ldots,N\rbrace}$, where $\mu_{ki}^{N}>0$ is an
eigenvalue of the local Steklov problem (\ref{SDN:Psi_def_N}) in which
$\Gamma_1$ is replaced by $\Gamma_i$. The corresponding eigenfunction
concentrates on the $i$-th patch and is only weakly influenced by the
other patches.  In addition, other SN eigenvalues corresponding to the
near-resonant case will be recovered in \S \ref{sec:sn_degen}.
\end{remark}

\subsection{Numerical Comparison}\label{sn:example:nondegen}

In \S \ref{mfpt_sec:numerics}, \S \ref{split_sec:numerics} and \S
\ref{stekDN:num}, we used a finite-element method for validating the
asymptotic formulas.  However, obtaining accurate numerical results by
this method for small patches requires using very fine meshes, which
typically results in prohibitively long computations.  To achieve a
more accurate computation of the SN eigenvalues for a single patch or
for two antipodal patches on a sphere, in Appendix \ref{appf:numer} we
outline an alternative spectral method, based on \cite{Grebenkov19b},
which exploits properties of axially symmetric harmonic functions.
Using this more refined numerical approach we now give two examples to
illustrate our main result (\ref{sn:stek_eigex}).

In addition, we inspect the possible advantage of using of an
alternative ``geodesic convention'' for the radius of a curved patch
in numerical computations.  We recall that $\eps$ was introduced in
(\ref{intro:scalings}) as the maximal half-diameter of the orthogonal
projection of the rescaled patch $\pa_i^{\eps}$ when mapped onto the
tangent plane to the sphere at $\x_i$. This is the ``standard
convention'' that we used in the numerical examples of previous
sections.  Denoting by $\epsilon$ the polar angle of the spherical
cap, we identify that $\epsilon = \sin^{-1}(\eps)$, and so we can
interpret $\epsilon$ as the ``geodesic radius'' of the cap.
For small patches, one has $\epsilon \sim \eps (1 + {\mathcal
O}(\eps^2))$, so that replacing $\eps$ by $\epsilon$ has no effect on
our asymptotic formulas, up to the relative error of order ${\mathcal
O}(\eps^2)$.  However, if $\eps$ is not small, this relative error can
deteriorate the accuracy of the asymptotic formulas.  In fact, since
the flattened patch $\PT_i$ aims at reproducing the effect of the
curved patch $\pa_i^{\eps}$, its rescaling by the ``geodesic radius''
$\epsilon$ instead of $\eps$ would preserve the surface area of the
patch and thus may provide more accurate results (see the remark at
the end of Appendix \ref{app_g:geod}).  In this ``geodesic
convention'', we retain $\eps$ in our asymptotic formulas but will
perform numerical computations for spherical caps with the geodesic
radius set to be $\eps$, not $\sin^{-1}(\eps)$, as earlier.  In Table
\ref{tab:SN} below, we compare the asymptotic formulas with the
numerical results obtained by both standard and geodesic conventions,
and the latter turns out to yield a better agreement.  For this
reason, we adopt the ``geodesic convention'' for the numerical
examples of this section.  We stress that the convention concerns
exclusively the numerical part (the results obtained via the spectral
method of Appendix \ref{appf:numer}) and does not alter any asymptotic
formula.

\vspace*{0.2cm}
\noindent {\bf Example I:} For a single circular patch $\pa_1^\eps$ of radius
$\eps$ (i.e., $N = 1$ and $a_1 = 1$), the condition (\ref{sn:sigma_0})
for $\sigma_0$ reads as $C_1(-\sigma_0)=0$.  The spectral expansion
(\ref{eq:Cmu_def0}) of $C_1(\kappa_1)$ implies that it has infinitely
many nontrivial zeros, which we denote as $-\mu_{k1}^N$ with
$k=1,2,\ldots$ (see Fig.~\ref{fig:Cmu}).  As shown in Lemma
\ref{lem:Cmu} of Appendix \ref{sec:Cmu0}, these roots $\mu_{k1}^{N}$
are in fact eigenvalues of a local Steklov eigenvalue problem
(\ref{eq:Psi_def_N}) defined near the patch $\PT_1$, for which the
corresponding eigenfunctions satisfy a far-field Neumann-like
condition (\ref{eq:VkN_inf}).  These zeros determine the leading-order
behavior of the associated SN eigenvalues for the global SN problem
(\ref{sn:eig}) as $\sigma_0^{(k)} =
\mu_{k1}^N$.  This leading-order behavior was studied in
\cite{Grebenkov25}, and the numerical values of $\mu_{k1}^{N}$ were
reported in Table I of \cite{Grebenkov25}.  In turn, the asymptotic
formula (\ref{sn:stek_eigex}) gives the next-order corrections as
$\sigma_1=0$ from (\ref{sn:sigma_1}) and
$\sigma_2=-{E_1(-\sigma_0)/C_1^{\prime}(-\sigma_0)}$ from
(\ref{sn:sigma_2}).  In this way, from (\ref{sn:stek_eigex}), the
first four SN eigenvalues (that correspond to axially symmetric
eigenfunctions) are predicted to have the two-term asymptotic behavior
\begin{subequations}  \label{sn:example_1all}
\begin{align}\label{sn:example_1}
  \sigma_{\rm asy}^{(1)}
  & \sim 4.121 -0.573\, \eps+ {\mathcal O}(\eps^2\log\eps)\,, \quad
    \sigma_{\rm asy}^{(2)} \sim 7.342 -0.552\, \eps+
    {\mathcal O}(\eps^2\log\eps)\,, \\
  \sigma_{\rm asy}^{(3)}
  & \sim 10.517 - 0.542\, \eps+ {\mathcal O}(\eps^2\log\eps)\,, \quad
    \sigma_{\rm asy}^{(4)} \sim 13.677 - 0.535\, \eps+
    {\mathcal O}(\eps^2\log\eps)\,.
\end{align}
\end{subequations}

In Fig.~\ref{fig:SN_patch1} we plot the difference between the
numerically computed values $\sigma^{(k)}_{\rm num}$ of the first two
SN eigenvalues and their asymptotic approximations in
(\ref{sn:example_1}), as a function of $\eps^2$.  The observed linear
dependence on $\eps^2$ indicates that: (i) the first two terms of
(\ref{sn:example_1}) are correct, and (ii) the next-order term is
${\mathcal O}(\eps^2)$ as the coefficient of the error term ${\mathcal
O}(\eps^2\log\eps)$ seems to vanish.  We remark that the contour
plots of the related eigenfunctions were shown in Fig. 6 of
\cite{Grebenkov25}.

\begin{figure}
\begin{center}
\includegraphics[width=80mm]{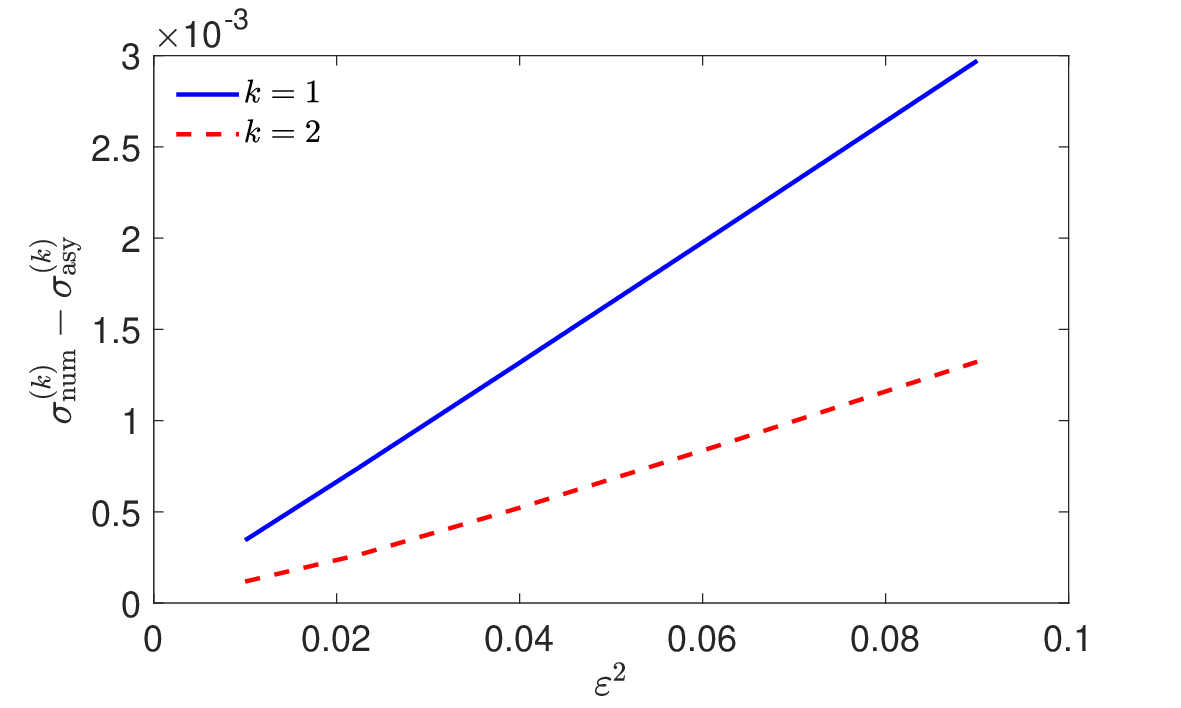} 
\end{center}
\caption{ 
Asymptotic behavior of the first two SN eigenvalues (that correspond
to axially-symmetric eigenfunctions) for a single circular patch of
radius $\eps$ on the unit sphere.  Each curve shows the difference
between the numerical value $\sigma^{(k)}_{\rm num}$, computed using
the spectral method from Appendix \ref{appf:numer} (with the
truncation order $\nmax = 4000$ and the ``geodesic convention''), and
its asymptotic value $\sigma^{(k)}_{\rm asy}$ given in
(\ref{sn:example_1}).  This difference is shown as a function of
$\eps^2$ to highlight the order of the error estimate in
(\ref{sn:example_1}).}
\label{fig:SN_patch1}
\end{figure}

\vspace*{0.2cm}

\noindent {\bf Example II:} Next, we consider the special case of $N$
circular Steklov patches of distinct radii $a_i>0$, for
$i=1,\ldots,N$, so that $a_i\neq a_j$ for all $i\neq j$.  Then, using
the scaling law (\ref{eq:Cmu_scaling}), we obtain from
(\ref{sn:sigma_0}) that $\sigma_{0}^{(k)}$ are the roots of ${\mathcal
N}(\sigma_0)=0$, where
\begin{equation}\label{sn:sigma_iden_disk}
  {\mathcal N}(\sigma_0) = \sum_{i=1}^{N} a_i {\mathcal C}(-\sigma_{0} a_i)\,.
\end{equation}
Here ${\mathcal C}(\mu)$ is readily computed via the spectral
expansion (\ref{eq:Cmu_def0}) for any rescaled patch $\PT_i/a_i$.
Owing to the monotonicity of ${\mathcal N}(\sigma_0)$ between
consecutive poles, which readily follows from the monotonicity of
${\mathcal C}(\mu)$ established in (\ref{eq:dCmu}), we conclude that
between any two consecutive poles of ${\mathcal N}(\sigma_0)$ the
function ${\mathcal N}(\sigma_0)=0$ must have a unique root.  For each
such root, the asymptotic result (\ref{sn:stek_eigex}) can then be
used to determine a three-term asymptotic expansion for this
particular SN eigenvalue.

To illustrate this result, we numerically compute the eigenvalues of
the SN problem (\ref{sn:eig}) for two circular patches of radii
$a_1\eps$ and $a_2\eps$ (with $a_2 = 1$), located at the north and
south poles of the unit sphere (note that the trivial principal
eigenvalue $\sigma^{(0)} = 0$ will be excluded from our discussion; we
also focus on the eigenvalues that correspond to axially symmetric
eigenfunctions).  Figure~\ref{fig:sigma_SN1} shows an excellent
agreement between the asymptotic result in (\ref{sn:stek_eigex}) and
the numerically computed eigenvalues as the radius $a_1$ of the
smaller patch is varied on $(0,1)$.  We observe that as $a_1\to 0$, we
recover the eigenvalues, written in the form $\mu_{k2}^{N}$, for the
SN problem with a single Steklov patch $\PT_2$.
The observed behavior of the eigenvalues allows us to push the
analogy to a single Steklov patch even further.  When $\eps$ is small,
one might expect that the two well-separated patches do not almost
``feel'' each other.  This (over-)simplified picture suggests that, to
leading order, the spectrum of the SN problem with two patches
would be the union of the spectra of the two SN problems with a single
patch, either $\PT_1$, or $\PT_2$.  In Fig.~\ref{fig:sigma_SN2},
four thin horizontal lines present the asymptotic values $\sigma_{\rm
asy}^{(k)}$ from (\ref{sn:example_1all}) of the first four eigenvalues
for a single Steklov patch $\PT_2$ (as if $\PT_1$ was absent).
In turn, the thick solid and dashed lines present the asymptotic values
$\sigma_{\rm asy}^{(k)}/a_1$ from (\ref{sn:example_1all}) of the first
two eigenvalues for a single Steklov patch $\PT_1$ (as if
$\PT_2$ was absent).  For comparison, symbols show the numerically
computed eigenvalues of the SN problem with two patches; these symbols
are identical with those shown in Fig.~\ref{fig:sigma_SN1} but just
colored differently.  One observes an excellent agreement between
symbols and curves that {\em partly} validates the intuitive idea of
the patches not feeling each other.  However, there are points (shown
by triangles) that are not captured by either of the asymptotic
relations for single patches.  To outline their dependence on $a_1$,
we added the dashed curve $0.95/a_1$, in which the prefactor $0.95$
was obtained from fitting, i.e., it does not correspond to any
limiting eigenvalue $\mu_{ki}^N$.  The presence of such points
highlights that the interaction between two patches is still relevant
but it mainly affects the smallest (nontrivial) eigenvalue.

\begin{figure}
  \centering
     \begin{subfigure}[b]{0.49\textwidth}  
      \includegraphics[width =\textwidth]{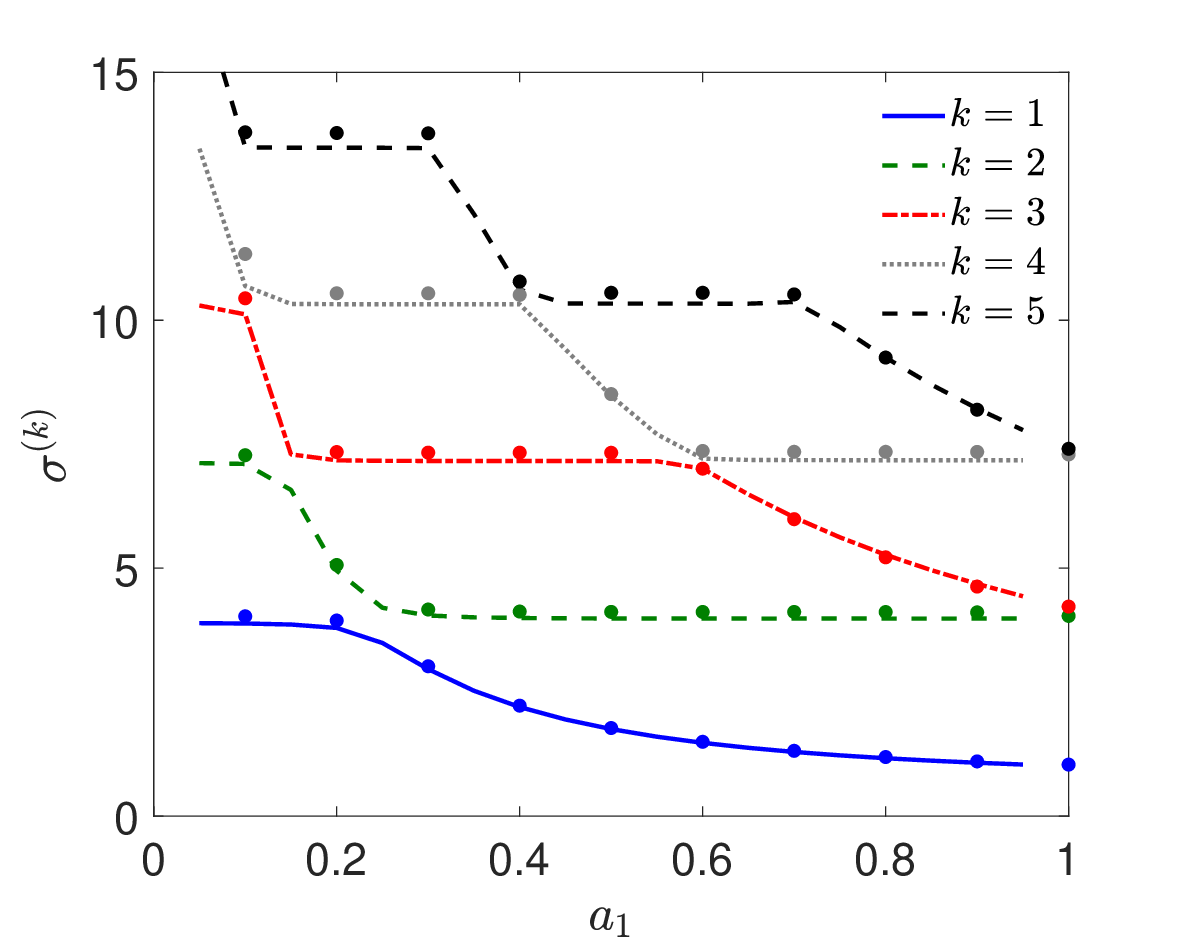} 
        \caption{Comparison with asymptotics (\ref{sn:stek_eigex})}
        \label{fig:sigma_SN1}
    \end{subfigure}  
    \begin{subfigure}[b]{0.49\textwidth}
      \includegraphics[width=\textwidth]{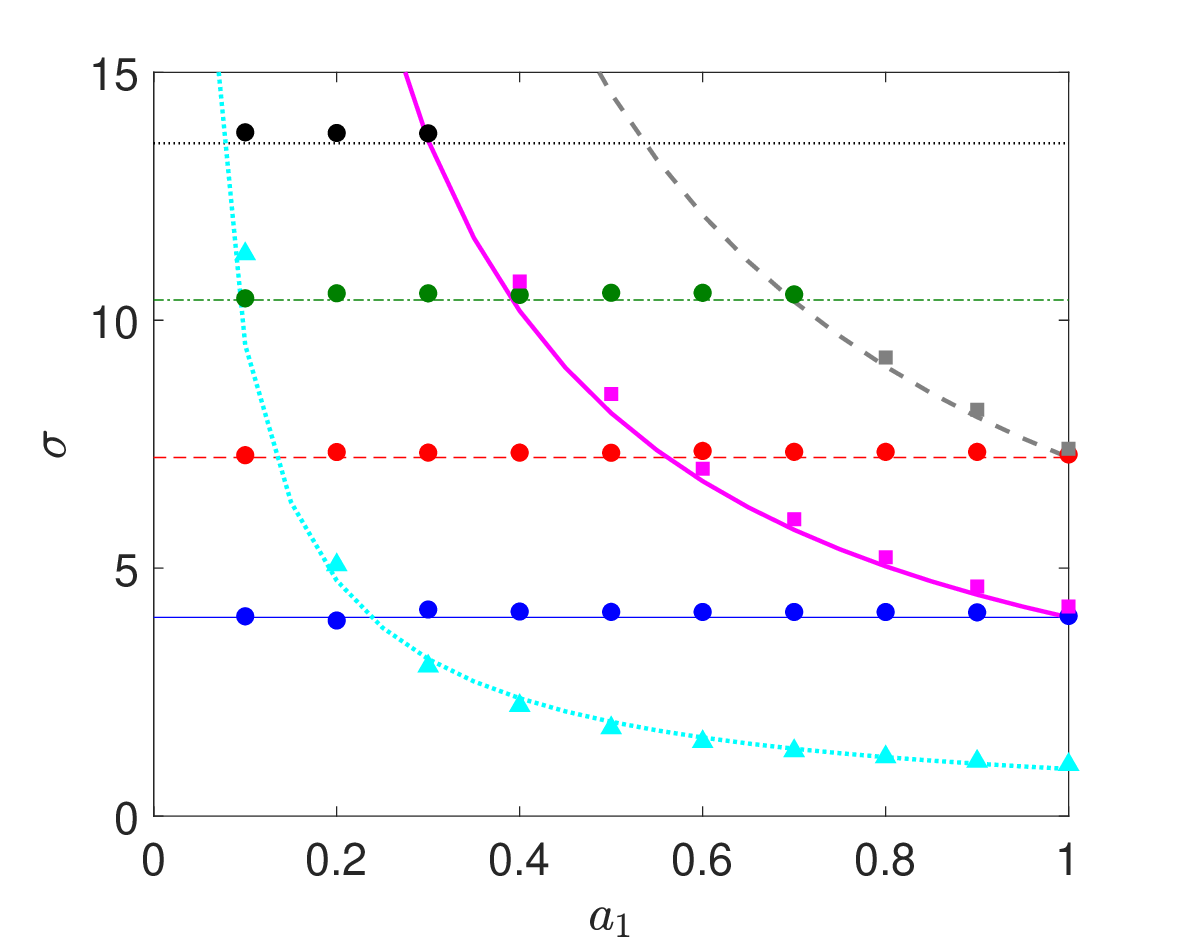} 
        \caption{Comparison with asymptotics (\ref{sn:example_1all})} 
        \label{fig:sigma_SN2}
    \end{subfigure}
\caption{
{\bf (a)} The first five SN eigenvalues $\sigma^{(k)}$ (that
correspond to axially symmetric eigenfunctions) for two circular
patches of radii $\eps a_1$ and $\eps a_2$, with $0<a_1<1$ and $a_2 =
1$, located at the north and south poles of the unit sphere, with
$\eps = 0.2$.  Symbols illustrate the numerical values computed by the
spectral method from Appendix \ref{appf:numer} (with the truncation
order $\nmax=1000$ and the ``geodesic convention''), whereas thick
lines indicate the asymptotic formula (\ref{sn:stek_eigex}).
{\bf (b)} Symbols show the same eigenvalues computed numerically but
colored differently, along with the shown curves: the thin horizontal
lines present $\sigma_{\rm asy}^{(k)}/a_2$ with $k= 1,2,3,4$ from
(\ref{sn:example_1all}); thick solid and dashed curves present
$\sigma_{\rm asy}^{(k)}/a_1$ with $k = 1,2$ from
(\ref{sn:example_1all}); thick dotted line presents $0.95/a_1$. }
\end{figure}

Let us now consider the limit $a_1\to 1$, which corresponds to the
setting of two identical patches.  Figure~\ref{fig:sigma_SN1} shows
that the asymptotic theory of \S \ref{sec:sn:non} does not account for
the closely spaced SN eigenvalues that are computed numerically, and
instead accurately captures only one of these two eigenvalues.
To qualitatively explain this discrepancy for two identical patches,
we observe that the poles of ${\mathcal N}(\sigma_0)$, as
characterized by the resonant set ${\mathcal P}$ in (\ref{sn:poles0})
with $N=2$, are no longer all distinct. In addition, the leading-order
SN eigenvalue for two identical patches with $a_1=a_2=1$ reduces from
(\ref{sn:sigma_iden_disk}) to simply finding the roots of ${\mathcal
C}(-\sigma_0)=0$.  As a result, our asymptotic theory when applied to
two identical patches would predict that, for each $k = 1,2,\ldots$,
there is a {\em unique} leading-order approximation
$\sigma_{0}^{(k)}=\mu_{k1}^{N} (=\mu_{k2}^N)$ for the SN eigenvalue,
satisfying $C_1(-\mu_{k1}^{N})=0$.  We emphasize that these are simply
the leading-order SN eigenvalues for a single patch. In particular,
the asymptotic result (\ref{sn:stek_eigex}) of \S \ref{sec:sn:non}
would erroneously predict that the two smallest SN eigenvalues have
the {\em same} two-term asymptotics $\sigma_{\rm asy}^{(k)} \sim
\mu_{k1}^{N} + \eps E_1(-\mu_{k1}^{N})/C_1^{\prime}(-\mu_{k1}^{N})$.
This is precisely the asymptotic result given in (\ref{sn:example_1})
for a single Steklov patch.
These results are shown by the limiting values of the green and black
dashed curves at the right endpoint in Fig.~\ref{fig:sigma_SN1}.
However, our asymptotic theory fails to account for the additional
nearby SN eigenvalue at the right end of the red curve in
Fig.~\ref{fig:sigma_SN1}, as well as a further closely spaced SN
eigenvalue near the right end of the black dashed curve.  Moreover,
the asymptotic analysis of \S \ref{sec:sn:non} does not predict the
lowest SN eigenvalue at the right end of the lower blue curve in
Fig.~\ref{fig:sigma_SN1}.  Indeed, the second line of Table
\ref{tab:SN} reports the first five nontrivial SN eigenvalues for two
identical patches with $\eps=0.2$ that were calculated by the spectral
method from Appendix \ref{appf:numer} (with the ``geodesic
convention''), which we consider as benchmarks.  These results are the
SN eigenvalues at the right ends of the curves in
Fig.~\ref{fig:sigma_SN1}.  For comparison, the second line of Table
\ref{tab:SN} yields the first two SN eigenvalues for two identical
patches as predicted by the asymptotic theory of \S
\ref{sec:sn:non} as obtained by setting $\eps=0.2$ in
(\ref{sn:example_1}).  As a result, we conclude that when applied to
the case of identical patches, the asymptotic theory of \S
\ref{sec:sn:non} only accounts for a subset of the true SN
eigenvalues.  We now remedy this deficiency by refining our asymptotic
theory to treat the setting of multiple identical patches.

\begin{table}
\centering
\begin{tabular}{|c|c|c|c|c|c|}  \hline
$k$                         & 1      &   2    &   3    &   4    &   5    \\  \hline     
Standard convention         & 1.023  & 3.980  & 4.166  & 7.183  & 7.295  \\
Geodesic convention         & 1.031  & 4.008  & 4.195  & 7.233  & 7.349  \\
Non-resonant asymptotics    &        & 4.006  &        & 7.232  &        \\
Near-resonant asymptotics   & 1.008  &        & 4.190  &        & 7.342  \\  \hline
\end{tabular}
\caption{
The first five SN eigenvalues (that correspond to axially symmetric
eigenfunctions) for two identical circular patches of radius $\eps =
0.2$ located on the north and south poles of the unit sphere.  The
first line presents the numerical results obtained by the spectral
method presented in Appendix \ref{appf:numer}, with the truncation
order $\nmax = 3000$ that ensures that all shown digits of these
values are exact.  The second line presents the numerical results
obtained by the same method, under the ``geodesic convention'' (all
the shown digits are as well exact).  The third line gives our
three-term expansion (\ref{sn:stek_eigex}) for the non-resonant case
(it is identical to the single-patch asymptotics
(\ref{sn:example_1all})).  The fourth line gives the three-term
expansion (\ref{sn:degen:ex1}), that will be derived in \S
\ref{sec:sn_degen} for the near-resonant case (see below). }
\label{tab:SN}
\end{table}

\subsection{Near-Resonant Case}\label{sec:sn_degen}

We now give a specific nontrivial illustration of the near-resonant
case that will always occur when there are $M$ identical patches, with
$2\leq M\leq N$.  With a suitable relabeling of the patch indices, we
label the common patch shape as $\partial\Omega_{c}^{\eps}=
\partial\Omega_{i}^{\eps}$ (with $a_i = a_c$) for $i=1,\ldots,M$. On
these identical patches, there is a common spectrum, labeled by
$\{\mu_{kc}\}_{k\geq 0}$, for the local Steklov problem
(\ref{eq:Psi_def}) of Appendix \ref{sec:Cmu}.

We assume that
$\sigma_0=\lim\limits_{\eps\to 0}\sigma(\eps)=\mu_{k^{\prime} c}$ for
some simple eigenvalue $\mu_{k^{\prime} c}$ of (\ref{eq:Psi_def}) for
which $d_{k^{\prime}c}\neq 0$. The corresponding local Steklov
eigenfunction from (\ref{eq:Psi_def}), labeled by
$\tilde{\Psi}_{k^{\prime}c}$, is taken to be the unique solution to
\begin{subequations}  \label{sn:Psi_def}
\begin{align}  \label{sn:Vk_eq}
  \Delta_{\y} \tilde{\Psi}_{k^{\prime}c} & = 0 \,, \quad \y \in \R_+^3 \,,\\
  \label{sn:Vk_stek}
  \partial_{y_3} \tilde{\Psi}_{k^{\prime}c} + \sigma_0
  \tilde{\Psi}_{k^{\prime}c} &=0 \,,
                                  \quad y_3=0 \,,\, (y_1,y_2)\in \PT_c\,,
  \\  \label{sn:Vk_Neumann}
  \partial_{y_3} \tilde{\Psi}_{k^{\prime}c} & = 0 \,, \quad y_3=0 \,,\,
                                      (y_1,y_2)\notin \PT_c\,,
  \\  \label{sn:Vk_inf}
  \tilde{\Psi}_{k^{\prime}c}(\y) & \sim \frac{1}{|\y|} +
                                   {\mathcal O}\left(|\y|^{-2}\right)
                           \quad \textrm{as}  \quad |\y|\to \infty\,,
\end{align}
\end{subequations}
where $\PT_c \asymp \eps^{-1}\partial\Omega_c^\eps$.  The tilde
highlights that we changed here the normalization of
$\tilde{\Psi}_{k^{\prime}c}$ by imposing (\ref{sn:Vk_inf}).  Comparing
this decay with the asymptotic behavior (\ref{eq:Psi_asympt}) of an
equivalent eigenfunction $\Psi_{k^{\prime}c}$ with the
conventional $L^2(\PT_c)$ normalization, we deduce that
\begin{equation}  \label{eq:tildePsi_Psi}  
  \tilde{\Psi}_{k^{\prime}c} = \frac{2\pi}{\mu_{k^\prime c}\, d_{k^{\prime}c}}
  \Psi_{k^{\prime}c} \,,
\end{equation}
with $d_{k^\prime c}$ being defined in (\ref{eq:dj}).
We conclude that
\begin{equation}  \label{eq:tildePsi_norm}
  \int\limits_{\PT_c} [\tilde{\Psi}_{k^{\prime}c}(\y)]^2 \, d\y =
  \left(\frac{2\pi}{\mu_{k^\prime c} \, d_{k^{\prime}c}}\right)^2 \,.
\end{equation}
Moreover, in our analysis below, we assume that the remaining $N-M$
patches are not in near-resonance in the sense that $\sigma_0\neq
\mu_{ki}$ for all $k\geq 0$ and all $i=M+1,\ldots N$.

As similar to the analysis in \S \ref{sec:sn:non}, in the outer region
we expand $u$ as in (\ref{sdn:outex}) to obtain (\ref{sdn:Uk}) at each
order. We then expand the Steklov eigenvalue as in
(\ref{sdn:stek_eig}), where $\sigma_0=\mu_{k^{\prime}c}$.  In the
inner regions near each Steklov patch, we expand the inner solution as
in (\ref{sdn:innex}) to obtain the inner problems (\ref{sn_s:Vi}) at
each order.

In contrast to the analysis for the non-resonant case, we obtain in
place of (\ref{sn:v0i}) that the leading-order inner solutions are now
\begin{equation}\label{sn:degen:v0i}
  \begin{split}
    V_{0i} &=A_{i} \tilde{\Psi}_{k^{\prime}c}(\y) \,, \quad i=1,\ldots,M \,,\\
    V_{0i} &=U_0 \left(1 - w_{i}(\y;-\sigma_{0})\right) \,, \quad
    i=M+1,\ldots,N \,,
  \end{split}
\end{equation}
where $A_1,\ldots,A_M$ are constants to be determined.  As
$|\y|\to \infty$, we have $V_{0i}\to 0$ for $i=1,\ldots,M$ while
$V_{0i}\to U_0$ for $i=M+1,\ldots,N$. This implies that we can only
match to the leading-order constant outer solution $U_0$ when
$U_0=0$. As a result, the leading-order inner solutions near the
non-resonant patches vanish, i.e.~$V_{0i}=0$ for
$i=M+1,\ldots,N$.

Next, by matching the far-field behavior of $V_{0i}$ for
$i=1,\ldots,M$ to the outer correction $U_1$ by using the far-field
(\ref{sn:Vk_inf}) for $\tilde{\Psi}_{k^{\prime}c}$, we obtain that
$U_1$ satisfies
\bsub\label{sn:degen:U1prob}
\begin{align}
  \Delta_{\x} U_{1} &= 0 \,, \quad \x\in \Omega \,; \qquad
  \partial_n U_1=0 \,, \quad \x\in \partial\Omega\backslash
  \lbrace{\x_1,\ldots,\x_M\rbrace} \,, \label{sn:degen:U1prob_1}\\
  U_1 & \sim \frac{A_i}{|\x-\x_1|}\,, \quad \mbox{as} \quad
        \x\to\x_i\in \partial\Omega \,, \quad i=1,\ldots,M \,.
\end{align}
\esub
The solvability condition for (\ref{sn:degen:U1prob}) yields that
\begin{equation}\label{sn:degen:suma}
  \sum_{i=1}^{M} A_i = 0 \,.
\end{equation}
In terms of an unknown constant $\overline{U}_1$, the
solution to (\ref{sn:degen:U1prob}) is
\begin{equation}\label{sn:degen:u1solve}
  U_1 = \overline{U}_1 +2\pi \sum_{j=1}^{M} A_j G_{s}(\x;\x_j) \,, \quad
\end{equation}
where $G_s(\x;\x_j)$ is the surface Neumann Green's function of
(\ref{mfpt:gs_exact}). By using the local behavior of $G_{s}$ given in
(\ref{mfpt:gs_locm}) in terms of geodesic coordinates, we obtain as
$\x\to \x_i$ that for the resonant patches
\bsub
\begin{equation}\label{sn:degen:U1loc_r}
\begin{split}
       U_{1} &\sim \frac{A_i}{\eps |\y|} - \frac{A_i}{2}
    \log\left(\frac{\eps}{2}\right)
    - \frac{A_i}{2} \left( \log(y_3+|\y|) -
      \frac{y_3(y_1^2+y_2^2)}{|\y|^3}\right)  \\
    & \qquad + \beta_{ci} +\overline{U}_1\,, \quad
    \mbox{for} \quad i=1,\ldots,M \,.
\end{split}
\end{equation}
In (\ref{sn:degen:U1loc_r}), we have defined $\beta_{ci}$ as the
$i$-th component of the vector ${\bm \beta}_c$ defined by
\begin{equation}\label{sn:degen:beta}
  {\bm \beta}_c \equiv 2\pi {\mathcal G}_{sc} \vac \,, \qquad
  \mbox{where} \qquad \vac=(A_1,\ldots,A_M)^T\,.
\end{equation}
Here ${\mathcal G}_{sc}$ is the $M\times M$ Green's matrix representing
long-range interactions over the resonant patches, defined by
\begin{equation}\label{sn:degen:green_mat}
    {\mathcal G}_{sc}  \equiv \left ( 
\begin{array}{cccc}
 R_s & G_{12} & \cdots & G_{1M} \\
 G_{21} & R_s & \cdots   &G_{2M} \\
 \vdots & \vdots  &\ddots  &\vdots\\ 
 G_{M1} &\cdots & G_{M,M-1} & R_s
\end{array}
\right ) \,, \quad R_s \equiv - \frac{9}{20\pi} \,, \quad G_{ij} \equiv
  G_{s}(\x_i;\x_j) \,.
\end{equation}
\esub
In contrast, for the non-resonant patches, we have as $\x\to\x_i$ that
\begin{equation}
  U_1  \sim \overline{U}_1 + 2\pi \sum_{j=1}^{M} A_j G_{s}(\x_i;\x_j) \,,
  \quad \mbox{for} \quad i=M+1,\ldots, N \,. \label{sn:degen:U1loc_nr}
\end{equation}

We observe upon comparing (\ref{sn:degen:U1loc_r}) and
(\ref{sn:degen:U1loc_nr}) that the ${\mathcal O}(\log\eps)$ term only
occurs for the resonant patches. As a result, in the inner expansion
(\ref{sdn:innex}) we conclude that $V_{1i}=0$ for the non-resonant
patches $i=M+1,\ldots,N$. Alternatively, for the resonant patches
$i=1,\ldots,M$, we obtain from the matching condition between the
inner and outer solutions that $V_{1i}$ satisfies
\bsub \label{sn:degen:V1i}
\begin{align}
  \Delta_{\y} V_{1i} &= 0\,, \quad \y \in \R_{+}^{3} \,, \label{sn:degen:Vi_1}\\
  \partial_{y_3} V_{1i} + \sigma_0 V_{1i} &= -\sigma_1 V_{0i} \,, \quad y_3=0 \,,\,
    (y_1,y_2)\in \PT_c\,,  \label{sn:degen:Vi_2}\\
    \partial_{y_3} V_{1i} &=0 \,, \quad y_3=0 \,,\, (y_1,y_2)\notin \PT_c
                            \,, \label{sn:degen:Vi_3}\\
  V_{1i}& \sim - \frac{A_i}{2} + {\mathcal O}(|\y|^{-1}) \,, \quad
  \mbox{as} \quad |\y|\to \infty \,. \label{sn:degen:Vi_4}
\end{align}
\esub

To derive the solvability condition for (\ref{sn:degen:V1i}), which will
determine $\sigma_1$, we need the following lemma:

\begin{lemma}\label{sn:degen:lemma} 
Consider the inhomogeneous problem for $V(\y)$ given by
\bsub \label{sn:degen:Vlem}
\begin{align}
  \Delta_{\y} V &= 0\,, \quad \y \in \R_{+}^{3} \,, \label{sn:degen:Vlem_1}\\
  \partial_{y_3} V + \sigma_0 V &= {\mathcal R}(y_1,y_2) \,, \quad y_3=0 \,,\,
    (y_1,y_2)\in \PT_c\,,  \label{sn:degen:Vlem_2}\\
    \partial_{y_3} V &=0 \,, \quad y_3=0 \,,\, (y_1,y_2)\notin \PT_c
                            \,, \label{sn:degen:Vlem_3}\\
  V & \sim V_{\infty} + {\mathcal O}(|\y|^{-1}) \,, \quad
  \mbox{as} \quad |\y|\to \infty \,, \label{sn:degen:Vlem_4}
\end{align}
\esub
where $V_{\infty}$ is a constant.  A necessary and sufficient
condition for (\ref{sn:degen:Vlem}) to have a solution is that
\begin{equation}\label{sn:degen:solve}
  \int_{\PT_c} \tilde{\Psi}_{k^{\prime} c} {\mathcal R} \, dy_1 dy_2 =
  2\pi V_{\infty} \,,
\end{equation}
where $\tilde{\Psi}_{k^{\prime}c}$ is the unique solution to
(\ref{sn:Psi_def}) with $\sigma_0=\mu_{k^{\prime}c}$.  When
(\ref{sn:degen:solve}) holds, the solution $V$ is unique up to adding
an arbitrary multiple of $\tilde{\Psi}_{k^{\prime}c}$.
\end{lemma}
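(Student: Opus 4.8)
The plan is to prove both directions from Green's second identity applied to the unknown $V$ and the resonant Steklov eigenfunction $\tilde{\Psi}_{k^{\prime}c}$ of (\ref{sn:Psi_def}), integrated over the truncated half-ball $B_\rho^{+}=\{\y\in\R_{+}^{3}: |\y|<\rho\}$, and then letting $\rho\to\infty$. Since both $V$ and $\tilde{\Psi}_{k^{\prime}c}$ are harmonic, the bulk term vanishes and the boundary of $B_\rho^{+}$ splits into the flat disk $\{y_3=0,\ |\y|<\rho\}$ and the hemispherical cap $\{|\y|=\rho,\ y_3>0\}$. On the flat disk I would substitute the boundary conditions: off $\PT_c$ both functions obey homogeneous Neumann conditions, so the integrand vanishes, while on $\PT_c$ the Robin conditions (\ref{sn:degen:Vlem_2}) and (\ref{sn:Vk_stek}) produce a cancellation of the two $\sigma_0 V\tilde{\Psi}_{k^{\prime}c}$ terms, leaving precisely $\tilde{\Psi}_{k^{\prime}c}\,{\mathcal R}$. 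Hence the flat-disk contribution is $\int_{\PT_c}\tilde{\Psi}_{k^{\prime}c}\,{\mathcal R}\,dy_1 dy_2$.

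The remaining task for necessity is the cap integral. Using the far fields $V\sim V_{\infty}+a/|\y|$ from (\ref{sn:degen:Vlem_4}) and $\tilde{\Psi}_{k^{\prime}c}\sim 1/|\y|$ from (\ref{sn:Vk_inf}), a short expansion of $V\,\partial_r\tilde{\Psi}_{k^{\prime}c}-\tilde{\Psi}_{k^{\prime}c}\,\partial_r V$ shows that the monopole-coefficient $a$-terms cancel at order $|\y|^{-3}$ and the leading surviving term is $-V_{\infty}/\rho^2$; integrating against the cap area element $\rho^2\,d\Omega$ over the hemisphere solid angle $2\pi$ gives $-2\pi V_{\infty}$ in the limit $\rho\to\infty$, with all lower-order remainders vanishing. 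Equating the total boundary flux to zero then yields the solvability identity (\ref{sn:degen:solve}), which establishes necessity.

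For sufficiency and the uniqueness clause, I would first remove the far-field constant via $V=V_{\infty}+\tilde V$, so that $\tilde V$ is harmonic, decays as $\tilde V\sim{\mathcal O}(|\y|^{-1})$, and solves the same mixed problem with Robin data $\tilde{\mathcal R}={\mathcal R}-\sigma_0 V_{\infty}$ on $\PT_c$ and homogeneous Neumann data off $\PT_c$. This reduces everything to the \emph{decaying} exterior Steklov problem, for which the spectral framework of Appendix \ref{sec:Cmu} applies: the problem is self-adjoint with discrete spectrum $\{\mu_{kc}\}$, and since $\sigma_0=\mu_{k^{\prime}c}$ is assumed simple, its kernel is the one-dimensional span of $\tilde{\Psi}_{k^{\prime}c}$. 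The Fredholm alternative then makes the decaying problem solvable, uniquely up to multiples of $\tilde{\Psi}_{k^{\prime}c}$, precisely when $\int_{\PT_c}\tilde{\Psi}_{k^{\prime}c}\,\tilde{\mathcal R}\,dy_1 dy_2=0$. Evaluating this with $\int_{\PT_c}\tilde{\Psi}_{k^{\prime}c}\,dy_1 dy_2=2\pi/\sigma_0$---obtained by applying the divergence theorem to $\tilde{\Psi}_{k^{\prime}c}$ alone, balancing the cap flux $-2\pi$ against the Robin flux $\sigma_0\int_{\PT_c}\tilde{\Psi}_{k^{\prime}c}$---reproduces exactly (\ref{sn:degen:solve}), and transporting the kernel back through $V=V_{\infty}+\tilde V$ gives the stated uniqueness up to multiples of $\tilde{\Psi}_{k^{\prime}c}$.

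The main obstacle I anticipate is the rigorous functional-analytic justification of the Fredholm alternative in the unbounded half-space: one must work in weighted Sobolev spaces that simultaneously encode harmonicity, the mixed Robin--Neumann conditions, and the prescribed $1/|\y|$ decay, and confirm that the associated Dirichlet-to-Neumann--type Steklov operator has compact resolvent, so that its spectrum is discrete and the simple eigenvalue $\mu_{k^{\prime}c}$ yields a one-dimensional kernel. Granting the spectral results already used in Appendix \ref{sec:Cmu}, the remaining boundary-term bookkeeping is routine; the only delicate points are the sign convention for the outward normal on the flat disk and the careful tracking of the ${\mathcal O}(|\y|^{-2})$ far-field remainders needed to ensure the cap integral converges to $-2\pi V_{\infty}$.
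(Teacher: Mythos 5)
Your proposal is correct and follows essentially the same route as the paper: necessity via Green's second identity over a large hemisphere (with the identical cancellation of the $\sigma_0 V\tilde{\Psi}_{k^{\prime}c}$ terms on $\PT_c$ and the same $-2\pi V_{\infty}$ cap contribution), and sufficiency via the spectral structure of the local Steklov problem. The only difference is cosmetic: where you invoke the Fredholm alternative abstractly, the paper implements it constructively by expanding $V = V_\infty + \sum_{k}\nu_k\Psi_{kc}$ in the complete orthonormal basis $\{\Psi_{kc}|_{\PT_c}\}$ of $L^2(\PT_c)$ and projecting the Robin condition, which gives $\nu_j(\sigma_0-\mu_{jc}) = \int_{\PT_c}\left({\mathcal R}-\sigma_0 V_\infty\right)\Psi_{jc}\,d\y$ and hence the orthogonality constraint at $j=k^{\prime}$ directly, so the weighted-Sobolev/compact-resolvent machinery you flag as the main obstacle is not needed beyond the completeness of the Steklov basis already granted in Appendix \ref{sec:Cmu}.
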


\begin{proof} To prove the necessity of (\ref{sn:degen:solve}) we apply
  Green's second identity to $V$ and $\tilde{\Psi}_{k^{\prime}c}$ over
  a large hemisphere of radius $R$ in the upper  half-space to
  obtain
\begin{equation}\label{sn:degen:proof_1}
  \begin{split}
    \int\limits_{\R_{+}^{3}} \left(V \Delta_{\y} \tilde{\Psi}_{k^{\prime}c} -
      \tilde{\Psi}_{k^{\prime}c}
      \Delta_{\y} V \right) d\y &=  \int\limits_{\PT_c} \left[
      \tilde{\Psi}_{k^{\prime} c}  \left(\partial_{y_3} V + \sigma_0 V\right) -
      V  \left(\partial_{y_3} \tilde{\Psi}_{k^{\prime} c} + \sigma_0
          \tilde{\Psi}_{k^{\prime}c}\right) \right]  d\y  \\
  & \qquad + 2\pi \lim_{R\to \infty} R^2 \left(
    V \frac{\partial \tilde{\Psi}_{k^{\prime}c}}{\partial |\y|} -
    \tilde{\Psi}_{k^{\prime}c} \frac{\partial V}{\partial |\y|} \right)\Big{\vert}_{
    |\y|=R}\,.
  \end{split}
\end{equation}
Then, upon imposing the conditions (\ref{sn:degen:Vlem_2}) and
(\ref{sn:Vk_stek}) on the patches, together with using the far-field
behaviors (\ref{sn:degen:Vlem_4}) and (\ref{sn:Vk_inf}), we readily
obtain that (\ref{sn:degen:proof_1}) reduces to (\ref{sn:degen:solve}).
This proves the necessity of (\ref{sn:degen:solve}).

We now prove the sufficiency of (\ref{sn:degen:solve}). Since
$\{\Psi_{kc}|_{\PT_c}\}$ form a complete orthonormal basis of
$L^2(\PT_c)$, any harmonic function $V$ in $\R_{+}^3$ that decays
at infinity and satisfies the mixed Robin-Neumann conditions
(\ref{sn:degen:Vlem_2}) and (\ref{sn:degen:Vlem_3}) can be represented
in terms of the eigenfunctions $\Psi_{kc}$.  In other words, a
general solution $V$ to (\ref{sn:degen:Vlem}) can be written as
\begin{equation}
V(\y) = V_\infty + \sum\limits_{k=0}^\infty \nu_k \, \Psi_{kc}(\y) \,,
\end{equation}
with suitable coefficients $\nu_k$.  By construction, it
satisfies (\ref{sn:degen:Vlem_1}), (\ref{sn:degen:Vlem_3}), and
(\ref{sn:degen:Vlem_4}).  Substituting this representation into
(\ref{sn:degen:Vlem_2}), multiplying it by $\Psi_{jc}$, integrating
over $\y\in\PT_c$ and using the orthonormality of $\{\Psi_{kc}\}$, we
obtain that
\begin{equation}\label{sn:lemma_coeff}
  \nu_j (\sigma_0 - \mu_{jc}) = \int\limits_{\PT_c}
  ({\mathcal R}(\y) - \sigma_0 V_\infty) \Psi_{jc}(\y) \, d\y \,. 
\end{equation}
Since $\sigma_0 = \mu_{k^{\prime}c}$, we observe that we must have
$\int_{\PT_c}({\mathcal R}(\y) - \sigma_0 V_\infty)
\Psi_{k^{\prime}c}(\y) \, d\y=0$, which is equivalent to
(\ref{sn:degen:solve}), as is readily seen by using the divergence
theorem.  When this condition holds, $\nu_{k^{\prime}}$ remains
undetermined (a free parameter). The other coefficients $\nu_j$ for
any $j\ne k^{\prime}$ are uniquely given by (\ref{sn:lemma_coeff}).

Finally, when (\ref{sn:degen:solve}) holds, the general solution $V$
to (\ref{sn:degen:Vlem}) can be written as
$V=V_p + B \tilde{\Psi}_{k^{\prime}c}$, where $B$ is an arbitrary
constant and where $V_p$ is the particular solution of
(\ref{sn:degen:Vlem}) satisfying
$V_p\sim V_{\infty} + {\mathcal O}(|\y|^{-2})$ as $|\y|\to
\infty$. Since $\tilde{\Psi}_{k^{\prime}c}\sim {1/|\y|}$ as
$|\y|\to \infty$, it follows that $V\sim V_{\infty}+{B/|\y|}$ as
$|y|\to\infty$, where $B$ is arbitrary.
\end{proof}

To determine $\sigma_1$ from (\ref{sn:degen:V1i}) we simply apply the
solvability condition (\ref{sn:degen:solve}) of Lemma
\ref{sn:degen:lemma} where we set $V_{\infty}={-A_i/2}$, and
${\mathcal R}= -\sigma_1 V_{0i}$ with
$V_{0i}=A_i \tilde{\Psi}_{k^{\prime} c}$. In this way, by using
(\ref{eq:tildePsi_norm}), we readily determine in terms of
$\mu_{k^{\prime}c}$ and the weight $d_{k^{\prime}c}=
\int_{\Gamma_c}\Psi_{k^{\prime}c} \, d\y$ that
\begin{equation}\label{sn:degen:sigma_1}  
  \sigma_1 = \frac{\pi}{\int_{\PT_c} (\tilde{\Psi}_{k^{\prime} c})^2
  \, d\y } = \frac{\mu_{k^{\prime}c}^2 d_{k^{\prime}c}^2}{4\pi}\,.
\end{equation}

Without loss of generality, as shown in Lemma \ref{sn:degen:lemma} we
are free to impose that $V_{1i}\sim -{A_i/2} + {\mathcal
O}(|\y|^{-2})$ as $|\y|\to\infty$, which ensures that $V_{1i}$ is
unique.  As a result, from the matching condition we obtain that the
outer correction $U_2$ in (\ref{sdn:outex}) satisfies (\ref{sdn:Uk}),
with no singularities at any $\x_i$ for $i=1,\ldots,N$.  We conclude
that $U_2=\overline{U}_2$, where the constant $\overline{U}_2$ can
only be obtained at higher order.

Next, we proceed to determine the SN eigenvalue correction $\sigma_2$.
For the non-resonant patches $i=M+1,\ldots,N$, we set $V_{0i}=0$ in
(\ref{sn_s:Vi}) and use the local behavior (\ref{sn:degen:U1loc_nr})
to derive that the inner correction $V_{2i}$ satisfies
\bsub \label{sn:ndegen:V2i}
\begin{align}
  \Delta_{\y} V_{2i} &= 0\,, \quad \y \in \R_{+}^{3} \,, \label{sn:ndegen:V2i_1}\\
  \partial_{y_3} V_{2i} + \sigma_0 V_{2i} &= 0 \,, \quad y_3=0 \,,\,
    (y_1,y_2)\in \PT_c\,,  \label{sn:ndegen:V2i_2}\\
    \partial_{y_3} V_{2i} &=0 \,, \quad y_3=0 \,,\, (y_1,y_2)\notin \PT_c
                            \,, \label{sn:ndegen:V2i_3}\\
  V_{2i}& \sim \overline{U}_1 + 2\pi \sum_{j=1}^{M} A_j G_{s}(\x_i;\x_j) \,, \quad
  \mbox{as} \quad |\y|\to \infty \,. \label{sn:ndegen:V2i_4}
\end{align}
\esub
The solution to (\ref{sn:ndegen:V2i}) for $i=M+1,\ldots,N$ is
\begin{equation}\label{sn:ndegen:V2i_sol}
  V_{2i} = \left(\gamma_i + \overline{U}_1\right) \left( 1 - w_{i}(\y;-\sigma_0)
  \right)\,, \quad \mbox{where} \quad
  \gamma_i\equiv 2\pi \sum_{j=1}^{M} A_j G_{s}(\x_i;\x_j) \,,
\end{equation}
which has the far-field behavior 
\begin{equation}\label{sn:ndegen:V2i_ff}
  V_{2i}\sim \left(\gamma_i + \overline{U}_1\right) \left( 1 -
    \frac{C_i(-\sigma_0)}{|\y|} \right) \,, \quad \mbox{as} \quad
  |\y|\to \infty \,, \quad i=M+1,\ldots,N \,.
\end{equation}

In contrast, for the resonant patches $i=1,\ldots,M$, we obtain from
(\ref{sn_s:Vi}), together with the ${\mathcal O}(1)$ terms in the
local behavior (\ref{sn:degen:U1loc_r}), that the inner correction
$V_{2i}$ satisfies
\bsub \label{sn:degen:V2i}
\begin{align}
  \Delta_{\y} V_{2i} &= 2y_3 V_{0i,y_3y_3} + 2 V_{0i,y_3}\,, \quad
                       \y \in \R_{+}^{3} \,, \label{sn:degen:V2i_1}\\
  \partial_{y_3} V_{2i} + \sigma_0 V_{2i} & = -\sigma_2 V_{0i}\,, \quad y_3=0 \,,\,
       (y_1,y_2)\in \PT_c\,,  \label{sn:degen:V2i_2}\\
  \partial_{y_3} V_{2i} &=0 \,, \quad y_3=0 \,,\, (y_1,y_2)\notin \PT_c
                          \,, \label{sn:degen:V2i_3}\\
  V_{2i} & \sim \beta_{ci} + \overline{U}_1  -\frac{A_i}{2}
          \left[\log\left(y_3 + |\y|\right) -
          \frac{y_3(y_1^2+y_2^2)}{|\y|^3} \right] \,, 
         \,\, \mbox{as} \quad |\y|\to \infty \,. \label{sn:degen:V2i_4}
\end{align}
\esub To derive the solvability condition for (\ref{sn:degen:V2i}), we
first need to decompose $V_{2i}$ so as to account for the
inhomogeneous term in the PDE (\ref{sn:degen:V2i_1}) as well as the
term in the square bracket in (\ref{sn:degen:V2i_4}) in the far-field
behavior.  More specifically, and as very similar to the analysis in Lemma
\ref{lemma:Phi2} of Appendix \ref{app_h:inn2}, we decompose $V_{2i}$
as
\begin{equation}\label{sn:degen:V2decomp}
  V_{2i}=V_{2ip} + V_{2iH} \,,
\end{equation}
where $V_{2ip}$ is given explicitly in terms of $V_{0i}=A_i \tilde{\Psi}_{k^{\prime}c}$ by
\begin{equation}\label{sn:degen:V2p}
  V_{2ip}=\frac{y_3^2}{2}V_{0i,y_3} + \frac{y_3}{2}V_{0i} - \frac{1}{2}
  \int_{0}^{y_3} V_{0i}(y_1,y_2,\eta)\, d\eta + A_{i}
  {\mathcal F}_{c}(y_1,y_2)\,,
\end{equation}
where ${\mathcal F}_{c}$ is the unique solution to
\bsub \label{sn:degen:fcal}
\begin{gather}
  \Delta_{S}{\mathcal F}_{c} = \left(\frac{1}{2} \partial_{y_3}
  \tilde{\Psi}_{k^{\prime}c}\vert_{y_3=0} \right) I_{\PT_c} \,; \quad
    I_{\PT_c} \equiv \left\{\begin{array}{ll}
        1 \,, & (y_1,y_2) \in \PT_c \\
       0 \,, & (y_1,y_2) \notin \PT_c\,,  \end{array}\right.
                       \label{sn:degen:fcal_1}\\
{\mathcal F}_{c} \sim
\left( \frac{1}{4\pi} \int_{\PT_c} \partial_{y_3} \tilde{\Psi}_{k^{\prime}c}
  \vert_{y_3=0} \, d\y\right)
 \log\rho_0 +  o(1)\,,  \quad \mbox{as} \quad
                            \rho_0\equiv (y_1^2+y_2^2)^{1/2}\to \infty
                            \,, \label{sn:degen:fcal_2}
\end{gather}                 
\esub 
with $\Delta_{S}{\mathcal F}_{c} \equiv {\mathcal F}_{c,y_1
y_1}+ {\mathcal F}_{c,y_2 y_2}$.  By applying the divergence theorem
to (\ref{sn:Psi_def}) we calculate $\int_{\PT_c}\partial_{y_3}
\tilde{\Psi}_{k^{\prime}c}\vert_{y_3=0}
\, d\y=-2\pi$. In addition, upon using the relation (\ref{sn:Vk_stek})
on $\PT_c$ the solution to (\ref{sn:degen:fcal}) is written in terms of
the free-space Green's function in the plane as
\begin{equation}  \label{eq:Fc_solution}
  {\mathcal F}_{c}(\y) = -
  \frac{\mu_{k^\prime c}}{4\pi} \int\limits_{\PT_c}
  \tilde{\Psi}_{k^{\prime}c}(\y^{\prime}) \log|\y-\y^{\prime}| \, d\y^{\prime} \,,
\end{equation}
which satisfies ${\mathcal F}_c\sim -\tfrac{1}{2}\log\rho_0+ o(1)$ as
$\rho_0\to\infty$. 

Then, by repeating a similar calculation as in the proof of Lemma
\ref{lemma:Phi2}, we conclude that $V_{2iH}$ in (\ref{sn:degen:V2decomp})
for $i=1,\ldots,M$ satisfies
\bsub \label{sn:degen:V2iH}
\begin{align}
  \Delta_{\y} V_{2iH} &= 0\,, \quad \y \in \R_{+}^{3} \,, \label{sn:degen:V2iH_1}\\
  \partial_{y_3} V_{2iH} + \sigma_0 V_{2iH} &= -\sigma_2 V_{0i} -\sigma_0 A_i
                                    {\mathcal F}_c \,, \quad y_3=0 \,,\,
    (y_1,y_2)\in \PT_c\,,  \label{sn:degen:V2iH_2}\\
    \partial_{y_3} V_{2iH} &=0 \,, \quad y_3=0 \,,\, (y_1,y_2)\notin \PT_c
                            \,, \label{sn:degen:V2iH_3}\\
  V_{2iH}& \sim  \beta_{ci} + \overline{U}_1  \,, \quad
  \mbox{as} \quad |\y|\to \infty \,. \label{sn:degen:V2iH_4}
\end{align}
\esub

To determine $\sigma_2$ from (\ref{sn:degen:V2iH}) we simply apply the
solvability condition (\ref{sn:degen:solve}) of Lemma
\ref{sn:degen:lemma} in which we set $V_{\infty}=\beta_{ci}+\overline{U}_1$ and
${\mathcal R}= -\sigma_2 V_{0i}-\sigma_0A_i {\mathcal F}_c$ with
$V_{0i}=A_i \tilde{\Psi}_{k^{\prime} c}$. This yields for $i=1,\ldots,M$ that
\begin{equation}\label{sn:degen:sigma2_old}
  -2\pi \left(\beta_{ci} + \overline{U}_1\right) = A_i \left[
    \sigma_2 \int_{\PT_c} \left(\tilde{\Psi}_{k^{\prime}c}\right)^2 \, d\y +
    \int_{\PT_c} \sigma_0 \tilde{\Psi}_{k^{\prime}c} \, {\mathcal F}_c
    \, d\y \right]\,,
\end{equation}
where we identify $\sigma_0 \tilde{\Psi}_{k^{\prime}c}=-
\partial_{y_3}\tilde{\Psi}_{k^{\prime}c}$ on $\PT_c$ and
$\int_{\PT_c}(\tilde{\Psi}_{k^{\prime}c})^2 \, d\y =
{\pi/\sigma_1}$ from (\ref{sn:degen:sigma_1}).  

Finally, upon recalling (\ref{sn:degen:beta}) for $\beta_{ci}$ and the
constraint (\ref{sn:degen:suma}), we obtain a matrix eigenvalue
problem for $\vac\equiv (A_1,\ldots,A_M)^{T}$ and an eigenvalue
parameter $\alpha$ given by
\bsub \label{sn:degen:mat_all}
\begin{equation}
  {\mathcal G}_{sc} \vac + \frac{\overline{U}_1}{2\pi} \evec_{M}
  =\alpha \vac \,, \qquad \evec_{M}^{T}\vac = 0 \,,
  \label{sn:degen:mat_1}
\end{equation}
where $\evec_M\equiv (1,\ldots,1)^{T}\in \R^M$. Here $\sigma_2$ is
related to $\alpha$ by
\begin{equation}
  \sigma_2 = -\frac{\sigma_1}{\pi} \left[ 4\pi^2 \alpha +
    {\mathcal J} \right] \,,  \label{sn:degen:sigma_2}
\end{equation}
with ${\mathcal J}$ given by
\begin{equation}\label{sn:degen:sigma_3}
  {\mathcal J} \equiv \int_{\PT_c} \sigma_0
  \, \tilde{\Psi}_{k^{\prime}c} \, {\mathcal F}_c \, d\y 
   = - \frac{\mu_{k^{\prime}c}^2}{4\pi} \int\limits_{\PT_c}
   \int\limits_{\PT_c} \tilde{\Psi}_{k^{\prime}c}(\y)\,
    \tilde{\Psi}_{k^{\prime}c}(\y^{\prime})\, \log |\y-\y^{\prime}|\, d\y \,
   d\y^{\prime}\,,
\end{equation}
where in the last equality, we used $\sigma_0=\mu_{k^{\prime}c}$ and
(\ref{eq:Fc_solution}).  By using (\ref{eq:tildePsi_Psi}),
this relation can also be written in terms of the conventionally
normalized eigenfunction $\Psi_{k^{\prime}c}$ as
\begin{equation}\label{sn:degen:sigma_3bis}
  {\mathcal J}  = - \frac{\pi}{d_{k^{\prime}c}^2} \int\limits_{\PT_c}
   \int\limits_{\PT_c} \Psi_{k^{\prime}c}(\y)\,
    \Psi_{k^{\prime}c}(\y^{\prime})\, \log |\y-\y^{\prime}|\, d\y \,
   d\y^{\prime}\,,
\end{equation}
where $d_{k^{\prime}c}=\int_{\Gamma_c}\Psi_{k^{\prime}c}\, d\y$.  This
relation shows that ${\mathcal J}$, and thus the associated correction
$\sigma_2$ to the SN eigenvalue, are independent of the normalization
of $\Psi_{k^{\prime}c}$.

Since $V_{0i}=A_i \tilde{\Psi}_{k^{\prime}c}$ for $i=1,\ldots,M$ and
$V_{0i}\equiv 0$ for $i=M+1,\ldots,N$, the PDE normalization condition
(\ref{sn:eig_4}) provides the following normalization condition for
the matrix eigenvalue problem (\ref{sn:degen:mat_all}):
\begin{equation}
  \vac^T \vac = \sum_{j=1}^{M} A_j^2 \sim \frac{1}{\eps \int_{\PT_c}
    [\tilde{\Psi}_{k^{\prime}c}]^2 \, d\y} \,. \label{sn:degen:mat_norm}
\end{equation}
\esub

By taking the inner product of (\ref{sn:degen:mat_1}) with $\evec_M$, we
can isolate $\overline{U}_1$ as
\begin{equation}\label{sn:degen:U1_mat}
  \overline{U}_{1} = - \frac{2\pi}{M} \evec_M^{T} {\mathcal G}_{sc}
   \vac \,,
\end{equation}
where $\vac$ and $\alpha$ is an eigenpair of the $M\times M$ matrix
eigenvalue problem
\begin{equation}\label{sn:degen:avec} 
  \left({\bf I} - \frac{\evec_{M} \evec_{M}^{T}}{M}  \right) {\mathcal G}_{sc}
  \vac = \alpha \vac \,,
  \quad \mbox{with} \quad \evec_{M}^T \vac=0  \,,
\end{equation}
with normalization (\ref{sn:degen:mat_norm}). Here ${\bf I}$ is the
$M\times M$ identity matrix.

We complete our analysis by deriving the problem for the outer
correction $U_3$ in (\ref{sdn:outex}), which satisfies (\ref{sdn:Uk})
with singularity conditions at the patch locations. For the resonant
patches, as discussed in the proof of Lemma \ref{sn:degen:lemma}, we
can impose that $V_{2iH}\sim \beta_{ci}+ \overline{U}_1 + {B_i/|\y|}$
as $|\y|\to \infty$, where $B_i$ for $i=1,\ldots,M$ are unknown
constants. For the non-resonant patches, we have that
(\ref{sn:ndegen:V2i_ff}) provides the singularity behavior for $U_3$. In this
way, we obtain that $U_3$ satisfies
\begin{equation}\label{sn:degen:U3prob}
  \begin{split}
  \Delta_{\x} U_{3} &= 0 \,, \quad \x\in \Omega \,; \qquad
  \partial_n U_3=0 \,, \quad \x\in \partial\Omega\backslash
  \lbrace{\x_1,\ldots,\x_N\rbrace} \,, \\
  U_3  & \sim -\frac{\left(\gamma_i +\overline{U}_1\right)C_i(-\sigma_0)}
  {|\x-\x_i|}\,, \quad \mbox{as} \quad \x\to\x_i \,, \quad i=M+1,\ldots, N ,\\
  U_3  & \sim \frac{B_i}{|\x-\x_i|}\,, \quad \mbox{as} \quad \x\to\x_i \,,
  \quad i=1,\ldots,M \,.
\end{split}
\end{equation}
The solvability condition for (\ref{sn:degen:U3prob}) provides one
equation for $(B_1,\ldots,B_M)^{T}$:
\begin{equation}\label{sn:degen:U3prob_solv}
  \sum_{i=1}^{M} B_i = \sum_{i=M+1}^{N} \left(\gamma_i+\overline{U}_1\right)
  C_i(-\sigma_0)\,,
\end{equation}
where $\gamma_i$ is defined in (\ref{sn:ndegen:V2i_sol}). A higher-order
analysis can in principle be undertaken to determine a matrix system
for $(B_1,\ldots,B_M)^T$.

We summarize our result in the following proposition.

\begin{prop}\label{sn:degen:main_res}
Suppose that there are exactly $M$ identical patches, with $2\leq
M\leq N$, with a common patch shape $\partial\Omega_{c}^{\eps}=
\partial\Omega_{i}^{\eps}$ for $i=1,\ldots,M$. Let 
$\{\mu_{kc}\}_{k\geq 0}$ be the spectrum of the local Steklov problem
(\ref{eq:Psi_def}) of Appendix \ref{sec:Cmu} on the local common patch
$\PT_c \asymp \eps^{-1}\partial\Omega_{c}^{\eps}$.  Then, for any
$k\geq 0$ such that $d_{kc}\neq 0$, the Steklov-Neumann (SN) problem
(\ref{sn:eig}) has $M-1$ eigenvalues $\sigma(\eps)$ (counting
multiplicity) for which $\sigma_0=\lim\limits_{\eps\to
0}\sigma(\eps)=\mu_{kc}$, where $\mu_{kc}$ is assumed to be simple.  A
three-term expansion for these eigenvalues is
\begin{equation}\label{sn:degen:main_sigma}
  \sigma =\sigma_{0}+ \eps\log\left(\frac{\eps}{2} \right) \sigma_{1} + \eps
  \sigma_{2} + {\mathcal O}(\eps^2\log \eps)\,,
\end{equation}
where $\sigma_1$ is given in (\ref{sn:degen:sigma_1}) and $\sigma_2$
is related via (\ref{sn:degen:sigma_2}) to $M-1$ eigenpairs
$\alpha$ and $\vac=(A_1,\ldots,A_M)^T$ of the matrix eigenvalue
problem (\ref{sn:degen:avec}), with normalization condition
(\ref{sn:degen:mat_norm}). Moreover, the local behavior of the
eigenfunctions on the resonant and non-resonant patches are
\begin{equation}\label{sn:degen:main_v}
\begin{split}
  u\vert_{\PT_i} &= A_i \tilde{\Psi}_{k^{\prime}c} + {\mathcal O}(\eps) \,,
  \quad i=1,\ldots,M \,, \\
  u\vert_{\PT_i} &\sim \eps\left(\gamma_i + \overline{U}_1\right)
  \left( 1 - w_{i}(\y;-\sigma_0) \right)\,, \quad i=M+1,\ldots,N \,,
\end{split}
\end{equation}
where $\tilde{\Psi}_{k^{\prime}c}$ satisfies (\ref{sn:Psi_def}). In
(\ref{sn:degen:main_v}), $\overline{U}_1$ and $\gamma_i$ are defined
in (\ref{sn:degen:U1_mat}) and (\ref{sn:ndegen:V2i_sol}),
respectively.  In the outer region, the SN eigenfunction is given by
\begin{equation}\label{sn:degen:main_out}
  u = \eps \left(\overline{U}_1 +2\pi \sum_{j=1}^{M} A_j G_{s}(\x;\x_j)
  \right) + {\mathcal O}(\eps^2) \,.
\end{equation}
\end{prop}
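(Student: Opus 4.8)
The plan is to assemble the order-by-order matched-asymptotic construction of the preceding analysis into a single argument indexed by powers of $\eps$, extract the three coefficients $\sigma_0,\sigma_1,\sigma_2$, and then count the resulting eigenvalues. First I would fix $\sigma_0=\mu_{k^{\prime}c}$ for a simple eigenvalue of the local Steklov problem (\ref{eq:Psi_def}) with $d_{k^{\prime}c}\neq 0$, posit the outer expansion (\ref{sdn:outex}), the eigenvalue expansion (\ref{sdn:stek_eig}), and the inner expansions (\ref{sdn:innex}), subject to the normalization (\ref{sn:eig_4}). The structural feature that distinguishes this from the non-resonant analysis of \S \ref{sec:sn:non} is that the leading-order inner solution on each resonant patch is a multiple $A_i\tilde{\Psi}_{k^{\prime}c}$ of the local Steklov eigenfunction (\ref{sn:Psi_def}), which tends to zero as $|\y|\to\infty$; matching this to a constant outer field therefore forces $U_0=0$, so that $V_{0i}=0$ on every non-resonant patch.

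Proceeding order by order, at $\0(\eps)$ the outer correction $U_1$ must carry the $A_i/|\x-\x_i|$ singularities of the resonant patches; its solvability condition produces the constraint $\sum_{i=1}^{M}A_i=0$ in (\ref{sn:degen:suma}), and $U_1$ is represented by (\ref{sn:degen:u1solve}). Expanding $U_1$ near each patch via (\ref{mfpt:gs_locm}) yields the local forms (\ref{sn:degen:U1loc_r}) and (\ref{sn:degen:U1loc_nr}); since an $\0(\log\eps)$ term appears only at the resonant patches, I conclude that $V_{1i}=0$ for $i=M+1,\dots,N$, while for $i=1,\dots,M$ the inner problem (\ref{sn:degen:V1i}) together with the solvability condition (\ref{sn:degen:solve}) of Lemma \ref{sn:degen:lemma} fixes $\sigma_1$ as in (\ref{sn:degen:sigma_1}). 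Normalizing $V_{1i}$ to decay like $\0(|\y|^{-2})$ renders $U_2$ free of singularities, so $U_2=\overline{U}_2$.

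The crux is the $\0(\eps^2)$ balance that determines $\sigma_2$. I would solve the non-resonant inner corrections (\ref{sn:ndegen:V2i}) explicitly as (\ref{sn:ndegen:V2i_sol}), and treat the resonant inner corrections (\ref{sn:degen:V2i}) through the splitting $V_{2i}=V_{2ip}+V_{2iH}$ of (\ref{sn:degen:V2decomp})--(\ref{sn:degen:V2p}), with the auxiliary planar problem (\ref{sn:degen:fcal}) for ${\mathcal F}_c$ solved in closed form in (\ref{eq:Fc_solution}). Applying Lemma \ref{sn:degen:lemma} to $V_{2iH}$ gives (\ref{sn:degen:sigma2_old}); inserting the definition (\ref{sn:degen:beta}) of ${\bm \beta}_c$ and using the constraint (\ref{sn:degen:suma}) recasts the $M$ relations as the matrix eigenvalue problem (\ref{sn:degen:mat_all}), after which isolating $\overline{U}_1$ via (\ref{sn:degen:U1_mat}) yields the reduced problem (\ref{sn:degen:avec}) and the link (\ref{sn:degen:sigma_2}) between $\alpha$ and $\sigma_2$. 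For the count, the operator in (\ref{sn:degen:avec}) is the compression $P{\mathcal G}_{sc}P$ of the symmetric matrix ${\mathcal G}_{sc}$ by the orthogonal projection $P\equiv{\bf I}-\evec_M\evec_M^{T}/M$ onto $\{\vac:\evec_M^{T}\vac=0\}$; being symmetric on the $(M-1)$-dimensional range of $P$, it has exactly $M-1$ real eigenpairs (counting multiplicity), each furnishing one near-resonant eigenvalue through (\ref{sn:degen:sigma_2}), with scale set by (\ref{sn:degen:mat_norm}). The representations (\ref{sn:degen:main_v}) and (\ref{sn:degen:main_out}) then follow from the leading-order inner solutions and (\ref{sn:degen:u1solve}).

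The main difficulty is conceptual rather than computational: this is a degenerate perturbation problem in which the amplitudes $A_1,\dots,A_M$ remain undetermined by the leading and first-order balances beyond the single constraint $\sum_{i}A_i=0$, and the degeneracy is lifted only at second order, where the solvability condition for $V_{2iH}$ becomes the matrix problem (\ref{sn:degen:avec}). The step requiring the most care is verifying that Lemma \ref{sn:degen:lemma} applies uniformly across all $M$ resonant patches despite the shared degeneracy, so that the second-order solvability conditions are simultaneously necessary and sufficient for the chosen far-field normalizations, and that the resulting symmetric compression indeed yields exactly $M-1$ distinct branches that exhaust the near-resonant family associated with $\mu_{k^{\prime}c}$.
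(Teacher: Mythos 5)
Your proposal is correct and follows essentially the same route as the paper's own analysis in \S \ref{sec:sn_degen}: the forced vanishing of $U_0$ and of the non-resonant leading-order inner solutions, the constraint $\sum_{i=1}^{M}A_i=0$ from the $U_1$ solvability condition, the determination of $\sigma_1$ and $\sigma_2$ by applying Lemma \ref{sn:degen:lemma} to the resonant inner corrections (with the splitting $V_{2i}=V_{2ip}+V_{2iH}$ and the planar problem for ${\mathcal F}_c$), and the reduction to the matrix eigenvalue problem (\ref{sn:degen:avec}). Your explicit remark that (\ref{sn:degen:avec}) is the compression $P{\mathcal G}_{sc}P$ of a symmetric matrix onto the $(M-1)$-dimensional subspace $\{\vac : \evec_M^T\vac=0\}$, hence yields exactly $M-1$ real eigenpairs, merely makes precise a counting step the paper leaves implicit, and is a welcome clarification rather than a departure.
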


We emphasize that in our analysis of this near-resonant case, the
eigenvalue corrections $\sigma_1$ and $\sigma_2$ are obtained upon
applying solvability conditions to the inner problems defined near the
resonant patches. For the non-resonant case studied in \S
\ref{sec:sn:non}, these correction terms were found from solvability
conditions on the outer solution. Moreover, in contrast to our main
result in Proposition \ref{sn:main_res} for the non-resonant case, we
observe that the SN eigenfunctions for this near-resonant case are
concentrated primarily on the resonant patches and that the outer
solution in (\ref{sn:degen:main_out}) is now ${\mathcal O}(\eps)$
smaller than that for the non-resonant case.

The results in Proposition \ref{sn:degen:main_res} for $\sigma_1$ and
$\sigma_2$ can be simplified for the special case where the common
patch shape $\PT_c$ is a disk of radius one ($a_c = 1$), for which
$\tilde{\Psi}_{k^{\prime}c}$ and
$\partial_{y_3}\tilde{\Psi}_{k^{\prime}c}$, depend only on
$\rho_0=(y_1^2+y_2^2)^{1/2}$ when $y_3=0$ and $(y_1,y_2)\in\PT_c$.  In
this special case, where ${\mathcal F}_c={\mathcal F}_{c}(\rho_0)$, we
write ${\mathcal J}$ in (\ref{sn:degen:sigma_3}) as
\begin{equation}\label{sn:degen:j1}
  {\mathcal J} = 2\pi \sigma_{0} \int_{0}^{1}
  \rho_0 \tilde{\Psi}_{k^{\prime}c}\, {\mathcal F}_{c}(\rho_0) \, d\rho_0 \,.
\end{equation}
Since the problem (\ref{sn:degen:fcal}) for ${\mathcal F}_c$ is
radially symmetric, we find that its first integral is $\rho_0
{\mathcal F}_{c\rho_0}=
-\tfrac{\sigma_0}{2}\int_{0}^{\rho_0}\eta\tilde{\Psi}_{k^{\prime}c}(\eta)\,
d\eta$ for $0<\rho_0<1$. Upon integrating (\ref{sn:degen:j1}) by
parts, we use this first integral together with ${\mathcal
F}_{c}(1)=0$ and $\sigma_0=\mu_{k^{\prime}c}$, to conclude that
\begin{subequations}\label{sn:degen:all}
\begin{align}\label{sn:degen:j}
  {\mathcal J} & = \pi \mu_{k^\prime c}^2 \int_{0}^{1}
  \frac{1}{\rho_0} \left( \int_{0}^{\rho_0} \eta \tilde{\Psi}_{k^\prime c}(\eta)
    \, d\eta \right)^2  \, d\rho_0\,, \\   \label{sn:degen:j2}
& = \frac{4\pi^3}{d_{k^\prime c}^2} \int_{0}^{1}
  \frac{1}{\rho_0} \left( \int_{0}^{\rho_0} \eta \Psi_{k^\prime c}(\eta)
    \, d\eta \right)^2  \, d\rho_0 \,,
\end{align}
\end{subequations}
where we used (\ref{eq:tildePsi_Psi}) in the second equality.

\subsection{Numerical Comparison}\label{sec:sn_examples_degen}

We now consider two examples of our theory for the near-resonant case.

\vspace*{0.2cm}
\noindent {\bf Example I:} We first apply our theory to Example II of
\S \ref{sn:example:nondegen} with two identical anti-podal circular
patches of radius $\eps$ centered at the north and south poles, for
which $M=N=2$ and $a_1 = a_2 = 1$.  For such anti-podal patches
centered at $\x_1=(0,0,1)$ and $\x_2=(0,0,-1)$, the $2\times 2$
Green's matrix ${\mathcal G}_{sc}$ in (\ref{sn:degen:green_mat}) is
circulant symmetric and so the only eigenpair of (\ref{sn:degen:avec})
is $\vac=A_c(1,-1)^T$ and $\alpha=R_s-G_s(\x_1;\x_2)$, where $A_c$ is
a normalization constant. Upon using $R_s=-{9/(20\pi)}$ and
(\ref{mfpt:gs_exact}) for $G_s(\x_1;\x_2)$, we calculate that
$\alpha={\left(\log{2}-1\right)/(4\pi)}$.  By inserting the
superscript $^{(k)}$ to distinguish asymptotics for different
eigenvalues, we conclude from (\ref{sn:degen:main_sigma}) and
(\ref{sn:degen:sigma_2}) that there are SN eigenvalues of
(\ref{sn:eig}) for each $k\geq 0$ given by
\begin{equation}\label{sn:degen:ex1}  
  \sigma(\eps) \sim \mu_{kc} +
  \eps\log\left(\frac{\eps}{2} \right) \sigma_{1}^{(k)} -
  \frac{\eps \sigma_1^{(k)}}{\pi} \left[ \pi (\log{2}-1) +
    {\mathcal J}^{(k)}\right]
  + \ldots \,,
\end{equation}
where $\sigma_1^{(k)}$ and ${\mathcal J}^{(k)}$ are given by
(\ref{sn:degen:sigma_1}) and (\ref{sn:degen:all}), respectively.  The
numerical values for some local Steklov eigenvalues $\mu_{kc}$ and
their weights $d_{kc}$ are given in Table \ref{table:muk_disk} of
Appendix \ref{app:circle}. These values can be used to calculate
$\sigma_{1}^{(k)}$ from (\ref{sn:degen:sigma_1}).

To illustrate the accuracy of the three-term expansion
(\ref{sn:degen:ex1}), we employ the accurate numerical method from
Appendix \ref{appf:numer} to compute the numerical values $\sigma_{\rm
num}^{(j)}$, enumerated by the index $j = 1,2,3,\ldots$, that are
considered as benchmarks (we recall that these SN eigenvalues
correspond to axially symmetric eigenfunctions).
Figure~\ref{fig:sigma_SN_two} presents the difference between
$\sigma_{\rm num}^{(j)}$ with $j = 1,3,5,$ and the three-term
expansion (\ref{sn:degen:ex1}) for near-resonant SN eigenvalues with
indices $k = 1,2,3$.  We also present the difference between
$\sigma_{\rm num}^{(j)}$ with $j= 2,4$, and the asymptotic values
given by (\ref{sn:example_1all}) for non-resonant eigenvalues for a
single patch with indices $k= 1, 2$.  In both cases, the difference is
shown as a function of $\eps^2$ to outline the correct form of the
asymptotic relations.  We can therefore conclude that these relations
are very accurate.  We also observe that the asymptotic relation for
the first (nontrivial) eigenvalue $\sigma^{(1)}$ is the least
accurate.  Note also that the asymptotic relations for non-resonant
eigenvalues are more accurate than those for near-resonant ones.

\begin{figure}
  \centering
  \includegraphics[width = 85mm]{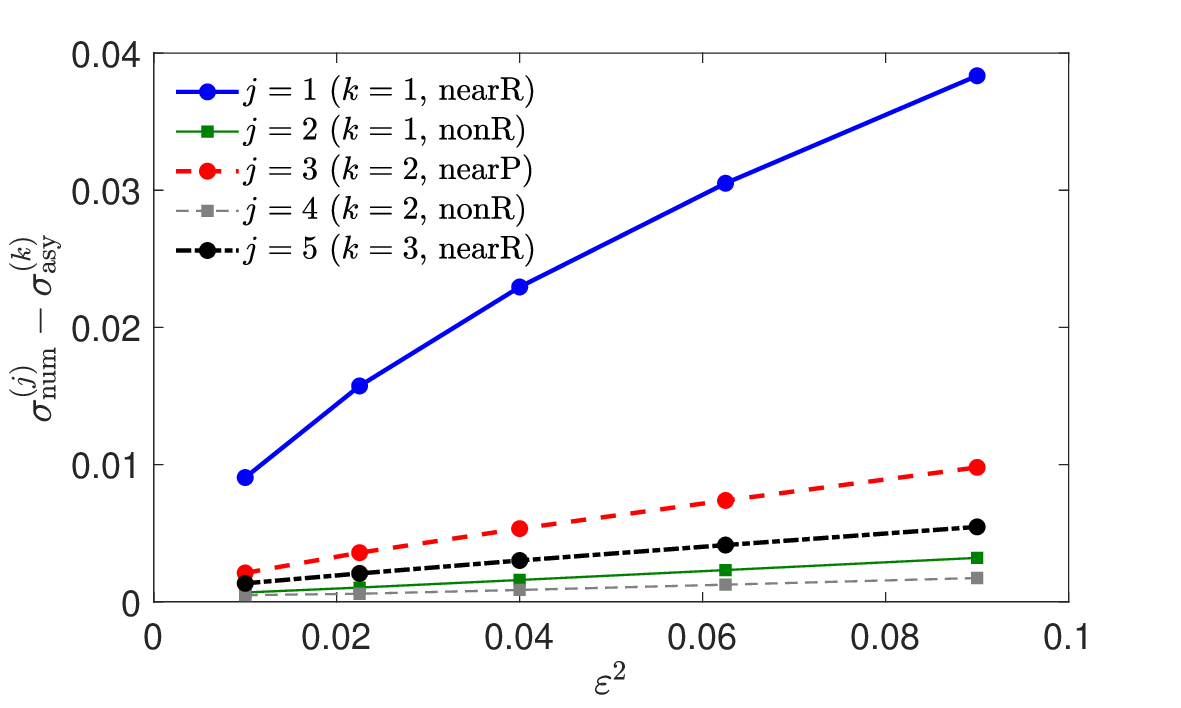} 
\caption{
Validation of the three-term expansions for the SN eigenvalues (that
correspond to axially symmetric eigenfunctions) in the case of two
identical circular patches of radius $\eps$ located at the north and
south poles of the unit sphere.  The difference between the numerical
values $\sigma_{\rm num}^{(j)}$ obtained by the spectral method from
Appendix \ref{appf:numer} (with truncation order $\nmax=2000$ and
``geodesic convention''), and the asymptotic values $\sigma_{\rm
asy}^{(k)}$ given by (\ref{sn:degen:ex1}) for near-resonant
eigenvalues with $k= 1,2,3$ and by (\ref{sn:example_1all}) for
non-resonant eigenvalues with $k = 1,2$.}
\label{fig:sigma_SN_two}
\end{figure}

Figure \ref{fig:SD_eigen} illustrates the behavior of the first
three nontrivial axially-symmetric eigenfunctions $u^{(k)}$ of the SN
problem with two circular patches of radii $\eps_1 = \eps_2 = 0.2$,
located at the north and south poles.  We recall that the principal
eigenvalue $\sigma^{(0)} = 0$ corresponds to a trivial (constant)
eigenfunction that we exclude from our analysis.  In turn, the
eigenfunctions plotted in Figure \ref{fig:SD_eigen} correspond to the
first three eigenvalues $\sigma^{(k)}$ reported in Table \ref{tab:SN}.
As discussed earlier, the eigenpair with $k = 2$ is non-resonant,
whereas the eigenpairs with $k = 1$ and $k = 3$ are near-resonant.  In
particular, in the non-resonant case, the eigenfunction away from both
patches approaches a constant $U_0 \ne 0$, which is fixed by the
normalization.  In contrast, the near-resonant eigenfunctions vanish
in the far-field.  Since the eigenvalues $\sigma^{(2)}$ and
$\sigma^{(3)}$ are relatively close to each other (see Table
\ref{tab:SN}), the associated eigenfunctions almost coincide on the
``left'' patch and are almost antisymmetric on the ``right'' patch, to
ensure their mutual orthogonality.

In addition, the SN eigenfunctions in our symmetric antipodal patch
configuration are necessarily either symmetric, or antisymmetric with
respect to the horizontal plane $x_3 = 0$.  We observe that the
non-resonant eigenfunctions are symmetric, given that the two patches
``communicate'' with each other through the bulk ($U_0 \ne 0$).  In
turn, the near-resonant eigenfunctions are antisymmetric; they vanish
at the center and take small values in the bulk; in this case, the
``communication'' between two patches is of resonance type.  We
emphasize, however, that the above symmetry argument is specific to
our example of antipodal patches; in contrast, our asymptotic analysis
and the established distinction between non-resonant and near-resonant
cases remain valid for any mutual arrangement of the identical patches
on the sphere. 

\begin{figure}
\begin{center}
\includegraphics[width=80mm]{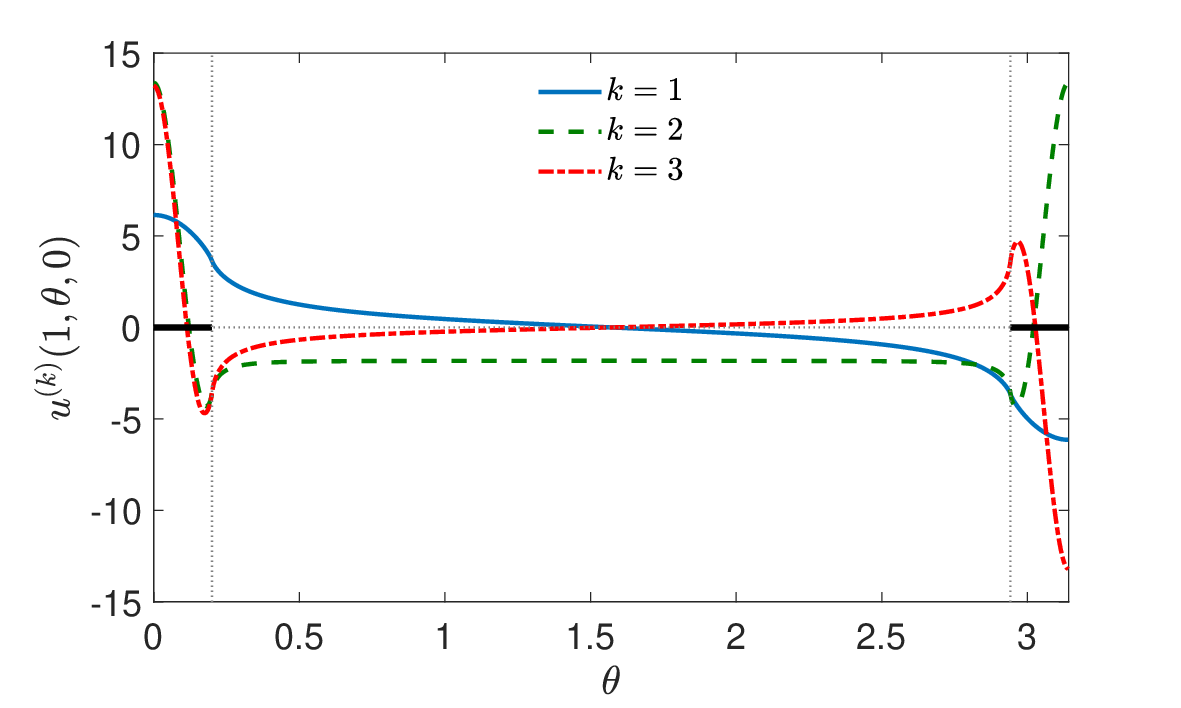} 
\end{center}
\caption{
The first three nontrivial axially-symmetric eigenfunctions $u^{(k)}$
of the SN problem with two circular patches of radii $\eps_1 = \eps_2
= 0.2$ (with $a_1 = a_2 = 1$), located at the north and south poles.
They are plotted in the spherical coordinates $(r,\theta,\phi)$, by
fixing $r = 1$ and $\phi = 0$.  Black thick intervals indicate the
locations of the Steklov patches ($0 < \theta < \epsilon_1$ and
$\pi-\epsilon_2 < \theta < \pi$), with $\eps_i = \sin(\epsilon_i)$.
These eigenfunctions correspond to the eigenvalues $\sigma^{(k)}$
reported in Table \ref{tab:SN}.  We used the spectral method from
Appendix \ref{appf:numer}, with the truncation order $\nmax = 1000$
and ``geodesic convention''.}
\label{fig:SD_eigen}
\end{figure}

\vspace*{0.2cm}
\noindent {\bf Example II:} Next, we suppose that we have $N$
identical circular patches of a common radius $\eps$ (i.e., all
$a_i = 1$), with centers located at the vertices of one of the five
largest platonic solids that can be inscribed within the unit
sphere. For this spatial configuration of patches, where $M=N$ and
$N\in \lbrace{4,6,8,12,20\rbrace}$, the Green's matrix ${\mathcal
G}_{sc}$ is symmetric with a constant row sum, so that ${\mathcal
G}_{sc}\evec_M=\alpha_M\evec_M$. Note that this matrix is in
general not circulant.  It follows that, up to a normalization
condition, the $N-1$ mutually orthogonal solutions to
(\ref{sn:degen:avec}) are
\begin{equation}\label{sx:ex2:ac}
  \vac_j = \vq_j \,, \quad \mbox{where} \quad {\mathcal G}_{sc}\vq_j=\alpha_j
  \vq_j\,, \quad \vq_j^{T}\evec_{M}=0 \quad \mbox{for} \quad j=2,\ldots,N\,.
\end{equation}
In addition, we obtain from (\ref{sn:degen:U1_mat}), that we must have
$\overline{U}_1=0$ in (\ref{sn:degen:main_out}).  We conclude from
(\ref{sn:degen:main_sigma}) and (\ref{sn:degen:sigma_2}) that, for
each $k\geq 0$, there are $(N-1)$ SN eigenvalues of (\ref{sn:eig}),
enumerated by $j = 1,\ldots,N-1$, given by
\begin{equation}\label{sn:degen:ex2}  
  \sigma(\eps)\sim \mu_{kc} +
  \eps\log\left(\frac{\eps}{2} \right) \sigma_{1}^{(k)} -
  \frac{\eps \sigma_1^{(k)}}{\pi} \left( 4\pi^2 \alpha_j +
    {\mathcal J}^{(k)}\right)  + \ldots \,, \quad j=2,\ldots,N \,.
\end{equation}
We remark that $\sigma_1^{(k)}$ and ${\mathcal J}^{(k)}$, as given by
(\ref{sn:degen:sigma_1}) and (\ref{sn:degen:j}) respectively, are
independent of $N$ and of the locations of the vertices. In this way,
determining a three-term expansion for the $(N-1)$ SN eigenvalues that
are in near-resonance with an eigenvalue $\mu_{kc}$ of the local
Steklov problem reduces to finding all of the eigenpairs of the
Green's matrix ${\mathcal G}_{sc}$ that are orthogonal to $\evec_M$.

In addition, from our analysis in \S \ref{sec:sn:non}, we observe from
our main result in Proposition \ref{sn:main_res} that all non-resonant
eigenvalues are close to roots of $C(-\sigma_0)=0$.  These roots are
the eigenvalues $\mu_{k}^{N}$ of the local Steklov eigenvalue problem
(\ref{eq:Psi_def_N}) of Appendix \ref{sec:Cmu0} satisfying the
far-field Neumann condition (\ref{eq:VkN_inf}). In this case, the
expansion (\ref{sn:stek_eigex}) is independent of the vertex locations
of the platonic solids and reduces to
\begin{equation}
  \sigma(\eps)
  \sim \mu_{k}^{N} + \eps \frac{E(-\mu_{k}^{N})}{C^{\prime}(-\mu_{k}^{N})}
  + \cdots \,,
\end{equation}
which is the same result derived in Example I of \S
\ref{sn:example:nondegen} for a single Steklov patch.

\section{Discussion}\label{sec:discussion}

For a collection of small partially reactive patches of arbitrary
shape located on the boundary of a sphere in three dimensions, we have
explored four different PDE problems with mixed boundary conditions:
the Poisson equation for the mean first-reaction time (MFRT), the
Laplace equation for the splitting probability, the
Steklov-Dirichlet-Neumann (SDN) eigenvalue problem, and the
Steklov-Neumann (SN) eigenvalue problem.  To analyze these problems,
we have developed and implemented a unified mathematical approach,
based on the method of matched asymptotic expansions combined with
spectral theory.  For the MFRT, our analysis extends that of
\cite{Cheviakov10} where only perfectly reactive and locally circular
patches were considered.  In each case, our three-term asymptotic
results in the small-patch limit have been favorably compared with
full numerical results.

The main focus of this paper was on the derivation of accurate
asymptotic expansions for partially reactive patches.  As in the case
of Dirichlet patches (perfect sinks), these expansions can find
numerous applications in diverse disciplines such as chemistry,
biophysics, computational biology, neurosciences, and ecology
\cite{Bressloff13,Holcman13,Holcman14,Holcman,Benichou14,Lindenberg,Grebenkovbook}.
For instance, different asymptotic formulas for the MFRT presented in
\S \ref{mfpt_sec:expan} allow one to examine  the relative roles of
reactivities, sizes, shapes and mutual arrangements of the patches
onto the efficiency of the boundary for capturing diffusive particles.
We provided a detailed analysis of the reactive capacitance
$C_i(\kappa_i)$ that incorporates reactivities, sizes and shapes of
the patches into the leading-order term.  However, higher-order terms
are mandatory to uncover the effect of the spatial organization of
reactive patches onto transport and reaction rates.  For instance,
does a clustered organization of receptors present a trapping
advantage as compared to a uniform distribution?  (see
\cite{Taflia11,Freche11} and references therein).  Such optimization
problems can now be addressed with the help of our asymptotic
relations.
In particular, \S \ref{mfpt:homog} presents the homogenization
procedure, in which a collection of partially reactive patches can be
replaced by an effective Robin condition characterized by an
equivalent reactivity $\K_{\rm eff}$, which extends the earlier
studies for perfectly reactive patches \cite{Cheviakov10,Cheviakov13}.
In other words, such homogenized descriptions substitute a structured
heterogeneous reactive boundary by a much simpler homogeneous one,
allowing one to upscale the problem by eliminating small patches and
thus considerably reducing computational cost.

In the case of fixed reactivities $\K_i$, the small-patch limit
$\eps\to 0$ implies $\kappa_i \to 0$ so that the trapping efficiency
of the patches is progressively reduced.  In \S \ref{sec:moderateK},
we described the four-term expansion for the MFRT, which includes all
the relevant terms $\OO(\eps^{-2})$, $\OO(\eps^{-1})$, $\OO(\log
\eps)$, and $\OO(1)$.  These asymptotic results rely on the Taylor
expansion of the reactive capacitance and incorporate the patch shapes
more explicitly, through the coefficients $c_{2i}$ and $c_{3i}$.
These coefficients can be found directly from (\ref{eq:c1_def}),
without the need for solving the local Steklov eigenvalue problem.

An important byproduct of the MFRT analysis is the three-term
asymptotic expansion for the principal (lowest) eigenvalue $\lambda_0$
of the Laplace operator in the presence of partially reactive patches
(see \S \ref{sec:mfrt:eig}).  This eigenvalue determines the long-time
behavior of various time-dependent characteristics of the considered
diffusion-reaction system such as the propagator, the survival
probability, and the probability density function of the
first-reaction time.

In addition to the MFRT, in \S \ref{split:intro} we analyzed the
splitting probabilities that are often employed to describe the
competition between patches that capture the diffusive particles in
parallel.  In particular, one can fix the positions of all patches
except one, and then inspect the relative role of that particular
patch on the overall capture efficiency.  Moreover, if different
patches represent distinct reaction pathways, the splitting
probabilities characterize their relative contributions.  For
instance, one patch may represent a target site of interest (e.g., a
receptor or an enzyme), whereas other patches can be holes or
channels, through which the particles may escape from the domain.  In
this setting, the splitting probability describes the successful
reaction on the patch of interest, prior to the escape.  We also
provided alternative physical interpretations in terms of a
steady-state concentration with a source term on one patch, as well as
Newton cooling in heat transfer.

Finally, the conventional Robin boundary condition describes the most
basic type of surface reactions.  As briefly discussed in
\S \ref{all:intro}, choosing a suitable (non-exponential)
distribution for the threshold $\hat{\ell}$ in the definition
(\ref{eq:tau_def}) of the first-reaction time $\tau$, one can
introduce more sophisticated surface reaction mechanisms such as
progressive passivation or activation of the patch by its encounters
with the diffusive particle.  In turn, a PDE description of these
mechanisms employs Steklov-Neumann or Steklov-Dirichlet-Neumann
spectral problems \cite{Grebenkov20,Grebenkov23a,Grebenkov23}.  In
\S \ref{stekDN:intro} and \S \ref{stekN}, we present the asymptotic
expansions of Steklov eigenvalues and eigenfunctions, which determine
other diffusion-reaction characteristics (e.g.,
Ref. \cite{Grebenkov25} explains how these spectral expansions give
access to an asymptotic formula for the MFRT in the presence of a
single small patch with a general surface reaction).  Further
extensions and practical implications of these asymptotic results in
chemical physics and computational biology will be presented
elsewhere.

We now discuss several open problems that are directly related to our
study.

(i) It would be worthwhile to develop an accurate numerical scheme,
such as in \cite{Lindsay18a} and \cite{Kaye20} for the exterior
problem, to validate the homogenization result
(\ref{homog:keff_small}) for the effective reactivity $\keff$ for a
large collection of small, but equi-distributed, patches on the
boundary of a sphere. 

(ii) Our analysis has been focused only on problems that are {\em
interior} to the sphere.  In the companion article
\cite{Grebenkov-Ward25}, we derived the effective reactivity rate for
the {\em exterior} problem where there is a large number of
equi-distributed partially reactive patches of arbitrary shape, but
with small area, on the boundary of a sphere.  This analysis extends
that of \cite{Lindsay17} where an effective Robin boundary condition
was derived for the case of perfectly reacting but locally circular
boundary patches. In turn, it would be worthwhile to analyze the SDN
and SN problems {\em exterior} to a sphere that has a collection of
small partially reactive surface patches.

(iii) Our analysis of the SDN and SN problems has left open a few
technical issues.  In particular, our analysis has provided only the
leading-order term for the Steklov eigenvalue for the special case
where the corresponding eigenfunction is not axially-symmetric on a
circular patch.  In addition, for the SN problem, in \S
\ref{sec:sn_degen} we have only analyzed in detail the {\em
near-resonant} case that will occur when a subset of the patches are
identical.  We did not provide a similar analysis to investigate {\em
near-resonance} behavior in other, less generic, situations, involving
non-identical patches, or when the local Steklov eigenvalue problem
near a patch has non-degenerate eigenvalues.

(iv) Given the central role of the reactive capacitance
$C_i(\kappa_i)$ and the monopole coefficient $E_i(\kappa_i)$ in our
asymptotic results, it is worthwhile to more fully characterize these
key quantities for an arbitrarily-shaped patch. Some monotonicity
principles for $C_i(\kappa_i)$ with regards to the patch shape and the
reactivity $\kappa_i$ have been derived in \cite{Grebenkov26}.
However, there are no such results to date for $E_i(\kappa_i)$. The
determination of these two quantities for a given patch shape relies
on calculating the charge density $q_i$ (recall (\ref{mfpt:wc_charge})
and (\ref{eq:Ei_general0})).  For an elliptically shaped patch $\PT_i$
with semi-axes $a_1$ and $a_2$, by an extension of the analysis in
\cite{Strieder09}, we can calculate explicitly $C=C_i(\infty)$, $q =
q_i(y_1,y_2;\infty)$, and $E=E_i(\infty)$ for a perfectly reactive
patch as
\begin{equation}\label{disc:ellipse} 
C=\frac{a_{>}}{K(m)} \,, \,\,\, q
= \frac{C}{2a_1a_2} \frac{1}{\sqrt{1 - y_1^2/a_1^2 - y_2^2/a_2^2}}
\,, \,\,\, \frac{E}{C^2} =
-\frac{1}{2}\log\left(\frac{a_1+a_2}{2}\right) -\log{2} + \frac{3}{4}
\,, \end{equation} where $a_{<}=\min(a_1,a_2)$, $a_{>}=\max(a_1,a_2)$,
and $K(m)$ is the complete elliptic integral of the first kind with
modulus $m\equiv \left(1- {a_{<}^2/a_{>}^2}\right)^{1/2}$. However, we
are not aware of any other exact solutions for different patch
shapes.
  
(v) Our asymptotic analysis relies on the assumption of a smooth
boundary so that the patches become flat in the limit $\eps \to 0$. In
particular, the inner expansion near each patch involves the solution
$w_i(\y;\kappa_i)$ of the BVP (\ref{mfpt:wc}) in the upper half-space.
In turn, this solution admits a spectral expansion over the local
Steklov eigenfunctions satisfying (\ref{eq:Psi_def}) that yields the
spectral expansion (\ref{eq:Cmu_def0}) for the reactive capacitance.
However, some biomedical and neuroscience applications require solving
the narrow escape to structured targets possessing spine-like,
``hairy'', or ``bumpy'' shapes located on a smooth surface
\cite{Reingruber09,Greb-Skvor22,Greb-Skvor23}, such as surfaces
coated by polymers or nanopillars.  Therefore, in practice, one may
need to deal with a smooth boundary covered by small cones, pyramids,
capped cylinders or half-ellipsoids, which do not reduce to flat
patches even in the limit $\eps\to 0$.  The proposed approach can be
formally applied to such bumpy patches. In fact, the shape of the
patch only affects the local Steklov eigenvalues and eigenfunctions
that in turn determine the reactive capacitance $C_i(\kappa_i)$ and
the correction term $E_i(\kappa_i)$.  For instance, the local exterior
Steklov problem for (half)spheroidal patches was studied in detail in
\cite{Grebenkov24}.  Moreover, in the case of a half-spherical patch,
the analysis can actually be even simpler than for flat patches
because the local problems become rotation invariant.  It is therefore
interesting to investigate how ``bumpiness'' of the patches and their
shapes may affect the narrow escape problem (see also
\cite{Grebenkov26}).

(vi) As we stated earlier, the leading-order terms of the
derived asymptotic expansions are independent of the actual shape of
the bounded domain and thus readily applicable to an arbitrary bounded
3-D domain $\Omega$ with a smooth boundary $\partial\Omega$ that
contains a collection of $N$ partially reactive surface
patches. However, extending the analysis to higher-order to capture
the role of the curvature of the boundary and the inter-patch
interactions is a largely open research frontier. In
\cite{Cheviakov15}, a two-term asymptotic expansion for the MFPT in an
arbitrary 3-D domain for perfectly reactive patches revealed the
effect on the MFPT of the local mean curvature of the smooth
boundary. However this analysis did not capture the higher-order
inter-patch interactions, as this would have required a detailed study
of the singularity behavior as $\x\to\x_i$ of the surface Neumann
Green's function $G_s(\x;\x_i)$ satisfying (\ref{mfpt:sph}).  By using
techniques from microlocal analysis, it was established rigorously in
Proposition 1 of \cite{Tzou21} that this singularity behavior has the
form
\begin{align} \label{extent:sing}
  G_{s}(\x;\x_i) & = \frac{1}{2\pi|\x-\x_i|} -\frac{{\mathcal H}_i}{4\pi}
  \log\left(|\x-\x_i| +d(\x) \right) \\  \nonumber
& \qquad + R_{s}(\x_i) + e(\x;\x_i) + o(1)
  \,, \quad \mbox{as} \quad \x\to \x_i \,.
\end{align}
Here ${\mathcal H}_i$ is the mean curvature of $\partial\Omega$ at
$\x_i$, $d(\x)\equiv \mbox{dist}(\x,\partial\Omega)>0$ is the distance
to the boundary from $\x$, $e(\x;\x_i)$ is an anisotropic term that
depends on the specific path of approach to
$\x=\x_i\in\partial\Omega$, while the {\em regular part} $R_{s}(\x_i)$
is a {\em global quantity} that depends on $\Omega$ and $\x_i$.  From
the resolution of this singularity structure of $G_s$, a three-term
expansion for the MFPT for a single local circular or elliptically
shaped perfectly reactive boundary patch was rigorously derived for a
general Riemannian manifold \cite{Tzou21}.

An extension of this result to a collection of small partially
reactive patches of arbitrary shape on the smooth boundary of a
general 3-D domain presents an open problem.  More specifically, one
could aim to extend our asymptotic framework to provide a three-term
expansion for the MFRT, the splitting probability, and the Steklov
eigenvalues for both the SDN and SN problems.  Local geodesic
coordinates, or a tangential-normal coordinate system, would lead to
very similar local problems near each patch as derived herein.  The
main challenge for obtaining readily evaluated asymptotic formulas
would be to develop an efficient numerical scheme to compute $G_s$ and
$R_s$ by exploiting the intricate singularity behavior
(\ref{extent:sing}).  Finally, we remark that some specific 3-D domain
shapes where exact solutions for $G_s$ and $R_s$ may be available are
a (capped) cylinder or a spheroid, for which the Laplace operator
admits a separation of variables in curvilinear coordinates.  These
basic geometries arise naturally in cellular biology and neuroscience,
where partially reactive boundaries play an important role.

\section*{Acknowledgements} 
D.S.G. acknowledges the Simons Foundation for supporting his
sabbatical sojourn in 2024 at the CRM (CNRS -- Universit\'e de
Montr\'eal, Canada), and the Alexander von Humboldt Foundation for
support within a Bessel Prize award.  M.J.W. was supported by the
NSERC Discovery grant program. We are grateful to Prof. Daniel Gomez
of the University of New Mexico for providing the
Fig.~\ref{fig:geodesic} of the geodesic coordinates and to Prof. Sean
Lawley of the University of Utah for discussions related to the
sigmoidal approximation in (\ref{mfpt:sigmoidal_2}).  We are grateful
to the anonymous referees for their comments on the initial
manuscript.

\appendix
\renewcommand{\theequation}{\Alph{section}.\arabic{equation}}

\section{Geodesic Normal Coordinates to the Unit Sphere
$\Omega$}\label{app_g:geod}

We define geodesic normal coordinates
$\bxi=(\xi_1,\xi_2,\xi_3)^T\in \left({-\pi/2},{\pi/2}\right) \times
\left(-\pi,\pi\right)\times [0,1]$ in $\Omega\cup \partial\Omega$ so
that $\bxi=0$ corresponds to $\x_i\in\partial\Omega$, with $\xi_3>0$
corresponding to the interior of $\Omega$.  Geodesics on
$\partial\Omega$ are obtained by setting $\xi_3=0$ and fixing either
$\xi_1=0$ or $\xi_2=0$.  In terms of the spherical angles $\theta_i\in
(0,\pi)$ and $\varphi_i\in [0,2\pi)$ (see Fig.~\ref{fig:geodesic}),
and for $|\x_i|=1$, we define the orthonormal vectors
\begin{equation}\label{app_g:ortho}
  \x_i \equiv \begin{bmatrix}
           \cos\varphi_i \, \sin\theta_i \\
           \sin\varphi_i \, \sin\theta_i \\
           \cos\theta_i
         \end{bmatrix}\,, \,\,\,
         \vv_{2i}=\partial_{\theta} \x_i \equiv
\begin{bmatrix}
           \cos\varphi_i \, \cos\theta_i \\
           \sin\varphi_i \, \cos\theta_i \\
           -\sin\theta_i
         \end{bmatrix}\,, \,\,\,
         \vv_{3i}=\x_i {\bf \times} \partial_{\theta} \x_i \equiv
\begin{bmatrix}
           -\sin\varphi_i \\
           \cos\varphi_i \\
           0
         \end{bmatrix}\,.
\end{equation}
The vectors $\vv_{2i}$ and $\vv_{3i}$ span the tangent plane to the
sphere at $\x=\x_i$.  We now define the geodesic normal coordinates
$\bxi=(\xi_1,\xi_2,\xi_3)^T$ by the global transformation
\begin{equation}\label{app_g:global}
  \x(\bxi) = \left(1-\xi_3\right)\left( \cos\xi_1 \,
    \cos\xi_2 \, \x_i + \cos\xi_1 \,
    \sin\xi_2 \, \vv_{2i} + \sin\xi_1 \vv_{3i}\right)\,.
\end{equation}
We observe that $\xi_3$ measures the distance of $\x$ to
$\partial\Omega$.  The curves obtained by setting $\xi_3=0$, and
fixing either $\xi_2=0$ or $\xi_1=0$ are, respectively,
$\x(\xi_1,0,0)=\cos\xi_1 \, \x_i + \sin\xi_1 \, \vv_{3i}$ or
$\x(0,\xi_2,0)=\cos\xi_2 \, \x_i + \sin\xi_2 \, \vv_{2i}$, which
correspond to intersections of $\partial\Omega$ with planes spanned by
$\lbrace{\x_i,\vv_{3i}\rbrace}$ or $\lbrace{\x_i,\vv_{2i}\rbrace}$,
respectively.

To transform the Laplacian from Cartesian to geodesic coordinates, we
use (\ref{app_g:global}) to calculate the scale factors
$h_{\xi_j}\equiv \vert {\partial\x/\partial \xi_j}\vert$ for $j=1,2,3$
as
\begin{equation}\label{app_g:scale}
  h_{\xi_1} = (1-\xi_3) \,, \qquad h_{\xi_2}=(1-\xi_3)\cos\xi_1 \,, \qquad
  h_{\xi_3}=1 \,.
\end{equation}
For the transformation of a generic function
${\mathcal V}(\bxi)\equiv u\left(\x({\bxi})\right)$, we calculate that
\begin{equation*}
  \begin{split}
    \Delta_{\x} u &= \frac{1}{h_{\xi_1}h_{\xi_2}h_{\xi_3}} \left[
      \frac{\partial}{\partial \xi_1}\left( \frac{h_{\xi_2}h_{\xi_3}}
        {h_{\xi_1}} {\mathcal V}_{\xi_1} \right) +
      \frac{\partial}{\partial \xi_2}\left( \frac{h_{\xi_1}h_{\xi_3}}
        {h_{\xi_2}} {\mathcal V}_{\xi_2} \right) +
            \frac{\partial}{\partial \xi_3}\left( \frac{h_{\xi_1}h_{\xi_2}}
              {h_{\xi_3}} {\mathcal V}_{\xi_3} \right) \right] \,,\\
          &= \frac{(1-\xi_3)^{-2}}{\cos\xi_1} \left[
            \frac{\partial}{\partial \xi_3} \left( (1-\xi_3)^2
              \cos\xi_1 {\mathcal V}_{\xi_3}\right) +
         \frac{\partial}{\partial \xi_1} \left( \cos\xi_1 {\mathcal V}_{\xi_1}
            \right) +
            \frac{\partial}{\partial \xi_2} \left( \frac{1}{\cos\xi_1 }
              {\mathcal V}_{\xi_2}\right) \right] \,,
        \end{split}
\end{equation*}
which yields
\begin{equation}\label{app_g:lap}
\Delta_{\x} u = {\mathcal V}_{\xi_3\xi_3} - \frac{2}{1-\xi_3} {\mathcal V}_{\xi_3} +
\frac{1}{(1-\xi_3)^2\cos^{2}\xi_1} {\mathcal V}_{\xi_2\xi_2} +
\frac{1}{(1-\xi_3)^2 \cos\xi_1} \frac{\partial}{\partial_{\xi_1}}
\left(\cos\xi_1 {\mathcal V}_{\xi_1} \right)\,.
\end{equation}

Next, by introducing the inner, or local variables,
$\y=(y_1,y_2,y_3)^T$, defined by
\begin{equation}\label{app_g:innvar}
  \xi_1=\eps y_1 \,, \qquad \xi_2=\eps y_2 \,, \qquad \xi_3 =\eps y_3\,,
\end{equation}
we use the Taylor approximations $(1-\xi_3)^{-1}\sim 1+\eps y_3$,
$(1-\xi_3)^{-2}\sim 1+2\eps y_3$, $\cos^{2}\xi_1=1+{\mathcal O}(\eps^2)$ and
$\sin\xi_1\sim \eps y_1$. We readily obtain that (\ref{app_g:lap}) reduces
to (\ref{mfpt:local}).

To determine a two-term approximation for the Euclidean distance
$|\x-\x_i|$ near the patch, we use (\ref{app_g:innvar}) in
(\ref{app_g:global}). From a Taylor series approximation we obtain that
\bsub \label{app_g:vab_all}
\begin{equation}
  \x-\x_i = \eps \vb_0 - \eps^2 \vb_1 + {\mathcal O}(\eps^3)\,, \qquad
  |\x-\x_i|^2=\eps^2 \vb_0^T\vb_0 - 2 \eps^3 \vb_0^T \vb_1  +
  {\mathcal O}(\eps^4)\,,
\end{equation}
where $\vb_0$ and $\vb_1$ are defined by
\begin{equation}\label{app_g:vab}
  \vb_0 = - y_3 \x_i + y_2 \vv_{2i} + y_1 \vv_{3i} \,, \qquad
  \vb_1= \frac{1}{2} \left(y_1^2+y_2^2\right) \x_i + y_3 y_2 \vv_{2i} + y_3
  y_1 \vv_{3i} \,.
\end{equation}
\esub
By calculating
$\vb_0^T\vb_0 =y_1^2+y_2^2+y_3^2 \equiv \rho^2$ and
$\vb_0^T\vb_1={y_3\left(y_1^2+y_2^2\right)/2}$, we get
\begin{equation}\label{app_g:loc}
  |\x-\x_i|\sim \eps \rho - \frac{\eps^2 y_3}{2\rho} \left(y_1^2+y_2^2
    \right) + {\mathcal O}(\eps^3) \,, \quad
  \frac{1}{|\x-\x_i|} \sim \frac{1}{\eps \rho} \left( 1 +
    \frac{\eps y_3}{2\rho^2} (y_1^2+y_2^2) + {\mathcal O}(\eps^2)\right)\,.
\end{equation}

In matrix form, and to leading order in $\eps$, we can write
  (\ref{app_g:vab_all}) in terms of $\y=(y_1,y_2,y_3)^T$ and
  an orthogonal matrix ${\mathcal Q}_i$ as
\begin{equation}\label{app_g:change}
  \y \sim \eps^{-1} {\mathcal Q}_{i}^T (\x-\x_i) \,, \quad \mbox{where} \quad
  {\mathcal Q}_{i} \equiv \begin{bmatrix}
    \vert & \vert & \vert \\
    \vv_{3i}   & \vv_{2i} & -\x_i \\
    \vert & \vert & \vert
\end{bmatrix} \quad \rightarrow \quad |\y|\sim \eps^{-1}|\x-\x_i|\,.
\end{equation}

Since $|\x-\x_i|=\eps \rho + {\mathcal O}(\eps^3)$ when $y_3=0$
from (\ref{app_g:loc}), it follows that if the Robin patch
$\partial\Omega_i^{\eps}$ is a spherical cap with the radius $a$ of
the base (i.e., whose projection onto the tangent plane at $\x_i$ is a
disk of radius $a$), then in terms of local geodesic coordinates we
have that $\left(y_1^2+y_2^2\right)^{1/2}\leq a + {\mathcal
O}(\eps^2)$.  One sees that the geodesic radius of the boundary patch
is close to $a$, up to ${\mathcal O}(\eps^2)$.  This small error
between the orthogonally projected and on-surface patches does not
influence our three-term asymptotic results obtained in our analysis
of the four problems.

In addition, since the scale factor is $h_{\xi_3}=1$, the Robin
boundary condition on a circular patch is well-approximated in the
local geodesic coordinates by
\begin{equation*}
  - \partial_{y_3} U  + \kappa U=0 \,, \qquad
  \mbox{for} \quad y_3=0 \,, \,\, (y_1^2+y_2^2)^{1/2}\leq a \,.
\end{equation*}
Finally, by using the scale factors (\ref{app_g:scale}), the surface area
element on the unit sphere, as needed in
(\ref{sdn:eig_norm}), is $d{\bf s}= \left(h_{\xi_1}h_{\xi_2}\right)
\vert_{\xi_3=0}d\xi_1d\xi_2= \cos(\xi_1)d\xi_1 d\xi_2$.

\section{Asymptotic Behavior of $C_{i}(\kappa_i)$ as $\kappa_i\ll 1$ for the Disk}
\label{app_a:Ckappa}

We derive (\ref{mfpt:cj_small_b}) of Lemma \ref{lemma:Cj_kappa} for a
disk-shaped patch of radius $a_i$. To do so, we first introduce
$\W(\y)$ by
\begin{equation}\label{app_a:W}
  w_{i}(\y) = 1- C_i\W(\y)\,,
\end{equation}
so that, upon dropping the subscript $i$, we obtain from (\ref{mfpt:wc}) that
$\W$ satisfies
\bsub \label{app_a:Wc}
\begin{align}
  \Delta_{\y} \W &=0 \,, \quad \y \in \R_{+}^{3} \,, \label{app_a:wc_1}\\
  -\partial_{y_3} \W + \kappa \W &=0 \,, \quad y_3=0 \,,\,
                                   (y_1,y_2)\in \PT\,,  \label{app_a:wc_2}\\
  \partial_{y_3} \W &=0 \,, \quad y_3=0 \,,\, (y_1,y_2)\notin \PT
                      \,, \label{app_a:wc_3}\\
  \W &\sim  B(\kappa) -\frac{1}{|\y|} + \ldots\,,  \quad \mbox{as}\quad
       |\y|\to \infty \,, \label{app_a:wc_4}
\end{align}
\esub where the neglected term in (\ref{app_a:wc_4}) is a dipole and
where $B(\kappa)$ is related to $C(\kappa)$ by
\begin{equation}\label{app_a:BtoC}
  C(\kappa) = \frac{1}{B(\kappa)}\,.
\end{equation}
Here $\PT$ is a disk of radius $a$.  By applying the divergence
theorem over the hemisphere
$\Omega_R=\lbrace{\y = (y_1,y_2,y_3) \,|\,|\y|\leq R\,, \, y_3\geq 0
  \rbrace}$, with boundary $\partial\Omega_R$, and with $\partial_n$
denoting the outward normal derivative to $\partial\Omega_R$, we get
from (\ref{app_a:wc_4}) that
\begin{equation}\label{app_a:w_inf}
  \lim_{R\to\infty} \int_{\partial\Omega_R} \partial_n \W\, d{\bf s} =2\pi \,.
\end{equation}

For $\kappa\ll 1$, we expand the solution to (\ref{app_a:Wc}) as
\begin{equation}\label{app_a:W_exp}
  \W = \frac{b_0}{\kappa} + \left(\W_{1}+b_1\right) + \kappa
  \left(\W_{2}+b_2\right) + \kappa^2 \left(\W_{2}+b_3\right) + \ldots\,.
\end{equation}
We substitute (\ref{app_a:W_exp}) into (\ref{app_a:Wc}) and collect
powers of $\kappa$. At leading-order, we choose $b_0$ so that
$\W_{1}$ satisfies
\bsub \label{app_a:W1}
\begin{align}
    \Delta_{\y} \W_1 &=0 \,, \quad \y \in \R_{+}^{3} \,, \label{app_a:1w_c1}\\
    \partial_{y_3} \W_1 &= b_0 \,, \quad y_3=0 \,,\,
    (y_1,y_2)\in \PT\,,  \label{app_a:1wc_2}\\
    \partial_{y_3} \W_1 &=0 \,, \quad y_3=0 \,,\, (y_1,y_2)\notin \PT
    \,, \label{app_a:1wc_3}\\
    \lim_{R\to\infty} \int_{\partial\Omega_R} \partial_n \W_1\, d{\bf s} &=2\pi \,,
    \qquad \W_1 \sim -\frac{1}{|\y|} + o(1)\,,   \quad \mbox{as}\quad
    |\y|\to \infty \,. \label{app_a:1wc_4}
\end{align}
\esub The $o(1)$ condition in (\ref{app_a:1wc_4}) determines $\W_1$
uniquely.  By applying the divergence theorem to (\ref{app_a:W1}) over
$\Omega_R$, we let $R\to \infty$ to obtain $2\pi - b_0 \pi a^2=0$, so
that
\begin{equation}\label{app_a:b0}
  b_0 = \frac{2}{a^2} \,.
\end{equation}

At higher order in $\kappa$, for each $m=2,3,\ldots$, we will
choose $b_{m-1}$ so that $\W_{m}$ is the unique solution to
\bsub \label{app_a:Wm}
\begin{align}
    \Delta_{\y} \W_m &=0 \,, \quad \y \in \R_{+}^{3} \,, \label{app_a:mw_c1}\\
    \partial_{y_3} \W_m &= \W_{m-1}+b_{m-1} \,, \quad y_3=0 \,,\,
    (y_1,y_2)\in \PT\,,  \label{app_a:mwc_2}\\
    \partial_{y_3} \W_m &=0 \,, \quad y_3=0 \,,\, (y_1,y_2)\notin \PT
    \,, \label{app_a:mwc_3}\\
    \lim_{R\to\infty} \int_{\partial\Omega_R} \partial_n \W_m\, d{\bf s} &=0 \,,
    \qquad \W_m \sim o(1)\,,  \quad \mbox{as}\quad
    |\y|\to \infty \,. \label{app_a:mwc_4}
\end{align}
By applying the divergence theorem to (\ref{app_a:Wm}) we obtain that
$\int_{\PT} \left(\W_{m-1}+b_{m-1}\right)\, d{\bf s}=0$, which
determines $b_{m-1}$ as
\begin{equation}\label{app_a:mwc_inf}
  b_{m-1}=-\frac{1}{\pi a^2} \int_{\PT} \W_{m-1}(y_1,y_2,0)\,  dy_1\,
  dy_2 \,, \qquad m=2,3,\ldots \,.
\end{equation}
\esub

With $b_0$ determined in (\ref{app_a:b0}), we use the method of images
to calculate $\W_1$ as
\begin{equation}\label{app_a:w1sol}
  \W_1(\y) = -\frac{1}{\pi a^2} \int_{\PT} \frac{d\xi_1 d\xi_2}{\left[
      (\xi_1-y_1)^2+(\xi_2-y_2)^2+y_3^2\right]^{1/2}}\,.
\end{equation}
In particular, for $y_3=0$, and with $\rho_0=(y_1^2+y_2^2)^{1/2}\leq a$, the
double integral in (\ref{app_a:w1sol}) can be evaluated as
\begin{equation}\label{app_a:w1sol_0}
  \W_1(y_1,y_2,0)= -\frac{4}{\pi a} E\left({\rho_0/a}\right) \,,
  \quad 0\leq \rho_0\leq a \,,
\end{equation}
where $E(z)\equiv\int_{0}^{\pi/2}\sqrt{1-z^2\sin^{2}\theta}\, d\theta$ is the
complete elliptic integral of the second kind. By using
(\ref{app_a:mwc_inf}) with $m=2$, and exploiting radial symmetry, we conclude
that 
\begin{equation}\label{app_a:b1}
  b_1 = \frac{8}{\pi a^3} \int_{0}^{a} \rho_0 E\left({\rho_0/a}\right) \,
  d\rho_0 = \frac{8}{\pi a} \int_{0}^{1} z E(z) \, dz = \frac{16}{3\pi a}\,,
\end{equation}
where we have used $\int_{0}^{1} z E(z)\, dz={2/3}$ from (5.112) of
\cite{Gradstein}.

By using the method of images we calculate that
\begin{equation}\label{app_a:w2p}
  \W_2(\y)= \frac{1}{2\pi} \int_{\PT} \frac{4 (\pi a)^{-1}
    E\left({|\xi|/a}\right) - b_1}{\left[
        (\xi_1-y_1)^2+(\xi_2-y_2)^2+y_3^2\right]^{1/2}} \, d\xi_1 d\xi_2 \,,
\end{equation}
where $|\xi|=(\xi_1^2+\xi_2^2)^{1/2}$.  By evaluating (\ref{app_a:w2p}) on
$y_3=0$, we obtain on the patch $0\leq\rho_0\leq a$ that
\begin{equation}\label{app_a:w2p_zero}
  \W_{2}(y_1,y_2,0)=-\frac{2a}{\pi} b_1 E\left(\frac{\rho_0}{a}\right)
  + \frac{2}{\pi^2 a} \int_{\PT} \frac{E\left({|\xi|/a}\right)}
  {\left[
      (\xi_1-y_1)^2+(\xi_2-y_2)^2\right]^{1/2}} \, d\xi_1 d\xi_2 \,.
\end{equation}
Upon substituting (\ref{app_a:w2p_zero}) into (\ref{app_a:mwc_inf}) with
$m=3$, we obtain that
\begin{equation}\label{app_a:b2_1}
  b_2 =\frac{4b_1}{\pi a} \int_{0}^{a} \rho_0 E\left(\frac{\rho_0}{a}
  \right) \, d\rho_0 - \frac{4}{\pi^2 a^3} \int_{0}^{a}
  \rho_0 {\mathcal J}(\rho_0)\, d\rho_0 \,,
\end{equation}
where ${\mathcal J}(\rho_0)$ is defined by
\begin{equation}\label{app_a:jrho}
{\mathcal J}(\rho_0) \equiv \int_{\PT} \frac{E\left({|\xi|/a}\right)}
  {\left[
      (\xi_1-y_1)^2+(\xi_2-y_2)^2\right]^{1/2}} \, d\xi_1 d\xi_2\,,
  \qquad \rho_0=\sqrt{y_1^2+y_2^2} \,.
\end{equation}
By using (\ref{app_a:b1}) for $b_1$ and $\int_{0}^{1} z E(z)\, dz={2/3}$ we
can calculate the first term in (\ref{app_a:b2_1}). Then, by writing the
second integral in (\ref{app_a:jrho}) in polar coordinates we obtain that
\begin{equation}\label{app_a:b2_2}
b_2 =\frac{128}{9\pi^2} -\frac{8}{\pi^2}\int_{0}^{1} \eta{\mathcal H}(\eta)\,
d\eta\,, \quad \mbox{where} \quad {\mathcal H}(\eta) \equiv
\int_{0}^{\pi}\int_{0}^{1} \frac{ E(r) r\, dr \, d\theta}{\left[r^2+\eta^2 -
    2 r \eta\cos\theta\right]^{1/2}}\,.
\end{equation}

Next, we use $\int_{0}^{\pi} \left[1+\beta^2-2\beta \cos\theta\right]^{-1/2}
\, d\theta= 2(1+\beta)^{-1}K\left({2\sqrt{\beta}/(1+\beta)}\right)$ for
$0\leq \beta<1$, where $K(z)$ is the complete elliptic integral of the first
kind of modulus $z$ (see (3.6.17) of \cite{Gradstein}). We conclude from
(\ref{app_a:b2_2}) that
\begin{equation}\label{app_a:heval}
  {\mathcal H}(\eta) = \int_{0}^{1} \frac{2 r E(r)}{r+\eta}
  K\left( \frac{2\sqrt{\eta r}}{r+\eta}\right) \, dr\,.
\end{equation}
We label $A_0\equiv \int_{0}^{1}\eta {\mathcal H}(\eta)\, d\eta$, in which
we use (\ref{app_a:heval}) for ${\mathcal H}(\eta)$. Upon switching the
order of integration and decomposing the resulting integral into two
parts we obtain
\begin{equation}\label{app_a:A0}
  A_0=\int_{0}^{1} r E(r) \left[ \int_{0}^{r} \frac{2\eta}{r+\eta}
    K\left(\frac{2\sqrt{r\eta}}{r+\eta}\right) \, d\eta +
\int_{r}^{1} \frac{2\eta}{r+\eta}
K\left(\frac{2\sqrt{r\eta}}{r+\eta}\right) \, d\eta\right] \, dr\,.
\end{equation}
To ensure that the modulus of the elliptic functions are on $[0,1]$, we
introduce the new variables $s={\eta/r}$ and $s={r/\eta}$ in the first and
second integrals of (\ref{app_a:A0}), respectively. This yields that
\begin{equation}\label{app_a:A0_2}
  A_0=\int_{0}^{1} 2r^2 E(r) \left[ \int_{0}^{1} \frac{s}{1+s}
    K\left(\frac{2\sqrt{s}}{1+s}\right) \, ds +
   \int_{r}^{1} \frac{1}{s^2(1+s)}
K\left(\frac{2\sqrt{s}}{1+s}\right) \, ds \right] \, dr\,.
\end{equation}
Since $0\leq s\leq 1$, we use the Landen transformation
$K\left({2\sqrt{s}/(1+s)}\right)=(1+s)K(s)$ in (\ref{app_a:A0_2}) to
obtain that
\begin{equation}\label{app_a:A0_3}
  A_0 = \int_{0}^{1} 2 r^2 E(r) \left[ \int_{0}^{1} s K(s) \, ds +
    \int_{r}^{1} s^{-2} K(s) \, ds \right] \, dr\,.
\end{equation}
By using the indefinite integrals
$\int s^{-2} K(s) \, ds = -s^{-1}E(s)$ and
$\int s K(s)\, ds = E(s)-(1-s^2)K(s)$ from (6.12.05) and (6.10.01) of
\cite{Gradstein} together with $E(0)=K(0)={\pi/2}$, we obtain from
(\ref{app_a:A0_3}) that
\begin{equation}\label{app_a:A0_fin}
  A_0 = \int_{0}^{1} 2r^2 E(r) \left[ E(1) + \left(-E(1) + r^{-1}E(r)
    \right)\right] \, dr = \int_{0}^{1} 2r \left[E(r)\right]^2 \, dr\,.
\end{equation}
In this way, since $A_0=\int_{0}^{1}\eta {\mathcal H}(\eta)\, d\eta$ we
obtain from (\ref{app_a:b2_2}) and (\ref{app_a:A0_fin}) that
\begin{equation}\label{app_a:b2_final}
  b_2= \frac{128}{9\pi^2} - \frac{8}{\pi^2}
  \int_{0}^{1} 2r \left[E(r)\right]^2 \, dr\,.
\end{equation}

Finally, upon substituting $B(\kappa)\sim b_0\kappa^{-1}+ b_1 +
b_2\kappa$ into (\ref{app_a:BtoC}), we revert the expansion to
conclude that
\begin{equation}\label{app_a:ckappa}
  C(\kappa) \sim \frac{\kappa}{b_0} - \frac{b_1\kappa^2}{b_0^2}+
  \frac{\kappa^3}{b_0^3}\left(b_1^2 - b_0 b_2\right) + \ldots \,.
\end{equation}
Upon using (\ref{app_a:b0}), (\ref{app_a:b1}) and
(\ref{app_a:b2_final}) in (\ref{app_a:ckappa}), we obtain
(\ref{mfpt:cj_small_b}) of Lemma \ref{lemma:Cj_kappa}. Moreover, we
can also readily identify the first three terms in the Taylor expansion
(\ref{eq:Cmu_Taylor}) as given in (\ref{eq:cn_exact}).

\section{Inner Problem Beyond Tangent Plane Approximation}\label{app_h:inn2}

Omitting the subscript $i$ for the $i$-th patch, we now analyze the
following inner problem that arises at a higher order beyond the
tangent-plane approximation:
\bsub\label{app_h:inn2_prob}
\begin{align}
  \Delta_{\y} \Phi_2 &= - 2\left( y_3 w_{y_3 y_3} + w_{ y_3}\right)
                       \,, \quad \y \in \R_{+}^{3} \,, \label{app_h:inn2_1}\\
  -\partial_{y_3} \Phi_2 + \kappa \Phi_2 &= 0 \,, \quad y_3=0 \,,\,
                   (y_1,y_2)\in \PT\,,  \label{app_h:inn2_2}\\
  \partial_{y_3} \Phi_2 &=0 \,, \quad y_3=0 \,,\, (y_1,y_2)\notin \PT
                          \,, \label{app_h:inn2_3}\\
  \Phi_2 \sim \frac{C}{2}\log(y_3+\rho) - & \frac{C y_3}{2\rho^3}
  (y_1^2+y_2^2) + \frac{E}{\rho} + \ldots\,,  \quad
          \mbox{as} \quad \rho\to \infty \,, \label{app_h:inn2_4}
\end{align}
\esub where $\rho=(y_1^2+y_2^2+y_3^2)^{1/2}$ and $\PT$ is the Robin
patch, which is not necessarily circular. Here $w_{}(\y)$ is the
solution to the leading-order problem (\ref{mfpt:wc}), where
$C=C(\kappa)$ is the coefficient of the monopole in the far-field
behavior (\ref{mfpt:wc_4}).  Our goal is to determine the coefficient
$E$ of the monopole term in the far-field (\ref{app_h:inn2_4}). The
result is as follows:

\begin{lemma}\label{lemma:Phi2}
  The solution to (\ref{app_h:inn2_prob}) can be decomposed as
\begin{equation}\label{app_h:decomp}
  \Phi_2 = \Phi_{2p} + \Phi_{2h} \,,
\end{equation}
where 
\begin{equation}\label{app_h:phi_2p}
  \Phi_{2p}=-\frac{y_3^2}{2}w_{y_3} - \frac{y_3}{2}w + \frac{1}{2}
  \int_{0}^{y_3} w(y_1,y_2,\eta)\, d\eta + {\mathcal F}(y_1,y_2;\kappa)\,,
\end{equation}
and where ${\mathcal F}(y_1,y_2;\kappa)$, with $\Delta_{S}{\mathcal F} \equiv
{\mathcal F}_{y_1 y_1}+ {\mathcal F}_{y_2 y_2}$, is the unique solution to
\bsub \label{app_h:fcal}
\begin{gather}
  \Delta_{S}{\mathcal F} = q(y_1,y_2;\kappa) I_{\PT} \,; \,\,\,
  q(y_1,y_2;\kappa)\equiv - \left(\frac{1}{2} w_{y_3}\vert_{y_3=0}\right)\,,
  \,\,\, I_{\PT} \equiv \left\{\begin{array}{ll}
        1 \,, & (y_1,y_2) \in \PT \\
  0 \,, & (y_1,y_2) \notin \PT\,,  \end{array}\right. \label{app_h:fcal_1}\\
                            {\mathcal F} \sim \frac{C}{2}\log\rho_0 +
                            o(1)\,,  \quad \mbox{as} \quad
                            \rho_0\equiv (y_1^2+y_2^2)^{1/2}\to \infty
                            \,. \label{app_h:fcal_2}
\end{gather}                 
\esub The logarithmic growth as $\rho_0\to\infty$ in
(\ref{app_h:fcal_2}) follows by applying the divergence theorem to
(\ref{app_h:fcal_1}) and recalling (\ref{mfpt:wc_charge}) for $C$. The
$o(1)$ condition in the far-field (\ref{app_h:fcal_2}) specifies
${\mathcal F}$ uniquely. In addition, the far-field behavior is
\begin{equation}
  \Phi_{2p}\sim -\frac{C y_3(y_1^2+y_2^2)}{2\rho^3} +
  \frac{C}{2}\log(y_3+\rho)+ o\left({1/\rho}\right)\,, \quad \mbox{as} \quad
\rho\to\infty\,. 
\end{equation}
The remaining term $\Phi_{2h}$ in (\ref{app_h:decomp}) satisfies
\bsub\label{app_h:inn2_probh}
\begin{align}
\Delta_{\y} \Phi_{2h} &= 0 \,, \quad \y \in \R_{+}^{3} \,,\label{app_h:inn2_h1}\\
  -\partial_{y_3} \Phi_{2h} + \kappa \Phi_{2h} &= -\kappa {\mathcal F}
                                                 \,, \quad y_3=0 \,,\,
    (y_1,y_2)\in \PT\,,  \label{app_h:inn2_h2}\\
    \partial_{y_3} \Phi_{2h} &=0 \,, \quad y_3=0 \,,\, (y_1,y_2)\notin \PT
    \,, \label{app_h:inn2_h3}\\
    \Phi_{2h} & \sim \frac{E}{\rho}\,,  \quad
    \mbox{as} \quad \rho\to \infty \,, \label{app_h:inn2_h4}
  \end{align}
  \esub  
  where the monopole coefficient $E=E(\kappa)$ is given explicitly by
\bsub \label{app_h:eval}
\begin{equation}\label{app_h:eval_a}
  E= -\frac{1}{\pi} \int_{\PT} q(y_1,y_2;\kappa) \,
  {\mathcal F}(y_1,y_2;\kappa)\, dy_1 dy_2  \,,
\end{equation}
where
\begin{equation}\label{app_h:eval_b}
  {\mathcal F}(y_1,y_2;\kappa)=\frac{1}{4\pi}
  \int_{\PT} q(y_1^{\p},y_{2}^{\p},\kappa)
  \log\left( \left(y_1-y_1^{\p}\right)^2+\left(y_2-y_{2}^{\p}\right)^2\right)\,
  dy_1^{\p} dy_2^{\p}\,.
\end{equation}
\esub In this way, the far-field behavior (\ref{app_h:inn2_4}) holds.
Finally, in the limit $\kappa\to 0$, the leading-order asymptotics for
$E$ is given by (\ref{eq:Ei_asympt0}).
\end{lemma}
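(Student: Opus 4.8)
The plan is to establish the lemma by direct verification of the proposed decomposition (\ref{app_h:decomp})--(\ref{app_h:phi_2p}), and then to pin down the monopole coefficient $E$ by a reciprocity argument. First I would verify that $\Phi_{2p}$ in (\ref{app_h:phi_2p}) solves the inhomogeneous PDE (\ref{app_h:inn2_1}). Since $w$ is harmonic, every derivative of $w$ is harmonic as well, so applying the product rule $\Delta(fg)=f\Delta g+2\nabla f\cdot\nabla g+g\Delta f$ to the terms $-\tfrac{y_3^2}{2}w_{y_3}$ and $-\tfrac{y_3}{2}w$ produces exactly the source $-2(y_3 w_{y_3 y_3}+w_{y_3})$ together with a leftover $\tfrac12 w_{y_3}\vert_{y_3=0}$. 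Differentiating the integral term twice in $y_3$ and using $w_{y_1y_1}+w_{y_2y_2}=-w_{y_3y_3}$ would show $\Delta\bigl(\tfrac12\int_0^{y_3}w\,d\eta\bigr)=\tfrac12 w_{y_3}\vert_{y_3=0}$, a function of $(y_1,y_2)$ only, while $\Delta_S\mathcal F=q\,I_\PT$. Summing these and invoking both $q=-\tfrac12 w_{y_3}\vert_{y_3=0}$ on $\PT$ and the Neumann condition $w_{y_3}\vert_{y_3=0}=0$ off $\PT$, the residual pieces cancel identically, recovering (\ref{app_h:inn2_1}).

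Next I would check the boundary data: a direct differentiation gives $\partial_{y_3}\Phi_{2p}\vert_{y_3=0}=0$ on all of $y_3=0$ (the integral term contributes $+\tfrac12 w$ and the $-\tfrac{y_3}{2}w$ term contributes $-\tfrac12 w$), and $\Phi_{2p}\vert_{y_3=0}=\mathcal F$. Hence $\Phi_{2h}=\Phi_2-\Phi_{2p}$ is harmonic, inherits the Neumann condition off $\PT$, and satisfies $-\partial_{y_3}\Phi_{2h}+\kappa\Phi_{2h}=-\kappa\mathcal F$ on $\PT$, which is precisely (\ref{app_h:inn2_probh}). The far-field claim for $\Phi_{2p}$ then follows by inserting $w\sim C/\rho$ (so $w_{y_3}\sim -Cy_3/\rho^3$) and $\int_0^{y_3}\!w\,d\eta\sim C\log\frac{y_3+\rho}{\rho_0}$ into (\ref{app_h:phi_2p}): the combination $\tfrac{Cy_3^3}{2\rho^3}-\tfrac{Cy_3}{2\rho}$ collapses to $-\tfrac{Cy_3(y_1^2+y_2^2)}{2\rho^3}$, and the $-\tfrac C2\log\rho_0$ from the integral is cancelled by the far-field $\mathcal F\sim\tfrac C2\log\rho_0$, leaving $\tfrac C2\log(y_3+\rho)$ and crucially no $1/\rho$ monopole. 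The representation (\ref{app_h:eval_b}) is simply the $2$-D free-space potential of the density $q$ (note $\tfrac1{4\pi}\log|\y-\y'|^2=\tfrac1{2\pi}\log|\y-\y'|$); its far-field $\tfrac C2\log\rho_0$ comes from $\int_\PT q\,d\y'=\pi C$, and the $o(1)$ normalization fixes $\mathcal F$ uniquely, guaranteeing that $\Phi_{2p}$ carries no spurious monopole.

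The crux of the argument, and the step I expect to be the main obstacle, is extracting $E$. Applying the divergence theorem to the harmonic $\Phi_{2h}$ over a large hemisphere gives $2\pi E=\int_\PT(-\partial_{y_3}\Phi_{2h})\vert_{y_3=0}\,dy_1 dy_2$, and the Robin condition turns this into $E=-\tfrac{\kappa}{2\pi}\int_\PT(\Phi_{2h}+\mathcal F)\,d\y$, which still contains the unknown boundary trace of $\Phi_{2h}$. The difficulty is that the natural device for eliminating it — Green's second identity between $w$ and $\Phi_{2h}$ — yields no information at infinity, because their monopole--monopole interaction $\tfrac{E}{\rho},\tfrac{C}{\rho}$ cancels exactly on the cap. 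The resolution is that this same cancellation forces the flat-face contribution to vanish: evaluating it with $\partial_{y_3}w=\kappa(w-1)$ and $\partial_{y_3}\Phi_{2h}=\kappa(\Phi_{2h}+\mathcal F)$ on $\PT$ gives the integrand $\kappa(\Phi_{2h}+w\mathcal F)$, so $\int_\PT\Phi_{2h}\,d\y=-\int_\PT w\mathcal F\,d\y$. Substituting this back, and recognizing $q=\tfrac\kappa2(1-w)$ on $\PT$, collapses the expression to $E=-\tfrac1\pi\int_\PT q\,\mathcal F\,d\y$, which is (\ref{app_h:eval_a}).

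Finally, for the small-reactivity asymptotics I would use that $w=O(\kappa)$ on $\PT$ as $\kappa\to0$, so the charge density $q=\tfrac\kappa2(1-w)$ becomes asymptotically uniform, $q\to\kappa/2=\pi C/|\PT|$, consistent with $C\sim\kappa|\PT|/(2\pi)$. Inserting this constant density into (\ref{app_h:eval_b}) gives $\mathcal F\sim\tfrac{C}{2|\PT|}\int_\PT\log|\y-\y'|\,d\y'$, and then (\ref{app_h:eval_a}) yields $E\sim-\tfrac{C^2}{2|\PT|^2}\int_\PT\!\int_\PT\log|\y-\y'|\,d\y\,d\y'=e\,C^2$ with $e$ as in (\ref{eq:Ei_asympt0}), completing the proof.
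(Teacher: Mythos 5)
Your proposal is correct and follows essentially the same route as the paper's proof: direct verification that $\Phi_{2p}$ absorbs the inhomogeneous source (with the residual $\tfrac12 w_{y_3}\vert_{y_3=0}$ cancelled by $\Delta_S {\mathcal F}=qI_{\PT}$ and the Neumann condition off the patch), the same far-field computation using $w\sim C/\rho$ and the $o(1)$ normalization of ${\mathcal F}$, and the same extraction of $E$ via the divergence theorem for $\Phi_{2h}$ combined with Green's second identity between $w$ and $\Phi_{2h}$ over a large hemisphere (whose cap contribution vanishes by the monopole--monopole cancellation), followed by the identification $q=\tfrac{\kappa}{2}(1-w)$ on $\PT$. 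Your small-$\kappa$ argument, replacing $q$ by the uniform density $\kappa/2$ and using $C\sim\kappa|\PT|/(2\pi)$, is also exactly the paper's derivation of (\ref{eq:Ei_asympt0}).
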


\begin{proof}
To establish this result, we first show that $\Phi_{2p}$ in
(\ref{app_h:phi_2p}) accounts for the the inhomogeneous terms in
(\ref{app_h:inn2_1}). We readily calculate that
\begin{equation}\label{app_h:phi2p_1}
  \Phi_{2p, y_3}=-\frac{y_3^2}{2}w_{y_3 y_3}-\frac{3}{2}y_3 w_{y_3} \,, \quad
  \Phi_{2p, y_3 y_3} = - \frac{y_3^2}{2} w_{y_3 y_3 y_3} - \frac{5}{2}
  y_3 w_{y_3 y_3} - \frac{3}{2} w_{y_3} \,.
\end{equation}
Moreover, we calculate
$\Delta_{S}\Phi_{2p}\equiv \Phi_{2p, y_1y_1}+\Phi_{2p, y_2 y_2}$, and by using
$w_{y_3y_3}=-\Delta_{S} w$ from (\ref{mfpt:wc_1}),  we derive
\begin{equation}\label{app_h:phi2p_s}
  \begin{split}
  \Delta_{S}\Phi_{2p} &= -\frac{y_3^2}{2}\partial_{y_3}\left[
    \Delta_{S} w\right] - \frac{y_3}{2}\Delta_{S}
  w + \frac{1}{2}\int_{0}^{y_3} \Delta_{S} w\, d\eta +
  \Delta_{S} {\mathcal F}  \\
      &= \frac{y_3^2}{2} w_{y_3 y_3 y_3} +
  \frac{y_3}{2} w_{y_3 y_3} - \frac{1}{2} w_{y_3} + \frac{1}{2} w_{y_3}
  \vert_{y_3=0} + \Delta_{S} {\mathcal F}\,.
  \end{split}
\end{equation}
Upon adding (\ref{app_h:phi2p_1}) and (\ref{app_h:phi2p_s}) we conclude that
\begin{equation}\label{app_h:phi2_add}
  \Delta_{\y} \Phi_{2p} = \Phi_{2p, y_3 y_3} + \Delta_{S}
  \Phi_{2p} =-2\left(y_3 w_{y_3 y_3} + w_{y_3}\right) +
  \left(\Delta_{S} {\mathcal F} + \frac{1}{2} w_{y_3}  \vert_{y_3=0}\right)\,,
\end{equation}
where the terms on the right-hand side in (\ref{app_h:phi2_add}) are
the inhomogeneous terms for this PDE for $\Phi_{2p}$. It follows that
$\Phi_{2p}$ satisfies (\ref{app_h:inn2_1}) when ${\mathcal F}$
satisfies (\ref{app_h:fcal_1}). Consequently, $\Phi_{2h}$ satisfies
the homogeneous problem (\ref{app_h:inn2_h1}).

To establish the boundary conditions in  (\ref{app_h:inn2_probh})
we observe from (\ref{app_h:phi2p_1}) and
(\ref{app_h:phi_2p}) that $\Phi_{2p, y_3}=0$ and $\Phi_{2p}={\mathcal F}$
on $y_3=0$. As a result, upon substituting  (\ref{app_h:decomp}) into
(\ref{app_h:inn2_2})--(\ref{app_h:inn2_3}) we obtain
(\ref{app_h:inn2_h2})--(\ref{app_h:inn2_h3}).

Next, we determine the asymptotic far-field behavior of $\Phi_{2p}$
as defined in (\ref{app_h:phi_2p}). We use $w\sim C\rho^{-1}$ as
$\rho\to \infty$ with $\rho=(y_3^2+\rho_0^2)^{1/2}$ where
$\rho_0\equiv (y_1^2+y_2^2)^{1/2}$. We readily calculate for $\rho\to\infty$
that
\bsub \label{app_h:ffes}
\begin{gather}
  -\frac{1}{2} y_3^2 w_{y_3} - \frac{1}{2} y_3 w \sim 
 -\frac{C}{2} \frac{y_3 \rho_0^2}{\rho^3} \,,
   \label{app_h:ffes_1} \\
  \frac{1}{2} \int_{0}^{y_3} w(y_1,y_2,\eta)\, d\eta \sim 
  \frac{C}{2} \int_{0}^{y_3} \frac{1}{\left(\eta^2+\rho_0^2\right)^{1/2}}
  \, d\eta \sim \frac{C}{2} \left[
  \log\left( y_3 + \rho \right) - \log\rho_0 \right] 
  \,. \label{app_h:ffes_2}
\end{gather}
\esub
In this way, it follows that $\Phi_{2p}$ in (\ref{app_h:phi_2p}) has the
{divergent} far-field behavior
\begin{equation}
  \Phi_{2p} \sim  \frac{C}{2} \log\left( y_3 + \rho \right)
 - \frac{C y_3}{2\rho^3} \left(y_1^2+y_2^2\right) 
  - \frac{C}{2} \log\rho_0 + {\mathcal F} \,. \label{app_h:phi2p_ff}
\end{equation}
Therefore, since
${\mathcal F}\sim \left({C/2}\right)\log\rho_0 + o(1)$ as
$\rho_0\to \infty$ as specified in (\ref{app_h:fcal_2}) it follows
from (\ref{app_h:phi2p_ff}) that
$\Phi_{2p}\sim -{C y_3(y_1^2+y_2^2)/(2\rho^3)} +
\left({C/2}\right)\log(y_3+\rho)+ o\left({1/\rho}\right)$ as
$\rho\to\infty$.  Finally, since (\ref{app_h:inn2_probh}) for
$\Phi_{2h}$ is a Neumann-Robin BVP with a spatially
inhomogeneous Robin condition on the patch, we have
$\Phi_{2h}={\mathcal O}(\rho^{-1})$ as $\rho\to \infty$. The
expression (\ref{app_h:eval}) for the monopole coefficient results
from using Green's second identity to the problems (\ref{mfpt:wc}) and
(\ref{app_h:inn2_probh}) for $w$ and $\Phi_{2h}$, respectively, over a
large hemisphere and by calculating ${\mathcal F}$ using the 2-D
free-space Green's function. In this way, the far-field behavior
(\ref{app_h:inn2_4}) for $\Phi_2$ holds.

Finally, we derive the limiting asymptotics (\ref{eq:Ei_asympt0}) for
$E$.  For $\kappa\to 0$, we obtain from (\ref{mfpt:wc}) that
$w={\mathcal O}(\kappa)$ so that
$q=-\left({1/2}\right)\partial_{y_3}w\sim {\kappa/2}$ on $\PT$. We
substitute (\ref{app_h:eval_b}) into (\ref{app_h:eval_a}) and use
$q\sim {\kappa/2}$. Upon eliminating $\kappa$ by using $C\sim {\kappa
|\PT|/(2\pi)}$ for $\kappa\ll 1$, as obtained from
(\ref{eq:Cmu_Taylor}) and (\ref{eq:c1_def}), we conclude for
$\kappa\to 0$ that
\begin{equation}\label{app_h:smallE}
  E \sim -\frac{\kappa^2}{8\pi^2}
  \int\limits_{\PT_i}\int\limits_{\PT_i} \log|\y-\y^{\prime}|\, d\y \,
  d\y^{\prime} = -\frac{C^2}{2|\PT|^2}
  \int\limits_{\PT_i}\int\limits_{\PT_i} \log|\y-\y^{\prime}|\, d\y \,
  d\y^{\prime}\,,
\end{equation}
which establishes (\ref{eq:Ei_asympt0}).  This completes the proof of
Lemma \ref{lemma:Phi2}.
\end{proof}

We remark that Lemma \ref{lemma:Phi2} applies to a patch $\PT$ of
arbitrary shape. However, when $\PT$ is a disk of radius $a$, the
expression for $E$ in (\ref{app_h:eval}) can be reduced to quadrature
and we can determine its limiting asymptotics, as was summarized in Lemma
\ref{lemma:Ej_kappa}.

When $\PT$ is a disk, $q$ and ${\mathcal F}$ in (\ref{app_h:fcal})
depend only on $\rho_0=(y_1^2+y_2^2)^{1/2}$, so that from
  (\ref{app_h:fcal}) we readily obtain that
  ${\mathcal F}={\mathcal F}(\rho_0;\kappa)$ satisfies
  $\left(\rho_0 {\mathcal F}_{\rho_0}\right)_{\rho_0}= \rho_0 q(\rho_0;\kappa)$. We
    integrate this ODE, impose that ${\mathcal F}(\rho_0;\kappa)$ has
    no singularity at $\rho_0=0$, and we substitute the resulting
    expression into (\ref{app_h:eval_a}). Exploiting radial
    symmetry, we determine $E$ as
\begin{equation}\label{app_h:e_rad}
  E = -2 \int_{0}^{a} \rho_0 q(\rho_0;\kappa) {\mathcal F}(\rho_0;\kappa)\, 
d\rho_0 \,; \qquad
  {\mathcal F}_{\rho_0} = \frac{1}{\rho_0} \int_{0}^{\rho_0} \eta
  q(\eta;\kappa) \, d\eta \,, \quad 0\leq \rho_0\leq a\,,
\end{equation}
with ${\mathcal F}=\left({C/2}\right)\log{a}$ at $\rho_0=a$. Next, we
integrate (\ref{app_h:e_rad}) by parts to obtain
\begin{equation}\label{app_h:e_by_parts}
  E=-2 \left[ {\mathcal F}(\rho_0;\kappa) \left(\int_{0}^{\rho_0}
        \eta q(\eta;\kappa)\, d\eta\right) \Big{\vert}_{0}^{a} -
        \int_{0}^{a} {\mathcal F}_{\rho_0} 
          \left(\int_{0}^{\rho_0}
        \eta q(\eta;\kappa)\, d\eta\right) \, d\rho_0 \right] \,.
\end{equation}
Then, upon using ${\mathcal F}=\left({C/2}\right)\log{a}$ at $\rho_0=a$,
$C=2\int_{0}^{a} \eta q(\eta;\kappa)\, d\eta$ and ${\mathcal F}_{\rho_0}$
from (\ref{app_h:e_rad}) we find that (\ref{app_h:e_by_parts}) reduces
to (\ref{mfpt:Ej_all}) in Lemma \ref{lemma:Ej_kappa}.

For $\kappa=\infty$, we calculate (\ref{mfpt:Ej_all}) analytically.
By using (\ref{mfpt:wc_q}) for $q=q(\rho_0;\infty)$,
$C=C(\infty)={2a/\pi}$ and
$\int_{0}^{\rho_0}{\eta/\sqrt{a^2-\eta^2}}\, d\eta=
a-\sqrt{a^2-\rho_0^2}$, we obtain from (\ref{mfpt:Ej_all}) that
\begin{align*}
  E(\infty) &= - \frac{\left[C(\infty)\right]^2}{2}\log{a} +
              \frac{2}{\pi^2}\int_{0}^{a}
              \frac{1}{\rho_0} \left[ a -
              \sqrt{a^2-\rho_0^2}\right]^2 \, d\rho_0\,,\\
            & = -\frac{2 a^2}{\pi^2}\log{a} +
              \frac{2a^2}{\pi^2}\int_{0}^{1}
              \frac{1}{x}\left(2-x^2 - 2\sqrt{1-x^2}\right)\, dx \,,\\
            &=-\frac{2a^2}{\pi^2}\log{a} +
              \frac{2a^2}{\pi^2} \left(\frac{3}{2}-\log{4}
              \right) \,.
\end{align*}
In this way, we recover the expression for $E(\infty)$ given in
(\ref{mfpt:Ej_asy_large}) of Lemma \ref{lemma:Ej_kappa}.

Finally, we calculate $E$ when $\kappa\ll 1$ and $\PT$ is a
disk. Instead of using (\ref{app_h:smallE}), we can proceed more
directly. When $\kappa\ll 1$, we find from (\ref{mfpt:wc}) that
$-\partial_{y_3}w\sim \kappa$ on $y_3=0$, $(y_1,y_2)\in \PT$, so that
from (\ref{app_h:fcal}), $q(\rho_0;\kappa)\sim {\kappa/2}$ on
$0\leq \rho_0\leq a$. By evaluating the integrals in
(\ref{mfpt:Ej_all}), and using $C\sim {\kappa a^2/2}$ for
$\kappa\ll 1$, we obtain (\ref{mfpt:Ej_asy_small}) of Lemma
\ref{lemma:Ej_kappa} for $E$ when $\kappa\ll 1$.

\section{Computation and Analysis of the Reactive Capacitance}
\label{sec:Cmu}

In this Appendix, we derive an exact representation for the reactivity
capacitance $C_i(\kappa_i)$ in the general case of an arbitrary patch.
We then analyze its properties for the case of a circular patch.  Some
additional properties of $C_i(\kappa_i)$ for noncircular patches are
discussed in \cite{Grebenkov26}.

\subsection{Arbitrary patch}

\subsubsection*{Spectral expansions}

The solution $w_i(\y;\kappa_i)$ of the mixed BVP (\ref{mfpt:wc}) is
the key element for our asymptotic analysis.  As this BVP is
formulated for the $i$-th patch $\PT_i$, independently of the other
patches, its solution can be searched separately for different patches
so that the index $i$ will be kept fixed throughout this appendix.  As
an explicit solution is not available even for circular patches, we
will seek a suitable spectral representation of $w_i(\y;\kappa_i)$.
For this purpose, we employ the {\em local} Steklov eigenvalue problem
defined in an upper half-space by
\begin{subequations}  \label{eq:Psi_def}
\begin{align}  \label{eq:Vk_eq}
\Delta_{\y} \Psi_{ki} & = 0 \,, \quad \y \in \R_+^3 \,,\\ \label{eq:Vk_stek}
  \partial_n \Psi_{ki} & = \mu_{ki} \Psi_{ki}\,, \quad y_3=0 \,,\,
                         (y_1,y_2)\in \PT_i\,,
  \\  \label{eq:Vk_Neumann}
  \partial_n \Psi_{ki} & = 0 \,, \quad y_3=0 \,,\, (y_1,y_2)\notin \PT_i\,,
  \\  \label{eq:Vk_inf}
  \Psi_{ki}(\y) & = {\mathcal O}\left({1/|\y|}\right) \quad \textrm{as}
               \quad |\y|\to \infty\,.
\end{align}
\end{subequations}
This spectral problem can be reduced to the exterior Steklov problem
in the whole space $\R^3$.  The latter has a discrete spectrum
\cite{Bundrock25}, whereas its eigenfunctions are necessarily either
symmetric, or antisymmetric with respect to the horizontal plane.  The
symmetric ones satisfy the Neumann boundary condition
(\ref{eq:Vk_Neumann}).  In the following, we focus only on these
symmetric eigenmodes and enumerate them by the index $k =
0,1,2,\ldots$ such that the associated eigenvalues form an increasing
sequence:
\begin{equation}
0 < \mu_{0i} < \mu_{1i} \leq \cdots \nearrow +\infty \,,
\end{equation}
with the principal eigenvalue $\mu_{0i}$ being simple and strictly
positive.  Importantly, the restrictions $\Psi_{ki}(\y)|_{\PT_i}$
onto $\PT_i$ form a complete orthonormal basis in $L^2(\PT_i)$:
\begin{equation}\label{eq:orhthog_D}
\int_{\PT_i} \Psi_{ki} \Psi_{k^{\prime}i} \, d\y= \delta_{k,k^{\prime}}\,.
\end{equation}
As discussed in \cite{Grebenkov25}, the restriction of the Neumann
Green's function onto $\PT_i$ is in fact the the kernel of an
integral operator that determines the eigenpairs $\mu_{ki}$ and
$\Psi_{ki}(\y)$.  In our setting, the restriction of the Neumann
Green's function in the half-space onto a patch on the horizontal
plane is $1/(2\pi|\y-\y^{\prime}|)$ and yields the following identity:
\begin{equation} \label{eq:NGreen_Psi}
  \frac{1}{2\pi |\y-\y^{\prime}|} =
  \sum\limits_{k=0}^\infty \frac{\Psi_{ki}(\y) \Psi_{ki}(\y^{\prime})}{\mu_{ki}}\,,
  \quad \mbox{for} \quad \y,\y^{\prime}\in \PT_i\,.
\end{equation}
This identity can also be recast as an eigenvalue problem
\begin{equation}  \label{eq:Psi_eigen}
  \int\limits_{\PT_i} \frac{1}{2\pi |\y - \y^{\prime}|} \Psi_{ki}(\y^{\prime}) \,
  d\y^{\prime} = \frac{1}{\mu_{ki}} \Psi_{ki}(\y)\,,  \quad \mbox{for}
  \quad \y\in \PT \,,
\end{equation}
whose eigenpairs are enumerated by $k = 0,1,\ldots$.

The completeness of the basis of $\{\Psi_{ki}\}$ in $L^2(\PT_i)$
allows us to decompose the restriction of $w_i(\y;\kappa_i)$ to
$\PT_i$ and thus to seek the solution of (\ref{mfpt:wc}) in the
form
\begin{equation}
w_i(\y; \kappa_i) = \sum\limits_{k=0}^\infty \eta_{ki} \Psi_{ki}(\y) \,.
\end{equation}
The unknown coefficients $\eta_{ki}$ can be found from imposing the
boundary condition (\ref{mfpt:wc_2}), which yields
\begin{equation}\label{eq:dj_zero}
  \sum\limits_{k^{\prime}=0}^\infty \eta_{k^{\prime}i}
  (\mu_{k^{\prime}i} + \kappa_i) \Psi_{k^{\prime}i}(\y) = \kappa_i \,,
\quad \mbox{for} \quad \y\in \PT_i\,.
\end{equation}
Multiplying (\ref{eq:dj_zero}) by $\Psi_{ki}(\y)$ and integrating the
resulting expression over $\PT_i$, we get
\begin{equation}  \label{eq:dj}
  \eta_{ki} = \frac{\kappa_i \, d_{ki}}{\mu_{ki} + \kappa_i}  \,, \quad \textrm{with}
  \quad d_{ki} = \int_{\PT_i} \Psi_{ki}\, d\y \,, 
\end{equation}
where we used the orthonormality condition (\ref{eq:orhthog_D}).  In this
way, we have deduced the following spectral representation:
\begin{equation}   \label{eq:wi_spectral}
  w_i(\y;\kappa_i) = \kappa_i \sum\limits_{k=0}^\infty
  \frac{d_{ki}}{\mu_{ki} + \kappa_i} \Psi_{ki}(\y) , \quad \y \in \R^3_+ \,.
\end{equation}
As a consequence, the charge density becomes
\begin{equation} \label{eq:qi}
q_i(\y; \kappa_i) = \frac12 \partial_n w_i(\y;\kappa_i) |_{\PT_i}
= \frac{\kappa_i}{2} \sum\limits_{k=0}^\infty
  \frac{d_{ki} \mu_{ki}}{\mu_{ki} + \mu} \Psi_{ki}(\y) \,, \quad \mbox{for}
  \quad \y\in \PT_i \,.
\end{equation}
Moreover, setting $\kappa_i = - \sigma$ and evaluating the derivative
with respect to $\sigma$ gives
\begin{equation}   \label{eq:wc_spectral}
w_{ci}(\y;-\sigma) = \partial_\sigma w_i(\y;-\sigma) =
- \sum\limits_{k=0}^\infty
\frac{\mu_{ki} d_{ki}}{(\mu_{ki} - \sigma)^2} \Psi_{ki}(\y) \,.
\end{equation}
These spectral expansions imply that if $\sigma = \mu_{ki}$ for some
integer $k\geq 0$, then there is no solution $w_{i}(\y;-\sigma)$.

\subsubsection*{Reactive capacitance}

According to the definition (\ref{mfpt:wc_charge}), the reactive
capacitance can be found as
\begin{equation} \label{eq:Cmu}
  C_i(\kappa_i) = \frac{1}{2\pi} \int\limits_{\PT_i} \partial_n w_i \, d\y
  = \frac{\kappa_i}{2\pi} \sum\limits_{k=0}^\infty
  \frac{\mu_{ki} d_{ki}^2}{\mu_{ki} + \kappa_i} \,.
\end{equation}
Equation (\ref{eq:Cmu}) is one of the main results of this appendix.
In particular, its derivative, given by (\ref{eq:dCmu}), is strictly
positive.  As a result, the capacitance $C_i(\infty)$ is an upper
bound for $C_i(\kappa_i)$ for $\kappa_i > 0$. Another upper bound,
which is useful for small $\kappa_i$, reads
\begin{equation}  \label{eq:Cmu_upper}
  C_i(\kappa_i) \leq \frac{\kappa_i}{2\pi} \sum\limits_{k=0}^\infty d_{ki}^2
  = \frac{\kappa_i \, |\PT_i|}{2\pi} \,,
\end{equation}
where the second equality follows from (\ref{eq:sum_dk}), which is
shown below.  We conclude that the eigenvalues $\mu_{ki}$ and the
spectral weights $d_{ki}$, for which $d_{ki}\neq 0$, fully determine
the reactive capacitance and its properties.  It is worth noting that
a dilation of the patch $\PT_i$ by $a_i > 0$ implies
\begin{equation}   \label{eq:rescaling}
  \mbox{If}~~ \PT^{\prime}_i = a_i\PT_i \,, \quad \mbox{then} \quad
  \mu^{\prime}_{ki} = \mu_{ki}/a_i, \quad d^{\prime}_{ki} = a_i \, d_{ki} \,,
\end{equation}
where we used the $L^2(\PT_i)$ normalization of eigenfunctions.

A practical implementation of the spectral expansion (\ref{eq:Cmu})
requires solving the eigenvalue problem (\ref{eq:Psi_eigen}) for a
given patch $\PT_i$.  An efficient discretization of this problem that
handles the explicit but weakly singular kernel $1/|\y-\y^{\prime}|$,
was presented in \cite{Grebenkov26}.  This numerical method allows one
to compute a number of eigenpairs $\{\mu_{ki}, \Psi_{ki}\}$ and the
related weights $d_{ki}$ to approximate the reactive capacitance over
the whole range of reactivities $0 < \kappa_i < \infty$.  A priori,
the convergence of the series in (\ref{eq:Cmu}) is not fast due to a
non-analytic behavior of $C_i(\kappa_i)$ at large $\kappa_i$
\cite{Grebenkov26} (see also \S \ref{sec:large_mu} below).  However,
our empirical observation suggests that the spectral weights $d_{ki}$
may decay rapidly with $k$ for some shapes (see Table
\ref{table:muk_disk} for the circular patch and
Refs. \cite{Grebenkov24,Grebenkov25a} for some other shapes), allowing
one to approximate $C_i(\kappa_i)$ accurately with a moderate number
of terms.  A systematic analysis of the convergence speed for
different patches presents an interesting open problem.

\subsubsection*{Small-reactivity behavior}

In the limit $\kappa_i\to 0$, the geometric series expansion of each
fraction in (\ref{eq:Cmu}) yields the Taylor expansion
\begin{equation} \label{eq:Cmu_Taylor0} 
C_i(\kappa_i) = - a_i
  \sum\limits_{n=1}^{\infty} c_{ni} \,(-\kappa_i a_i)^n  \,, \quad
  \mbox{with} \quad c_{ni} = \frac{1}{2\pi a_i^{n+1}} \sum\limits_{k=0}^\infty
  \frac{d_{ki}^2}{\mu_{ki}^{n-1}} \,.
\end{equation}
The coefficient $c_{1i}$ can be found by expanding the unity on the
complete basis of eigenfunctions $\{\Psi_{ki}\}$,
\begin{equation}  \label{eq:unity_Psi}
  \sum\limits_{k=0}^\infty d_{ki} \Psi_{ki}(\y) = 1 \,, \quad \mbox{for} \quad
  \y\in\PT_i\,,
\end{equation}
while its integral over $\y\in\PT_i$ yields
\begin{equation}  \label{eq:sum_dk}
\sum\limits_{k=0}^\infty d_{ki}^2 = |\PT_i| \,, 
\end{equation}
where $|\PT_i|$ denotes the area of $\PT_i$.  This shows that $F_{ki}
= d_{ki}^2/|\PT_i|$ can be interpreted as the relative weight of the
$k$-th eigenpair.  Applying the divergence theorem to
(\ref{eq:Psi_def}), we observe that the coefficient $d_{ki}$
determines the far-field behavior of $\Psi_{ki}(\y)$ in the form
\begin{equation}  \label{eq:Psi_asympt}
  \Psi_{ki}(\y) \sim \frac{\mu_{ki} d_{ki}}{2\pi |\y|} + \ldots \,, \quad
  \mbox{as}
  \quad |\y|\to\infty\,.
\end{equation}

According to (\ref{eq:sum_dk}), we get 
\begin{equation}
c_{1i} = \frac{|\PT_i|}{2\pi a_i^2} \,.  
\end{equation}
It is convenient to derive a closed-form representation for the
coefficients $c_{2i}$ and $c_{3i}$ to facilitate their numerical
computation without solving the Steklov eigenvalue problem.
To this end, we integrate (\ref{eq:Psi_eigen}) over
$\y\in\PT_i$ to get
\begin{equation}
  \int\limits_{\PT_i} \Psi_{ki}(\y)\, \omega_i(\y) \, d\y =
  \frac{d_{ki}}{\mu_{ki}} \,,  
\qquad \mbox{where} \qquad \omega_i(\y) \equiv \int\limits_{\PT_i}
\frac{d\y^{\prime}}{2\pi|\y-\y^{\prime}|} \,.
\end{equation}
Multiplying this relation by $d_{ki}/(2\pi)$ and summing over $k$, we find
\begin{align}  \nonumber
c_{2i} & = \frac{1}{2\pi a_i^3} \sum\limits_{k=0}^\infty \frac{d_{ki}^2}{\mu_{ki}} 
= \frac{1}{2\pi a_i^3} \sum\limits_{k=0}^\infty \int\limits_{\PT_i} 
      \Psi_{ki}(\y)\, \omega_i(\y) \, d\y \int\limits_{\PT_i} \Psi_{ki}(\y^{\prime})
  \, d\y^{\prime}  \\  \label{eq:c2_integral}
    & = \frac{1}{2\pi a_i^3} \int\limits_{\PT_i} \omega_i(\y)\,
    \left(\int\limits_{\PT_i} \underbrace{\sum\limits_{k=0}^\infty\Psi_{ki}(\y)\,
      \Psi_{ki}(\y^{\prime})}_{=\delta(\y-\y^{\prime})} \, d\y^{\prime}  \right) \,
       d\y\ = \frac{1}{2\pi a_i^3} \int\limits_{\PT_i} \omega_i(\y) \, d\y \,,
\end{align}
where we used the completeness of the basis of eigenfunctions
$\Psi_{ki}$ in $L^2(\PT)$.  In the same vein, we have
\begin{align}  \nonumber
c_{3i} & = \frac{1}{2\pi a_i^4} \sum\limits_{k=0}^\infty \frac{d_{ki}^2}{\mu_{ki}^2} 
= \frac{1}{2\pi a_i^4} \sum\limits_{k=0}^\infty \int\limits_{\PT_i}  \Psi_{ki}(\y)\, 
      \omega_i(\y)\, d\y \, \int\limits_{\PT_i} \Psi_{ki}(\y^{\prime}) \,
      \omega_i(\y^{\prime})\, d\y^{\prime} \\  \label{eq:c3_integral}
    & = \frac{1}{2\pi a_i^4} \int\limits_{\PT_i}\omega_i(\y) \, 
    \left(  \int\limits_{\PT_i}\omega_i(\y^{\prime})
      \underbrace{\sum\limits_{k=0}^\infty
      \Psi_{ki}(\y)\, \Psi_{ki}(\y^{\prime})}_{=\delta(\y-\y^{\prime})}
      \,  d\y^{\prime}\right) \,  d\y 
= \frac{1}{2\pi a_i^4} \int\limits_{\PT_i}  \omega_i^2(\y) \, d\y \,.
\end{align}
These two representations allow one to compute the coefficients
$c_{2i}$ and $c_{3i}$ numerically for any patch shape without solving
the exterior Steklov problem.  In other words, we managed to represent
these coefficients in purely geometric terms.  Moreover, the
exact analytical formulas for $c_{2i}$ were derived in
\cite{Grebenkov26} for rectangular, elliptical and circular-annular
patches.

A numerical computation of the function $\omega_i(\y)$ requires
integration of the singular kernel $1/|\y-\y^{\prime}|$.  To avoid
technical issues, it is convenient to recall that $\Delta_2
|\y-\y^{\prime}| = {1/|\y-\y^{\prime}|}$, where $\Delta_2$ is the
two-dimensional Laplace operator.  As a consequence, one has
\begin{equation}  \label{eq:omega_int}
  \omega_i(\y) = \frac{1}{2\pi} \int\limits_{\PT_i} \Delta_2 |\y-\y^{\prime}|
 \,   d\y^{\prime} \, 
 = -\frac{1}{2\pi} \int\limits_{\partial\PT_i} \frac{(\n_{\y^{\prime}}
   \cdot (\y-\y^{\prime}))}{|\y-\y^{\prime}|} \, d\y^{\prime} \,, 
\end{equation}
where $\n_{\y^{\prime}}$ is the unit normal vector to the boundary of
the patch, oriented outward from $\PT_i$.  In this way, the
original integral over a planar region $\PT_i$ is reduced to an
integral over its one-dimensional boundary that avoids singularities.
Using $\nabla_{\y} |\y-\y^{\prime}| = (\y -
\y^{\prime})/|\y-\y^{\prime}|$ and the divergence theorem, we can
rewrite the integral in (\ref{eq:c2_integral}) as
\begin{equation}  \label{eq:c2_integral2}
  c_{2i} = - \frac{1}{4\pi^2 a_i^3} \int\limits_{\partial\PT_i} d\y
  \int\limits_{\partial\PT_i}  |\y-\y^{\prime}| (\n_{\y} \cdot \n_{\y^{\prime}}) \,
  d\y^{\prime} \,.
\end{equation}

\subsection{Circular Patch}\label{app:circle}

For a circular patch $\PT_i$, the exterior Steklov problem was
studied in \cite{Grebenkov24}.  In particular, an efficient numerical
procedure for constructing the eigenvalues and eigenfunctions was
developed by using oblate spheroidal coordinates.  Moreover, the axial
symmetry of this setting implies that only axially symmetric
eigenfunctions do contribute to $w_i(\y;\kappa_i)$ in
(\ref{eq:wi_spectral}) and related quantities (in fact, the
coefficients $d_{ki}$ in (\ref{eq:dj}) vanish for non-axially
symmetric eigenfunctions).  As a consequence, we can focus exclusively
on axially symmetric eigenmodes that we still enumerate by the index
$k = 0,1,2,\ldots$.

\begin{table}
\begin{center}
\begin{tabular}{|c|c|c|c|c|c|c|c|c|} \hline
$k$            & 0      & 1      & 2      & 3      & 4      & 5      & 6      & 7      \\ \hline
$\mu_{ki}$     & 1.1578 & 4.3168 & 7.4602 & 10.602 & 13.744 & 16.886 & 20.028 & 23.169 \\ 
$d_{ki}    $   & 1.7524 & 0.2298 & 0.1000 & 0.0587 & 0.0397 & 0.0291 & 0.0225 & 0.0180 \\ 
$d_{ki}^2/\pi$ & 0.9775 & 0.0168 & 0.0032 & 0.0011 & 0.0005 & 0.0003 & 0.0002 & 0.0001 \\ \hline
$\mu^N_{ki}$   & 0      & 4.1213 & 7.3421 & 10.517 & 13.677 & 16.831 & 19.981 & 23.128 \\ 
$F_{ki}^N$     & 1      & 0.1195 & 0.0782 & 0.0587 & 0.0471 & 0.0394 & 0.0339 & 0.0297 \\ \hline
\end{tabular}
\end{center}
\caption{ 
The first eight Steklov eigenvalues $\mu_{ki}$ for the unit disk
$\PT_i$ ($a_i = 1$) in the upper half-space that correspond to axially
symmetric eigenfunctions, for which the weights $d_{ki}$ are nonzero.
These values were obtained via a numerical diagonalization of a
truncated matrix representing the Dirichlet-to-Neumann operator, with
the truncation order $100$ (see details in \cite{Grebenkov24}).  Note
that the reduction of the truncation order to $50$ does not affect the
shown digits, assuring the high accuracy of this computation.  The
last two rows present the first eight eigenvalues $\mu_{ki}^N$,
corresponding to axially symmetric eigenfunctions, of the Steklov
problem (\ref{eq:Psi_def_N}), and the associated weights $F_{ki}^N =
\pi [\Psi_{ki}^N(\infty)]^2$; see \cite{Henrici70,Grebenkov25} for
their numerical computation.}
\label{table:muk_disk}
\end{table}

Using the numerical procedure from \cite{Grebenkov24}, we compute
$\mu_{ki}$ and $d_{ki}$ by diagonalizing an appropriate truncated
matrix.  The first eight eigenvalues $\mu_{ki}$ and coefficients
$d_{ki}$ for the unit disk are shown in Table \ref{table:muk_disk}.
One sees that the principal eigenmode provides the dominant
contribution of $98\%$, the next one gives $1.7\%$, whereas all the
remaining eigenmodes are almost negligible.  In other words, the
infinite sum in (\ref{eq:Cmu_Taylor}) can be truncated to only a few
terms to get very accurate results for $c_{ni}$.  For instance,
keeping only the first two terms in the spectral expansion
(\ref{eq:Cmu_Taylor}) determining $c_{ni}$ and substituting the data
from Table \ref{table:muk_disk}, we find
\begin{equation}  \label{eq:cn_approx}
  c_{ni} \approx \frac{0.4888}{(1.1578)^{n-1}} + \frac{0.0084}{(4.3168)^{n-1}}\,,
  \quad \mbox{for} \quad n\geq 2\,.
\end{equation}
This explicit approximation gives $c_{2i} \approx 0.4241$ and $c_{3i}
\approx 0.3651$, which perfectly agree with the exact values from
(\ref{eq:cn_exact}).  While the exact computation of $c_{ni}$ rapidly
becomes very cumbersome (see Appendix \ref{app_a:Ckappa}), the
approximation (\ref{eq:cn_approx}) is fully explicit.

We also note that an alternative exact computation of $c_{2i}$ and
$c_{3i}$ to that done in Appendix \ref{app_a:Ckappa} can be achieved
using the integral representations (\ref{eq:c2_integral}) and
(\ref{eq:c3_integral}).  Setting $\y = (r,\phi)$ and $\y^{\prime} =
(1,\phi^{\prime})$ in polar coordinates, we first evaluate the
integral in (\ref{eq:omega_int}) as
\begin{equation}
  \omega_i(\y) = -\frac{1}{2\pi} \int\limits_0^{2\pi}
  \frac{r \cos(\phi-\phi^{\prime}) - 1}{\sqrt{1 + r^2 -
      2r \cos(\phi-\phi^{\prime})}} = \frac{2}{\pi} E(|\y|) \,,
\end{equation}
where $E(z)$ is the complete elliptic integral of the second kind.
As a consequence, we get immediately that
\begin{equation}
  c_{2i} = \frac{2}{\pi} \int\limits_{0}^{1}  r \, E(r) \, dr =
  \frac{4}{3\pi} \,,  \qquad
  c_{3i} = \frac{4}{\pi^2} \int\limits_{0}^{1} r \, \left[E(r)\right]^2\,
  dr \approx 0.3651 \,.
\end{equation}
Note that $c_{2i}$ could also be found directly from
(\ref{eq:c2_integral2}):
\begin{equation}
  c_{2i} = - \frac{1}{4\pi^2} \int\limits_0^{2\pi} \left(\int\limits_0^{2\pi}
    \sqrt{2-2\cos(\phi-\phi^{\prime})} \cos(\phi-\phi^{\prime}) \, d\phi^{\prime}
    \right) \, d\phi = \frac{4}{3\pi} \,.
\end{equation}

\subsection{The Large-Reactivity Limit}
\label{sec:large_mu}

While the fast decay of $d_{ki}^2$ allows one to keep only few terms
in the analysis of the small-reactivity limit $\kappa_i \to 0$, all
eigenmodes become relevant in the opposite limit $\kappa_i\to\infty$
of high reactivity.  In the case of the unit disk (i.e., $a_i = 1$),
the solution of (\ref{mfpt:wc}) can be related to the potential $\Phi$
introduced and studied in \cite{Guerin23}.  In fact, the divergence
theorem applied to (\ref{mfpt:wc}) yields $\kappa_i \int_{\PT_i}
(1-w_i(\y;\kappa_i)) \, d\y = 2\pi C_i(\kappa_i)$, so that
$(1-w_i)/C_i = 2\pi \Phi$, where $\Phi$ is the unique solution of
equations (12-14) from \cite{Guerin23}.  As a consequence, the
asymptotic relations (2,3,11) derived in \cite{Guerin23} (with $D =
1$) imply, in our notations, that
\begin{equation} \label{eq:Cmu_asympt}
  C_i(\kappa_i) \approx \frac{2}{\pi} -
  \frac{2\left( \log\kappa_i + \log{2} + \gamma_e +
1\right)}{\pi^2 \kappa_i}   \quad \mbox{as} \quad \kappa_i\to \infty \,,
\end{equation}
where $\gamma_e \approx 0.5772\ldots$ is the Euler constant.
Figure~\ref{fig:Cmu_asympt} illustrates an excellent agreement between
the numerically computed values of $C_i(\kappa_i)$ and the asymptotic
relation (\ref{eq:Cmu_asympt}).

\begin{figure}
\begin{center}
\includegraphics[width=88mm]{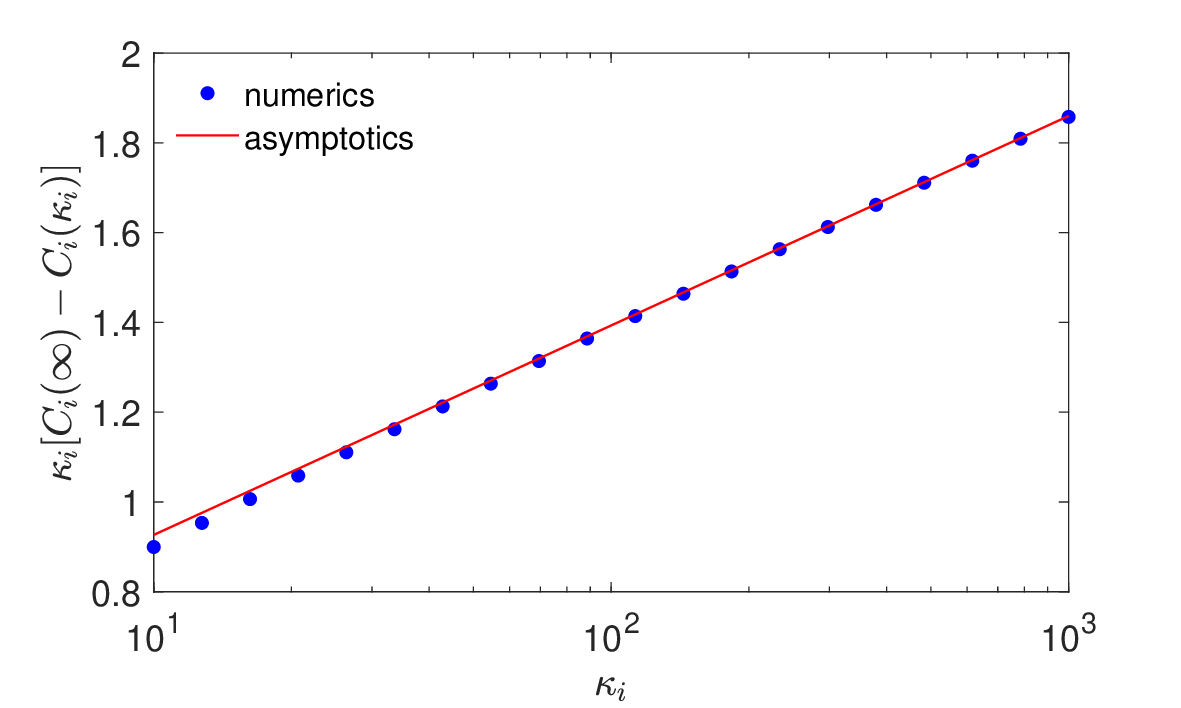} 
\end{center}
\caption{ 
Asymptotic behavior of $C_i(\kappa_i)$ at large $\kappa_i$ for the
unit disk $\PT_i$ ($a_i = 1$).  The numerical results (symbols)
computed from (\ref{eq:Cmu}), which was truncated after 100 terms, are
well-predicted by the asymptotic formula (\ref{eq:Cmu_asympt}) when
$\kappa_i$ is large.}
\label{fig:Cmu_asympt}
\end{figure}

\subsection{Alternative Representation and Zeros of the Function $C_i(\kappa_i)$}
\label{sec:Cmu0}

As discussed in \S \ref{stekN}, the leading-order term in the
asymptotic expansion of the SN problem is determined by
(\ref{sn:sigma_0}), which requires finding zeros of the sum $\sum_{i}
C_i(-\sigma_0)$.  We now discuss the relation between the zeros of
$C_i(-\sigma_0)$ and the spectrum of an additional local Steklov
eigenvalue problem (see also \cite{Grebenkov25,Grebenkov26}).

For this purpose, we consider an auxiliary exterior Steklov problem:
\begin{subequations}  \label{eq:Psi_def_N}
\begin{align}  \label{eq:VkN_eq}
\Delta_{\y} \Psi_{ki}^N & = 0\,, \quad \y \in \R_+^3 \,,\\
  \partial_n \Psi_{ki}^N & = \mu_{ki}^N \Psi_{ki}^N \,, \label{eq:VkN_stek}
\quad y_3=0 \,,\, (y_1,y_2)\in \PT_i\,,\\  \label{eq:VkN_Neumann}
  \partial_n \Psi_{ki}^N & = 0 \,,
\quad y_3=0 \,,\, (y_1,y_2)\notin \PT_i\,,\\  \label{eq:VkN_inf}
  |\y|^2 \, |\nabla \Psi_{ki}^N(\y)| & \to 0 \,,
                                    \quad \textrm{as}\quad |\y|\to \infty\,.
\end{align}
\end{subequations}
This spectral problem, which was formulated in \cite{Henrici70},
admits infinitely many nontrivial solutions $\{\mu_{ki}^N, \Psi_{ki}^N\}$,
enumerated by $k = 0,1,\ldots$, such that the eigenvalues form an
ordered sequence,
\begin{equation}
0 = \mu_{0i}^N < \mu_{1i}^N \leq \mu_{2i}^N \leq \ldots \nearrow +\infty,
\end{equation}
whereas the restrictions $\Psi_{ki}^N|_{\PT_i}$ onto $\PT_i$ form a
complete orthonormal basis of $L^2(\PT_i)$.  The spectral problem
(\ref{eq:Psi_def_N}) differs from the former problem
(\ref{eq:Psi_def}) by the imposed behavior of eigenfunctions at
infinity.  In fact, while (\ref{eq:Vk_inf}) can be interpreted as a
Dirichlet-type condition at infinity, (\ref{eq:VkN_inf}) implements a
vanishing flow condition, which is a sort of Neumann condition at
infinity.  The distinctions between these two cases were investigated
in a much more general setting in \cite{Arendt15,Bundrock25}.  It is
trivial to check that a constant function $\Psi_{0i}^N =
{1/\sqrt{|\PT_i|}}$ is a solution of (\ref{eq:Psi_def_N}), with the
trivial eigenvalue $\mu_{0i}^N = 0$.  As the other eigenfunctions
$\Psi_{ki}^N$ must be orthogonal to $\Psi_{0i}^N$ in $L^2(\PT_i)$, one
has $\int\nolimits_{\PT_i} \Psi_{ki}^N\, d\y= 0$ for any $k > 0$.
Note that the condition (\ref{eq:VkN_inf}) does not require vanishing
of $\Psi_{ki}^N$ at infinity, i.e., $\Psi_{ki}^N$ may have a constant
non-zero limit as $|\y|\to \infty$.  For a circular patch, the first
eight eigenvalues $\mu_{ki}^N$ corresponding to axially symmetric
eigenfunctions, are given in the next-to-last row of Table
\ref{table:muk_disk} (see \cite{Grebenkov25} for the details on their
numerical computation).

The two exterior Steklov problems (\ref{eq:Psi_def}) and
(\ref{eq:Psi_def_N}) provide complementary spectral tools for our
asymptotic analysis.  It is therefore instructive to discuss several
relations between the eigenpairs of these problems.  For some indices
$k$ and $k^{\prime}$, let us multiply (\ref{eq:Vk_eq}) by
$\Psi_{k^{\prime}i}^N$, multiply (\ref{eq:VkN_eq}) with $k^{\prime}$
by $\Psi_{ki}$, subtract these equations, integrate over a large
hemisphere, apply the Green's formula, and use the boundary conditions
and the asymptotic behavior at infinity to get
\begin{equation}  \label{eq:Psi_DN}
  \left(\mu_{ki} - \mu_{k^{\prime}i}^{N}\right)
  \int\limits_{\PT_i} \Psi_{ki}(\y) \Psi_{k^{\prime}i}^N(\y) \, d\y  =
  \mu_{ki} d_{ki} \Psi_{k^{\prime}i}^N(\infty) \,.
\end{equation}
This identity allows us to provide some classification of the spectra of
the two problems.  In (\ref{sn:poles0}), we introduced the resonant
set ${\mathcal P}_i$ as the union of all eigenvalues $\mu_{ki}$ for
which $d_{ki} \ne 0$:
\begin{equation}
  {\mathcal P}_i \equiv \bigcup_{k=0}^{\infty} \left\{ \mu_{ki} ~\vert~
    d_{ki} \ne 0 \right\}\,.
\end{equation}
The union of the remaining eigenvalues is then denoted by
\begin{equation}
 {\mathcal P}_i^0 \equiv \bigcup_{k=0}^{\infty}\left\{ \mu_{ki} ~\vert~ d_{ki} = 0
  \right\}\,.
\end{equation}
We emphasize that the intersection of these two sets is not
necessarily empty, i.e., there may exist indices $k \ne k^{\prime}$
such that $\mu_{ki} = \mu_{k^{\prime}i}$, $d_{ki} \ne 0$ and
$d_{k^{\prime}i} = 0$.  In analogy, one can define two sets of
eigenvalues for the spectral problem (\ref{eq:Psi_def_N}) as
\begin{subequations}
\begin{align}
  {\mathcal P}_i^N & \equiv \bigcup_{k=0}^{\infty} \left\{ \mu_{ki}^N ~\vert~
                     \Psi^N_{ki}(\infty) \ne 0 \right\}\,, \\
  {\mathcal P}_i^{0,N} & \equiv \bigcup_{k=0}^{\infty} \left\{ \mu_{ki}^N ~\vert~
                         \Psi^N_{ki}(\infty) = 0 \right\}\, .
\end{align}
\end{subequations}

We now prove the following statement.

\begin{lemma} One has
\begin{equation}  \label{eq:Pset}
{\mathcal P}_i^0 = {\mathcal P}_i^{0,N} \,, \qquad 
{\mathcal P}_i \cap {\mathcal P}_i^N = \emptyset \,.
\end{equation}
\end{lemma}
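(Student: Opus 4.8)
The plan is to exploit the fact that the two exterior Steklov problems (\ref{eq:Psi_def}) and (\ref{eq:Psi_def_N}) share the same interior equation and the same mixed Steklov--Neumann boundary conditions on the plane $y_3=0$, differing only in their prescribed behavior at infinity. The condition (\ref{eq:Vk_inf}) forbids a nonzero constant limit but permits a monopole term $\propto 1/|\y|$, whereas (\ref{eq:VkN_inf}) forbids a monopole (a monopole $M/|\y|$ yields $|\y|^2|\nabla(M/|\y|)|\to |M|$) but permits a nonzero constant limit $\Psi_{ki}^N(\infty)$. The central tool is the flux--weight relation furnished by the divergence theorem: for any eigenfunction, the normal flux through $\PT_i$ equals the flux radiated to infinity. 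First I would record two consequences. For (\ref{eq:Psi_def_N}) with $\mu_{ki}^N>0$, the condition (\ref{eq:VkN_inf}) makes the flux at infinity vanish, so $\mu_{ki}^N\int_{\PT_i}\Psi_{ki}^N\,d\y=0$ and hence $\int_{\PT_i}\Psi_{ki}^N\,d\y=0$; the sole exception is the constant mode $\Psi_{0i}^N\equiv|\PT_i|^{-1/2}$ with $\mu_{0i}^N=0$, for which $\Psi_{0i}^N(\infty)\ne0$. For (\ref{eq:Psi_def}), the expansion (\ref{eq:Psi_asympt}) shows the monopole coefficient of $\Psi_{ki}$ is exactly $\mu_{ki}d_{ki}/(2\pi)$.

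For the disjointness ${\mathcal P}_i\cap{\mathcal P}_i^N=\emptyset$, I would argue by contradiction using the identity (\ref{eq:Psi_DN}). Suppose $\lambda\in{\mathcal P}_i\cap{\mathcal P}_i^N$. Then there are indices $k,k'$ with $\mu_{ki}=\mu_{k'i}^N=\lambda$, with $d_{ki}\ne0$ and $\Psi_{k'i}^N(\infty)\ne0$. Substituting these into (\ref{eq:Psi_DN}) makes the prefactor $\mu_{ki}-\mu_{k'i}^N$ vanish, so the left-hand side is $0$, while the right-hand side equals $\lambda\,d_{ki}\,\Psi_{k'i}^N(\infty)$. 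Since the principal eigenvalue of (\ref{eq:Psi_def}) is strictly positive we have $\lambda=\mu_{ki}>0$, so the right-hand side is nonzero, a contradiction. Hence no such $\lambda$ exists.

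For the equality ${\mathcal P}_i^0={\mathcal P}_i^{0,N}$, I would prove the two inclusions by transferring eigenfunctions between the problems. If $\mu_{ki}\in{\mathcal P}_i^0$, then $d_{ki}=0$, so by (\ref{eq:Psi_asympt}) the monopole of $\Psi_{ki}$ vanishes, giving $\Psi_{ki}=O(|\y|^{-2})$ and thus $|\y|^2|\nabla\Psi_{ki}|\to0$; therefore $\Psi_{ki}$ satisfies every condition of (\ref{eq:Psi_def_N}) with eigenvalue $\mu_{ki}$ and $\Psi_{ki}(\infty)=0$, so $\mu_{ki}\in{\mathcal P}_i^{0,N}$. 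Conversely, if $\mu_{ki}^N\in{\mathcal P}_i^{0,N}$, then $\Psi_{ki}^N(\infty)=0$; integrating $\nabla\Psi_{ki}^N$ inward along a ray and using (\ref{eq:VkN_inf}) yields $\Psi_{ki}^N=o(|\y|^{-1})$, so $\Psi_{ki}^N$ satisfies (\ref{eq:Vk_inf}) and is an eigenfunction of (\ref{eq:Psi_def}) at eigenvalue $\mu_{ki}^N$. The constant mode is excluded since it has a nonzero limit, so $\mu_{ki}^N>0$, and the flux--weight relation then gives $\int_{\PT_i}\Psi_{ki}^N\,d\y=0$, i.e. the weight of $\Psi_{ki}^N$ as a solution of (\ref{eq:Psi_def}) vanishes; hence $\mu_{ki}^N\in{\mathcal P}_i^0$. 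The two inclusions give the equality.

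The step requiring the most care, and the main obstacle, is the rigorous bookkeeping of the far-field asymptotics: I must justify that the absence of a monopole forces $O(|\y|^{-2})$ decay (so that the stronger condition (\ref{eq:VkN_inf}) holds) and that a vanishing constant limit together with (\ref{eq:VkN_inf}) forces $o(|\y|^{-1})$ decay (so that (\ref{eq:Vk_inf}) holds). Both rest on the multipole expansion of a harmonic function in the half-space that obeys the Neumann condition off the patch; since $\PT_i$ is bounded, I would justify this by even reflection across $y_3=0$ away from $\PT_i$, which produces a function harmonic in an exterior neighborhood of infinity and hence admitting a convergent expansion in inverse powers of $|\y|$. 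A secondary point to handle carefully is the exceptional zero eigenvalue $\mu_{0i}^N=0$ of (\ref{eq:Psi_def_N}): one must check it is simple (by a Liouville-type argument on the reflected function, its eigenspace consists only of constants) and that it lies in ${\mathcal P}_i^N$ rather than in ${\mathcal P}_i^{0,N}$, so that it does not interfere with either set identity.
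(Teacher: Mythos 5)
Your proof is correct, and its two halves relate differently to the paper's own argument. The disjointness ${\mathcal P}_i\cap{\mathcal P}_i^N=\emptyset$ is proved exactly as in the paper: the identity (\ref{eq:Psi_DN}) with $d_{ki}\ne 0$, $\Psi_{k'i}^N(\infty)\ne 0$ and $\mu_{ki}>0$ is incompatible with $\mu_{ki}=\mu_{k'i}^N$; you phrase this as a contradiction and the paper as a direct implication, but it is the same computation. For the equality ${\mathcal P}_i^0={\mathcal P}_i^{0,N}$ your route is genuinely different. The paper stays with (\ref{eq:Psi_DN}): when $d_{ki}=0$ its right-hand side vanishes for every $k'$, and completeness of $\{\Psi_{k'i}^N\vert_{\PT_i}\}$ in $L^2(\PT_i)$ forbids $\Psi_{ki}$ from being orthogonal to all of them, forcing some $\mu_{k'i}^N$ to coincide with $\mu_{ki}$; only then is the decay of $\Psi_{ki}$ invoked to conclude that this eigenfunction vanishes at infinity. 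You bypass both (\ref{eq:Psi_DN}) and completeness for this part, transferring eigenfunctions directly between the two problems: $d_{ki}=0$ removes the monopole in (\ref{eq:Psi_asympt}), so after even reflection across $y_3=0$ the multipole expansion gives $\Psi_{ki}=O(|\y|^{-2})$ and $|\y|^2|\nabla\Psi_{ki}|\to 0$, i.e.\ $\Psi_{ki}$ itself solves (\ref{eq:Psi_def_N}) and vanishes at infinity; conversely, $\Psi_{ki}^N(\infty)=0$ together with (\ref{eq:VkN_inf}) yields $o(1/|\y|)$ decay, so $\Psi_{ki}^N$ solves (\ref{eq:Psi_def}), and the flux balance $\mu_{ki}^N\int_{\PT_i}\Psi_{ki}^N\,d\y=0$ shows that its weight vanishes. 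Your version makes explicit the far-field bookkeeping that the paper compresses into the assertion that the common eigenfunction ``must decay faster than ${\mathcal O}(1/|\y|)$'', and your remarks on the zero mode $\mu_{0i}^N=0$ (simplicity via a Liouville argument after reflection, and its membership in ${\mathcal P}_i^N$ rather than ${\mathcal P}_i^{0,N}$) cover a detail the paper leaves implicit; what the paper's completeness argument buys in exchange is that it never needs pointwise multipole expansions or the reflection principle, operating purely at the level of $L^2(\PT_i)$ and the single bilinear identity. Both arguments share the same harmless imprecision in the degenerate case, where membership in the four sets is tied to the particular orthonormal enumeration of eigenfunctions.
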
 
\begin{proof}
Let us first prove that ${\mathcal P}_i^0 \subset {\mathcal
P}_i^{0,N}$ in the first relation.  If $\mu_{ki} \in {\mathcal
P}_i^0$, the right-hand side of (\ref{eq:Psi_DN}) is zero for all
$k^{\prime} = 0,1,\ldots$.  Since the eigenfunctions
$\{\Psi_{k^{\prime}i}^N\}$ form a complete basis of $L^2(\PT_i)$, an
eigenfunction $\Psi_{ki}$ cannot be orthogonal to all eigenfunctions
$\Psi_{k^{\prime}i}^N$, implying that there exists an index
$k^{\prime}$ such that $\mu_{ki} = \mu_{k^{\prime}i}^N$.  As a
consequence, the associated eigenfunction $\Psi_{ki}$ satisfies both
(\ref{eq:Psi_def}) and (\ref{eq:Psi_def_N}) so that it must decay
faster than ${\mathcal O}\left(1/|\y|\right)$, and thus
$\Psi_{ki}(\infty) = 0$.  We conclude that $\mu_{ki} \in {\mathcal
P}_i^{0,N}$.  The opposite inclusion ${\mathcal P}_i^{0,N} \subset
{\mathcal P}_i^{0}$ in the first relation of (\ref{eq:Pset}) is proved
in the same way.

In turn, if $\mu_{ki} \in {\mathcal P}_i$ and $\mu_{k^{\prime}i} \in
{\mathcal P}_i^N$, the right-hand side of (\ref{eq:Psi_DN}) is
nonzero, implying that $\mu_{ki} \ne \mu_{k^{\prime}i}$ and that the
eigenfunctions $\Psi_{ki}$ and $\Psi_{k^{\prime}i}^N$ are not
orthogonal to each other.  This proves the second relation in
(\ref{eq:Pset}).
\end{proof}

Another practical consequence of the identity (\ref{eq:Psi_DN}) is the
possibility to re-expand an eigenfunction from one basis on the
eigenfunctions from the other basis.  More precisely, if $\mu_{ki}\in
{\mathcal P}_i \backslash {\mathcal P}_i^0$, then
\begin{equation}
  \Psi_{ki}(\y) = \sum\limits_{k^{\prime}=0}^\infty
  (\Psi_{ki}, \Psi_{k^{\prime}i}^N)_{L^2(\PT_i)} \Psi_{k^{\prime}i}^N(\y) 
  = \mu_{ki} d_{ki} \sum\limits_{k^{\prime}=0}^\infty
  \frac{\Psi_{k^{\prime}i}^N(\infty)}{\mu_{ki} - \mu_{k^{\prime}i}^N}
  \Psi_{k^{\prime}i}^N(\y) \,, \quad  \y\in\PT_i \,;
\end{equation}
similarly, if $\mu_{k^{\prime}i}^N\in {\mathcal P}_i^N \backslash
{\mathcal P}_i^{0}$, then
\begin{equation}  \label{eq:PsiN_expansion}
  \Psi_{k^{\prime}i}^N(\y) = \sum\limits_{k=0}^\infty (\Psi_{k^{\prime}i}^N,
  \Psi_{ki})_{L^2(\PT_i)} \Psi_{ki}(\y) 
  = \Psi_{k^{\prime}i}^N(\infty) \sum\limits_{k=0}^\infty
  \frac{\mu_{ki} d_{ki} }{\mu_{ki} - \mu_{k^{\prime}i}^N} \Psi_{ki}(\y) \,,
\quad  \y\in\PT_i \,.
\end{equation}

The complementary nature of the Steklov problems (\ref{eq:Psi_def})
and (\ref{eq:Psi_def_N}) suggest that the reactive capacitance can
actually be expressed in terms of the eigenpairs $\{\mu_{ki}^N,
\Psi_{ki}^N\}$, as shown in the following lemma.

\begin{lemma} 
For any $\kappa_i \notin {\mathcal P}_i^N$, one has
\begin{equation}  \label{eq:Cmu_N}
  \frac{1}{C_i(\kappa_i)} = \frac{1}{C_i(\infty)} +
  2\pi \sum\limits_{k=0}^\infty \frac{[\Psi_{ki}^N(\infty)]^2}
  {\mu_{ki}^N + \kappa_i} \,.
\end{equation}
\end{lemma}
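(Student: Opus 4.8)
The plan is to prove the alternative representation \eqref{eq:Cmu_N} by establishing a connection between the two exterior Steklov problems \eqref{eq:Psi_def} and \eqref{eq:Psi_def_N} via the spectral expansion \eqref{eq:Cmu}, already derived in this appendix. The starting point is the established formula $C_i(\kappa_i) = \frac{\kappa_i}{2\pi} \sum_{k} \frac{\mu_{ki} d_{ki}^2}{\mu_{ki} + \kappa_i}$. Rather than manipulate this sum directly, I would work with the reciprocal $1/C_i(\kappa_i)$ and try to match it to the right-hand side of \eqref{eq:Cmu_N}. The key structural observation is that the right-hand side of \eqref{eq:Cmu_N} has simple poles precisely at $\kappa_i = -\mu_{ki}^N$ with $\Psi_{ki}^N(\infty)\neq 0$, i.e.\ at the eigenvalues in ${\mathcal P}_i^N$, whereas $C_i(\kappa_i)$ itself has poles at $-\mu_{ki}$ with $d_{ki}\neq 0$ (the set ${\mathcal P}_i$). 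Since $C_i$ vanishes exactly at the points where $1/C_i$ has poles, the claim is essentially that the zeros of $C_i(\kappa_i)$ are the eigenvalues $\{-\mu_{ki}^N\}$ of the Neumann-type problem \eqref{eq:Psi_def_N} (this is the content alluded to in Lemma~\ref{lem:Cmu} and \S\ref{sec:Cmu0}).

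The cleaner approach is to verify \eqref{eq:Cmu_N} as an identity between two meromorphic functions of $\kappa_i$ by comparing their analytic structure. First I would note that both sides are meromorphic in $\kappa_i\in\mathbb{C}$. The function $1/C_i(\kappa_i)$ has poles exactly at the zeros of $C_i$, which by the relation between the two spectral problems are the points $-\mu_{ki}^N$ with $\Psi_{ki}^N(\infty)\neq 0$; the right-hand side manifestly has simple poles there. Next I would compute the residues of both sides at each such pole and check they coincide. For the right-hand side the residue at $\kappa_i = -\mu_{ki}^N$ is simply $-2\pi[\Psi_{ki}^N(\infty)]^2$. For the left-hand side, near a simple zero $\kappa_i^* = -\mu_{ki}^N$ of $C_i$ one has $1/C_i(\kappa_i) \sim 1/[C_i'(\kappa_i^*)(\kappa_i-\kappa_i^*)]$, so the residue is $1/C_i'(-\mu_{ki}^N)$; I would show using the spectral derivative formula \eqref{eq:dCmu} together with the re-expansion identity \eqref{eq:PsiN_expansion} (which expresses $\Psi_{ki}^N$ in terms of the $\Psi_{ki}$) that $C_i'(-\mu_{ki}^N) = -1/(2\pi[\Psi_{ki}^N(\infty)]^2)$, matching the residues. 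Finally, both sides agree in the limit $\kappa_i\to\infty$: the left-hand side tends to $1/C_i(\infty)$, and in the sum $\sum_k [\Psi_{ki}^N(\infty)]^2/(\mu_{ki}^N+\kappa_i)\to 0$, leaving exactly $1/C_i(\infty)$. A Liouville-type argument then forces the two meromorphic functions, having the same poles with equal residues and the same value at infinity, to be identical.

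I expect the main obstacle to be the residue computation on the left-hand side, i.e.\ establishing the precise normalization $C_i'(-\mu_{ki}^N) = -1/(2\pi[\Psi_{ki}^N(\infty)]^2)$. This requires carefully connecting the derivative of $C_i$, expressed via the basis $\{\Psi_{ki}\}$ in \eqref{eq:dCmu}, to the far-field amplitude $\Psi_{ki}^N(\infty)$ of the complementary problem. The natural tool is the cross-problem identity \eqref{eq:Psi_DN}, which relates $\mu_{ki}d_{ki}\Psi_{k'i}^N(\infty)$ to the overlap integral $(\Psi_{ki},\Psi_{k'i}^N)_{L^2(\PT_i)}$, together with the re-expansion \eqref{eq:PsiN_expansion}. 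One must handle convergence of the resulting double sum and confirm that the squared far-field amplitude $[\Psi_{ki}^N(\infty)]^2$ emerges with the correct factor of $2\pi$. An alternative, perhaps more transparent route that avoids this delicate residue matching would be a direct variational/Green's-identity derivation: apply Green's second identity to $w_i(\y;\kappa_i)$ and the eigenfunctions $\Psi_{ki}^N$, expand $1-w_i$ in the complete basis $\{\Psi_{ki}^N\}$ of $L^2(\PT_i)$, and read off $1/C_i$ from the coefficient of the constant far-field mode; I would pursue this as the primary strategy and fall back on the meromorphic-matching argument as a cross-check.
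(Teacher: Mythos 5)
Your primary strategy---expand $1-w_i$ in the basis $\{\Psi_{ki}^N\}$ and ``read off $1/C_i(\kappa_i)$ from the coefficient of the constant far-field mode''---has a genuine gap, and it is exactly the point the paper's proof is built around. The function $1-w_i(\y;\kappa_i)$ has far-field behavior $1-C_i(\kappa_i)/|\y|+\cdots$, i.e.\ it carries a monopole, whereas every $\Psi_{ki}^N$ obeys the flux-free condition (\ref{eq:VkN_inf}) and therefore has no monopole. Hence the $L^2(\PT_i)$ expansion of the patch restriction of $1-w_i$ defines, in the bulk, a \emph{different} harmonic function (the extension with Neumann-type behavior at infinity that merely agrees with $1-w_i$ on $\PT_i$), and the far field of $1-w_i$ cannot be evaluated term by term from that expansion. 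If you push your computation through---Green's identity between $w_i$ and $\Psi_{ki}^N$ gives $\int_{\PT_i}w_i\Psi_{ki}^N\,d\y=-2\pi C_i(\kappa_i)\Psi_{ki}^N(\infty)/(\mu_{ki}^N+\kappa_i)$ for $k\geq1$---and then naively let $|\y|\to\infty$ inside the sum, you land on $1/C_i(\kappa_i)=2\pi\sum_{k\geq0}[\Psi_{ki}^N(\infty)]^2/(\mu_{ki}^N+\kappa_i)$, which is false: its right-hand side tends to $0$ as $\kappa_i\to\infty$ while the left-hand side tends to $1/C_i(\infty)>0$. The missing constant is precisely the $1/C_i(\infty)$ in (\ref{eq:Cmu_N}). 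The paper's proof avoids this by expanding not $1-w_i$ but the auxiliary function $\tilde w_i=w_i(\cdot\,;\infty)/C_i(\infty)-w_i(\cdot\,;\kappa_i)/C_i(\kappa_i)$, whose monopoles cancel; only then does the $\Psi^N$ representation hold in the whole half-space, so that letting $|\y|\to\infty$ (where $\tilde w_i\to 0$) legitimately yields (\ref{eq:Cmu_N}), the term $1/C_i(\infty)$ arising because $w_i(\cdot\,;\infty)\equiv 1$ on $\PT_i$.

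The meromorphic-matching fallback is a reasonable outline but is not self-contained either. It presupposes (i) that the poles of $1/C_i$, i.e.\ the zeros of $C_i(-\mu)$, are exactly ${\mathcal P}_i^N$---this is Lemma \ref{lem:Cmu}, whose harder inclusion requires its own argument (the paper proves it independently of (\ref{eq:Cmu_N}), so citing it is not circular, but it cannot simply be assumed); (ii) that all zeros of $C_i$ are real and simple---simplicity of the real zeros follows from (\ref{eq:dCmu}), but realness needs the Herglotz property $\mathrm{Im}\,C_i(\kappa)>0$ for $\mathrm{Im}\,\kappa>0$, which you do not address; and (iii) an actual Liouville-type argument for functions whose poles accumulate at $-\infty$, which requires uniform bounds away from the poles, not just residue matching plus one limit along the positive real axis. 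Your residue normalization also carries a (self-consistent) sign slip: the residue of the right-hand side at $\kappa_i=-\mu_{ki}^N$ is $+2\pi[\Psi_{ki}^N(\infty)]^2$, and correspondingly one needs $C_i^{\prime}(-\mu_{ki}^N)=+1/\bigl(2\pi[\Psi_{ki}^N(\infty)]^2\bigr)$, which does follow from (\ref{eq:dCmu}) combined with the $L^2(\PT_i)$ normalization in (\ref{eq:PsiN_expansion}); so the residue matching itself would survive once the other gaps are filled.
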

\begin{proof}
To prove (\ref{eq:Cmu_N}), we consider an auxiliary function
\begin{equation}
  \tilde{w}_i(\y;\kappa) = \frac{w_i(\y;\infty)}{C_i(\infty)} -
  \frac{w_i(\y;\kappa)}{C_i(\kappa)} \,,
\end{equation}
which, by construction and via (\ref{mfpt:wc}), satisfies
\begin{subequations}  \label{eq:tilde_w}
\begin{align}  
\Delta_{\y} \tilde{w}_i & = 0\,, \quad \y \in \R_+^3 \,,\\
  \partial_n \tilde{w}_i + \kappa \tilde{w}_i & =
 \kappa \biggl(\frac{1}{C_i(\infty)} - \frac{1}{C_i(\kappa)}\biggr)
+ \frac{\partial_n w_i(\y;\infty)}{C_i(\infty)} \,,
\quad y_3=0 \,,\, (y_1,y_2)\in \PT_i\,,\\  
  \partial_n \tilde{w}_i & = 0 \,,
\quad y_3=0 \,,\, (y_1,y_2)\notin \PT_i\,,\\  
  \tilde{w}_i(\y) & \sim o(1/|\y|) \,,
                                    \quad \textrm{as}\quad |\y|\to \infty\,.
\end{align}
\end{subequations}
Since the decay of this function at infinity does not include the
monopole term $1/|\y|$, it can decomposed onto the Steklov
eigenfunctions $\{ \Psi_{ki}^N\}$, with the coefficients obtained from
the boundary condition:
\begin{equation}  \label{eq:tilde_w_PsiN}
  \tilde{w}_i(\y;\kappa) = \frac{1}{C_i(\infty)} - \frac{1}{C_i(\kappa)} +
  \frac{1}{C_i(\infty)} 
  \sum\limits_{k=0}^\infty \frac{\bigl(\partial_n w_i(\y;\infty),
    \Psi_{ki}^N\bigr)_{L^2(\PT_i)}}{\mu_{ki}^N + \kappa} \Psi_{ki}^N(\y) \,,
\end{equation}
where we used $\Psi_{0i}^N(\y) = 1/\sqrt{|\Gamma_i|}$ for the first two
terms.  Multiplying (\ref{eq:VkN_eq}) by $w_i(\y;\infty)$, multiplying
(\ref{mfpt:wc_1}) with $\kappa_i = \infty$ by $\Psi_{ki}^N(\y)$,
subtracting these equations, integrating them over a large hemisphere,
applying the Green's formula, the boundary conditions and the decay at
infinity, we get
\begin{equation}
  \int\limits_{\PT_i} \Psi_{ki}^N(\y) \, (\partial_n w_i(\y;\infty)) d\y =
  2\pi C_i(\infty) \Psi_{ki}^N(\infty) \,,
\end{equation}
that determines the scalar product in (\ref{eq:tilde_w_PsiN}).
Finally, in the limit $|\y|\to\infty$, the left-hand side of
(\ref{eq:tilde_w_PsiN}) vanishes, yielding the spectral expansion
(\ref{eq:Cmu_N}). 
\end{proof}

While the original representation (\ref{eq:Cmu_def0}) allowed us to
get the upper bound (\ref{eq:Cmu_upper}), the alternative expansion
(\ref{eq:Cmu_N}) gives access to lower bounds.  For instance, one has
\begin{equation} \label{eq:ineq_auxil}
  \frac{1}{C_i(\kappa_i)} \leq \frac{1}{C_i(\infty)} +
  \frac{2\pi}{\kappa_i |\PT_i|} 
+ 2\pi \sum\limits_{k=1}^\infty \frac{[\Psi_{ki}^N(\infty)]^2}{\mu_{ki}^N} \,.
\end{equation}
The last sum can computed by comparing the Taylor series of
$1/C_i(\kappa_i)$ as $\kappa_i\to 0$ with (\ref{eq:Cmu_Taylor}), from
which
\begin{equation}
  \frac{1}{C_i(\infty)} + 2\pi \sum\limits_{k=1}^\infty
  \frac{[\Psi_{ki}^N(\infty)]^2}{\mu_{ki}^N} =
  \frac{4\pi^2 a_i^3 c_{2i}}{|\PT_i|^2} \,,
\end{equation}
where the coefficient $c_{2i}$ is given by (\ref{eq:c1_def}).  We
conclude that
\begin{equation}  \label{eq:Cmu_bound}
  C_i(\kappa_i) \geq \frac{\kappa_i |\PT_i|}{2\pi}
  \biggl(1 + \frac{2\pi a_i^3 c_{2i}}{|\PT_i|} \kappa_i\biggr)^{-1} \,.
\end{equation}
We remark that if the last term in (\ref{eq:ineq_auxil}) is neglected,
we recover our sigmoidal approximation (\ref{eq:Cmu_approx}).  The
smallness of this last term as compared to $1/C_i(\infty)$ may explain
the high accuracy of this approximation (see \cite{Grebenkov26} for
further analysis).

From the representation (\ref{eq:Cmu_N}), it is clear that, when
$\kappa_i$ approaches $-\mu_{ki}^N$ such that $\Psi_{ki}^N(\infty)\ne
0$, the right-hand side of (\ref{eq:Cmu_N}) diverges, so that
$C_i(-\kappa_i)$ vanishes.  In other words, any element of ${\mathcal
P}_i^N$ is a zero of the function $C_i(-\kappa)$.  In the following
lemma, we prove that all zeros of $C_i(-\kappa)$ are in ${\mathcal
P}_i^N$.

\begin{lemma} \label{lem:Cmu}
Let $\mathcal Z_0$ be the set of zeros of the function $C_i(-\mu)$,
and $\mathcal P_i^{N}$ be the subset of eigenvalues $\mu_{ki}^N$ such that
$\Psi_{ki}^N(\infty) \ne 0$.  Then $\mathcal Z_0 = \mathcal P_i^{N}$.
\end{lemma}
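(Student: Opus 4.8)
The plan is to prove the two inclusions separately. The forward inclusion $\mathcal P_i^{N}\subseteq\mathcal Z_0$ is essentially already recorded before the lemma: the representation (\ref{eq:Cmu_N}), read as a function of $\mu=-\kappa_i$, shows that $1/C_i(-\mu)$ acquires a genuine pole as $\mu\to\mu_{ki}^N$ precisely when the residue $[\Psi_{ki}^N(\infty)]^2$ is nonzero, and hence $C_i(-\mu_{ki}^N)=0$ for every $\mu_{ki}^N\in\mathcal P_i^{N}$. The substance of the lemma is therefore the reverse inclusion $\mathcal Z_0\subseteq\mathcal P_i^{N}$, which I would establish by a direct construction that exhibits each zero of $C_i(-\mu)$ as an eigenvalue of the Neumann-type exterior Steklov problem (\ref{eq:Psi_def_N}) whose eigenfunction does not vanish at infinity.

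For the reverse inclusion, let $\mu_0\in\mathcal Z_0$, so that $C_i(-\mu_0)=0$. Since $\mathcal Z_0$ consists of points where $C_i(-\mu)$ is \emph{finite}, while the poles of $C_i(-\mu)$ lie on $\mathcal P_i$ by the spectral representation (\ref{eq:Cmu_def0}), we have $\mu_0\notin\mathcal P_i$, so the canonical solution $w_i(\y;-\mu_0)$ of (\ref{mfpt:wc}) with $\kappa_i=-\mu_0$ exists. I would then set $\Psi:=1-w_i(\y;-\mu_0)$ and check that it solves (\ref{eq:Psi_def_N}) with $\mu_{ki}^N=\mu_0$. Indeed, $\Psi$ is harmonic; using $\partial_n=-\partial_{y_3}$ on $y_3=0$, the Robin condition (\ref{mfpt:wc_2}) rearranges to $\partial_n\Psi=\mu_0\Psi$ on $\PT_i$, matching (\ref{eq:VkN_stek}), while (\ref{mfpt:wc_3}) gives $\partial_n\Psi=0$ off the patch, matching (\ref{eq:VkN_Neumann}). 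The hypothesis $C_i(-\mu_0)=0$ is exactly what removes the monopole from the far field (\ref{mfpt:wc_4}), leaving $w_i=\mathcal O(|\y|^{-2})$; hence $\Psi\to 1$ with $|\y|^2|\nabla\Psi|\to0$, which is the condition (\ref{eq:VkN_inf}), and $\Psi(\infty)=1\neq0$. Thus $\Psi$ is a nontrivial eigenfunction of (\ref{eq:Psi_def_N}) with eigenvalue $\mu_0$ and nonzero limit at infinity, so $\mu_0=\mu_{ki}^N\in\mathcal P_i^{N}$.

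The degenerate value $\mu_0=0$ is consistent with the statement and should be checked: $C_i(0)=0$ by (\ref{eq:Ci_limit0}), the construction gives $\Psi\equiv1$, and $\mu_{0i}^N=0\in\mathcal P_i^{N}$ with the constant eigenfunction. The main obstacle I anticipate is not algebraic but lies in cleanly matching the two \emph{different} behaviors at infinity: the canonical problem (\ref{mfpt:wc}) carries the Dirichlet-at-infinity condition (\ref{eq:Vk_inf}), whereas (\ref{eq:Psi_def_N}) imposes the vanishing-flux condition (\ref{eq:VkN_inf}) that permits a nonzero constant limit. Verifying that $C_i(-\mu_0)=0$ upgrades the decay of $w_i$ to $\mathcal O(|\y|^{-2})$, and that the resulting $\Psi$ is genuinely nontrivial, is precisely the place where the hypothesis enters and must be argued carefully. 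As an alternative (and cross-check) that avoids the PDE construction, one can instead argue directly from (\ref{eq:Cmu_N}): for $\mu\notin\mathcal P_i^{N}$ the series on the right is finite---its value at $\mu=0$ being the convergent quantity computed in the derivation of (\ref{eq:Cmu_bound})---so $1/C_i(-\mu)$ is finite there and $C_i(-\mu)\neq0$, whence any zero must lie in $\mathcal P_i^{N}$.
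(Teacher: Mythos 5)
Your proof is correct, and your treatment of the substantive inclusion $\mathcal Z_0 \subseteq \mathcal P_i^{N}$ is genuinely different from the paper's. For the easy inclusion $\mathcal P_i^{N} \subseteq \mathcal Z_0$ you read off the poles of $1/C_i$ from (\ref{eq:Cmu_N}); this is exactly the route the paper acknowledges ("follows directly from (\ref{eq:Cmu_N})") before giving its own Green's-identity argument, so the two are equivalent there. For the reverse inclusion, the paper argues by contradiction: it supposes $\mu^{\prime}>0$ is a zero with $\mu^{\prime}\notin\mathcal P_i^{N}$, uses completeness of $\{\Psi_{ki}^N|_{\PT_i}\}$ to solve the auxiliary inhomogeneous problem (\ref{eq:auxil12}) with data $\Psi_{0i}|_{\PT_i}$, and after two applications of Green's identity reaches $-d_{0i}=d_{0i}\mu^{\prime}/(\mu_{0i}-\mu^{\prime})$, forcing $\mu_{0i}=0$ and contradicting the strict positivity of the principal eigenvalue. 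You instead construct the eigenfunction explicitly: at a zero $\mu_0$ (which indeed lies off the pole set $\mathcal P_i$, since every residue $\mu_{ki}d_{ki}^2$ in (\ref{eq:Cmu_def0}) is positive and no cancellation can make a pole removable), the function $\Psi=1-w_i(\y;-\mu_0)$ is harmonic, satisfies the Steklov and Neumann conditions of (\ref{eq:Psi_def_N}), and---precisely because $C_i(-\mu_0)=0$ removes the monopole in (\ref{mfpt:wc_4})---satisfies the flux condition (\ref{eq:VkN_inf}) with $\Psi(\infty)=1\neq0$. This is shorter, constructive, handles $\mu_0=0$ transparently, sidesteps the solvability discussion for (\ref{eq:auxil12}), and exposes the mechanism that the paper uses elsewhere (it is the same identification of roots of $C(-\sigma_0)=0$ with the $\mu_{k}^{N}$ invoked in Example I of \S\ref{sn:example:nondegen} and in Remark \ref{sdn:remark2}); the paper's contradiction argument, in exchange, produces the cross-basis identities (\ref{eq:app_ex_2})--(\ref{eq:app_ex_3}), which have some independent interest. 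One caution: your closing ``cross-check'' is mildly circular as stated, because the derivation of (\ref{eq:Cmu_N}) divides by $C_i(\kappa)$ and hence presupposes $C_i\neq 0$; to make that variant rigorous you would need an isolated-zeros or meromorphic-continuation argument showing the right-hand side of (\ref{eq:Cmu_N}) stays bounded as $\mu\to\mu_0\notin\mathcal P_i^{N}$. Since your primary construction does not rely on it, the proof stands as written.
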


\begin{proof} 
We first prove that $\mathcal P_i^{N} \subset \mathcal Z_0$.  
This inclusion follows directly from (\ref{eq:Cmu_N}) but we provide
an alternative argument here.

We recall from (\ref{mfpt:wc}) that $C_i(-\mu)$ is obtained from the
solution to
\bsub \label{appd:mfpt:wc}
\begin{align}
    \Delta_{\y} w_i &=0 \,, \quad \y \in \R_{+}^{3} \,, \label{appd:mfpt:wc_1}\\
   \partial_n w_i - \mu w_i &=-\mu \,, \quad y_3=0 \,,\,
    (y_1,y_2)\in \PT_i \,,  \label{appd:mfpt:wc_2}\\
    \partial_n w_i &=0 \,, \quad y_3=0 \,,\, (y_1,y_2)\notin \PT_i
    \,, \label{appd:mfpt:wc_3}\\
  w_i &\sim \frac{C_i(-\mu)}{|\y|} + {\mathcal O}(|\y|^{-2})  \,,
    \quad \mbox{as}\quad
    |\y|\to \infty \,. \label{appd:mfpt:wc_4}
\end{align}
\esub
By applying Green's second identity to $w_i$ and $\Psi_{ki}^{N}$,
which satisfies (\ref{eq:Psi_def_N}), over a large hemisphere of
radius $R$ in the upper half-plane we pass to the limit $R\to \infty$
to obtain
\begin{equation}\label{appd:green}
  \begin{split}
0 = \int_{\R_{+}^{3}} \left(w_i \, \Delta_{\y} \Psi_{ki}^N - \Psi_{ki}^N \,
 \Delta_{\y} w_i \right)\, d\y &=  \int_{\PT_i} \left(w_i \,\partial_n \Psi_{ki}^N
    -\Psi_{ki}^{N} \, \partial_n w_i \right) \, d\y  \\
  & \qquad + 2\pi \lim_{R\to \infty} R^2 \left(
    w_i \frac{\partial \Psi_{ki}^{N}}{\partial |\y|} -
    \Psi_{ki}^{N} \frac{\partial w_i}{\partial |\y|} \right)\Big{\vert}_{
    |\y|=R}\,.
  \end{split}
\end{equation}
Then, upon using the Steklov conditions (\ref{eq:VkN_stek}) and
(\ref{appd:mfpt:wc_2}) on $\PT_i$, together with the far-field
behaviors (\ref{eq:VkN_inf}) and (\ref{appd:mfpt:wc_4}) for
$\Psi_{ki}^{N}$ and $w_i$, we find that (\ref{appd:green}) reduces to
\begin{equation}\label{appd:scalarprod}
  \left(\mu_{ki}^{N} - \mu\right)\int_{\PT_i} w_i \Psi_{ki}^{N}  \,
  d\y + \mu \int_{\PT_i} \Psi_{ki}^{N} \, dy + 2\pi\Psi_{ki}^{N}(\infty)
  C_i(-\mu) =0 \,.
\end{equation}
Owing to the decay behavior (\ref{eq:VkN_inf}), we obtain from the
divergence theorem that for $k>0$, for which $\mu_{ki}^{N}>0$, we have
$\int_{\PT_i} \Psi_{ki}^{N}\, d\y=\left(\mu_{ki}^{N}\right)^{-1}
\int_{\PT_i} \partial_n \Psi_{ki}^{N}\, d\y=0$. As a result,
(\ref{appd:scalarprod}) simplifies to
\begin{equation}
  \left( \mu_{ki}^{N}-\mu\right) \int_{\PT} w_i \Psi_{ki}^{N}\, d\y =
  -2\pi \Psi_{ki}^{N}(\infty) C_i(-\mu) \,.
\end{equation}
We conclude that if an eigenfunction $\Psi_{ki}^N$ does not vanish at
infinity, then the function $C_i(-\mu)$ must vanish at $\mu =
\mu_{ki}^N$.  By its definition (\ref{eq:Cmu_def0}), $C_i(-\mu)$ also
vanishes at $\mu= \mu_{0i}^N = 0$.  This proves that $\mathcal P_i^{N}\subset
\mathcal Z_0$.

Let us now prove the opposite inclusion
$\mathcal Z_0 \subset \mathcal P_i^{N}$, i.e., there is no other zero of
$C_i(-\mu)$ than those determined by the eigenvalues $\mu_{ki}^N$.
Assume that there exists $\mu^{\prime} > 0$ such that
$-\mu^{\prime} \in \mathcal Z_0$ but
$-\mu^{\prime} \notin \mathcal P_i^{N}$.  Since the basis of
eigenfunctions $\Psi_{ki}^N$ is complete in $L^2(\PT_i)$, there is a
unique solution to the inhomogeneous problem
\begin{subequations}  \label{eq:auxil12}
\begin{align}  
\Delta_{\y} U & = 0\,, \quad \y \in \R_+^3 \,,\\
  \partial_n U - \mu^{\prime} U & = f \,,
\quad y_3=0 \,,\, (y_1,y_2)\in \PT_i\,,\\
\partial_n U & = 0 \,, \quad y_3=0 \,,\, (y_1,y_2)\notin \PT_i\,,\\
|\y|^2 \, |\nabla U(\y)| & \to 0\,, \quad \textrm{as}\quad |\y|\to \infty\,,
\end{align}
\end{subequations}
where we set $f = (\Psi_{0i})|_{\PT}$.  The divergence theorem implies
\begin{equation}\label{eq:app_ex_1}
0 = \int_{\PT_i} \partial_n U \, d\y= \mu^{\prime} \int_{\PT_i} U \,
d\y + d_{0i} \,,
\end{equation}
where $d_{0i}$ is given by (\ref{eq:dj}).
On one hand, upon multiplication of (\ref{eq:unity_Psi}) by $U$ and
integration over $\PT_i$, while using (\ref{eq:app_ex_1}), we obtain
\begin{equation}
- \frac{d_{0i}}{\mu^{\prime}} = \int_{\PT} U \, d\y = \sum\limits_{k=0}^\infty d_{ki} 
 \int_{\PT_i} U \Psi_{ki} \, d\y  \,.
\end{equation}
On the other hand, upon applying Green's second identity to $U$ and
$\Psi_{ki}$ over a large hemisphere in the upper half-plane and
passing to the limit we obtain
\begin{equation}\label{eq:app_ex_2}
  0 = \int_{\R_{+}^{3}} \left(\Psi_{ki} \Delta_{\y} U - U \Delta_{\y} \Psi_{ki}\right)
  \, d\y= 
\mu_{ki} d_{ki} U(\infty) + (\mu^{\prime} - \mu_{ki}) \int_{\PT_i}  U \Psi_{ki}\, d\y 
+ \delta_{0,k} \,,
\end{equation}
due to the orthogonality of $\Psi_{ki}$ to $\Psi_{0i}$.  Upon solving
(\ref{eq:app_ex_2}) for $\int_{\PT_i} U\Psi_{ki} \, d\y$, we obtain
from (\ref{eq:app_ex_1}) and the Steklov eigenfunction expansion of
$C_i(\mu)$ in (\ref{eq:Cmu}), that
\begin{equation}\label{eq:app_ex_3}
- d_{0i} = \mu^{\prime} \biggl[\frac{d_{0i}}{\mu_{0i} - \mu^{\prime}} +
U(\infty) \sum\limits_{k=0}^\infty
\frac{\mu_{ki} d_{ki}^2}{\mu_{ki} - \mu^{\prime}}\biggr]
= \frac{d_{0i} \mu^{\prime}}{\mu_{0i}-\mu^{\prime}} -
2\pi U(\infty) C_i(-\mu^{\prime})
\,.
\end{equation}
Since $\mu^{\prime}$ was assumed to be a zero of $C_i(-\mu)$, we
conclude that $-d_{0i} = d_{0i}\mu^{\prime}/(\mu_{0i}-\mu^{\prime})$.
Given that $d_{0i} \ne 0$, this yields that $\mu_{0i} = 0$, which
contradicts the strict positivity of the principal eigenvalue
$\mu_{0i}$.  We conclude that the second inclusion $\mathcal Z_0
\subset \mathcal P_i^{N}$ must also hold. Therefore, $\mathcal Z_0 =
\mathcal P_i^{N}$.
\end{proof}

\subsection{Relation to Dirichlet-to-Neumann operators}

The dual character of the Steklov problems (\ref{eq:Psi_def}) and
(\ref{eq:Psi_def_N}) can be further understood from the tight relation
between the associated Dirichlet-to-Neumann operators $\D_i$ and
$\D_i^N$.  The operator $\D_i$ associates to a function $f \in
H^{1/2}(\PT_i)$ on the patch $\PT_i$ another function $g = \D_i f =
(\partial_n u)|_{\PT_i} \in H^{-1/2}(\PT_i)$ on the same patch, where
$u$ is the unique solution of the BVP
\begin{subequations}  \label{eq:u_DtN}
\begin{align}
  \Delta u = 0 & \quad \textrm{in} ~ \R^3_+\,,
        \qquad u = f \quad \textrm{on}~ y_3= 0, ~ (y_1,y_2)\in \PT_i\,,   \\
\partial_n u = 0 &\quad \textrm{on}~ y_3 = 0, ~ (y_1,y_2)\notin \PT_i\,, \qquad
u \to 0 \quad \textrm{as} ~ |\y|\to \infty\,.
\end{align}
\end{subequations}
The operator $\D_i^N$ acts similarly, i.e., $g = \D_i^N f =
(\partial_n u^N)|_{\PT_i}$, where $u^N$ satisfies the same BVP, except
for the asymptotic decay $|\y|^2 |\nabla u^N| \to 0$ at infinity.  The
spectral properties of the operators $\D_i$ and $\D_i^N$ were
investigated in a more general setting in
\cite{Arendt15,Bundrock25}.  It is easy to check that $\mu_{ki}$ and
$\Psi_{ki}|_{\PT_i}$ are the eigenpairs of $\D_i$, whereas
$\mu_{ki}^N$ and $\Psi_{ki}^N|_{\PT_i}$ are the eigenpairs of
$\D_i^N$.  Moreover, Theorem 5.9 from \cite{Arendt15} states that
these two operators differ by a rank-one perturbation:
\begin{equation}
  \D_i f = \D_i^N f + \frac{1}{\beta_i} (f, \varphi_i)_{L^2(\PT_i)} \varphi_i  \,,
  \qquad \textrm{where} ~ \varphi_i = \D_i 1  
\quad \textrm{and} \quad \beta_i = (1, \varphi_i)_{L^2(\PT_i)} \,.
\end{equation}
Although this statement was proved for a slightly different
setting of exterior problems in the whole space, the arguments seem to
straightforwardly apply to our case (see further discussion in
\cite{Bundrock25}).
Since (\ref{eq:u_DtN}) with $f = 1$ is identical to (\ref{mfpt:wc})
with $\kappa_i = \infty$, one gets
\begin{equation}
  \varphi_i = \D_i 1 = (\partial_n w_i(\y;\infty))|_{\PT_i} \,,
  \qquad \beta_i = 2\pi C_i(\infty) \,,
\end{equation}
where the second relation follows from the divergence theorem.  

Many former relations can be rapidly recovered by using the operators
$\D_i$ and $\D_i^N$.  To illustrate this point, we first note that,
according to (\ref{eq:NGreen_Psi}), $1/(2\pi |\x-\y|)$ is the kernel
of the inverse of $\D_i$ so that the function $\omega_i$, defined in
(\ref{eq:omega_def}), can be formally written as $\omega_i =
\D_i^{-1} 1$ (the operator $\D_i$ is invertible because all its
eigenvalues are strictly positive).  The projections of this relation
onto a constant function or onto itself yield immediately that
\begin{subequations}
\begin{align}
  \int\limits_{\PT_i} \omega_i(\y) d\y &  = (1 , \D_i^{-1} 1)_{L^2(\PT_i)} =
         \sum\limits_{k=0}^\infty \frac{d_{ki}^2}{\mu_{ki}}  \,,\\
  \int\limits_{\PT_i} [\omega_i(\y)]^2 d\y &  = (\D_i^{-1} 1 , \D_i^{-1} 1)_{L^2(\PT_i)}
 = \sum\limits_{k=0}^\infty \frac{d_{ki}^2}{\mu_{ki}^2}  \,,
\end{align}
\end{subequations}
from which follows the representations (\ref{eq:c1_def}) for the
coefficients $c_{2i}$ and $c_{3i}$.

\section{Computation and Analysis of the Monopole Term $E_i$}
\label{sec:Ei}

We aim at computing numerically the coefficient $E_i$ given by
(\ref{mfpt:Ej_all}) for the circular patch $\PT_i$ of radius $a_i$.
Upon changing the integration variables, we represent it as
\begin{equation}\label{appE:new}  
  E_i(\kappa_i) = - \frac{C_i^2(\kappa_i)}{2} \log a_i +
  a_i^2 {\mathcal E}_i(\kappa_i a_i) \,,
\end{equation}
where
\begin{equation}  \label{eq:Ei_def}
  {\mathcal E}_i(\mu) = 2\int_{0}^{1} \frac{1}{r} \biggl(\int_{0}^{r}
   r^{\prime} a_i q_i(a_i r^{\prime};\mu/a_i) \, dr^{\prime}\biggr)^2 \, dr 
\end{equation}
now corresponds to the unit disk.  By recalling the limiting
asymptotics in (\ref{mfpt:Ej_asy}), we observe that
${\mathcal E}_i(\infty) = (3 - 4\log 2)/\pi^2$ and that
${\mathcal E}_i(\mu) \approx \mu^2/32$ as $\mu \to 0$.

In order to compute the integral in (\ref{eq:Ei_def}), one can employ
the spectral representation (\ref{eq:qi}) of the density
$q_i(\y;\kappa_i)$.  The presence of the factor $\mu_{ki}$ in the
numerator of (\ref{eq:qi}) deteriorates the numerical convergence of
the spectral expansion, thus requiring higher truncation
orders. Therefore, it is convenient to use the identity
(\ref{eq:unity_Psi}) to represent this function as
\begin{equation} \label{eq:qi2}
  q_i(\y;\kappa_i) = \frac{\kappa_i}{2}
  \biggl(1 - \kappa_i \sum\limits_{k=0}^\infty
  \frac{d_{ki} \, \Psi_{ki}(\y)}{\mu_{ki} + \kappa_i} \biggr)  \,,
  \quad \mbox{for}
  \quad \y\in \PT_i\,,
\end{equation}
which exhibits a faster convergence.  Repeating this trick, we get an
even faster converging representation for $\y\in \PT_i$ given by
\begin{equation} \label{eq:qi3}
  q_i(\y;\kappa_i) = \frac{\kappa_i}{2}
  \biggl(1 - \kappa_i \biggl[\frac{2a_i}{\pi} E(|\y|/a_i) 
  - \kappa_i \sum\limits_{k=0}^\infty
  \frac{d_{ki} \, \Psi_{ki}(\y)}{\mu_{ki}(\mu_{ki} + \kappa_i)}
\biggr]\biggr) \,,  \quad \mbox{for}   \quad \y\in \PT_i\,,
\end{equation}
where $E(z)$ is the complete elliptic integral of the second kind.  

Figure~\ref{fig:qi_mu_a} illustrates the behavior of the density $q_i$
versus $r$ for the unit disk ($a_i = 1$).  As expected, this density
approaches its limiting form $q_i(\y; \infty)$ as $\kappa_i
\to\infty$.  Curiously, for a fixed $r$, $q_i$ is not a monotone
increasing function of $\kappa_i$, as illustrated in
Fig.~\ref{fig:qi_mu_b}.  Finally, we highlight that even the use of
large truncation orders does not fully resolve the issue of the
numerical accuracy at large $\kappa_i$, especially for small $r$.

\begin{figure}
  \centering
     \begin{subfigure}[b]{0.49\textwidth}  
      \includegraphics[width =\textwidth]{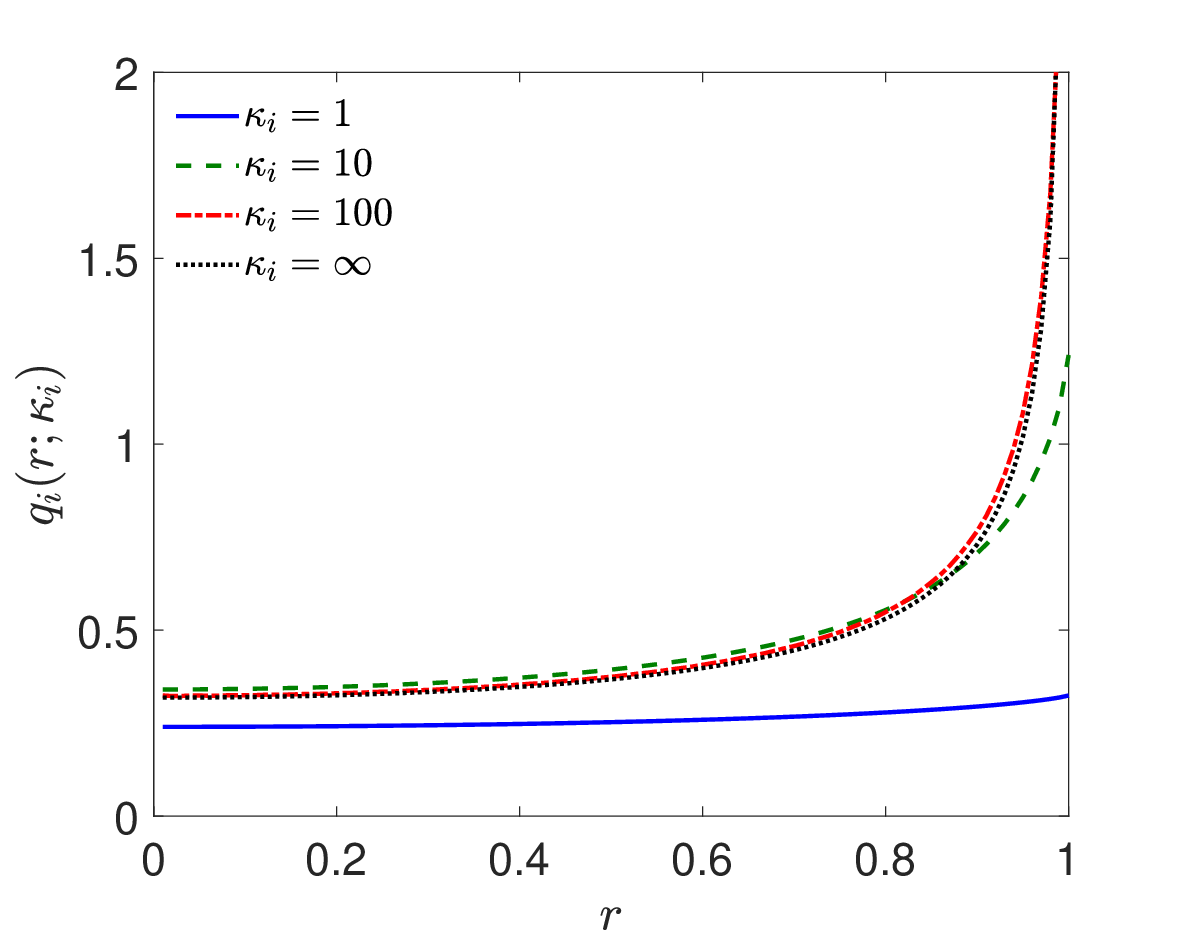} 
        \caption{$q_i(r;\kappa_i)$ versus $r$ for three $\kappa_i$}
        \label{fig:qi_mu_a}
    \end{subfigure}
    \begin{subfigure}[b]{0.49\textwidth}
      \includegraphics[width=\textwidth]{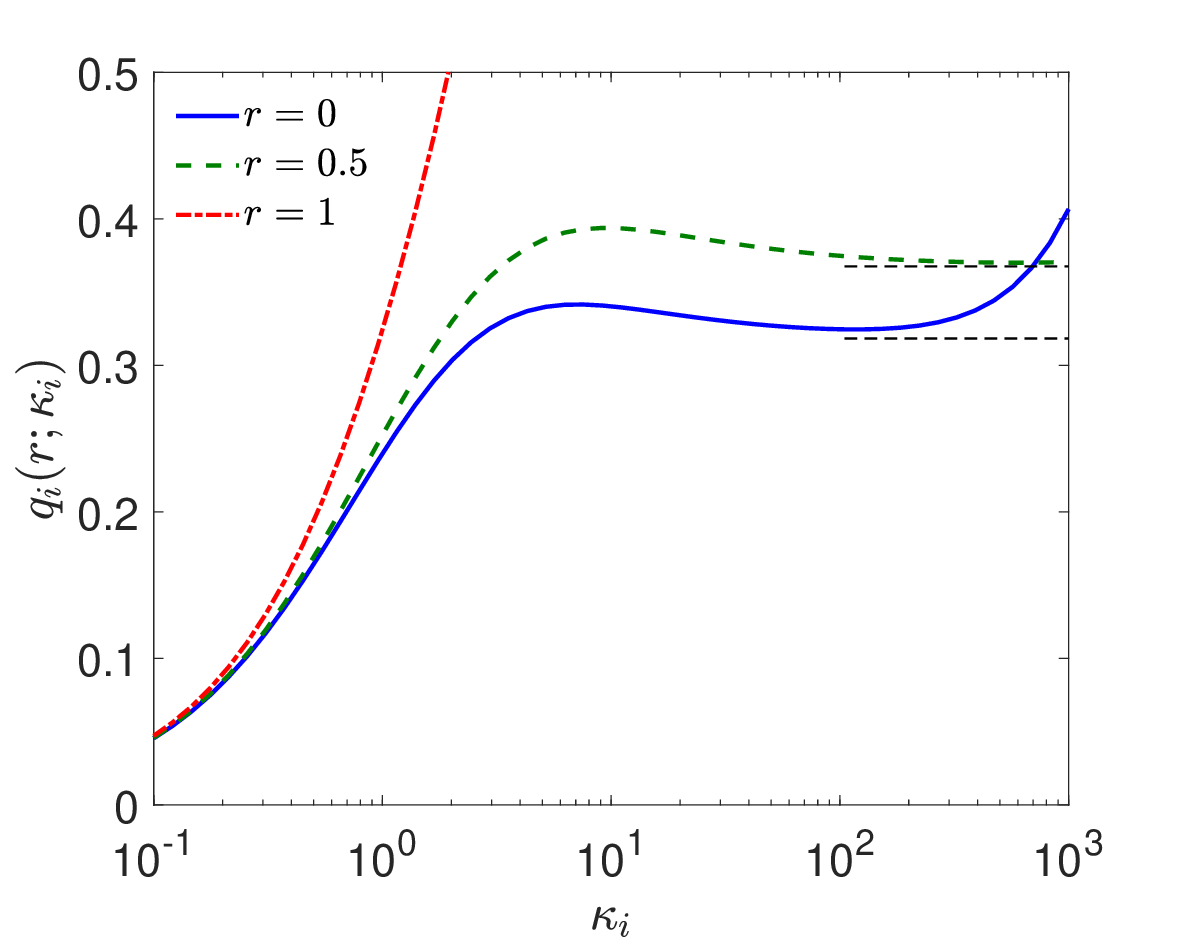} 
        \caption{$q_i(r;\kappa_i)$ versus $\kappa_i$ for three $r$}
        \label{fig:qi_mu_b}
    \end{subfigure}
\caption{ 
{\bf (a):} The density $q_i(r;\kappa_i)$ for the unit disk ($a_i = 1$)
as a function of $r$, for different values of $\kappa_i$, as indicated
in the legend.  The three curves were obtained by truncating the
series in (\ref{eq:qi2}) at $k \leq 500$.  The black dashed curve
presents the exact expression (\ref{mfpt:wc_q}). {\bf (b):} The
density $q_i(r;\kappa_i)$ as a function of $\kappa_i$, for three fixed
$r$.  Here we used the truncation up to $1000$ terms; nevertheless,
the obtained $q_i$ at $r = 0$ exhibits erroneous behavior at large
$\kappa_i$, since it does not approach its limit $1/(\pi
\sqrt{1-r^2})$, as indicated by dashed horizontal line. }
\label{fig:qi_mu}
\end{figure}

In Fig.~\ref{fig:Ei}, we plot the numerically computed $E_i(\kappa_i)
= {\mathcal E}_i(\kappa_i)$ versus $\kappa_i$ for $\kappa_i<0$ for the
unit disk ($a_i = 1$).  In computing ${\mathcal E}_i$ from
(\ref{eq:Ei_def}), we numerically evaluated the integral in
(\ref{eq:Ei_def}) with the discretization step $\delta r = 10^{-4}$,
where $q_i$ was estimated from the series (\ref{eq:qi2}) truncated to
either $200$ terms (crosses) or to $1000$ terms (line).  An excellent
agreement between these two results confirms the accuracy of our
numerical computation.  Despite the vertical asymptotes (the poles of
$C_i(\kappa_i)$), the function $E_i(\kappa_i)$ remains always
positive.

In turn, Fig.~\ref{fig:Ekappa} shows the dependence of the coefficient
$E_i(\kappa_i)$ on $\kappa_i$ for $\kappa_i>0$.  In contrast to that
for $C_i(\kappa_i)$, this dependence is not monotonous.  We expect
that this is a result of the non-monotone approach of
$q_i(\y;\kappa_i)$ to $q_i(\y;\infty)$, as discussed earlier.  It is
also worth noting that the numerical computation for large $\kappa_i$
requires large truncation orders; in fact, the truncation order $100$
was not sufficient for $\kappa_i \geq 100$ (two curves start to
deviate from each other).  Even the large truncation order $500$
becomes insufficient for $\kappa_i \geq 1000$ (not shown).  To
overcome this difficulty, we propose below a simple empirical formula
(\ref{eq:Eapp}) for $E_i(\kappa_i)$ that closely predicts the
numerically computed value on the entire range $\kappa_i >0$.  The
remarkable accuracy of this simple approximation was illustrated in
Fig. \ref{fig:Ekappa}. 

\begin{figure}
    \centering
      \includegraphics[width = 85mm]{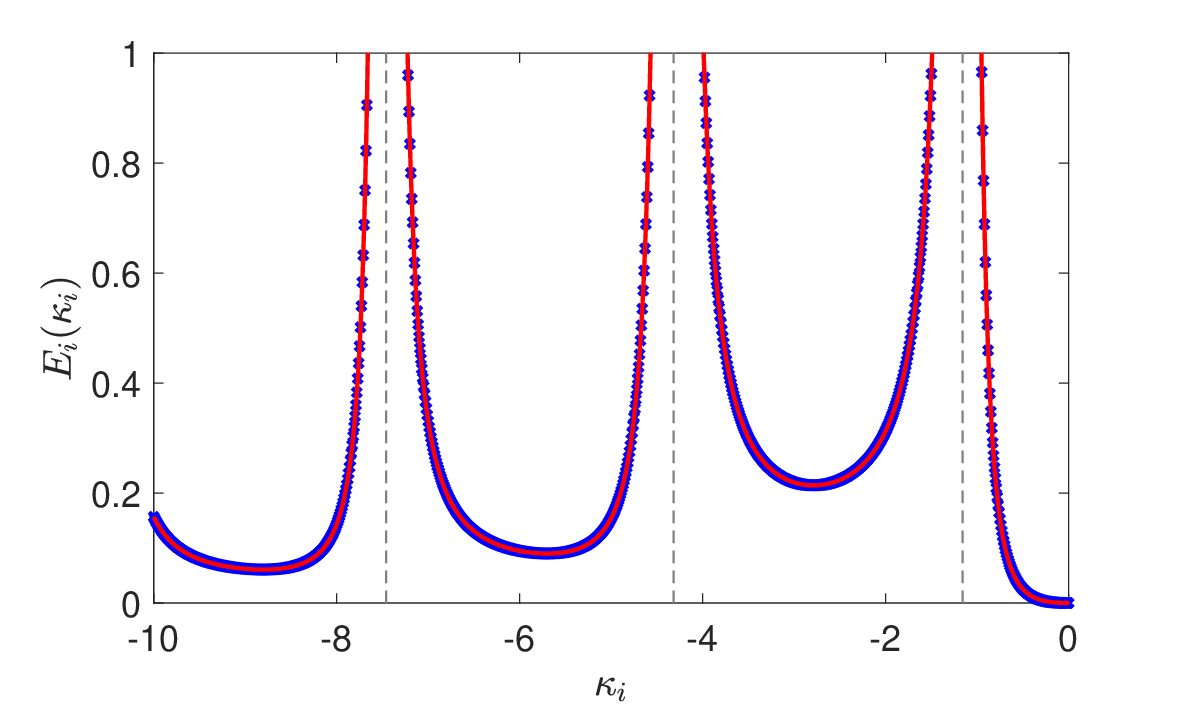} 
\caption{ 
The numerically computed $E_i(\kappa_i)$ for $\kappa_i<0$ for a
circular patch $\PT_i$ with $a_i=1$.  Vertical dashed lines indicate
three poles $\{-\mu_{ki}\}$ of $C_i(\kappa_i)$, which are also the
poles of $q_i$ and thus of $E_i$.  Crosses correspond to the
truncation order $k_{\rm max} = 200$, whereas the line shows the result
for $k_{\rm max} = 1000$. }
\label{fig:Ei}
\end{figure}

\subsection*{Empirical Approximation of $E$ for the Unit Disk}

The non-monotonic behavior of $E_i(\kappa_i)$ makes its approximation
trickier.  For this reason, we consider the ratio
$E_i(\kappa_i)/C_i^2(\kappa_i)$, which turns out to be a monotone
decreasing function of $\kappa_i$ that ranges from $1/8$ at $\kappa_i
= 0$ to $3/4 - \log 2$ as $\kappa_i\to \infty$.  This behavior
suggests to approximate it for the unit disk as
\begin{equation}
  \frac{E_i(\kappa)}{C_i^2(\kappa)} \approx \frac34 - \log{2} +
  \frac{1}{\frac{1}{\log{2} - 5/8} + f(\kappa)} \,,
\end{equation}
where $f(\kappa)$ is a suitable increasing function of $\kappa$ (such
that $f(0) = 0$ and $f(\infty) = \infty$).  Choosing $f(\kappa) =
3.04\, \kappa^{0.88}$, one can make this approximation accurate, but
it still requires the computation of $C_i(\kappa)$ via its spectral
representation (\ref{eq:Cmu}).  Applying the empirical approximation
$C^{\rm app}(\kappa)= 2\kappa/(\pi \kappa + 4)$ from
(\ref{mfpt:sigmoidal_2}) for $C_i(\kappa)$, we get a fully explicit
empirical approximation on $\kappa>0$ for the unit disk:
\begin{equation}  \label{eq:Eapp}
E^{\rm app}(\kappa) = \frac{4\kappa^2}{(\pi \kappa + 4)^2} 
\biggl(\frac34 - \log{2} + \frac{1}{\frac{1}{\log{2} - 5/8} + 5.17 \,
  \kappa^{0.81}} \biggr)\,,
\end{equation}
where we adjusted the function $f(\kappa)$ to ensure higher accuracy.
The closeness of this approximation was shown in
Fig.~\ref{fig:Ekappa}.

\section{Improved Numerics for the SN Problem}\label{appf:numer}

In this appendix, we devise an accurate numerical method to
treat the SN problem (\ref{sn:eig}) for the unit sphere with
either one circular patch, or two circular patches centered at the
north and south poles. To do so, we employ a general expansion of an
axially symmetric harmonic function $u$ in spherical coordinates in
terms of Legendre polynomials $P_n(z)$, given by
\begin{equation}  \label{eq:V_expansion_Pn}
u(r,\theta) = \sum\limits_{n=0}^\infty c_n r^n P_n(\cos\theta) \,.
\end{equation}
The unknown coefficients $c_n$ are determined from the mixed boundary
condition:
\begin{equation}
\partial_n u = \mu I_{\pa_a} u \,,\quad r = 1,~~ 0\leq \theta \leq \pi,
\end{equation}
where $I_{\pa_a}(\theta)$ is the indicator function of the Steklov
patches $\pa_a$, and $\mu = \sigma/\eps$.  The substitution of the
expansion (\ref{eq:V_expansion_Pn}) yields
\begin{equation}
  \sum\limits_{n=0}^\infty c_n n P_n(\cos\theta) = \mu
  \sum\limits_{n=0}^\infty c_n P_n(\cos\theta) I_{\pa_a}(\theta) \,.
\end{equation}
Multiplying this equation by $P_m(\cos\theta)\sin\theta$ and
integrating over $\theta \in (0,\pi)$, we get a system of linear
equations
\begin{equation}  \label{eq:AppF_auxil1}
m c_m = \mu \sum\limits_{n=0}^\infty K_{m,n} c_n \,,   \quad m = 0,1,2,\ldots \,,
\end{equation}
where
\begin{equation} \label{eq:AppF_Kdef}
K_{m,n} = (m+1/2)\int_0^\pi \sin\theta \, P_m(\cos\theta)
P_n(\cos\theta) I_{\pa_a}(\theta) d\theta \,.
\end{equation}
As the left-hand side of (\ref{eq:AppF_auxil1}) vanishes at $m = 0$,
it is convenient to isolate the coefficient $c_0$ from this relation
at $m = 0$ as
\begin{equation}  \label{eq:AppF_c0}
c_0 = - \frac{1}{K_{0,0}} \sum\limits_{n=1}^\infty K_{0,n} c_n \,.
\end{equation}
We substitute it into the above system to get
\begin{equation}
  m c_m = \mu \biggl(-\frac{K_{m,0}}{K_{0,0}} \sum\limits_{n=1}^\infty K_{0,n} c_n
  + \sum\limits_{n=1}^\infty K_{m,n} c_n \biggr)  \quad
 \mbox{for} \quad m = 1,2,\ldots \,.
\end{equation}
This system of linear equations can be written in a matrix form as
\begin{equation}  \label{eq:AppF_eigenM}
{\bf M} {\bf C} = \frac{1}{\mu} {\bf C} \,,
\end{equation}
where ${\bf C}$ is the (infinite-dimensional) vector of coefficients
$c_1,c_2,\ldots$, and
\begin{equation}  \label{eq:AppF_defM}
  {\bf M}_{m,n} = \frac{1}{m} \biggl(K_{m,n} -\frac{K_{m,0}K_{0,n}}{K_{0,0}} \biggr)
  \quad \mbox{for} \quad m,n = 1,2,\ldots \,.
\end{equation}
A truncation of the matrix ${\bf M}$ to a finite size $\nmax
\times \nmax$ and its numerical diagonalization allow one to
approximate the eigenvalues $\sigma = \mu \eps$ of the SN problem
(except for the trivial eigenvalue $\sigma^{(0)} = 0$); the accuracy of
this approximation can be improved by increasing the truncation order
$\nmax$.  Moreover, an eigenvector ${\bf C} =
(c_1,c_2,\ldots,c_{\nmax})^T$ of the truncated matrix ${\bf M}$
determines the coefficients in the (truncated) representation
(\ref{eq:V_expansion_Pn}) of the associated SN eigenfunction $u$,
whereas the coefficient $c_0$ is given by (\ref{eq:AppF_c0}).

Finally, the standard $L^2(\pa_a)$ normalization of the eigenfunction
$u$ can be ensured via the following relation:
\begin{align}  \nonumber
\int_{\pa_a} u^2({\bf s})\, d{\bf s} & = \int_0^\pi \sin\theta
\biggl[\sum\limits_{n=0}^\infty c_n P_n(\cos\theta)\biggr]^2 
I_{\pa_a}(\theta) \, d\theta 
= \sum\limits_{m=0}^\infty \sum\limits_{n=0}^\infty c_m c_n \frac{K_{m,n}}{m+1/2}
\\  \nonumber
& = 2c_0^2 K_{0,0} + 2c_0 \sum\limits_{n=1}^\infty c_n K_{0,n} +
c_0 \sum\limits_{m=1}^\infty c_m \frac{K_{m,0}}{m+1/2} \\ \nonumber
& \qquad + \sum\limits_{m=1}^\infty \sum\limits_{n=1}^\infty c_m c_n
\frac{1}{m+1/2}
\biggl(m {\bf M}_{m,n} + \frac{K_{m,0} K_{0,n}}{K_{0,0}}\biggr) \\  \label{eq:AppF_Vnorm}
& = \sum\limits_{m=1}^\infty \frac{m \, c_m}{m+1/2}
\sum\limits_{n=1}^\infty  {\bf M}_{m,n} c_n 
= \frac{1}{\mu}  \sum\limits_{m=1}^\infty \frac{m}{m+1/2} \, c_m^2 ,  
\end{align}
where we used (\ref{eq:AppF_Kdef}), (\ref{eq:AppF_c0}),
(\ref{eq:AppF_eigenM}), and (\ref{eq:AppF_defM}).  Note also that, in
the first equality, we omitted the factor $2\pi$, which would come
from the integral over the angle $\phi$, since we focus here
exclusively on axially symmetric eigenpairs; as a result, we ignore the
angular coordinate $\phi$ and the related normalization factor $2\pi$.
To fix the normalization of $u$, we have to set the left-hand side of
(\ref{eq:AppF_Vnorm}) to $1$, thus implying an equivalent condition on
the coefficients $c_m$ of the eigenvector ${\bf C}$.  In other words,
the normalization can be imposed directly through the coefficients
$c_m$.

For a single patch of angle $\epsilon_1$ at the north pole, an
explicit representation of the elements $K_{m,n}$ was given in
Appendix D.3 of \cite{Grebenkov19b} as
\begin{equation}
K_{m,n}^{(1)}(\epsilon_1) = \sum\limits_{k=0}^{\min\{m,n\}} B_{mn}^k 
\frac{P_{m+n-2k-1}(\cos\epsilon_1) - P_{m+n-2k+1}(\cos\epsilon_1)}{2(m+n-2k)+1} \,,
\end{equation}
where
\begin{equation}
  B_{mn}^k = \frac{A_k A_{m-k} A_{n-k}}{A_{m+n-k}} \, \frac{2m+2n-4k+1}{2m+2n-2k+1}
  \,,  \quad
A_k = \frac{\Gamma(k+1/2)}{\sqrt{\pi} \, \Gamma(k+1)}\,,  \quad A_0 = 1\,.
\end{equation}  
For a single patch of angle $\epsilon_2$ at the south pole, one can
use the symmetry of Legendre polynomials, $P_n(1-x) = (-1)^n P_n(x)$,
to get
\begin{equation}
K_{m,n}^{(2)}(\epsilon_2) = (-1)^{m+n} K_{m,n}^{(1)}(\epsilon_2) \,.
\end{equation}
When there are two patches, the matrix element $K_{m,n}$ is the
sum of these two contributions.

We emphasize that $u(r,\theta)$ in (\ref{eq:V_expansion_Pn}) is
constructed to be axially symmetric (i.e., independent of the angle
$\phi$).  In other words, this numerical procedure gives access
exclusively to axially symmetric eigenfunctions of the SN problem.  In
a similar way, one can construct non-axially-symmetric eigenfunctions
by using a representation in the form $e^{im\phi} r^n 
P_n^m(\cos\theta)$ with associated Legendre polynomials $P_n^m(z)$,
see similar constructions in \cite{Grebenkov24,Grebenkov25}.

\subsection*{Numerical Results for One Patch}

For validation purpose, we consider the case of a single patch of
radius $\eps$ at the north pole (with the angle $\epsilon =
\sin^{-1}(\eps)$).  We compute the first two SN eigenvalue by
truncating the matrix ${\bf M}$ to the size $\nmax \times \nmax$.  Our
numerical results for these eigenvalues, labeled by $\sigma_{\rm
num}^{(1)}$ and $\sigma_{\rm num}^{(2)}$, for different $\eps$ and
three truncation orders $\nmax$, are shown in
Table~\ref{tab:appf:one}.  From this table we observe that increasing
$\nmax$ by a factor of two does not significantly change our numerical
estimate for the first two SN eigenvalues, which confirms the high
accuracy of our numerical method. The favorable comparison between the
first two numerically computed SN eigenvalues and their asymptotic
predictions in (\ref{sn:example_1}) is shown in
Fig.~\ref{fig:SN_patch1}.

\begin{table}
\begin{center} 
\begin{tabular}{| c| c| c| c| c| c| c |}  \hline
$\nmax$& $\epsilon$ & 0.1 & 0.15 & 0.2 & 0.25 & 0.30 \\ \hline 
$1000$ & $\sigma_{\rm num}^{(1)}$ & 4.0578 & 4.0211 & 3.9814 & 3.9388 & 3.8933 \\
$2000$ & $\sigma_{\rm num}^{(1)}$ & 4.0576 & 4.0210 & 3.9813 & 3.9387 & 3.8933 \\
$4000$ & $\sigma_{\rm num}^{(1)}$ & 4.0576 & 4.0209 & 3.9813 & 3.9387 & 3.8933 \\  \hline
$1000$ & $\sigma_{\rm num}^{(2)}$ & 7.2758 & 7.2329 & 7.1845 & 7.1305 & 7.0710 \\
$2000$ & $\sigma_{\rm num}^{(2)}$ & 7.2751 & 7.2326 & 7.1843 & 7.1304 & 7.0710 \\
$4000$ & $\sigma_{\rm num}^{(2)}$ & 7.2750 & 7.2325 & 7.1843 & 7.1304 & 7.0709 \\  \hline
\end{tabular}
\end{center}
\caption{
Numerical approximation for the first two SN eigenvalues $\sigma_{\rm
num}^{(1)}$ and $\sigma_{\rm num}^{(2)}$ for a single circular patch
of angle $\epsilon$ and radius $\eps = \sin(\epsilon)$ on the unit
sphere for various values of $\epsilon$ and three different
truncations of the matrix ${\bf M}$.}
\label{tab:appf:one}
\end{table}

\bibliographystyle{plain}
\bibliography{references}

\end{document}